\DeclareFontFamily{U}{rsfs}{} \DeclareFontShape{U}{rsfs}{n}{it}{<->
rsfs10}{} \DeclareSymbolFont{mscr}{U}{rsfs}{n}{it}
\DeclareSymbolFontAlphabet{\scr}{mscr}
\def\mathscr{\scr}
\begin{document}
%%%%%%%%%%%%%%%%%%%%%%%%%%%%%%%%%%%%%%%%%%%%%%%%%%%%%%%%%%%%%%%%%%%%%%%%
%%%%%%%%%%%%%%%%%%%%%%%%%%     Macros      %%%%%%%%%%%%%%%%%%%%%%%%%%%%%
%%%%%%%%%%%%%%%%%%%%%%%%%%%%%%%%%%%%%%%%%%%%%%%%%%%%%%%%%%%%%%%%%%%%%%%%
\def\e#1\e{\begin{equation}#1\end{equation}}
\def\ea#1\ea{\begin{align}#1\end{align}}
\def\eq#1{{\rm(\ref{#1})}}
\theoremstyle{plain}% default
\newtheorem{thm}{Theorem}[section]
\newtheorem{prop}[thm]{Proposition}
\newtheorem{lem}[thm]{Lemma}
\newtheorem{cor}[thm]{Corollary}
\newtheorem{quest}[thm]{Question}
\newtheorem{conj}[thm]{Conjecture}
\theoremstyle{definition}
\newtheorem{dfn}[thm]{Definition}
\newtheorem{ex}[thm]{Example}
\newtheorem{rem}[thm]{Remark}
\numberwithin{equation}{section}
\def\dim{\mathop{\rm dim}\nolimits}
\def\Ker{\mathop{\rm Ker}}
\def\Coker{\mathop{\rm Coker}}
\def\sign{\mathop{\rm sign}\nolimits}
\def\id{\mathop{\rm id}\nolimits}
\def\SO{\mathop{\rm SO}\nolimits}
\def\SF{\mathop{\rm SF}\nolimits}
\def\Or{\mathop{\rm Or}\nolimits}
\def\ad{\mathop{\rm ad}\nolimits}
\def\Hom{\mathop{\rm Hom}\nolimits}
\def\Map{\mathop{\rm Map}\nolimits}
\def\Crit{\mathop{\rm Crit}\nolimits}
\def\Hol{\mathop{\rm Hol}\nolimits}
\def\Iso{\mathop{\rm Iso}\nolimits}
\def\Hess{\mathop{\rm Hess}\nolimits}
\def\Pd{\mathop{\rm Pd}\nolimits}
\def\Aut{\mathop{\rm Aut}\nolimits}
\def\Diff{\mathop{\rm Diff}\nolimits}
\def\Flag{\mathop{\rm Flag}\nolimits}
\def\FlagSt{\mathop{\rm FlagSt}\nolimits}
\def\dOrb{{\mathop{\bf dOrb}}}
\def\dMan{{\mathop{\bf dMan}}}
\def\mKur{{\mathop{\bf mKur}}}
\def\Kur{{\mathop{\bf Kur}}}
\def\Re{\mathop{\rm Re}}
\def\Im{\mathop{\rm Im}}
\def\SU{\mathop{\rm SU}}
\def\Sp{\mathop{\rm Sp}}
\def\Spin{\mathop{\rm Spin}}
\def\GL{\mathop{\rm GL}}
\def\ind{\mathop{\rm ind}}
\def\area{\mathop{\rm area}}
\def\U{{\rm U}}
\def\vol{\mathop{\rm vol}\nolimits}
\def\virt{{\rm virt}}
\def\emb{{\rm emb}}
\def\bs{\boldsymbol}
\def\ge{\geqslant}
\def\le{\leqslant\nobreak}
\def\O{{\mathbin{\mathcal O}}}
\def\cA{{\mathbin{\mathcal A}}}
\def\cB{{\mathbin{\mathcal B}}}
\def\cC{{\mathbin{\mathcal C}}}
\def\cD{{\mathbin{\scr D}}}
\def\cDHS{{\mathbin{\scr D}_{\Q HS}}}
\def\cE{{\mathbin{\mathcal E}}}
\def\cF{{\mathbin{\mathcal F}}}
\def\cG{{\mathbin{\mathcal G}}}
\def\cH{{\mathbin{\mathcal H}}}
\def\cI{{\mathbin{\mathcal I}}}
\def\cJ{{\mathbin{\mathcal J}}}
\def\cK{{\mathbin{\mathcal K}}}
\def\cL{{\mathbin{\mathcal L}}}
\def\cM{{\mathbin{\mathcal M}}}
\def\bcM{{\mathbin{\bs{\mathcal M}}}}
\def\cN{{\mathbin{\mathcal N}}}
\def\cO{{\mathbin{\mathcal O}}}
\def\cP{{\mathbin{\mathcal P}}}
\def\cS{{\mathbin{\mathcal S}}}
\def\cT{{\mathbin{\mathcal T}}}
\def\cU{{\mathbin{\mathcal U}}}
\def\cQ{{\mathbin{\mathcal Q}}}
\def\cW{{\mathbin{\mathcal W}}}
\def\C{{\mathbin{\mathbb C}}}
\def\bD{{\mathbin{\mathbb D}}}
\def\F{{\mathbin{\mathbb F}}}
\def\H{{\mathbin{\mathbb H}}}
\def\N{{\mathbin{\mathbb N}}}
\def\Q{{\mathbin{\mathbb Q}}}
\def\R{{\mathbin{\mathbb R}}}
\def\bS{{\mathbin{\mathbb S}}}
\def\Z{{\mathbin{\mathbb Z}}}
\def\sF{{\mathbin{\mathscr F}}}
\def\al{\alpha}
\def\be{\beta}
\def\ga{\gamma}
\def\de{\delta}
\def\io{\iota}
\def\ep{\epsilon}
\def\la{\lambda}
\def\ka{\kappa}
\def\th{\theta}
\def\ze{\zeta}
\def\up{\upsilon}
\def\vp{\varphi}
\def\si{\sigma}
\def\om{\omega}
\def\De{\Delta}
\def\La{\Lambda}
\def\Si{\Sigma}
\def\Th{\Theta}
\def\Om{\Omega}
\def\Ga{\Gamma}
\def\Up{\Upsilon}
\def\pd{\partial}
\def\ts{\textstyle}
\def\st{\scriptstyle}
\def\sst{\scriptscriptstyle}
\def\w{\wedge}
\def\sm{\setminus}
\def\bu{\bullet}
\def\op{\oplus}
\def\ot{\otimes}
\def\ov{\overline}
\def\ul{\underline}
\def\bigop{\bigoplus}
\def\bigot{\bigotimes}
\def\iy{\infty}
\def\es{\emptyset}
\def\ra{\rightarrow}
\def\Ra{\Rightarrow}
\def\Longra{\Longrightarrow}
\def\ab{\allowbreak}
\def\longra{\longrightarrow}
\def\hookra{\hookrightarrow}
\def\dashra{\dashrightarrow}
\def\t{\times}
\def\ci{\circ}
\def\ti{\tilde}
\def\d{{\rm d}}
\def\ha{{\ts\frac{1}{2}}}
\def\md#1{\vert #1 \vert}
\def\bmd#1{\big\vert #1 \big\vert}
\def\ms#1{\vert #1 \vert^2}
\def\nm#1{\Vert #1 \Vert}
%%%%%%%%%%%%%%%%%%%%%%%%%%%%%%%%%%%%%%%%%%%%%%%%%%%%%%%%%%%%%%%%%%%%%%%%
%%%%%%%%%%%%%%%%%%%%%% Text of paper %%%%%%%%%%%%%%%%%%%%%%%%%%%%%%%%%%%
%%%%%%%%%%%%%%%%%%%%%%%%%%%%%%%%%%%%%%%%%%%%%%%%%%%%%%%%%%%%%%%%%%%%%%%%
\title{Conjectures on counting associative \\ 3-folds in $G_2$-manifolds}
\author{Dominic Joyce}
\date{}
\maketitle

\begin{abstract} There is a strong analogy between compact, torsion-free $G_2$-manifolds $(X,\vp,*\vp)$ and Calabi--Yau 3-folds $(Y,J,g,\om)$. We can also generalize $(X,\vp,*\vp)$ to `tamed almost $G_2$-manifolds' $(X,\vp,\psi)$, where we compare $\vp$ with $\om$ and $\psi$ with $J$. Associative 3-folds in $X$, a special kind of minimal submanifold, are analogous to $J$-holomorphic curves in~$Y$. 

Several areas of Symplectic Geometry -- Gromov--Witten theory, Quantum Cohomology, Lagrangian Floer cohomology, Fukaya categories -- are built using `counts' of moduli spaces of $J$-holomorphic curves in $Y$, but give an answer depending only on the symplectic manifold $(Y,\om)$, not on the (almost) complex structure $J$.

We investigate whether it may be possible to define interesting invariants of tamed almost $G_2$-manifolds $(X,\vp,\psi)$ by `counting' compact associative 3-folds $N\subset X$, such that the invariants depend only on $\vp$, and are independent of the 4-form $\psi$ used to define associative 3-folds.

We conjecture that one can define a superpotential $\Phi_\psi:\cU\ra\La_{>0}$ `counting' associative $\Q$-homology 3-spheres $N\subset X$ which is deformation-invariant in $\psi$ for $\vp$ fixed, up to certain reparametrizations $\Up:\cU\ra\cU$ of the base $\cU=\Hom(H_3(X;\Z),1+\La_{>0})$, where $\La_{>0}$ is a Novikov ring. Using this we define a notion of `$G_2$ quantum cohomology'.

We also discuss Donaldson and Segal's proposal \cite[\S 6.2]{DoSe} to define invariants `counting' $G_2$-instantons on tamed almost $G_2$-manifolds, with `compensation terms' counting weighted pairs of a $G_2$-instanton and an associative 3-fold, and suggest some modifications to it.
\end{abstract}

\setcounter{tocdepth}{2}
\tableofcontents
\section{Introduction}
\label{ca1}

Let $(Y,\om)$ be a compact symplectic manifold. Several areas of Symplectic Geometry --- Gromov--Witten invariants \cite{FuOn,HWZ,McSa}, Quantum Cohomology \cite{McSa}, Lagrangian Floer cohomology \cite{Fuka2,FOOO}, Fukaya categories \cite{Seid}, and so on --- involve choosing an almost complex structure $J$ on $Y$ compatible with $\om$, `counting' moduli spaces $\cM$ of $J$-holomorphic curves in $Y$ satisfying some conditions, and using the `numbers' $[\cM]_\virt$ and homological algebra to define the theory.

A remarkable feature of these theories is that although the family $\cJ$ of possible choices of $J$ is infinite-dimensional, and two $J_1,J_2$ in $\cJ$ may be very far apart, nonetheless the theory is independent of choice of $J$ (up to a suitable notion of equivalence), and so depends only on~$(Y,\om)$.

These areas are related to String Theory, and are driven by conjectures made by physicists. Oversimplifying rather, String Theorists tell us that if $(Y,J,g,\om)$ is a Calabi--Yau 3-fold, then the String Theory of $Y$ (a huge structure) has a `topological twisting', the `A model', a smaller and simpler theory. The A model depends only on the symplectic manifold $(Y,\om)$, not on the other geometric structures $J,g,\Om$, and encodes data including the Gromov--Witten invariants, Quantum Cohomology, and Fukaya category of $(Y,\om)$.

We wish to explore the possibility that an analogue of these ideas may work for compact $G_2$-manifolds. As in \S\ref{ca2}, if $(X,g)$ is a Riemannian 7-manifold with holonomy group $G_2$ then $X$ has a natural closed 3-form $\vp$ and Hodge dual closed 4-form $*\vp$, in a local normal form that we call `positive' 3- and 4-forms. There are two classes of special submanifolds in $X$, `associative 3-folds' $N\subset X$ calibrated by $\vp$, and `coassociative 4-folds' $C\subset X$ calibrated by $*\vp$. 

There is a well known analogy:
\ea
&\text{Calabi--Yau 3-folds $(Y,J,h)$} &&\!\!\!\!\!\!\leftrightarrow\!\!\! &&\text{Torsion-free $G_2$-manifolds $(X,\vp,*\vp)$} 
\nonumber\\
&\text{$J$-holomorphic curves in $Y$} &&\!\!\!\!\!\!\leftrightarrow\!\!\! &&\text{associative 3-folds in $X$} 
\label{ca1eq1}\\
&\text{(Special) Lagrangian 3-folds in $Y$} &&\!\!\!\!\!\!\leftrightarrow\!\!\! &&\text{coassociative 4-folds in $X$.} 
\nonumber
\ea

Torsion-free $G_2$-manifolds $(X,\vp,*\vp)$ are a rather restrictive class. Following Donaldson and Segal \cite[\S 3--\S 4]{DoSe}, we will work with the much larger class of {\it tamed almost-$G_2$-manifolds}, or {\it TA-$G_2$-manifolds}, $(X,\vp,\psi)$, which have a closed $G_2$ 3-form $\vp$ and a compatible closed $G_2$ 4-form $\psi$ on $X$, but need not have $\psi=*\vp$. We call $\vp,\psi$ {\it good\/} if they extend to a TA-$G_2$-manifold $(X,\vp,\psi)$. Then we can extend the analogy \eq{ca1eq1}, adding the lines:
\ea
&\text{Symplectic form $\om$ on $Y$} &&\!\!\!\!\leftrightarrow\!\!\! &&\text{Good 3-form $\vp$ on $X$} \nonumber\\
&\text{(Almost) complex structure $J$ on $Y$} &&\!\!\!\!\leftrightarrow\!\!\! &&\text{Good 4-form $\psi$ on $X$} 
\label{ca1eq2}\\
&\begin{subarray}{l}\ts\text{Symplectic manifold $(Y,\om)$ with} \\
\ts\text{compatible almost complex structure $J$}\end{subarray} &&\!\!\!\!\leftrightarrow\!\!\! &&\text{TA-$G_2$-manifold $(X,\vp,\psi)$.}\!\!\!\!{} 
\nonumber
\ea

Here we compare $\vp$ with $\om$ and $\psi$ with $J$ because the notion of associative 3-fold $N$ in $(X,\vp,\psi)$ depends only on $X,\psi$, not on $\vp$, but $N$ has volume $[\vp]\cdot[N]$ for $[\vp]\in H^3_{\rm dR}(X;\R)$ and~$[N]\in H_3(X;\Z)$. Following analogy \eq{ca1eq1}--\eq{ca1eq2}, and being very optimistic, one might hope to construct:
\begin{itemize}
\setlength{\itemsep}{0pt}
\setlength{\parsep}{0pt}
\item[(a)] Gromov--Witten type invariants $GW_{\psi,\al}\in\Q$ counting associative 3-folds $N$ in a TA-$G_2$-manifold $(X,\vp,\psi)$ in homology class $[N]=\al\in H_3(X;\Z)$.
\item[(b)] A `quantum cohomology algebra' $QH^*(X;\La_{\ge 0})$ for a TA-$G_2$-manifold $(X,\vp,\psi)$, defined by modifying usual cohomology $H^*(X;\La_{\ge 0})$ by terms involving counting associative 3-folds in $X$ passing through given cycles. 
\item[(c)] `Floer cohomology groups' or `Fukaya categories' for coassociative 4-folds $C$ in $(X,\vp,\psi)$, defined by counting associative 3-folds $N$ in $X$ with boundary $\pd N\subset C$, as discussed by Leung, Wang and Zhu~\cite{LWZ1,LWZ2}.
\end{itemize}

We particularly want anything we define to be {\it unchanged by continuous deformations of\/ $(\vp,\psi)$ which fix the cohomology class $[\vp]=\ga$ in\/} $H^3_{\rm dR}(X;\R)$, as this is our analogue of symplectic theories being independent of choice of almost complex structure $J$, and is our criterion for having found an interesting, `topological' theory, in the style of invariant theories in Symplectic Geometry.

Our conjectural answers to these are:
\begin{itemize}
\setlength{\itemsep}{0pt}
\setlength{\parsep}{0pt}
\item[(a$)'$] We outline how to define numbers $GW_{\psi,\al}\in\Q$ `counting' associative $\Q$-homology 3-spheres $N$ in $(X,\vp,\psi)$ with $[N]=\al\in H_3(X;\Z)$ and $\psi$ generic. These $GW_{\psi,\al}$ {\it depend on arbitrary choices}, and {\it are not invariant under deformations of $(\vp,\psi)$ fixing\/} $[\vp]\in H^3_{\rm dR}(X;\R)$. 

However, we expect the {\it family\/} of $GW_{\psi,\al}$ for all $\al\in H_3(X;\Z)$ to have some interesting deformation-invariant features, as in Conjecture \ref{ca1conj1}. In particular, the $GW_{\psi,\al}$ should be combined in a superpotential $\Phi_\psi:\cU\ra \La_{>0}$ as in \eq{ca1eq3} which is independent of choices up to reparametrization by a class of automorphisms of the base~$\cU$.
\item[(b$)'$] If this superpotential $\Phi_\psi$ has a critical point $\th\in\cU$, we explain how to define `$G_2$ quantum cohomology' $QH^*_\th(X;\La_{\ge 0})$, a supercommutative algebra over the Novikov ring $\La_{\ge 0},$ which is a deformation of $H^*(X;\La_{\ge 0})$, expected to be deformation-invariant up to isomorphism.

If a critical point $\th$ exists, we say that $(X,\vp,\psi)$ is {\it unobstructed}. This is a condition similar to Lagrangian Floer cohomology of a Lagrangian being unobstructed in Fukaya, Oh, Ohta and Ono~\cite{Fuka2,FOOO}.
\item[(c$)'$] We expect that it is {\it not possible\/} to construct a deformation-invariant version of Lagrangian Floer theory for coassociatives $C$ in $X$, based on counting associatives $N$ in $X$ with $\pd N\subset C$, for reasons explained in~\S\ref{ca62}.
\end{itemize}

The next conjecture explains (a$)'$ in more detail. We need the following notation. Let $\F$ be the field $\Q,\R$ or $\C$. Write $\La$ for the Novikov ring over $\F$:
\begin{equation*}
\La=\bigl\{\ts\sum_{i=1}^\iy c_iq^{\al_i}: \text{$c_i\in\F,$ $\al_i\in\R,$ $\al_i\ra \iy$ as $i\ra\iy$}\bigr\}, 
\end{equation*}
with $q$ a formal variable. Then $\La$ is a commutative $\F$-algebra. Define $v:\La\ra\R\amalg\{\iy\}$ by $v(\la)$ is the least $\al\in\R$ with the coefficient of $q^\al$ in $\la$ nonzero for $\la\in\La\sm\{0\}$, and $v(0)=\iy$. Write $\La_{\ge 0}\subset\La$ for the subalgebra of $\la\in\La$ with $v(\la)\ge 0$, and $\La_{>0}\subset\La_{\ge 0}$ for the ideal of $\la\in\La$ with $v(\la)>0$. 

Then $\La$ is a {\it complete non-Archimedean field\/} in the sense of Bosch, G\"untzer and Remmert \cite[\S A]{BGR}, with valuation $\nm{\la}=2^{-v(\la)}$, so we can consider {\it rigid analytic spaces\/} over $\La$ as in \cite[\S C]{BGR}. These are like schemes over $\La$, except that polynomial functions on schemes are replaced by convergent power series.

\begin{conj}[see Conjecture \ref{ca7conj1}] Let $X$ be a compact, oriented\/ $7$-manifold. Consider $1+\La_{>0}\subset\La$ as a group under multiplication in $\La$. Write
\begin{equation*}
\cU=\Hom\bigl(H_3(X;\Z),1+\La_{>0}\bigr)
\end{equation*}
for the set of group morphisms $\th:H_3(X;\Z)\ra 1+\La_{>0}$. By choosing a basis $e_1,\ldots,e_n$ for $H_3(X;\Z)/$torsion, where $n=b_3(X),$ we can identify $\cU\cong\La_{>0}^n$ by $\th\cong (\la_1,\cdots,\la_n)$ if\/ $\th(e_i)=1+\la_i$ for $i=1,\ldots,n,$ where $\La_{>0}$ is the open unit ball in $\La$ in the norm $\nm{\,.\,}$. We regard\/ $\cU$ as a \begin{bfseries}smooth rigid analytic space\end{bfseries} over $\La,$ as in Bosch, G\"untzer and Remmert\/~{\rm\cite{BGR}}.

Let\/ $\ga\in H^3_{\rm dR}(X;\R),$ and write $\cF_\ga$ for the set of closed\/ $4$-forms $\psi$ on $X$ such that there exists a closed\/ $3$-form $\vp$ on $X$ with\/ $[\vp]=\ga\in H^3_{\rm dR}(X;\R),$ for which $(X,\vp,\psi)$ is a TA-$G_2$-manifold, with the given orientation on $X$. Let\/ $\psi\in\cF_\ga$ be generic. Then we can define a superpotential\/ $\Phi_\psi:\cU\ra \La_{>0},$ of the form
\e
\Phi_\psi(\th)=\sum_{\al\in H_3(X;\Z):\ga\cdot\al>0}GW_{\psi,\al}\, q^{\ga\cdot\al}\,\th(\al), 
\label{ca1eq3}
\e
where $GW_{\psi,\al}\in\Q$ is a weighted count of associative $\Q$-homology $3$-spheres in $(X,\vp,\psi)$ with homology class $\al$. The $GW_{\psi,\al}$ are \begin{bfseries}not independent of choices\end{bfseries}, and are \begin{bfseries}not invariant under deformations of\/ $\psi$ in\end{bfseries} $\cF_\ga$. So \begin{bfseries}they are not enumerative invariants in the usual sense\end{bfseries}.

Nonetheless, the whole superpotential\/ $\Phi_\psi$ does have the following invariance property. If\/ $\psi_0,\psi_1$ are generic elements of the same connected component of\/ $\cF_\ga$ (we allow $\psi_0=\psi_1$), and\/ $\Phi_{\psi_0},\Phi_{\psi_1}$ are any choices for the superpotentials for\/ $\psi_0,\psi_1,$ then there is a \begin{bfseries}quasi-identity morphism\end{bfseries} $\Up:\cU\ra\cU,$ a special kind of isomorphism of rigid analytic spaces defined in\/ {\rm\S\ref{ca71},} with\/~$\Phi_{\psi_1}=\Phi_{\psi_0}\ci\Up$. 
\label{ca1conj1}
\end{conj}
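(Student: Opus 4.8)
\emph{The count.} We have no proof; the following is a programme, in the spirit of wall-crossing in gauge theory and of the superpotential formalism of \cite{DoSe,FOOO}. For generic $\psi\in\cF_\ga$ and $\al\in H_3(X;\Z)$ with $\ga\cdot\al>0$, let $\cM_\al(\psi)$ be the moduli space of compact associative $\Q$-homology $3$-spheres ($\Q\mathrm{HS}^3$'s) $N\subset X$ with $[N]=\al$. The linearised associative operator is a self-adjoint Dirac-type operator of index $0$, so $\cM_\al(\psi)$ has expected dimension $0$. I would first prove, by a Sard--Smale argument whose free parameter is the deformation of $\psi$ within $\cF_\ga$ itself (legitimate because ``goodness'' of $(\vp,\psi)$ is an open condition, so nearby $\psi$ remain in $\cF_\ga$), that for generic $\psi$ every $N\in\cM_\al(\psi)$ is unobstructed (hence isolated), and that no sequence in $\cM_\al(\psi)$ runs off to a singular limit in class $\al$, so that $\cM_\al(\psi)$ is finite by the compactness theorem for integral currents (a compact associative being a calibrated, hence area-minimizing, $3$-cycle). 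I would then attach to each $N$ a rational weight --- a sign from an orientation of the determinant line of the deformation operator, times $1/\md{H_1(N;\Z)}$, the latter because a $\Q\mathrm{HS}^3$ has finite $H_1$ --- set $GW_{\psi,\al}$ to be the total weight and $\Phi_\psi$ by \eq{ca1eq3}, and note the sum converges in $\La_{>0}$ since (again by integral-current compactness) only finitely many $\al$ with $\ga\cdot\al\le C$ support a compact associative, for each $C$. The ``choices'' in $\Phi_\psi$ are the orientation data and auxiliary perturbations used here; changing them is the case $\psi_0=\psi_1$ of the invariance statement, covered by the argument below.

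\emph{One-parameter families.} Given generic $\psi_0,\psi_1$ in one connected component of $\cF_\ga$, I would choose a generic path $(\psi_t)_{t\in[0,1]}$ in $\cF_\ga$ with primitives $\vp_t$, $[\vp_t]=\ga$, and for each $\al$ form the parametrised moduli space $\cM_\al=\{(t,N):N\in\cM_\al(\psi_t)\}$, generically a $1$-manifold with boundary $\cM_\al(\psi_1)\amalg\cM_\al(\psi_0)$. Its generic interior events are transverse birth/death pairs of associatives carrying opposite signs, which leave the weighted count unchanged. The crucial --- and hardest --- input is that the remaining non-compact ends of $\cM_\al$ are exactly ``broken'' limits: along such an end $N_{t_i}$ converges, as $t_i\to t_*$, to a union $N^{(1)}\cup\dots\cup N^{(k)}$ with $k\ge 2$ of smooth associative $\Q\mathrm{HS}^3$'s meeting transversally at isolated points, $[N^{(j)}]=\al_j$, $\sum_j\al_j=\al$, $\ga\cdot\al_j>0$ (each crossing modelled on a transverse pair of associative $3$-planes, i.e.\ with link a disjoint pair of pseudo-holomorphic $2$-spheres in the nearly-K\"ahler $6$-sphere), together with the converse gluing theorem: every such broken configuration at a generic wall $t_*$ is the limit of a $1$-parameter family of genuine associative $\Q\mathrm{HS}^3$'s in class $\al$ for $t$ on one side of $t_*$. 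Counting ends with signs would then yield a wall-crossing formula
\begin{equation}
GW_{\psi_1,\al}-GW_{\psi_0,\al}=\sum_{k\ge 2}\ \sum_{\substack{\al_1+\dots+\al_k=\al\\ \ga\cdot\al_j>0}}c(\al_1,\dots,\al_k)\prod_{j=1}^{k}GW_{\psi_0,\al_j},
\label{ca1wc}
\end{equation}
with universal rational coefficients $c(\al_1,\dots,\al_k)$ read off from the local gluing model and the intersection (linking) signs of the continuation families along the path; only classes with $\ga\cdot\al_j<\ga\cdot\al$ occur on the right, so induction on $\ga\cdot\al$ rewrites the intermediate $GW$'s in terms of the $\psi_0$-invariants, as displayed.

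\emph{Assembling $\Up$.} I would then check that \eq{ca1wc} is precisely the assertion $\Phi_{\psi_1}=\Phi_{\psi_0}\ci\Up$ for a quasi-identity morphism $\Up:\cU\ra\cU$. In the coordinates $\cU\cong\La_{>0}^n$, writing $\Up$ as $n$ convergent power series and requiring it to be a quasi-identity --- the identity up to corrections of strictly positive additional $q$-order --- the equation $\Phi_{\psi_1}=\Phi_{\psi_0}\ci\Up$ becomes a triangular system in the Novikov filtration, whose right-hand side at filtration level $\ga\cdot\al$ involves only $GW$'s and correction coefficients at strictly lower levels --- exactly the shape of \eq{ca1wc} --- so $\Up$ is determined order by order, convergence following from the finiteness above. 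Since quasi-identity morphisms are closed under composition, it suffices to treat one generic wall and compose over the finitely many walls of the path; no path-independence is needed, only the existence of some $\Up$. This presents $\Phi_\psi$ as canonical up to the group of quasi-identity morphisms, which is the claimed invariance.

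\emph{The main obstacle.} Everything outside the second paragraph is bookkeeping with the Novikov filtration and with orientations of moduli spaces. The genuine difficulty --- the reason this is still a conjecture --- is the compactness and gluing analysis there (and the analogous ``no singular limits'' input in the first paragraph). Unlike $J$-holomorphic curves, associative $3$-folds have no positivity and no automatic regularity, and their possible singular limits --- isolated conical singularities over pseudo-holomorphic curves in the $6$-sphere, possibly in positive-dimensional moduli, as well as branching and higher-multiplicity phenomena --- are not understood well enough either to guarantee that a generic path meets only transverse broken configurations, or to establish the matching gluing theorem at singularities modelled on intersecting associative planes. These structural facts about singular associatives are themselves, at present, largely conjectural.
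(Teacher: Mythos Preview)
Your programme captures some correct ingredients --- index-zero deformation theory, Sard--Smale genericity, current compactness, birth/death cancellation --- but diverges from the paper's in ways that are genuine gaps rather than an alternative route. First, your weight $1/\md{H_1(N;\Z)}$ is the reciprocal of the paper's $I(N)=\md{H_1(N;\Z)}$ (equation \eq{ca5eq12}). The choice is not free: it is forced by a wall-crossing you do not consider, where an associative develops an isolated conical singularity modelled on the $T^2$-cone $L_0\subset\R^7$ of \S\ref{ca51} (type (E) in \S\ref{ca72}). Across such a wall one associative $N^1_t$ is replaced by two, $N^2_t,N^3_t$, in the same homology class but topologically distinct, and the relation among their $H_1$-orders is additive (\eq{ca5eq13}), not multiplicative. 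Your weight would not survive this transition; $\md{H_1}$ does. Your ``broken limits'' with link a pair of $2$-spheres in $S^6$ are only the paper's type (B); the $T^2$-cone has link $T^2$, and there are further types --- self-connect-sums (C),(D) and multiple covers (F) --- each requiring separate treatment.

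Second, your $\Phi_\psi$ is what the paper calls the ``main term'' $\Phi_\psi^{\rm main}$ (Remark \ref{ca7rem1}), and the paper states explicitly that this is \emph{not} deformation-invariant, even up to quasi-identity morphisms, because of type (B): when $N^+_{t_0}\cap N^-_{t_0}=\{x\}$ a new associative $\ti N_t\cong N^+\#N^-$ appears in class $[N^+]+[N^-]$ with nothing to cancel it. The paper's remedy is not a wall-crossing formula absorbed into $\Up$, but a more elaborate \emph{definition}: $\Phi_\psi$ in \eq{ca7eq4} is a sum over labelled trees whose vertices carry associatives and whose edges carry linking-type numbers $(N'_v\times N_w+N'_w\times N_v)\bu D$ against an auxiliary $8$-chain $D$ on $X\times X$ with prescribed boundary \eq{ca7eq3}. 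The type (B) transition changes an edge factor by $\pm 1$ precisely so as to cancel the new vertex $\ti N_t$ (sketch proof of Theorem \ref{ca7thm1}(a)). The ``choices'' in the conjecture are the cycles $C^i_j$ and the chain $D$, not perturbation data; and the quasi-identity $\Up$ in the paper arises not from associatives colliding with each other, but from associatives crossing the fixed $3$-cycles $C^3_j$ as $t$ varies (Theorem \ref{ca7thm1}(b), equation \eq{ca7eq7}). So both the definition of $\Phi_\psi$ and the mechanism producing $\Up$ are structurally different from what you propose.
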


Here we work over the Novikov ring $\La_{>0}$, as in \cite{Fuka2,FOOO}, as our theory involves infinite sums such as \eq{ca1eq3}, but we do not know these sums converge in the usual sense, so we have to use formal power series. If we knew all our formal sums converged, we could work over $\R$ or $\C$ instead, with $q\in\R,\C$ small.

Conjecture \ref{ca1conj1} implies that any information which depends on $\Phi_\psi$ only up to reparametrizations by quasi-identity morphisms $\Up:\cU\ra\cU$ is deformation-invariant. For example, the least $A>0$ such that $GW_{\psi,\al}\ne 0$ for $\al\in H_3(X;\Z)$ with $\ga\cdot\al=A$ should be deformation-invariant, and the values of $GW_{\psi,\al}$ for all $\al\in H_3(X;\Z)$ with $\ga\cdot\al=A$ should also be deformation-invariant. Section \ref{ca76} outlines how to define a `$G_2$ quantum cohomology algebra' $QH^*_\th(X;\La_{\ge 0})$ depending on a  critical point $\th$ of $\Phi_\psi$ in $\cU$, which should be deformation-invariant.

The message of this paper is both positive and negative. On the positive side, there is (the author believes) some nontrivial deformation-invariant information from counting associatives. On the negative side, not that much information survives -- much less than for $J$-holomorphic curves in Symplectic Geometry -- and conjectures more optimistic than Conjecture \ref{ca1conj1} are likely to be false.

The reasoning behind Conjecture \ref{ca1conj1} is as follows. Let $(X,\vp_t,\psi_t),$ $t\in[0,1]$ be a smooth 1-parameter family of TA-$G_2$-manifolds. Then as in \S\ref{ca27} we can form moduli spaces $\cM(\cN,\al,\psi_t)$ of compact associative 3-folds $N$ in $(X,\vp_t,\psi_t)$ of diffeomorphism type $\cN$ and homology class $[N]=\al\in H_3(X;\Z)$. To define enumerative invariants for associative 3-folds which are the same for $(X,\vp_0,\psi_0)$ and $(X,\vp_1,\psi_1)$, we need to understand how the moduli spaces $\cM(\cN,\al,\psi_t)$ can change as $t$ increases through~$[0,1]$.

The typical reason why moduli spaces change is that for some $t_0\in(0,1)$ there exists a family $N_t$ for $t\in[0,t_0]$, where $N_t$ for $t\in[0,t_0)$ is a compact associative 3-fold in $(X,\vp_t,\psi_t)$ in homology class $\al$ depending smoothly on $t$, and $N_{t_0}=\lim_{t\ra t_{0-}}N_t$ is a {\it singular\/} associative 3-fold, and no $N_t$ for $t\in (t_0,1]$ exist, so that a point in $\cM(\cN,\al,\psi_t)$ disappears as $t$ crosses $t_0$ in~$[0,1]$.

Let us suppose that $(X,\vp_t,\psi_t),$ $t\in[0,1]$ is a {\it generic\/} 1-parameter family. Then the singularities of $N_{t_0}$ are not arbitrary. To each singularity type $\cS$ of associative 3-folds we can assign an {\it index\/} $\ind\cS$, which is the codimension in which singularities of type $\cS$ occur in families of associative 3-folds over generic families of $G_2$-structures. In our problem $N_{t_0}$ can only have index 1, so it is enough for us to understand index 1 singularities of associative 3-folds. 

Sections \ref{ca4}--\ref{ca5} and \ref{ca72} describe several kinds of index 1 singularity of associative 3-folds. These are the only kinds the author knows, and may perhaps be the only kinds there are. They all definitely change the number of associative 3-folds, and so mean that na\"\i ve counts of associative 3-folds cannot be deformation-invariant.

In \S\ref{ca7} we assume that moduli spaces of compact associatives in $(X,\vp,\psi)$ have good compactness, smoothness, and orientation properties, and that their only boundary behaviour comes from the six kinds of index 1 singularity described in \S\ref{ca72}. Under these very strong assumptions, we explain how by counting associative 3-folds in cunning ways, we can still extract deformation-invariant information from the numbers of associative 3-folds as in Conjecture \ref{ca1conj1}, as we arrange that the changes under index 1 singularities cancel out.

As in \cite{DoSe}, $G_2$-{\it instantons\/} on a TA-$G_2$-manifold $(X,\vp,\psi)$ are connections $A$ on principal $G$-bundles $P\ra X$ whose curvature $F_A$ satisfies $F_A\w\psi=0$. In our analogy \eq{ca1eq1}--\eq{ca1eq2}, we can add the line:
\begin{equation*}
\text{Hermitian--Yang--Mills vector bundles on $Y$} \;\> \leftrightarrow \;\>\text{$G_2$-instantons on $(X,\vp,*\vp)$.}
\end{equation*}
Donaldson and Segal \cite[\S 6.2]{DoSe} proposed a programme to define invariants counting $G_2$-instantons, which would hopefully be unchanged under deformations of $(\vp,\psi)$, and would be analogues of Donaldson--Thomas invariants of Calabi--Yau 3-folds \cite{Joyc21,JoSo}. It is currently under investigation by Menet, Nordstr\"om, S\'a Earp, Walpuski, and others \cite{MNS,SaEa,SEWa,Walp1,Walp2,Walp3,Walp4}. As in \cite[\S 6.2]{DoSe}, to define invariants of $(X,\vp,\psi)$ unchanged under deformations of $\psi$ will require the inclusion of `compensation terms' counting solutions of some equation on associative 3-folds $N$ in $X$, to compensate for bubbling of $G_2$-instantons on associative 3-folds. 

Section \ref{ca8} discusses several aspects of this programme. We make a proposal for how to define canonical orientations for $G_2$-instanton moduli spaces, based on the ideas in \S\ref{ca3} on orienting associative moduli spaces. And we argue that counting $G_2$-instantons on $(X,\vp,\psi)$ in a deformation-invariant way should only be possible if counting associative 3-folds in $(X,\vp,\psi)$ is `unobstructed' -- the superpotential $\Phi_\psi$ has a critical point $\th$, as in (b$)'$ above -- and we choose some such $\th$, similar to choosing a `bounding cochain' for a Lagrangian in the Lagrangian Floer theory of Fukaya, Oh, Ohta and Ono~\cite{Fuka2,FOOO}.

On the relation with String Theory and M-theory, we can ask:

\begin{quest} Is there some good notion of `topological twisting' for M-theory or String Theory on TA-$G_2$-manifolds $(X,\vp,\psi),$ which includes the superpotential\/ $\Phi_\psi,$ and\/ $G_2$ quantum cohomology $QH^*_\th(X;\La_{\ge 0}),$ and modified Donaldson--Segal invariants, proposed above?
\label{ca1quest2}
\end{quest}

See de Boer et al.\ \cite{BMSS,BNS1,BNS2} for a discussion of topological $G_2$-strings. Superpotentials $\Phi$ counting associative 3-folds similar to those in Conjecture \ref{ca1conj1} were discussed in M-theory by Acharya \cite{Acha1,Acha2} and Harvey and Moore~\cite{HaMo}. 

Throughout \S\ref{ca2}--\S\ref{ca7} we state conjectures on how the author expects the mathematics to work. These are not of uniform difficulty. For some of them, the author or one of his friends could easily write down a proof, if we were not too busy writing grant proposals. However, our main conjecture includes some aspects which are seriously difficult, and the author has no idea how to prove:

\begin{itemize}
\setlength{\itemsep}{0pt}
\setlength{\parsep}{0pt}
\item Implicit in Conjecture \ref{ca1conj1} is the idea that the only {\it index one\/} singularities of associative 3-folds (i.e.\ the only singularities that can occur in associatives in generic 1-parameter families of TA-$G_2$-manifolds $(X,\vp_t,\psi_t)$, $t\in[0,1]$) are those described in \S\ref{ca72}. This is difficult because it requires some measure of control over {\it all possible\/} singularities of associative 3-folds, as described using Geometric Measure Theory, for instance.
\item A proper understanding of the multiple cover phenomena for associatives in \S\ref{ca72}(F) also looks rather difficult, but is essential for Conjecture~\ref{ca1conj1}.
\end{itemize}

We emphasize that this paper is very speculative, and little in it is actually proved. There are a few bits which are both new and more-or-less rigorous, in particular, some ideas on TA-$G_2$-manifolds in \S\ref{ca25}, and on canonical flags, flag structures, and orientations for associative moduli spaces $\cM(\cN,\al,\psi)$ in~\S\ref{ca3}.

This paper is similar to the author's paper \cite{Joyc4}, which made conjectures on invariants counting special Lagrangian 3-folds in Calabi--Yau 3-folds. 
\smallskip

\noindent{\it Acknowledgements.} This research was partly funded by a Simons Collaboration Grant on `Special Holonomy in Geometry, Analysis and Physics'. I would like to thank Bobby Acharya, Robert Bryant, Alexsander Doan, Simon Donaldson, Mark Haskins, Andriy Haydys, Johannes Nordstr\"om, Matthias Ohst, and Thomas Walpuski for helpful conversations, and a referee for useful comments.

\section{\texorpdfstring{Geometry of $G_2$-manifolds}{Geometry of G₂-manifolds}}
\label{ca2}

We begin by introducing $G_2$-manifolds and associative and coassociative submanifolds. Some references for \S\ref{ca21}--\S\ref{ca23} are the author's books \cite{Joyc3,Joyc20}. Parts of \S\ref{ca25}--\S\ref{ca26} on TA-$G_2$-manifolds and on associative moduli spaces are new.

\subsection{\texorpdfstring{$G_2$-manifolds}{G₂-manifolds}}
\label{ca21}

Let $(X,g)$ be a connected Riemannian $n$-manifold, and fix a basepoint $x\in X$. The {\it holonomy group\/} $\Hol(g)$ of $g$ is the group of isometries of $T_xX$
generated by parallel transport around smooth loops $\ga:[0,1]\ra X$ with $\ga(0)=\ga(1)=x$. We consider $\Hol(g)$ to be a (Lie) subgroup of ${\rm O}(n)$, defined up to conjugation by elements of ${\rm O}(n)$. Then $\Hol(g)$ is independent of the choice of base point $x$.

The possible holonomy groups were classified by Berger \cite{Berg} in 1955. If $X$ is simply-connected and $g$ is irreducible and nonsymmetric, then $\Hol(g)$ is one of
\begin{gather*}
\SO(n), \quad \U(m),\SU(m)\; (n=2m,\; m\ge 2),\\ \Sp(m),\Sp(m)\Sp(1)\; (n=4m,\; m\ge 2),\quad G_2\;(n=7),\quad\text{or}\quad \Spin(7)\; (n=8).
\end{gather*}

We are concerned with the exceptional holonomy group $G_2$ in 7 dimensions. In 1987, Bryant \cite{Brya} first used the theory of exterior differential systems to show that locally there exist many metrics with holonomy $G_2$. In 1989, Bryant and Salamon \cite{BrSa} found explicit examples of complete metrics with holonomy $G_2$ on noncompact manifolds. Constructions of compact 7-manifolds with holonomy $G_2$ were given by the author \cite{Joyc1,Joyc2,Joyc3} in 1993 and 2000, by Kovalev \cite{Kova} in 2000, and by Corti, Haskins, Nordstr\"om and Pacini \cite{CHNP} in 2012. 

Let $(x_1,\ldots,x_7)$ be coordinates on $\R^7$. Write $\d{\bf x}_{ij\ldots l}$ for the exterior form $\d x_i\w\d x_j\w\cdots\w\d x_l$ on $\R^7$. Define a 3-form $\vp_0$ on $\R^7$ by
\e
\vp_0=\d{\bf x}_{123}+\d{\bf x}_{145} +\d{\bf x}_{167}+\d{\bf
x}_{246} -\d{\bf x}_{257}-\d{\bf x}_{347}-\d{\bf x}_{356}.
\label{ca2eq1}
\e
The subgroup of $\GL(7,\R)$ preserving $\vp_0$ is the exceptional
Lie group $G_2$. It is compact, connected, simply-connected,
semisimple and 14-dimensional, and it also preserves the Hodge dual 4-form
\e
*\vp_0=\d{\bf x}_{4567}+\d{\bf x}_{2367}+\d{\bf x}_{2345}+
\d{\bf x}_{1357}-\d{\bf x}_{1346}-\d{\bf x}_{1256}-\d{\bf x}_{1247},
\label{ca2eq2}
\e
the Euclidean metric $g_0=\d x_1^2+\cdots+\d x_7^2$,
and the orientation on $\R^7$. The subgroup of $\GL(7,\R)$ preserving $*\vp_0$ is $\{\pm 1\}\t G_2$, but the subgroup preserving $*\vp_0$ and the orientation on $\R^7$ is~$G_2$.

Let $X$ be a 7-manifold, and $\vp\in\Ga^\iy(\La^3T^*X)$ a smooth 3-form on $X$. We call $\vp$ {\it positive\/} if for each $x\in X$ there exists an isomorphism $T_xX\cong\R^7$
identifying $\vp\vert_x$ with $\vp_0$ in \eq{ca2eq1}. This is an open condition on $\vp$. If $\vp$ is positive then the set of isomorphisms $T_xX\cong\R^7$ identifying $\vp\vert_x\cong\vp_0$ for all $x\in X$ is a principal subbundle $P_\vp$ of the frame bundle $F\ra X$ of $X$ with structure group $G_2$. That is, $P_\vp$ is a $G_2$-{\it structure\/} on $X$. This gives a 1-1 correspondence between positive 3-forms and $G_2$-structures on a 7-manifold~$X$. 

Similarly, we call a 4-form $\psi\in\Ga^\iy(\La^4T^*X)$ {\it positive\/} if for each $x\in X$ there exists an isomorphism $T_xX\cong\R^7$ identifying $\psi\vert_x$ with $*\vp_0$ in \eq{ca2eq2}. If we fix an orientation on $X$, the set of oriented isomorphisms $T_xX\cong\R^7$ identifying $\psi\vert_x\cong *\vp_0$ for all $x\in X$ is a $G_2$-structure $P_\psi$ on $X$. This gives a 1-1 correspondence between positive 4-forms and $G_2$-structures on an oriented 7-manifold~$X$. 

A $G_2$-{\it manifold\/} is a 7-manifold $X$ with a $G_2$-structure $P$. As above $P$ corresponds to positive 3- and 4-forms $\vp,*\vp$, and by an abuse of notation we call $(X,\vp,*\vp)$ a $G_2$-manifold. A $G_2$-manifold $(X,\vp,*\vp)$ has an associated Riemannian metric $g$ and orientation.

\begin{prop} Let\/ $(X,\vp,*\vp)$ be a\/ $G_2$-manifold, with associated metric $g$. Then the following are equivalent:
\begin{itemize}
\setlength{\itemsep}{0pt}
\setlength{\parsep}{0pt}
\item[{\rm(i)}] $\Hol(g)\subseteq G_2,$ and\/ $\vp$ is the induced\/ $3$-form,
\item[{\rm(ii)}] $\nabla\vp=0$ on $X,$ where $\nabla$ is the Levi-Civita connection of\/ $g,$ and
\item[{\rm(iii)}] $\d\vp=\d(*\vp)=0$ on $X$.
\end{itemize}
\label{ca2prop1}
\end{prop}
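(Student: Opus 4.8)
The plan is to prove (i)$\Longleftrightarrow$(ii) by the holonomy principle, (ii)$\Longra$(iii) by a one-line computation, and (iii)$\Longra$(ii) --- the substantive implication --- by a representation-theoretic analysis of the intrinsic torsion of the $G_2$-structure $P_\vp$. This is the classical Fern\'andez--Gray theorem; see also \cite{Joyc3}.

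For (i)$\Longleftrightarrow$(ii): if $\Hol(g)\subseteq G_2$ and $\vp$ is the induced $3$-form, then $\vp$ corresponds fibrewise to the $G_2$-invariant tensor $\vp_0$ on $\R^7$, so it is invariant under parallel transport, i.e.\ $\nabla\vp=0$. Conversely, if $\vp$ is positive and $\nabla\vp=0$, then parallel transport around based loops fixes $\vp\vert_x\in\La^3T_x^*X$; since the stabiliser of $\vp_0$ in $\GL(7,\R)$ equals $G_2\subset\SO(7)$, this says precisely that $\Hol(g)$, realised inside ${\rm O}(T_xX)$ via any isomorphism taking $\vp\vert_x$ to $\vp_0$, lies in $G_2$, and then $\vp$ is by construction the $3$-form of this $G_2$-structure. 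For (ii)$\Longra$(iii): $\d\vp$ is obtained from $\nabla\vp$ by antisymmetrization, so $\nabla\vp=0$ gives $\d\vp=0$; and $g$ and the orientation are $\nabla$-parallel, hence so is the Hodge star, so $\nabla(*\vp)=*(\nabla\vp)=0$ and $\d(*\vp)=0$. (Equivalently, by (i), both $\vp$ and $*\vp$ are $G_2$-invariant tensors, hence parallel, hence closed.)

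The heart of the argument is (iii)$\Longra$(ii). The idea is to show that $(\d\vp,\d(*\vp))$ determines the intrinsic torsion of $P_\vp$, so that it vanishes only when the torsion does, i.e.\ when $\nabla\vp=0$. Working at $x\in X$ and identifying $T_xX\cong\R^7$ $G_2$-equivariantly, the intrinsic torsion lies in $\R^7\ot\mathfrak{g}_2^\perp$, where $\mathfrak{g}_2^\perp$ is the orthogonal complement of $\mathfrak{g}_2$ in $\mathfrak{so}(7)=\La^2\R^7$; since $\La^2\R^7=\mathfrak{g}_2\op\La^2_7$ with $\La^2_7\cong\R^7$, the torsion space is $\R^7\ot\R^7\cong\R\op\R^7\op\R^{14}\op\R^{27}$ --- the four Fern\'andez--Gray components $\tau_0$ (a function), $\tau_1$ (a $1$-form), $\tau_2\in\La^2_{14}$, $\tau_3\in\La^3_{27}$. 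Now $\d\vp$ is a $G_2$-equivariant linear function of the torsion valued in $\La^4\R^7\cong\R\op\R^7\op\R^{27}$, and $\d(*\vp)$ one valued in $\La^5\R^7\cong\La^2_7\op\La^2_{14}\cong\R^7\op\R^{14}$; by Schur's lemma $\d\vp$ can only involve $\tau_0,\tau_1,\tau_3$, and $\d(*\vp)$ only $\tau_1,\tau_2$. Indeed the standard structure equations have the shape
\begin{equation*}
\d\vp=\tau_0\,*\vp+3\,\tau_1\w\vp+*\tau_3, \qquad \d(*\vp)=4\,\tau_1\w*\vp+\tau_2\w\vp,
\end{equation*}
the three terms on the left lying in the summands $\La^4_1,\La^4_7,\La^4_{27}$ of $\La^4\R^7$ and the two on the right in the summands $\La^5_7,\La^5_{14}$ of $\La^5\R^7$. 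Once one checks that the coefficients of $\tau_0,\tau_1,\tau_2,\tau_3$ here are all nonzero --- a short explicit computation, e.g.\ by evaluating on local models --- it follows that $\d\vp=0$ forces $\tau_0=\tau_1=\tau_3=0$ and then $\d(*\vp)=0$ forces $\tau_2=0$, so the whole intrinsic torsion vanishes and $\nabla\vp=0$; (i) then follows from the first part.

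The main obstacle is this last implication, and within it three ingredients: (a) the decomposition of $\La^k\R^7$ and of $\R^7\ot\R^7$ into irreducible $G_2$-modules; (b) pinning down which modules actually appear in $\d\vp$ and $\d(*\vp)$ as functions of the torsion; and (c) --- the one step that is not pure representation theory --- checking that the universal constants that arise are nonzero, so that the map (intrinsic torsion)$\,\mapsto(\d\vp,\d(*\vp))$ is injective. Everything else, including (i)$\Longleftrightarrow$(ii), is formal given the holonomy principle; alternatively, (iii)$\Longra$(i) can be quoted directly from the Fern\'andez--Gray classification of $G_2$-structures by torsion type.
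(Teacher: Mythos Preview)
Your argument is correct and is precisely the classical Fern\'andez--Gray proof (holonomy principle for (i)$\Leftrightarrow$(ii), antisymmetrization for (ii)$\Rightarrow$(iii), and the intrinsic-torsion decomposition $\R^7\ot\La^2_7\cong\R\op\R^7\op\R^{14}\op\R^{27}$ together with nonvanishing of the Schur coefficients for (iii)$\Rightarrow$(ii)). Note, however, that the paper does not actually prove this proposition: it is stated without proof as standard background material, with the references for \S\ref{ca21} given at the start of \S\ref{ca2} as the author's books \cite{Joyc3,Joyc20}, where the full argument you sketch may be found. So there is no ``paper's own proof'' to compare against --- your write-up simply supplies what the paper quotes.
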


We call $\nabla\vp$ the {\it torsion} of the $G_2$-structure $P_\vp$. If $\nabla\vp=0$ then $(X,\vp,*\vp)$ is called {\it torsion-free}. If $g$ has holonomy $\Hol(g)\subseteq G_2$, then $g$ is Ricci-flat.

\begin{thm} Let\/ $(X,g)$ be a compact Riemannian\/ $7$-manifold with\/ $\Hol(g)\subseteq G_2$. Then $\Hol(g)=G_2$ if and only if\/ $\pi_1(X)$ is finite. In this case the moduli space of metrics with holonomy $G_2$ on $X,$ up to diffeomorphisms isotopic to the identity, is a smooth manifold of dimension~$b^3(X)$.
\label{ca2thm1}
\end{thm}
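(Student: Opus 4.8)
The plan is to prove Theorem \ref{ca2thm1} in two parts: first the statement that $\Hol(g)=G_2$ iff $\pi_1(X)$ is finite, and second the description of the moduli space as a smooth manifold of dimension $b^3(X)$.

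For the first part, I would use the classification of how $G_2$-metrics on compact manifolds can fail to have full holonomy. If $\Hol(g)\subsetneq G_2$, then since $g$ is Ricci-flat by Proposition \ref{ca2prop1} and its remark, Cheeger--Gromoll tells us that a finite cover $\tilde X$ of $X$ splits isometrically as $\tilde X\cong \mathbb{R}^k\times Y$ with $Y$ compact; since $X$ is compact we must have $k=0$, so $\tilde X = Y$ is a compact Ricci-flat $7$-manifold with $\Hol(\tilde g)\subsetneq G_2$. Running through the subgroups of $G_2$ that arise as holonomy groups (the possibilities are essentially $\{1\}$, $\SU(2)$, $\SU(3)$, and $\Sp(1)$ acting reducibly on $\R^7$), one checks each gives a nonzero first Betti number or an infinite fundamental group, so $b^1(\tilde X)>0$, hence $b^1(X)>0$ after passing back down, forcing $\pi_1(X)$ infinite. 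Conversely, if $\Hol(g)=G_2$, then the $G_2$-holonomy reduction forces the space of parallel $1$-forms to be zero (as $G_2$ acts on $\R^7$ with no nonzero fixed vectors), so by Bochner $b^1(X)=0$; combined with Ricci-flatness and Cheeger--Gromoll, $\pi_1(X)$ is finite. I would cite the author's book \cite{Joyc3} for the detailed case analysis rather than reproduce it.

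For the moduli space statement, the approach is the standard deformation theory, originally due to the author. One considers the set of torsion-free $G_2$-structures $\vp$ on $X$, i.e.\ positive $3$-forms with $\d\vp=\d(*_\vp\vp)=0$, modulo the action of $\Diff_0(X)$, the group of diffeomorphisms isotopic to the identity. By Proposition \ref{ca2prop1} this is the same as the moduli space of holonomy $G_2$ metrics. The key steps are: (1) show that the map $\vp\mapsto [\vp]\in H^3_{\rm dR}(X;\R)$ from the moduli space to cohomology is locally a homeomorphism onto an open set, which requires solving the nonlinear PDE $\d(*_\vp\vp)=0$ for $\vp$ in a fixed cohomology class near a given torsion-free one; (2) the linearization of this problem at a torsion-free $\vp$ is governed by the Hodge Laplacian, and using the $G_2$-decomposition of forms together with $\Hol(g)=G_2$ (so there are no `extra' parallel forms beyond $\vp,*\vp$ and their scalar multiples), one shows the relevant cohomology group controlling deformations is $H^3_{\rm dR}(X;\R)$, of dimension $b^3(X)$; (3) apply an inverse-function-theorem / implicit-function-theorem argument in suitable Hölder or Sobolev spaces to integrate the infinitesimal deformations, obtaining that the moduli space is a smooth manifold of dimension exactly $b^3(X)$ with the period map as a local chart. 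The condition $\Hol(g)=G_2$ (not just $\subseteq$) enters to ensure there are no infinitesimal automorphisms beyond those from $\Diff_0(X)$ and no obstruction, so the deformation theory is unobstructed with the expected dimension.

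The main obstacle is step (2)--(3) of the moduli space part: proving that the deformation problem is genuinely unobstructed and that the full nonlinear moduli space has the predicted dimension, which requires the elliptic analysis of the operator $\vp\mapsto \d(*_\vp\vp)$ linearized and then the quantitative inverse function theorem to pass from the linear to the nonlinear statement; controlling the nonlinearity in $*_\vp\vp$ as a function of $\vp$ is the technical heart. The first part (the $\pi_1$ dichotomy) is comparatively soft once one invokes Cheeger--Gromoll and the subgroup classification, though enumerating the possible proper holonomy subgroups of $G_2$ and checking each has $b^1>0$ does require care. Both parts are carried out in detail in \cite{Joyc3}, so in the paper I would state the theorem, indicate these two ingredients, and refer there.
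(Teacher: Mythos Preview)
Your proposal is correct and matches the paper's treatment: Theorem~\ref{ca2thm1} is stated in the paper as a background result without proof, with the surrounding section referring to the author's books \cite{Joyc3,Joyc20} for details. Your sketch of the two ingredients (Cheeger--Gromoll plus the subgroup analysis for the $\pi_1$ dichotomy, and the unobstructed deformation theory via the period map $\vp\mapsto[\vp]\in H^3_{\rm dR}(X;\R)$ for the moduli statement) is the standard argument from \cite{Joyc3}, and your conclusion to simply state the theorem and cite that reference is exactly what the paper does.
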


\subsection{\texorpdfstring{Calabi--Yau 3-folds and $G_2$-manifolds}{Calabi--Yau 3-folds and G₂-manifolds}}
\label{ca22}

Let $(Y,J)$ be a compact complex 3-manifold admitting K\"ahler metrics, with trivial canonical bundle $K_Y\cong\O_Y$. Yau's proof of the Calabi Conjecture implies that each K\"ahler class on $Y$ contains a unique Ricci-flat K\"ahler metric $h$. Then $h$ has holonomy group $\Hol(h)\subseteq\SU(3)$. We call $(Y,J,h)$ a {\it Calabi--Yau\/ $3$-fold}.

The Levi-Civita connection $\nabla$ of $h$ preserves $J,h$, the K\"ahler form $\om$ of $h$, and a holomorphic volume form $\Om$ in $H^0(K_Y)$, which we can scale to have length $\md{\Om}=2^{3/2}$. Then at each point $y\in Y$, there is an isomorphism of complex vector spaces $T_yY\cong\C^3$ identifying $h\vert_y,\om\vert_y,\Om\vert_y$ with $h_0,\om_0,\Om_0$, where
\begin{gather}
h_0=\ms{\d z_1}+\ms{\d z_2}+\ms{\d z_3},\quad \om_0=\ts\frac{i}{2}(\d z_1\w\d\bar z_1+\d z_2\w\d\bar z_2+\d z_3\w\d\bar z_3),
\nonumber\\
\text{and}\quad\Om_0=\d z_1\w\d z_2\w\d z_3,
\label{ca2eq3}
\end{gather}
with $(z_1,z_2,z_3)$ the complex coordinates on $\C^3$.

Calabi--Yau 3-folds and $G_2$-manifolds are connected in the following way. Identify $\R^7\cong\R\t\C^3$ by $(x_1,\ldots,x_7)\cong(x_1,x_2+ix_3,x_4+ix_5,x_6+ix_7)$. Then $g_0,\vp_0,*\vp_0$ in \S\ref{ca21} are related to $h_0,\om_0,\Om_0$ in \eq{ca2eq3} by
\e
g_0=\d x_1^2+h_0,\;\> \vp_0=\d x_1\w\om_0+\Re\Om_0,\;\> *\vp_0=\ha\om_0\w\om_0-\d x_1\w\Im\Om_0.
\label{ca2eq4}
\e

Therefore, if $(Y,J,h)$ is a Calabi--Yau 3-fold with K\"ahler form $\om$ and holomorphic volume form $\Om$, if we define $X=\R\t Y$ or $X=\cS^1\t Y$, with $x$ the coordinate on $\R$ or $\cS^1=\R/\Z$, and set
\begin{equation*}
g=\d x^2+h,\;\> \vp=\d x\w\om+\Re\Om,\;\> *\vp=\ha\om\w\om-\d x\w\Im\Om,
\end{equation*}
then $(X,\vp,*\vp)$ is a torsion-free $G_2$-manifold with metric $g$. There is a strong analogy between torsion-free $G_2$-manifolds and Calabi--Yau 3-folds.

\subsection{Calibrated submanifolds}
\label{ca23}

The next definition is due to Harvey and Lawson~\cite{HaLa}.

\begin{dfn} Let $(X,g)$ be a Riemannian manifold, and $\vp$ a closed $k$-form on $X$. We call $\vp$ a {\it calibration\/} if for every $x\in X$ and $k$-dimensional subspace $V\subseteq T_xX$ we have $\bmd{\vp\vert_V}\le 1$. If $\vp$ is a calibration, we say that an oriented $k$-dimensional submanifold $N$ in $X$ is {\it calibrated with respect to\/} $\vp$ if $\vp\vert_{T_xN}=\vol_{T_xN}$ for all $x\in N$. Calibrated submanifolds are naturally oriented, and a compact calibrated submanifold $N$ is volume-minimizing in its homology class, with volume $[\vp]\cdot[N]$, so calibrated submanifolds are minimal submanifolds.
\label{ca2def1}
\end{dfn}

Calibrated geometry is a natural companion to the theory of holonomy groups. If $(X,g)$ is a Riemannian manifold with special holonomy $H\subset{\rm O}(n)$, it will have natural constant $k$-forms $\vp$ corresponding to $H$-invariant $k$-forms in $\La^k(\R^n)^*$, and if $\vp$ is rescaled appropriately it is a calibration. Thus, manifolds with special holonomy have interesting special classes of minimal submanifolds.

Let $(X,\vp,*\vp)$ be a torsion-free $G_2$-manifold, with metric $g$ and 4-form $*\vp$. Then as in Harvey and Lawson~\cite[\S IV]{HaLa}:
\begin{itemize}
\setlength{\itemsep}{0pt}
\setlength{\parsep}{0pt}
\item[(a)] $\vp$ is a calibration on $(X,g)$, and its calibrated submanifolds are called {\it associative\/ $3$-folds}.
\item[(b)] $*\vp$ is a calibration on $(X,g)$, and its calibrated submanifolds are called {\it coassociative\/ $4$-folds}. If $C$ is a 4-dimensional submanifold of $X$ then $C$ is coassociative (with some unique orientation) if and only if~$\vp\vert_C=0$.
\end{itemize}
Examples of compact associative 3-folds and coassociative 4-folds in compact 7-manifolds with holonomy $G_2$ can be found in the author~\cite[\S 12.6]{Joyc3}.

Similarly, there are three kinds of calibrated submanifolds in a Calabi--Yau 3-fold $(Y,J,h)$ with K\"ahler form $\om$ and holomorphic volume form~$\Om$:
\begin{itemize}
\setlength{\itemsep}{0pt}
\setlength{\parsep}{0pt}
\item[(A)] $J$-{\it holomorphic curves}, that is, 2-submanifolds $\Si\subset Y$ calibrated w.r.t.~$\om$.
\item[(B)] {\it Special Lagrangian\/ $3$-folds}, or {\it SL\/ $3$-folds}, with {\it phase\/} $e^{i\th}$, for $\th\in\R$, that is, 3-submanifolds $L\subset Y$ calibrated w.r.t.~$\cos\th\,\Re\Om+\sin\th\,\Im\Om$. \\
In particular, SL 3-folds with phase 1 are calibrated w.r.t.\ $\Re\Om$, and SL 3-folds with phase $i$ are calibrated w.r.t.\ $\Im\Om$. \\
When we do not specify a phase, we mean phase 1.
\item[(C)] {\it Complex surfaces}, that is, 4-submanifolds $S\subset Y$ calibrated w.r.t.~$\ha\om\w\om$.
\end{itemize}

\begin{rem} From \eq{ca2eq4}, we deduce the following relation between calibrated submanifolds in a Calabi--Yau 3-fold $Y$ (or in $Y=\C^3$), and calibrated submanifolds in the $G_2$-manifold $\R\t Y$ (or in $\R^7=\R\t\C^3$):
\begin{itemize}
\setlength{\itemsep}{0pt}
\setlength{\parsep}{0pt}
\item[(i)] If $\Si$ is a $J$-holomorphic curve in $Y$ then $\R\t\Si$ is associative 3-fold in~$\R\t Y$.
\item[(ii)] If $L$ is an SL 3-fold in $Y$ with phase 1 then $\{x\}\t L$ is an associative 3-fold in $\R\t Y$ for each~$x\in\R$.
\item[(iii)] If $L$ is an SL 3-fold in $Y$ with phase $i$ then $\R\t L$ is a coassociative 4-fold in~$\R\t Y$.
\item[(iv)] If $S$ is a complex surface in $Y$ then $\{x\}\t S$ is a coassociative 4-fold in $\R\t Y$ for each~$x\in\R$.
\end{itemize}
\label{ca2rem1}
\end{rem}

This will be important to us because a great deal is known about examples and properties of singularities of SL 3-folds, as in \cite{Joyc3,Joyc4,Joyc5,Joyc6,Joyc7,Joyc8,Joyc9,Joyc10,Joyc11,Joyc12,Joyc13,Joyc14,Joyc15,Joyc16,Joyc17,Joyc18,Joyc19,Joyc20}, and from Remark \ref{ca2rem1}(ii) we can deduce many examples of singularities of associative 3-folds. Examples of singular associative 3-folds in $\R^7$ which do not come from special Lagrangians in $\C^3$ can be found in Lotay~\cite{Lota1,Lota2,Lota3}.

\subsection{\texorpdfstring{$G_2$-instantons}{G₂-instantons}}
\label{ca24}

Let $(X,\vp,*\vp)$ be a compact, torsion-free $G_2$-manifold. As in \cite[\S 10.3]{Joyc3}, the 2-forms $\La^2T^*X$ on $X$ have a natural splitting $\La^2T^*X=\La^2_7\op\La^2_{14}$, where $\La^2_7,\La^2_{14}$ are vector subbundles of $\La^2T^*X$ with ranks 7,14, and $\La^2_{14}$ is the kernel of the vector bundle morphism $\La^2T^*X\ra \La^6T^*X$ mapping $\al\mapsto \al\w(*\vp)$. Let $G$ be a compact Lie group (we usually take $G=\SU(2)$), $\pi:P\ra X$ a principal $G$-bundle, and $A$ a connection on $P$, with curvature $F_A$. Following Donaldson and Segal \cite{DoSe}, we call $(P,A)$ a $G_2$-{\it instanton}, with {\it structure group\/} $G$, if the component of $F_A$ in $\mathop{\rm ad}(P)\ot\La^2_7$ is zero, or equivalently, if~$F_A\w(*\vp)=0$.

Write $\cM(P,*\vp)$ for the moduli space of gauge equivalence classes $[A]$ of $G_2$-instanton connections $A$ on $P$. The deformation theory of $A$, and hence the local description of $\cM(P,*\vp)$ near $[A]$, is controlled by the elliptic complex
\e
\begin{gathered}
\xymatrix@C=40pt@R=15pt{ 0 \ar[r] & \Ga^\iy(\ad{P}) \ar[rr]_(0.4){\d_A} && *+[l]{\Ga^\iy(\ad{P}\ot T^*X)} \ar[d]_{(-\w *\vp)\ci\d_A}
\\
0 & \Ga^\iy(\ad{P}\ot \La^7T^*X) \ar[l]
&& *+[l]{\Ga^\iy(\ad{P}\ot \La^6T^*X).\!\!} \ar[ll]_(0.6){\d_A}}
\end{gathered}
\label{ca2eq5}
\e
Here infinitesimal gauge transformations live in $\Ga^\iy(\ad{P})$, infinitesimal changes $\de A$ to $A$ live in $\Ga^\iy(\ad{P}\ot T^*X)$, and $F_{A+\de A}\w*\vp$ lives in $\Ga^\iy(\ad{P}\ot\La^6T^*X)$. For any connection $A'$ on $P$, as $\d_{A'}F_{A'}=0$ and $\d(*\vp)=0$ we have $\d_{A'}(F_{A'}\w*\vp)=0$, and the linearization of this equation at $A,\de A$ lies in~$\Ga^\iy(\ad{P}\ot\La^7T^*X)$.

Because the deformation theory of $G_2$-instantons comes from an elliptic complex \eq{ca2eq5}, which has index 0, the moduli spaces $\cM(P,*\vp)$ are well behaved, in the same way that moduli spaces of associative 3-folds in \S\ref{ca26} are well behaved: except at points $[A]$ with nontrivial stabilizer groups, $\cM(P,*\vp)$ should be a {\it derived manifold\/} of virtual dimension 0 in the sense of \cite{Bori,BoNo,Joyc22,Joyc23,Joyc24,Joyc25,Joyc26,Joyc27,Spiv}, and if $*\vp$ is suitably generic then $\cM(P,*\vp)$ should be a manifold of dimension 0. 

There is a topological formula for the $L^2$-norm $\nm{F_A}_{L^2}$ of the curvature of a $G_2$-instanton. When $G=\SU(2)$ this is
\e
\nm{F_A}^2_{L^2}=-4\pi^2([\vp]\cup c_2(P))\cdot[X],
\label{ca2eq6}
\e
where $c_2(P)$ is the second Chern class of $P$. We will discuss $G_2$-instantons and the Donaldson--Segal programme \cite{DoSe} further in~\S\ref{ca8}.

\subsection{\texorpdfstring{Tamed almost-$G_2$-manifolds}{Tamed almost-G₂-manifolds}}
\label{ca25}

So far we have focused on {\it torsion-free\/} $G_2$-manifolds $(X,\vp,*\vp)$, with $\d\vp=\d(*\vp)=0$. But for our purposes, these are too restrictive, for two reasons:
\begin{itemize}
\setlength{\itemsep}{0pt}
\setlength{\parsep}{0pt}
\item We want to discuss structures invariant under deformations of $\vp,*\vp$. On a compact 7-manifold $X$, torsion-free $G_2$-structures $(\vp,*\vp)$ come in finite-dimensional families as in Theorem \ref{ca2thm1}, so deformation-invariance amongst torsion-free $G_2$-structures is not a powerful statement. 

Even worse, we will want to fix the cohomology class $[\vp]\in H^3_{\rm dR}(X;\R)$, and then there are no torsion-free deformations at all. 
\item We hope that choosing $(\vp,*\vp)$ {\it generic\/} will simplify the problem (e.g. ensure all associative 3-folds $N\subset X$ are unobstructed). But this is only plausible if we choose $(\vp,*\vp)$ from an infinite-dimensional family.
\end{itemize}

The obvious answer is to relax the condition $\d\vp=0$ or $\d(*\vp)=0$ on $(X,\vp,*\vp)$, but there would be a cost to this, as the next remark explains.

\begin{rem} Here are the important consequences of allowing $\d\vp\ne 0$ or $\d(*\vp)\ne 0$ for the theories of associative 3-folds and coassociative 4-folds discussed in \S\ref{ca23}, and for $G_2$-instantons in~\S\ref{ca24}:
\begin{itemize}
\setlength{\itemsep}{0pt}
\setlength{\parsep}{0pt}
\item[(a)] If $\d\vp=0$ then a compact associative 3-fold $N\subset X$ has volume given by the topological formula, for $[\vp]\in H^3_{\rm dR}(X;\R)$ and $[N]\in H_3(X;\Z)$:
\e
\vol(N)=[\vp]\cdot[N].
\label{ca2eq7}
\e
If $\d\vp\ne 0$ then $[\vp]$ no longer makes sense.

This matters to us for two reasons. Firstly, if $\d\vp\ne 0$ then in a moduli space $\cM(\cN,\al,*\vp)$ of associative 3-folds $N$ in class $\al\in H_3(X;\Z)$, we might have a sequence $[N_i]_{i=1}^\iy$ in $\cM(\cN,\al,*\vp)$ with $\vol(N_i)\ra\iy$ as $i\ra\iy$, and then there could be no limit point $\lim_{i\ra\iy}[N_i]$ in $\cM(\cN,\al,*\vp)$. Thus, the lack of a volume bound may cause moduli spaces $\cM(\cN,\al,*\vp)$ to be noncompact (though they could also be noncompact for other reasons). Secondly, as in \eq{ca1eq3} we hope to combine invariants $GW_{\psi,\al}$ counting associatives $N$ in a formal power series weighted by $q^{\vol(N)}=q^{\ga\cdot\al}$, and this is only sensible with a topological formula for~$\vol(N)$.
\item[(b)] McLean's moduli theory for compact associative 3-folds $N$ in \S\ref{ca26} works fine if $\d\vp\ne 0\ne\d(*\vp)$. However, the linear elliptic operator $\bD:\Ga^\iy(\nu)\ra\Ga^\iy(\nu)$ need only be self-adjoint if $\d(*\vp)=0$. As in Remark \ref{ca3rem2} below, we need $\bD$ to be self-adjoint for the `canonical flag' of $N$ defined in \S\ref{ca3} to be well behaved, and this is important for our proposal in Conjecture~\ref{ca1conj1}.
\item[(c)] As in (a), if $\d(*\vp)=0$ then as in \eq{ca2eq7} a compact coassociative 4-fold $C\subset X$ has volume given by the topological formula
\e
\vol(C)=[*\vp]\cdot[C].
\label{ca2eq8}
\e 
If $\d(*\vp)\ne 0$ then $[*\vp]$ no longer makes sense, and the lack of a  volume bound could cause moduli spaces of coassociatives to become noncompact.
\item[(d)] McLean's moduli theory for compact coassociative 4-folds $C$ in \S\ref{ca26} relies on the alternative definition that $C$ is coassociative if $\vp\vert_C=0$. If $\d\vp\ne 0$ then the deformation theory of $C$ is no longer part of an elliptic complex, so coassociatives will not form well behaved moduli spaces.
\item[(e)] If $\d\vp=0$ then as in (a),(c) a $G_2$-instanton $(P,A)$ has a topological formula \eq{ca2eq6} for the $L^2$-norm of its curvature. This may be important in proving compactness of moduli spaces $\cM(P,*\vp)$.
\item[(f)] Moduli theory for $G_2$-instantons $A$ uses $F_A\w(*\vp)=0$. If $\d(*\vp)\ne 0$ then the deformation theory of $A$ is no longer part of an elliptic complex \eq{ca2eq5}, so as in (d), $G_2$-instantons will not form well behaved moduli spaces.
\end{itemize}
\label{ca2rem2}
\end{rem}

Therefore we do not want to sacrifice either condition $\d\vp=0$ or $\d(*\vp)=0$. Instead we will do something more complicated: we will work with a version of the `tamed almost-$G_2$-manifolds' introduced by Donaldson and Segal \cite[\S 3--\S 4]{DoSe}, for the same reasons as us. Our treatment using (i)--(iii) is new. 

\begin{dfn} A {\it tamed almost-$G_2$-manifold\/} or {\it TA-$G_2$-manifold\/} $(X,\vp,\psi)$ is a 7-manifold $X$ equipped with a closed positive 3-form $\vp$ and a closed positive 4-form $\psi$ satisfying a compatibility condition. As in \S\ref{ca22}, $\vp$ corresponds to a $G_2$-structure $P_\vp$ on $X$, and this induces an orientation on $X$. Using this orientation, $\psi$ corresponds to a $G_2$-structure $P_\psi$ on $X$. Write $g_\vp,g_\psi$ for the metrics induced by $P_\vp,P_\psi$. We require that the following equivalent conditions should hold:
\begin{itemize}
\setlength{\itemsep}{0pt}
\setlength{\parsep}{0pt}
\item[(i)] For all $x\in X$ and all oriented 3-planes $V\subset T_xX$ which are associative with respect to the $G_2$-structure $P_\psi$, we have $\vp\vert_V>0$.
\item[(ii)] For all $x\in X$ and all oriented 4-planes $W\subset T_xX$ which are coassociative with respect to the $G_2$-structure $P_\vp$, we have $\psi\vert_W>0$.
\item[(iii)] There do not exist $x\in X$, an oriented 3-plane $V\subset T_xX$ which is associative with respect to $P_\psi$, and an oriented 4-plane $W\subset T_xX$ which is coassociative with respect to $P_\vp$, such that $V\subset W\subset T_xX$.
\end{itemize}

To show that (i)--(iii) are equivalent, suppose (iii) does not hold, so there exist $V\subset W\subset T_xX$ as in (iii). Then $\vp\vert_W=0$ as $W$ is $\vp$-coassociative, so $\vp\vert_V=0$, and (i) does not hold. Also $V\subset W$ with $V$ $\psi$-associative and $W$ a 4-plane imply that $\psi\vert_W=0$, so (ii) does not hold. Hence (i),(ii) both imply~(iii). 

Suppose (i) does not hold. Then by connectedness, either (a) $\vp\vert_V<0$ for all $x\in X$ and $\psi$-associative $V\subset T_xX$, or (b) there exist $x\in X$ and $\psi$-associative $V\subset T_xX$ with $\vp\vert_V=0$. But for (a), by reversing the orientation used to define $P_\psi$ we would get $\vp\vert_V>0$ for all $x,V$, so that (i) holds after all. In fact (a) is impossible, as we chose $P_\vp,P_\psi$ to have the same orientation. Thus there exists a $\psi$-associative $V\subset T_xX$ with $\vp\vert_V=0$. By \cite[Th.~IV.4.6]{HaLa} there is a unique $\vp$-coassociative $W\subset T_xX$ with $V\subset W$, so (iii) does not hold. Thus (iii) implies (i). A similar argument shows that (iii) implies (ii), so (i)--(iii) are equivalent.

Observe that if $X$ is compact, then (i),(ii) are equivalent to:
\begin{itemize}
\setlength{\itemsep}{0pt}
\setlength{\parsep}{0pt}
\item[(i$)'$] There exists a constant $K>0$ such that for all $x\in X$ and all oriented 3-planes $V\subset T_xX$ which are associative with respect to $P_\psi$, we have $\vol^{g_\psi}_V\le K\vp_x\vert_V$, where $\vol^{g_\psi}_V\in\La^3V^*$ is the volume form defined using the metric $g_\psi\vert_x$ on $T_xX$ and the orientation on~$V$.
\item[(ii$)'$] There exists a constant $K'>0$ such that for all $x\in X$ and all oriented 4-planes $W\subset T_xX$ which are coassociative with respect to $P_\vp$, we have $\vol^{g_\vp}_W\le K'\psi_x\vert_W$, where $\vol^{g_\vp}_W\in\La^4W^*$ is the volume form defined using the metric $g_\vp\vert_x$ on $T_xX$ and the orientation on $W$.
\end{itemize}

Note that we can have $P_\vp\ne P_\psi$, and $P_\vp=P_\psi$ if and only if $(X,\vp,\psi)$ is a torsion-free $G_2$-manifold $(X,\vp,*\vp)$.

For $(X,\vp,\psi)$ to be a TA-$G_2$-manifold is an open condition on pairs $(\vp,\psi)$ of a closed 3-form $\vp$ and a closed 4-form $\psi$ on $X$. Thus the family of TA-$G_2$-structures on $X$ is infinite-dimensional, if it is nonempty.
\label{ca2def2}
\end{dfn}

Following \cite{DoSe}, we extend the definitions of associative 3-folds, coassociative 4-folds and $G_2$-instantons to TA-$G_2$-manifolds:

\begin{dfn} Let $(X,\vp,\psi)$ be a TA-$G_2$-manifold. Then:
\begin{itemize}
\setlength{\itemsep}{0pt}
\setlength{\parsep}{0pt}
\item[(i)] An {\it associative\/ $3$-fold\/} $N\subset X$ is a 3-submanifold $N$ in $X$ which is associative with respect to the $G_2$-structure $P_\psi$. 
\item[(ii)] A {\it coassociative\/ $3$-fold\/} $C\subset X$ is a 4-submanifold $C$ in $X$ which is associative with respect to the $G_2$-structure $P_\vp$. 
\item[(iii)] A $G_2$-{\it instanton\/} $(P,A)$ on $X$, with {\it structure group\/} $G$ for $G$ a compact Lie group, is a principal $G$-bundle $\pi:P\ra X$ and a connection $A$ on $P$ whose curvature $F_A$ satisfies $F_A\w\psi=0$. 
\end{itemize}

All the issues in Remark \ref{ca2rem2}(a)--(f) work out nicely with these definitions. For (a), if $(X,\vp,\psi)$ is a compact TA-$G_2$-manifold, so that Definition \ref{ca2def2}(i$)'$ holds for some $K>0$, and $N\subset X$ is a compact associative 3-fold, then for each $x\in N$ we have $\vol^{g_\psi}_{T_xN}\le A\vp_x\vert_V$, so integrating over $N$ yields a topological volume bound generalizing~\eq{ca2eq7}:
\e
\vol^{g_\psi}(N)\le K[\vp]\cdot[N].
\label{ca2eq9}
\e
For (b), as $\d\psi=0$ the elliptic operator $\bD$ in \S\ref{ca26} is self-adjoint. For (c), if $X$ is compact and $C\subset X$ is compact coassociative then as for \eq{ca2eq9} we 
get a topological volume bound generalizing \eq{ca2eq8}, for $K'>0$ as in Definition \ref{ca2def2}(ii$)'$:
\begin{equation*}
\vol^{g_\vp}(C)\le K'[\psi]\cdot[C].
\end{equation*}
For (d), as $\d\vp=0$, moduli spaces of coassociatives are well behaved. For (e), if $(P,A)$ is a $G_2$-instanton with group $G=\SU(2)$, as in \cite{DoSe} we can show that
\begin{equation*}
\nm{F_A}^2_{L^2}\le -K''([\vp]\cup c_2(P))\cdot[X],
\end{equation*}
generalizing \eq{ca2eq6}, for $K''>0$ depending on $(X,\vp,\psi)$ similar to $K$ in Definition \ref{ca2def2}(i$)'$. For (f), as $\d\psi=0$, moduli spaces of $G_2$-instantons are well behaved.
\label{ca2def3}
\end{dfn}

\begin{prop}{\bf(a)} Let\/ $X$ be a compact oriented\/ $7$-manifold and\/ $\psi$ a closed positive\/ $4$-form on $X$. Then $\cC_{X,\psi}:=\bigl\{\vp\in\Ga^\iy(\La^3T^*X):(X,\vp,\psi)$ is a TA-$G_2$-manifold\/$\bigr\}$ is an open convex cone in\/~$\bigl\{\vp\in\Ga^\iy(\La^3T^*X):\d\vp=0\bigr\}$. 

Hence $\cK_{X,\psi}:=\bigl\{[\vp]:\vp\in\cC_{X,\psi}\bigr\}$ is an open convex cone in~$H^3_{\rm dR}(X;\R)$.
\smallskip

\noindent{\bf(b)} Let\/ $X$ be a compact\/ $7$-manifold and\/ $\vp$ a closed positive\/ $3$-form on $X$. Then $\cC'_{X,\vp}:=\bigl\{\psi\in\Ga^\iy(\La^4T^*X):(X,\vp,\psi)$ is a TA-$G_2$-manifold\/$\bigr\}$ is an open convex cone in\/ $\bigl\{\psi\in\Ga^\iy(\La^4T^*X):\d\psi=0\bigr\}$. 

Hence $\cK'_{X,\vp}:=\bigl\{[\psi]:\psi\in\cC'_{X,\vp}\bigr\}$ is an open convex cone in $H^4_{\rm dR}(X;\R)$.
\label{ca2prop2}
\end{prop}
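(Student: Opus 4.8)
First I would use characterization (i) of Definition \ref{ca2def2} to rewrite
\[
\cC_{X,\psi}=\bigl\{\vp\in\Ga^\iy(\La^3T^*X):\d\vp=0\text{ and }\vp_x\vert_V>0\ \forall\,x\in X\text{ and all oriented }\psi\text{-associative }3\text{-planes }V\subset T_xX\bigr\},
\]
provisionally dropping the demand that $\vp$ be a positive $3$-form inducing the correct orientation --- the key lemma below shows this is automatic. Fixing $x$ and identifying $(T_xX,P_\psi\vert_x)\cong(\R^7,*\vp_0)$, the oriented $\psi$-associative $3$-planes become the compact $G_2$-orbit $\cong G_2/\SO(4)$ of $V_0=\langle e_1,e_2,e_3\rangle$ with its standard orientation. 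Let $\cA:=\{\al\in\La^3(\R^7)^*:\al\vert_V>0$ for every oriented associative $3$-plane $V\subset\R^7\}$, and let $\cP^+\subset\La^3(\R^7)^*$ be the positive $3$-forms inducing the standard orientation --- a single connected open $\GL^+(7,\R)$-orbit, and a cone. Immediately: $\cA$ is an intersection of open half-spaces indexed by a compact set, hence an open convex cone; it is nonempty since $\vp_0\in\cA$ (as $\vp_0$ is a calibration, $\vp_0\vert_V=\vol_V>0$ on oriented associative planes); and the analogous statements hold fibrewise for the cones $\cA_x\subset\La^3T_x^*X$ obtained by transporting $\cA$ along $P_\psi$.

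The one substantial ingredient is the inclusion $\cA\subseteq\cP^+$, which is exactly what makes positivity of $\vp$ automatic from the linear inequalities. I would prove it in three steps: (1) $\cA$ is convex, hence connected; (2) every $\al\in\cA$ is a \emph{stable} $3$-form, i.e.\ lies in one of the finitely many \emph{open} $\GL(7,\R)$-orbits on $\La^3(\R^7)^*$ classified by Bryant \cite{Brya} --- equivalently, no $3$-form in a non-open orbit is strictly positive on every oriented associative $3$-plane; (3) the open orbits are pairwise disjoint open sets, so the connected set $\cA$ lies in one of them, and it must be $\cP^+$ since $\vp_0\in\cA\cap\cP^+$. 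Step (2) is the main obstacle and carries essentially all the content: one runs through Bryant's list of non-open orbits and checks that each has a representative vanishing on, or negative on, some oriented associative plane --- e.g.\ a decomposable form $e^1\w e^2\w e^3$, though positive on one associative plane, vanishes on the associative plane $\langle e_1,e_4,e_5\rangle$ and so is not in $\cA$. (Alternatively one can try to deduce $B_\al\succ0$ directly from $\al\in\cA$, where $B_\al$ is Bryant's quadratic form with $B_\al(u,u)\,\vol=\ts\frac16(\io_u\al)^2\w\al$, whose definiteness with the correct orientation characterizes $\cP^+$.) Either route requires genuine $G_2$ linear algebra.

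Granting $\cA\subseteq\cP^+$, part (a) is routine: now $\cC_{X,\psi}=\{\vp:\d\vp=0,\ \vp_x\in\cA_x\ \forall x\}$, so the cone and convexity properties descend fibrewise from the $\cA_x$ (a positive-scalar or convex combination of such $\vp$ still lies in each $\cA_x$ and is still closed, hence still positive by the lemma). For openness: the bundle $Z\to X$ of pairs $(x,V)$ with $V$ an oriented $\psi$-associative $3$-plane in $T_xX$ is compact (fibre $G_2/\SO(4)$), and $(\vp,(x,V))\mapsto\vp_x\vert_V/\vol^{g_\psi}_V$ is continuous and linear in $\vp$; if $\vp_0\in\cC_{X,\psi}$ it is $\ge\ep>0$ on $Z$ for some $\ep>0$, hence remains positive for all $\vp$ in a $C^0$- (a fortiori $C^\iy$-) neighbourhood of $\vp_0$ inside $\{\d\vp=0\}$. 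Finally, the de Rham class map $\pi:\{\vp\in\Ga^\iy(\La^3T^*X):\d\vp=0\}\to H^3_{\rm dR}(X;\R)$ is a continuous surjective linear map of Fr\'echet spaces, hence open by the open mapping theorem (equivalently, it has a continuous linear section given by harmonic representatives for an auxiliary metric), so $\cK_{X,\psi}=\pi(\cC_{X,\psi})$ is open, and as the linear image of a convex cone it is a convex cone.

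Part (b) reduces to (a) via the Hodge star. With characterization (ii), $\cC'_{X,\vp}=\{\psi:\d\psi=0,\ \psi_x\vert_W>0\ \forall x\text{ and all }\vp\text{-coassociative }4\text{-planes }W\subset T_xX\}$. Applying $*_\vp$, the Hodge star of the metric $g_\vp$ and $\vp$-orientation, identifies oriented $\vp$-coassociative $4$-planes $W$ with oriented $\vp$-associative $3$-planes $W^\perp$ and matches the evaluations, so the conditions $\psi_x\vert_W>0$ become precisely $(*_\vp\psi)_x\in\cA^{P_\vp}_x$, the cone of the key lemma built from the $G_2$-structure $P_\vp$; by the lemma $(*_\vp\psi)_x$ is then a positive $3$-form with the $\vp$-orientation, so $\psi_x=*_\vp((*_\vp\psi)_x)$ is a positive $4$-form. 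Hence $\cC'_{X,\vp}=\{\psi:\d\psi=0,\ \psi_x\in(*_\vp)^{-1}(\cA^{P_\vp}_x)\ \forall x\}$ with $(*_\vp)^{-1}(\cA^{P_\vp}_x)$ an open convex cone in $\La^4T_x^*X$ varying continuously in $x$ ($\vp$ being fixed), and the compactness and quotient-map arguments of (a) apply verbatim to show $\cC'_{X,\vp}$ and $\cK'_{X,\vp}$ are open convex cones.
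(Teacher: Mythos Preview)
Your proof is essentially correct and follows the same skeleton as the paper's, but is far more detailed: the paper simply asserts the key step (``from this we can deduce that $\vp$ is positive'') and moves on, while you correctly isolate this as the pointwise lemma $\cA\subseteq\cP^+$ and attempt to prove it. Your compactness argument for openness of $\cC_{X,\psi}$, the open-mapping/harmonic-section argument for openness of $\cK_{X,\psi}$, and the Hodge-star reduction of (b) to (a) are all fine and fill in details the paper elides.

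There is, however, a real gap in your \emph{primary} route to the key lemma. Your step (2) proposes to show that no unstable $3$-form lies in $\cA$ by exhibiting, for each non-open $\GL(7,\R)$-orbit, one representative that vanishes on some associative plane. But $\cA$ is only $G_2$-invariant, not $\GL(7,\R)$-invariant, so excluding a single representative from $\cA$ does not exclude the rest of its $\GL(7,\R)$-orbit. (Your own example illustrates this: $e^1\w e^2\w e^3\notin\cA$, but many $\GL(7)$-translates of it might behave quite differently on the \emph{fixed} family of $\psi$-associative planes.) So this approach, as written, does not prove~(2).

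Your \emph{alternative} via Bryant's form $B_\al$ does work, and cleanly. Here is the missing computation. Fix $u\in\R^7\sm\{0\}$; by $G_2$-transitivity on $S^6$ take $u=e_1$. Write $\al=e^1\w\ga+\de$ with $\iota_{e_1}\ga=\iota_{e_1}\de=0$, so $\iota_{e_1}\al=\ga\in\La^2(e_1^\perp)^*$ and $(\iota_{e_1}\al)^2\w\al=e^1\w\ga^3$. The associative $3$-planes containing $e_1$ are exactly $V=\langle e_1,v,J_\psi v\rangle$ for $v\perp e_1$, where $J_\psi$ is the complex structure on $e_1^\perp\cong\C^3$ induced by $P_\psi$; for such $V$ one has $\al\vert_V=\ga(v,J_\psi v)\,e^1\w v^\flat\w(J_\psi v)^\flat$. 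Thus $\al\in\cA$ forces $\ga(v,J_\psi v)>0$ for all $v\ne 0$, i.e.\ $\ga$ is a positive $(1,1)$-form for $J_\psi$, hence $\ga^3>0$ in the standard orientation on $e_1^\perp$, hence $B_\al(e_1,e_1)=\tfrac{1}{6}e^1\w\ga^3>0$. By $G_2$-equivariance $B_\al$ is positive definite with the standard orientation, which is exactly the characterization of $\cP^+$. This simultaneously handles your step~(2) (non-open orbits have degenerate $B_\al$) and step~(3) (the split $\ti G_2$ orbit has indefinite $B_\al$), so the detour through orbit classification is unnecessary.

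One small caution: your representation-theoretic instinct is right that something subtle happens here---in fact $\La^3_7$ lies in the kernel of the evaluation map to functions on associative planes (since $(\R^7)^{SO(4)}=0$), so $\vp_0+c\be\in\cA$ for all $c$ and all $\be\in\La^3_7$. The lemma is still true (one can write down $T\in\GL^+(7)$ with $T^*\vp_0=\vp_0+c\be$ explicitly), but this shows why the argument genuinely needs the $B_\al$ computation rather than a soft orbit count.
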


\begin{proof} Suppose $\vp_1,\vp_2\in\cC_{X,\psi}$, and let $t_1,t_2\ge 0$ with $(t_1,t_2)\ne(0,0)$. Consider the 3-form $\vp=t_1\vp_1+t_2\vp_2$ on $X$. It is closed as $\vp_1,\vp_2$ are, and it satisfies Definition \ref{ca2def2}(i) as $\vp_1,\vp_2$ do, and from this we can deduce that $\vp$ is positive. Therefore $(X,\vp,\psi)$ is also a TA-$G_2$-manifold, so $\vp\in\cC_{X,\psi}$, and $\cC_{X,\psi}$ is a convex cone in $\bigl\{\vp\in\Ga^\iy(\La^3T^*X):\d\vp=0\bigr\}$. Openness holds as Definition \ref{ca2def2}(i) is an open condition on $\vp$, proving (a). Part (b) is similar.
\end{proof}

\begin{dfn} Let $X$ be a 7-manifold. A closed positive 3-form $\vp$ on $X$ will be called {\it good\/} if there exists a 4-form $\psi$ on $X$ with $(X,\vp,\psi)$ a TA-$G_2$-manifold. 

Similarly, a closed positive 4-form $\psi$ on $X$ will be called {\it good\/} if there exists a 3-form $\vp$ on $X$ with $(X,\vp,\psi)$ a TA-$G_2$-manifold. For compact $X$, to be good is an open condition on closed 3- and 4-forms~$\vp,\psi$.
\label{ca2def4}
\end{dfn}

\begin{rem} We can now extend our analogy between Calabi--Yau 3-folds $(Y,J,h)$ and $G_2$-manifolds $(X,\vp,*\vp)$, adding the lines:
\begin{align*}
&\text{Symplectic form $\om$ on $Y$} &&\!\!\!\!\leftrightarrow\!\!\! &&\text{Good 3-form $\vp$ on $X$} \\
&\text{(Almost) complex structure $J$ on $Y$} &&\!\!\!\!\leftrightarrow\!\!\! &&\text{Good 4-form $\psi$ on $X$} \\
&\begin{subarray}{l}\ts\text{Symplectic manifold $(Y,\om)$ with} \\
\ts\text{compatible almost complex structure $J$}\end{subarray} &&\!\!\!\!\leftrightarrow\!\!\! &&\text{TA-$G_2$-manifold $(X,\vp,\psi)$.}\!\!\!\!{} 
\end{align*}

Then Proposition \ref{ca2prop2}(a) is an analogue of the fact that K\"ahler forms $\om$ on a fixed complex manifold $(Y,J)$ form an open convex cone in the closed real (1,1)-forms on $Y$, and $\cK_{X,\psi}$ is an analogue of the K\"ahler cone of $(Y,J)$. Also Proposition \ref{ca2prop2}(b) is analogous to the fact that the family of almost complex structures $J$ compatible with a fixed symplectic form $\om$ on $Y$ form an infinite-dimensional contractible space.

Suppose we can show that some structure we define for TA-$G_2$-manifolds $(X,\vp,\psi)$, e.g. $G_2$ quantum cohomology in \S\ref{ca76}, is unchanged under deformations of $(X,\vp,\psi)$ fixing $\vp$. If so, this structure depends only on $X$ and the good 3-form $\vp$, as Proposition \ref{ca2prop2}(b) shows that the family of $\psi$ compatible with $\vp$ is connected. This is the analogue of the Gromov--Witten invariants, Lagrangian Floer cohomology, etc.\ of a symplectic manifold $(Y,\om)$ being independent of almost complex structure $J$. 

In fact our theories will manifestly depend only on $\psi$ and the cohomology class $[\vp]\in H^3(X;\R)$, so if they are independent of $\psi$ up to deformation, then they depend only on $(X,\vp)$ up to deformations fixing~$[\vp]$.
\label{ca2rem3}	
\end{rem}

\subsection{Moduli spaces of associative 3-folds}
\label{ca26}

Much of this paper concerns moduli spaces of associative 3-folds $\cM(\cN,\al,\psi)$ in a TA-$G_2$-manifold $(X,\vp,\psi)$. We will use the following notation.

\begin{dfn} Consider compact, oriented 3-manifolds $N$. Write $[N]_\cD$ or $\cN$ for the equivalence class of $N$ under the equivalence relation $N\sim N'$ if there exists an orientation-preserving diffeomorphism $\de:N\ra N'$. We call $[N]_\cD$ an {\it oriented diffeomorphism class}. Write $\cD$ for the set of all oriented diffeomorphism classes, and $\cDHS\subset\cD$ for the subset of $[N]_\cD$ with $N$ a $\Q$-{\it homology sphere}, that is, $b^1(N)=b^2(N)=0$, which is equivalent to $H_1(N;\Z)$ being finite.

Let $(X,\vp,\psi)$ be a TA-$G_2$-manifold. For each $\cN\in\cD$ and $\al\in H_3(X;\Z)$, we write $\cM(\cN,\al,\psi)$ for the {\it moduli space of immersed associative\/ $3$-folds\/} $i:N\ra X$ in $(X,\vp,\psi)$ which have oriented diffeomorphism type $\cN$ and homology class $\al$. In more detail, consider pairs $(N,i)$, where:
\begin{itemize}
\setlength{\itemsep}{0pt}
\setlength{\parsep}{0pt}
\item $N$ is a compact, oriented 3-manifold in oriented diffeomorphism class $\cN$;
\item $i:N\ra X$ is an immersed associative 3-fold in $(X,\vp,\psi)$;
\item $i^*(\vp)$ is a positive 3-form on $N$ with its given orientation; and 
\item $i_*([N])=\al\in H_3(X;\Z)$.
\end{itemize} 
Two such pairs $(N,i),(N',i')$ are equivalent, written $(N,i)\approx(N',i')$, if there exists an orientation-preserving diffeomorphism $\de:N\ra N'$ with $i=i'\ci\de$. We write $[N,i]$ for the $\approx$-equivalence class of~$(N,i)$.

Then just as a set, $\cM(\cN,\al,\psi)$ is the set of all such $[N,i]$. We make $\cM(\cN,\al,\psi)$ into a topological space by choosing $N\in\cN$, and writing
\begin{align*}
\cM(\cN,\al,\psi)\cong\bigl\{i\in \Map_{C^\iy}(N,X):\text{$i$ is an associative immersion,}\\ 
\text{$i^*(\vp)$ is positive, $i_*([N])=\al\in H_3(X;\Z)$}\bigr\}\big/\Diff_+(N),
\end{align*}
with $\Diff_+(N)$ the group of orientation-preserving diffeomorphisms $\de:N\ra N$ acting by $i\mapsto i\ci\de$. Then we give $\cM(\cN,\al,\psi)$ the quotient-subspace topology coming from the $C^\iy$-topology on $\Map_{C^\iy}(N,X)$. We write $\cM(\cN,\al,\psi)_\emb\subseteq\cM(\cN,\al,\psi)$ for the open subset of $[N,i]$ with $i:N\hookra X$ an embedding.

For each $[N,i]\in\cM(\cN,\al,\psi)$ we define the {\it isotropy group\/} $\Iso([N,i])$ to be the subgroup $\de\in \Diff_+(N)$ with $i\ci\de=i$. Then $\Iso([N,i])$ is finite, as $N$ is compact and $i$ an immersion, and $\Iso([N,i])=\{1\}$ if $[N,i]\in \cM(\cN,\al,\psi)_\emb$.

We use the notation $\cM(\cN,\al,\psi)$, omitting $\vp$, since as in Definition \ref{ca2def3} the notion of associative 3-fold in $(X,\vp,\psi)$ depends only on $X,\psi$, not on~$\vp$.

Now suppose $(X,\vp_t,\psi_t):t\in\cF$ is a smooth family of TA-$G_2$-manifolds over a base $\cF$ which is a finite-dimensional manifold, or manifold with boundary. Then we write $\cM(\cN,\al,\psi_t:t\in\cF)$ for the moduli space of pairs
\begin{equation*}
\cM(\cN,\al,\psi_t:t\in\cF)=\bigl\{(t,[N,i]):t\in\cF,\;\> [N,i]\in \cM(\cN,\al,\psi_t)\bigr\},
\end{equation*}
with topology induced from that on $\cF\t\Map_{C^\iy}(N,X)$ as above.
\label{ca2def5}
\end{dfn}

We want the moduli spaces $\cM(\cN,\al,\psi)$, $\cM(\cN,\al,\psi_t:t\in\cF)$ to be not just topological spaces, but (in good cases) manifolds or orbifolds, preferably compact and oriented, and (in general) derived manifold or derived orbifolds. The deformation theory of compact associative 3-folds was studied by McLean \cite[\S 5]{McLe}. He considered compact, embedded associative 3-folds in torsion-free $G_2$-manifolds, and showed that their moduli space is locally the solutions of a nonlinear elliptic p.d.e.\ with linearization the twisted Dirac operator $\bD$ below. Our theorem follows from and extends McLean's work using standard techniques.

\begin{thm}[McLean {\cite[\S 5]{McLe},} extended] Suppose\/ $(X,\vp,\psi)$ is a TA-$G_2$-manifold, and\/ $i:N\ra X$ be a compact, immersed associative $3$-fold, with $i_*([N])=\al\in H_3(X;\Z)$ and\/ $[N]=\cN\in\cD,$ so that\/~$[N,i]\in \cM(\cN,\al,\psi)$.

Write $g$ for the Riemannian metric on $X$ from the $G_2$-structure associated to $\psi,$ and\/ $\nu\ra N$ for the normal bundle of\/ $N$ in $X,$ a rank\/ $4$ vector bundle, and\/ $\nabla^\nu$ for the connection on $\nu$ induced by the Levi-Civita connection of\/ $g$. Then there is a natural first-order linear elliptic operator $\bD:\Ga^\iy(\nu)\ra\Ga^\iy(\nu)$ of index $0,$ a twisted Dirac operator, which is characterized by the equation  
\e
\bigl\langle\bD v,w\bigr\rangle_{L^2}=\int_N\psi_{a_1a_2[b_1b_2}(\nabla^\nu_{b_3]}v^{a_1})w^{a_2}
\label{ca2eq10}
\e
for all\/ $v,w\in\Ga^\iy(\nu)$. Here the $L^2$-inner product on $\Ga^\iy(\nu)$ is defined using $g,$ and we use the index notation for tensors, contracting together $\psi,v,\nabla^\nu w$ to get a $3$-form, which we integrate over the oriented\/ $3$-manifold\/~$N$.

Write $\cT_N=\Ker\bD$ and\/ $\O_N=\Coker\bD,$ as finite-dimensional real vector spaces with\/ $\dim\cT_N=\dim\O_N$. Then the finite group\/ $\Ga:=\Iso([N,i])$ from Definition\/ {\rm\ref{ca2def5}} acts on $\cT_N,\O_N$. There exist a $\Ga$-invariant open neighbourhood\/ $V$ of\/ $0$ in $\cT_N,$ a $\Ga$-equivariant smooth map $\Th:V\ra\O_N$ with\/ $\Th(0)=\d\Th(0)=0,$ an open neighbourhood\/ $W$ of\/ $[N,i]$ in $\cM(\cN,\al,\psi),$ and a homeomorphism $\Psi:\Th^{-1}(0)/\Ga\ra W$ with\/~$\Psi(0)=[N,i]$. 

We call\/ $\cT_N$ the \begin{bfseries}Zariski tangent space\end{bfseries} and\/ $\O_N$ the \begin{bfseries}obstruction space\end{bfseries} to $\cM(\cN,\al,\psi)$ at $[N,i]$. We call\/ $N$ \begin{bfseries}unobstructed\end{bfseries} if\/~$\O_N=0$.
\label{ca2thm2}
\end{thm}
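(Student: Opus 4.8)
The plan is to derive the result from McLean's original analysis of compact associative 3-folds in torsion-free $G_2$-manifolds by checking that nothing in his argument actually used $\psi = *\vp$, beyond the two facts (i) $\d\psi = 0$ and (ii) that $\psi$ is a positive 4-form, so that the induced metric $g = g_\psi$ and the associative condition are the standard pointwise-$G_2$ ones. First I would set up the deformation problem: fix $N\in\cN$ and a representative $i:N\ra X$, and parametrize nearby immersions by sections $v$ of the normal bundle $\nu\ra X$ via the exponential map $\exp_v:N\ra X$ of the metric $g = g_\psi$ (or any tubular-neighbourhood identification). The condition that $\exp_v(N)$ be associative with respect to $P_\psi$ is, by Harvey--Lawson, the vanishing of a certain $\nu$-valued 3-form built from $\psi$; restricting the associator-type map $\chi_\psi$ to the graph of $v$ and pulling back gives a smooth nonlinear map $F_\psi:\Ga^\iy(\nu)\supseteq V'\ra\Ga^\iy(\nu\ot\La^3T^*N)\cong\Ga^\iy(\nu)$ (using the orientation and volume form of $N$) with $F_\psi(0)=0$, whose zero set near $0$ is exactly the associative deformations of $N$.

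Next I would identify the linearization $\d F_\psi(0)$ with the operator $\bD$ characterized by \eqref{ca2eq10}. This is McLean's computation: differentiating $F_\psi$ at $v=0$ produces a first-order operator on $\Ga^\iy(\nu)$ whose symbol is that of a Dirac operator (because, under the $G_2$-identification, $\nu|_N$ is the spinor bundle of $N$ twisted by a line bundle, and the associative condition linearizes to the twisted Dirac equation), and pairing against $w$ and integrating gives precisely the right-hand side of \eqref{ca2eq10} — here one uses $\d\psi=0$ only to see that the resulting operator is \emph{self-adjoint}, but it is first-order elliptic of index $0$ regardless (the index is $0$ by the cohomological vanishing theorem / Atiyah--Singer, exactly as in McLean). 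I would remark that \eqref{ca2eq10} determines $\bD$ uniquely since the $L^2$ pairing is nondegenerate, and that the index-$0$ property gives $\dim\Ker\bD=\dim\Coker\bD$, so $\cT_N$ and $\O_N$ have equal finite dimension.

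Then the local structure of $\cM(\cN,\al,\psi)$ near $[N,i]$ follows by the standard Kuranishi/Lyapunov--Schmidt reduction for a smooth Fredholm map between Banach-space completions (say $C^{k+1,\gamma}$ and $C^{k,\gamma}$ sections): split $\Ga^\iy(\nu) = \cT_N\op(\Ker\bD)^\perp$ and the target as $\O_N\op\Im\bD$, apply the implicit function theorem to the $\Im\bD$-component to solve for the $(\Ker\bD)^\perp$-part as a smooth function of $v_0\in V\subset\cT_N$, and let $\Th:V\ra\O_N$ be the remaining $\O_N$-component of $F_\psi$; elliptic regularity upgrades solutions to smooth sections and makes the constructed map into $\cM$ a homeomorphism onto an open neighbourhood $W$, with $\Th(0)=\d\Th(0)=0$ by construction. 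Finally, the finite group $\Ga=\Iso([N,i])$ acts on everything by pullback of sections, commutes with $\bD$ (the construction is $\Ga$-natural once one chooses $\Ga$-invariant metrics and cutoffs on $N$, which is possible as $\Ga$ is finite), hence acts on $\cT_N,\O_N$ and one can choose $V,\Th$ to be $\Ga$-equivariant by averaging; passing to the quotient $\Th^{-1}(0)/\Ga$ gives $W\subset\cM(\cN,\al,\psi)$, recording that immersions rather than embeddings are handled by the same argument since the normal bundle of an immersion is still well defined.

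The main obstacle is purely bookkeeping: making the $\Ga$-equivariance genuinely functorial (choosing the tubular-neighbourhood identification, Banach norms, and the splitting of the target all $\Ga$-equivariantly) and verifying carefully that McLean's symbol computation and index-$0$ statement survive verbatim when $\psi\ne *\vp$ — in particular that positivity of $\psi$ alone suffices to run his identification of $\nu|_N$ with a twisted spin bundle. None of this is conceptually hard, but it is the only place where one must be slightly careful that the torsion-free hypothesis was not secretly used; everything else is the textbook elliptic-operator Kuranishi package.
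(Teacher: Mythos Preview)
Your proposal is correct and matches the paper's approach exactly: the paper gives no detailed proof, stating only that the result ``follows from and extends McLean's work using standard techniques,'' and then remarks separately that the proof ``does not need $\psi$ closed, and does not use $\vp$ at all'' --- precisely the point you make about $\d\psi=0$ being needed only for self-adjointness (which the paper records as the subsequent Lemma). Your outline is a faithful and somewhat more explicit unpacking of what the paper leaves implicit.
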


The proof of Theorem \ref{ca2thm2} does not need $\psi$ closed, and does not use $\vp$ at all. However, if $v,w\in\Ga^\iy(\nu)$ then by Stokes' Theorem and \eq{ca2eq10} we have
\begin{align*}
0&=\int_N\d[\psi_{a_1a_2b_1b_2}v^{a_1}w^{a_2}\bigr]\\
&=\int_N\bigl[\d\psi_{a_1a_2b_1b_2b_3}v^{a_1}w^{a_2}
+\psi_{a_1a_2[b_1b_2}\nabla_{b_3]}^\nu v^{a_1}w^{a_2}+\psi_{a_1a_2[b_1b_2}v^{a_1}\nabla^\nu_{b_3]}w^{a_2}\bigr]\\
&=\int_N\bigl[\d\psi_{a_1a_2b_1b_2b_3}v^{a_1}w^{a_2}\bigr]+\bigl\langle\bD v,w\bigr\rangle_{L^2}-\bigl\langle v,\bD w\bigr\rangle_{L^2}.
\end{align*}
Hence if $\d\psi=0$ we have $\langle\bD v,w\rangle_{L^2}=\langle v,\bD w\rangle_{L^2}$, giving:

\begin{lem} In Theorem\/ {\rm\ref{ca2thm2},} if\/ $\d\psi=0$ (which is included in the definition of TA-$G_2$-manifold\/ $(X,\vp,\psi)$) then $\bD$ is a \begin{bfseries}self-adjoint\end{bfseries} linear operator.
\label{ca2lem1}	
\end{lem}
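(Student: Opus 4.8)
The statement to prove is Lemma~\ref{ca2lem1}: if $\d\psi=0$, then the twisted Dirac operator $\bD$ from Theorem~\ref{ca2thm2} is self-adjoint.

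\medskip

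The plan is essentially to read off the result from the Stokes' Theorem computation that immediately precedes the statement in the excerpt, so the proof is very short. First I would recall that by the characterizing equation \eq{ca2eq10}, for all $v,w\in\Ga^\iy(\nu)$ we have $\langle\bD v,w\rangle_{L^2}=\int_N\psi_{a_1a_2[b_1b_2}(\nabla^\nu_{b_3]}v^{a_1})w^{a_2}$, and symmetrically $\langle v,\bD w\rangle_{L^2}=\int_N\psi_{a_1a_2[b_1b_2}v^{a_1}(\nabla^\nu_{b_3]}w^{a_2})$. Next I would apply Stokes' Theorem to the $2$-form-valued contraction $\psi_{a_1a_2b_1b_2}v^{a_1}w^{a_2}$ thought of as a $2$-form on $N$ (using that $\nu$ carries the metric connection $\nabla^\nu$, so that the Leibniz rule holds and the pointwise pairing is compatible with $\nabla^\nu$): since $N$ is compact without boundary, $\int_N\d\bigl[\psi_{a_1a_2b_1b_2}v^{a_1}w^{a_2}\bigr]=0$. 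Expanding the exterior derivative by the Leibniz rule produces exactly three terms: one involving $\d\psi$, one equal to $\langle\bD v,w\rangle_{L^2}$, and one equal to $\langle v,\bD w\rangle_{L^2}$. Imposing $\d\psi=0$ kills the first term, leaving $\langle\bD v,w\rangle_{L^2}-\langle v,\bD w\rangle_{L^2}=0$, i.e. $\bD$ is formally self-adjoint; combined with ellipticity and index~$0$ from Theorem~\ref{ca2thm2} (so $\bD$ is an unbounded self-adjoint operator with the usual Sobolev-space domain), this gives the claim.

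\medskip

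There is no real obstacle here, since the computation has effectively already been displayed before the Lemma; the only points requiring a word of care are (i) checking that the contraction $\psi_{a_1a_2b_1b_2}v^{a_1}w^{a_2}$ genuinely defines a smooth $2$-form on $N$ (one restricts $\psi$ to $N$ and uses the splitting $TX|_N = TN\oplus\nu$, inserting the normal vector fields $v,w$ into two slots and tangent vectors into the remaining two), and (ii) justifying that $\d$ of this $2$-form distributes as in the displayed three-line calculation — this uses that $\nabla^\nu$ is induced from the Levi-Civita connection and hence is metric, so differentiating the pointwise inner-product pairings is legitimate, and that the tangential part of $\nabla\psi$ contributing to $\d(\psi|_N)$ is captured by the $\d\psi$ term together with the two derivative-of-$v$, derivative-of-$w$ terms. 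Since the excerpt performs exactly this expansion, I would simply cite it. The remaining remark, that formal self-adjointness of a first-order elliptic operator of index $0$ with the standard $L^2$ domain yields genuine self-adjointness, is standard elliptic theory and needs no further comment.
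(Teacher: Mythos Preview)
Your proposal is correct and follows exactly the paper's own argument: the computation you describe is precisely the Stokes' Theorem expansion displayed immediately before the Lemma, yielding $0=\int_N(\d\psi)_{a_1a_2b_1b_2b_3}v^{a_1}w^{a_2}+\langle\bD v,w\rangle_{L^2}-\langle v,\bD w\rangle_{L^2}$, from which $\d\psi=0$ gives self-adjointness. Your added remark distinguishing formal from genuine self-adjointness is a harmless elaboration the paper does not make explicit.
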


In \S\ref{ca3} we want $\bD$ to be self-adjoint to define `flags' of unobstructed associative 3-folds, and this is one reason we take $\psi$ closed in TA-$G_2$-manifolds~$(X,\vp,\psi)$.

Derived Differential Geometry is the study of `derived manifolds' and `derived orbifolds'. Different versions of derived manifolds are defined by Spivak \cite{Spiv}, Borisov--Noel \cite{Bori,BoNo} and the author \cite{Joyc22,Joyc23,Joyc24,Joyc25,Joyc26,Joyc27}. The author gives two equivalent notions of derived manifolds and orbifolds: {\it d-manifolds\/} and {\it d-orbifolds\/} \cite{Joyc22,Joyc23,Joyc24}, and {\it m-Kuranishi spaces\/} and {\it Kuranishi spaces\/} \cite{Joyc25,Joyc26,Joyc27}, which are an improved version of Fukaya--Oh--Ohta--Ono's Kuranishi spaces~\cite{FOOO,FuOn}.

Many moduli spaces in differential geometry are known to be derived manifolds or derived orbifolds \cite{Joyc24}. Theorem \ref{ca2thm2} implies that $\cM(\cN,\al,\psi)$ locally has the structure of a derived orbifold/Kuranishi space, since $(V,\O_N,\Ga,\Th,\Psi)$ is a Kuranishi neighbourhood on $\cM(\cN,\al,\psi)$. The author expects to prove the following conjecture in the next few years, as part of a larger project.

\begin{conj} In Definition\/ {\rm\ref{ca2def5}} we can give $\cM(\cN,\al,\psi)$  the structure of a \begin{bfseries}d-orbifold\end{bfseries} in the sense of\/ {\rm\cite{Joyc22,Joyc23,Joyc24},} or a \begin{bfseries}Kuranishi space\end{bfseries} in the sense of\/ {\rm\cite{Joyc25,Joyc26,Joyc27},} of virtual dimension\/ $0,$ canonical up to equivalence in the $2$-categories\/ $\dOrb,\Kur$. The open subset $\cM(\cN,\al,\psi)_\emb\subseteq\cM(\cN,\al,\psi)$ of embedded associatives becomes a \begin{bfseries}d-manifold\end{bfseries} or \begin{bfseries}m-Kuranishi space\end{bfseries}. Similarly, we can make $\cM(\cN,\al,\psi_t:t\in\cF)$ into a d-orbifold or Kuranishi space, with virtual dimension $\dim\cF,$ and with a $1$-morphism $\pi:\cM(\cN,\al,\psi_t:t\in\cF)\ra\cF$.
\label{ca2conj1}	
\end{conj}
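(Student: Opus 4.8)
The strategy is to realise $\cM(\cN,\al,\psi)$ as the zero locus of a Fredholm section of a Banach vector bundle, reduce to finite dimensions using Theorem \ref{ca2thm2}, and then invoke (an associative-$3$-fold instance of) the general machinery of \cite{Joyc22,Joyc23,Joyc24,Joyc25,Joyc26,Joyc27} which equips moduli spaces of solutions of elliptic problems with canonical derived orbifold structures. Fix $N\in\cN$ and a regularity class (suitable Sobolev or H\"older completions). Let $\cB$ be the Banach manifold of immersions $i\in\Map(N,X)$ lying in the connected component with $i_*([N])=\al$ and with $i^*(\vp)$ positive (an open condition, so this just restricts to an open subset), and let $\cE\ra\cB$ be the Banach vector bundle whose fibre at $i$ records the ``associative defect'', i.e.\ the projection of the tangent planes $\d i(T_xN)$ onto the non-associative part, as a section of an appropriate bundle over $N$. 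The group $\Diff_+(N)$ of orientation-preserving diffeomorphisms (of matching regularity) acts on $\cB$ and $\cE$ equivariantly, with finite stabilisers equal to the isotropy groups $\Iso([N,i])$. The associative condition reads $s(i)=0$ for a smooth $\Diff_+(N)$-equivariant section $s$ of $\cE$, and McLean's analysis, as extended in Theorem \ref{ca2thm2}, shows $s$ is Fredholm with linearisation conjugate to the twisted Dirac operator $\bD$, hence of index $0$, so $\Ker$ and $\Coker$ are finite-dimensional of equal dimension. (Note self-adjointness of $\bD$, Lemma \ref{ca2lem1}, is not needed here; only $\ind\bD=0$.)

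Near each $[N,i]\in\cM(\cN,\al,\psi)$, Theorem \ref{ca2thm2} already produces the desired local model: choosing a finite-dimensional subspace of sections of $\cE$ surjecting onto $\Coker\bD=\O_N$, applying the implicit function theorem to the complementary infinite-dimensional equations, and quotienting by $\Ga=\Iso([N,i])$, one obtains the Kuranishi neighbourhood $(V,\O_N,\Ga,\Th,\Psi)$ with $\Th(0)=\d\Th(0)=0$. Since $\dim\cT_N=\dim\O_N$ these have virtual dimension $0$, and they are precisely the local charts for objects of $\dOrb$ and $\Kur$. On the open subset $\cM(\cN,\al,\psi)_\emb$ all isotropy groups are trivial, so one may take $\Ga=\{1\}$ and the charts become m-Kuranishi / d-manifold charts.

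The substantive step is to glue these charts into a single global object, canonical up to equivalence. Here I would adapt the existence-and-uniqueness theorems of \cite{Joyc24,Joyc27} for derived structures on elliptic moduli spaces. The key inputs are: (i) elliptic regularity, which forces all solutions to be smooth, so the $C^\iy$-topology of Definition \ref{ca2def5} coincides with the topology induced by the Kuranishi charts in any completion; (ii) any two charts at a point, and charts at different points with overlapping images, are related by coordinate changes, which can be chosen compatibly on triple overlaps up to coherent $2$-isomorphism by exploiting the freedom in the choice of finite-dimensional obstruction spaces and of the cut-off functions making them global sections of $\cE$; (iii) this data assembles into an object of $\dOrb$ (resp.\ $\Kur$) of virtual dimension $0$, well-defined up to equivalence, restricting to an object of $\dMan$ (resp.\ $\mKur$) on $\cM(\cN,\al,\psi)_\emb$. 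The family version is identical with $\cF$ adjoined as a parameter: one works over $\cF\t\cB$, the linearisation acquires the extra $T\cF$-directions in its kernel, so the virtual dimension becomes $\dim\cF$, projection to $\cF$ gives a smooth $1$-morphism $\pi$, and if $\cF$ has boundary or corners one obtains a d-orbifold/Kuranishi space with corners over $\cF$.

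The main obstacle is steps (ii)--(iii): performing the global gluing coherently and proving canonicity. This demands a functorial treatment of how McLean's local Kuranishi neighbourhoods depend on their auxiliary choices, and verification that the induced coordinate changes satisfy the cocycle conditions up to a compatible system of $2$-morphisms — exactly the sort of ``moduli spaces carry canonical derived structures'' statement the author's programme is built to establish in general, but which has not yet been written out for associative $3$-folds. A secondary difficulty is the $\Diff_+(N)$-equivariance needed to handle immersed associatives with nontrivial finite isotropy and to control the interface where the immersed locus meets $\cM(\cN,\al,\psi)_\emb$; this is managed by working $\Diff_+(N)$-equivariantly throughout, but requires care in matching the orbifold charts across that interface.
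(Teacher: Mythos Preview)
This statement is a \emph{conjecture}, not a theorem, and the paper provides no proof of it. Immediately before stating it the author writes: ``The author expects to prove the following conjecture in the next few years, as part of a larger project,'' and the only justification offered is the sentence preceding that, namely that Theorem~\ref{ca2thm2} already supplies the local Kuranishi neighbourhood $(V,\O_N,\Ga,\Th,\Psi)$ at each point.

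Your proposal is therefore not comparable to a proof in the paper --- there is none --- but as a sketch of how the conjecture \emph{should} eventually be proved it is entirely consistent with what the author indicates: the local model is exactly Theorem~\ref{ca2thm2}, and the substantive remaining work is the global gluing of charts with coherent $2$-morphisms and the canonicity-up-to-equivalence, which you correctly flag as the main obstacle and attribute to the general machinery of \cite{Joyc24,Joyc27}. You have also correctly noted that the embedded locus has trivial isotropy (hence d-manifold/m-Kuranishi charts) and that the family version shifts the virtual dimension by $\dim\cF$. In short, your outline matches the author's stated expectations, but you should be aware that you are sketching a proof of an open conjecture rather than reproducing an argument from the paper.
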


Here is a class of immersed submanifolds that will be important to us:

\begin{dfn} Let $i:N\ra X$ be a compact, immersed submanifold. We call $N$ {\it finite-embedded\/} if either $i:N\ra X$ is an embedding, or else $i=\ti\imath\ci\pi$ for $\ti\imath:\ti N\ra X$ an embedded submanifold and $\pi:N\ra\ti N$ a finite cover.
\label{ca2def6}	
\end{dfn}

In several important moduli problems, by taking the geometric data generic, one can ensure that the moduli spaces are smooth. For example, Donaldson and Kronheimer \cite[\S 4.3]{DoKr} show that if $(M,g)$ is a compact oriented Riemannian 4-manifold with $b^2_+(M)>0$ then all moduli spaces of $\SU(2)$-instantons on $X$ are smooth, and McDuff and Salamon \cite[\S 3.4]{McSa} prove that if $(S,\om)$ is a symplectic manifold and $J$ is a generic almost structure on $S$ compatible with $\om$ then all moduli spaces of embedded $J$-holomorphic curves in $S$ are smooth.

\begin{conj} Suppose\/ $(X,\vp,\psi)$ is a compact TA-$G_2$-manifold, with\/ $\psi$ \begin{bfseries}generic\end{bfseries} amongst closed\/ $4$-forms on $X$. Then for all\/ $\cN\in\cD$ and\/ $\al\in H_3(X;\Z),$ the moduli space $\cM(\cN,\al,\psi)$ in Definition\/ {\rm\ref{ca2def5}} is a finite set. 

For each\/ $[N,i]\in\cM(\cN,\al,\psi),$ the associative $3$-fold\/ $N$ is unobstructed, and $N$ is finite-embedded, as in Definition\/ {\rm\ref{ca2def6}}. Furthermore, for any $A>0$ there are only finitely many pairs $(\cN,\al)$ with\/ $\cM(\cN,\al,\psi)\ne\es$ and\/~$[\vp]\cdot\al\le A$.
\label{ca2conj2}	
\end{conj}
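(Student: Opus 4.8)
The plan is to split the statement into a \emph{transversality} half --- that for generic $\psi$ every associative $3$-fold in $\cM(\cN,\al,\psi)$ is unobstructed and finite-embedded --- and a \emph{compactness} half --- that each $\cM(\cN,\al,\psi)$ is a finite set, and that for fixed $A>0$ only finitely many pairs $(\cN,\al)$ contribute. The first half should be accessible by the Sard--Smale machinery familiar from gauge theory and symplectic topology (Donaldson--Kronheimer \cite[\S 4.3]{DoKr}, McDuff--Salamon \cite[\S 3.4]{McSa}); the second half rests on a compactness theory for associatives that is not currently available, and is where the real work lies.

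For the transversality half, pass to Banach completions: fix $\vp$ and let $\cB$ be a Banach-space neighbourhood of a point in the open convex cone $\cC'_{X,\vp}$ of Proposition \ref{ca2prop2}(b), consisting of closed positive $4$-forms $\psi$ with $(X,\vp,\psi)$ a TA-$G_2$-manifold. Form the \emph{universal moduli space} $\bcM(\cN,\al)$ of pairs $(\psi,[N,i])$ with $\psi\in\cB$ and $[N,i]\in\cM(\cN,\al,\psi)$. The analytic crux is a surjectivity lemma at a point where $i:N\ra X$ is an embedding: the linearization in $(i,\psi)$ of the associative equation, followed by $L^2$-projection onto $\O_N=\Coker\bD$, is onto. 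This uses that $\bD$ is a first-order elliptic operator of Dirac type (Theorem \ref{ca2thm2}), so by unique continuation any nonzero $w\in\O_N$ has support all of $N$; one can then choose a closed, indeed exact, perturbation $\dot\psi=\d\ga$ with $\ga$ supported near a point of $N$ so that, via the pairing \eq{ca2eq10}, the induced variation of the associative condition is not $L^2$-orthogonal to $w$. Hence $\bcM(\cN,\al)_\emb$ is a Banach manifold, the projection $\pi:\bcM(\cN,\al)_\emb\ra\cB$ is Fredholm of index $0$ (the index of $\bD$), and Sard--Smale gives a residual set of $\psi\in\cB$ for which $\cM(\cN,\al,\psi)_\emb$ is a $0$-manifold with every point unobstructed. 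To handle non-embedded $i$, stratify by self-intersection pattern: the stratum of finite covers $i=\ti\imath\ci\pi$ of an embedded associative $\ti N$ reduces, after pulling $\bD$ back along $\pi$, to the embedded case on $\ti N$; and since a generic $3$-submanifold of a $7$-manifold is embedded, every other self-intersection stratum carries at least $7-(3+3)=1$ further transverse condition, so has negative virtual dimension and is empty for generic $\psi$. This gives finite-embeddedness, and unobstructedness modulo the point raised below. As $\cD$ is countable and $H_3(X;\Z)$ is finitely generated, only countably many $(\cN,\al)$ occur, and a Baire category argument intersects the residual sets into a single residual set of good~$\psi$.

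For the compactness half, the topological volume bound \eq{ca2eq9} gives $\vol^{g_\psi}(N)\le K[\vp]\cdot\al$ uniformly over $\cM(\cN,\al,\psi)$. Pairing $\al$ with closed $3$-forms representing a basis of $H^3_{\rm dR}(X;\R)$ and using $\md{\int_N\eta}\le C_\eta\vol(N)$ for each fixed representative confines $\al$, modulo the finite torsion subgroup of $H_3(X;\Z)$, to a finite set whenever $[\vp]\cdot\al\le A$; this yields finiteness of the relevant $\al$. It remains to show each $\cM(\cN,\al,\psi)$ is compact --- that no sequence of associatives of type $\cN$ in class $\al$ runs off to a missing limit --- and that only finitely many $\cN$ occur below volume $KA$. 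Given the uniform volume bound, Geometric Measure Theory (compactness of integral currents, stationarity of the limit) extracts a subsequential limiting integral current, stationary and indeed mass-minimising in its class for $g_\psi$. One then wants: \emph{for generic $\psi$, this limit is a smooth, multiplicity-one, finite-embedded associative of type $\cN$ and class $\al$}; Allard and Simon regularity then upgrade weak convergence to smooth graphical convergence, so a tail of the sequence lies in a single Kuranishi chart $(V,\O_N,\Ga,\Th,\Psi)$ from Theorem \ref{ca2thm2}, which for generic $\psi$ is a single point, forcing the sequence to be eventually constant --- contradicting infiniteness. Letting $\cN$ vary in the same argument bounds the diffeomorphism types occurring below volume $KA$.

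The principal obstacle is exactly the italicised claim above: ruling out, for generic $\psi$, that a bounded-volume sequence of associatives limits onto a singular current, onto a smooth associative with multiplicity greater than one, or onto one of a different diffeomorphism type. This is the ``index $0$'' instance of the singularity-control problem flagged in \S\ref{ca1}: it demands a classification of all tangent cones and degeneration modes of associative $3$-folds --- including the multiple-cover bubbling of \S\ref{ca72}(F) --- together with a proof that each occurs in positive codimension in $\cC'_{X,\vp}$, hence not for generic $\psi$. A secondary subtlety concerns the unobstructedness of genuine multiple covers $N\ra\ti N$: a perturbation of $\psi$ acts identically on all sheets and so reaches only the deck-invariant part of $\Coker\bD_N$, so ``$N$ unobstructed'' for such $N$ is not delivered by Sard--Smale and must come from a deeper understanding of multiple covers, again as anticipated in \S\ref{ca1}. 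Everything else is routine; these two points are the genuine content.
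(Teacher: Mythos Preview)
The statement is a \emph{Conjecture} in the paper, not a theorem: the paper offers no proof, and indeed flags it as open. So there is no paper proof to compare against; your proposal can only be assessed as a strategy toward an open problem.

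As such, your decomposition into a transversality half and a compactness half is the natural one, and your diagnosis of where the real difficulty lies matches the paper's own. The Sard--Smale portion is the right shape: universal moduli space over a Banach slice of $\cC'_{X,\vp}$, unique continuation for the Dirac-type $\bD$ to show the parametrised linearisation surjects, exact perturbations $\dot\psi=\d\ga$ localised near a point of $N$. Your flagging of the multiple-cover unobstructedness issue --- that varying $\psi$ reaches only the deck-invariant part of $\Coker\bD_N$ when $i=\ti\imath\ci\pi$ --- is a genuine subtlety you are right to isolate; it is not addressed by standard transversality.

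The compactness half is precisely where the paper itself declines to proceed. In the introduction it lists among the ``seriously difficult'' aspects of its programme the need for ``some measure of control over \emph{all possible} singularities of associative 3-folds, as described using Geometric Measure Theory,'' and separately the multiple-cover phenomena of \S\ref{ca72}(F). Your italicised claim --- that for generic $\psi$ a GMT limit of bounded-volume associatives is smooth, multiplicity-one, finite-embedded, and of the same type $\cN$ --- is exactly this unresolved point, here in its index-$0$ incarnation. So your proposal is not a proof and does not pretend to be one; it is a correct triage. The routine parts are routine, and the hard parts you name are the same ones the author names as open.
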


Note here that $\cM(\cN,\al,\psi)$ has virtual dimension 0, and `compact smooth 0-manifold' is equivalent to `finite set'.

McLean \cite[\S 3--\S 4]{McLe} also studied moduli spaces of compact special Lagrangian submanifolds, and coassociative 4-folds. These are simpler than the associative case, as they are always smooth manifolds. 

\begin{thm}[McLean \cite{McLe}]{\bf(a)} Suppose $(Y,J,h)$ is a Calabi--Yau $m$-fold, and\/ $L\subset Y$ is a compact SL $m$-fold. Then the moduli space $\cM_L$ of special Lagrangian deformations of\/ $L$ is a smooth manifold of dimension\/ $b^1(L)$.
\smallskip

\noindent{\bf(b)} Suppose $(X,\vp,\psi)$ is a TA-$G_2$-manifold, and\/ $C$ is a compact coassociative $4$-fold in\/ $X$. Then the moduli space $\cM_C$ of coassociative deformations of\/ $C$ is a smooth manifold of dimension\/ $b^2_+(C)$.
\label{ca2thm3}	
\end{thm}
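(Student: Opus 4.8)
The plan is to follow McLean's deformation arguments \cite{McLe}: in each case, identify the submanifolds near $L$ (resp.\ $C$) in the relevant class with small sections of a natural bundle, rewrite the calibrated condition as the vanishing of a nonlinear map $F$ whose target is the space of \emph{exact} forms of the appropriate degree, compute $\d F\vert_0$, use Hodge theory to show it is surjective with kernel a space of harmonic forms, and then invoke the implicit function theorem in Banach completions together with elliptic regularity. For part (a), I would first apply Weinstein's Lagrangian neighbourhood theorem to identify a tubular neighbourhood of $L$ symplectomorphically with a neighbourhood of the zero section of $T^*L$ with its canonical symplectic form, carrying $L$ to the zero section and $\nu_L$ to $T^*L$; then Lagrangian submanifolds $C^1$-close to $L$ correspond exactly to graphs of closed $1$-forms $\al$ on $L$ of small norm, and the graph $L_\al$ of $\al$ is special Lagrangian of phase $1$ iff $\Im\Om\vert_{L_\al}=0$. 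Since $\Im\Om$ is closed and $L_\al$ is homotopic to $L$ with $\Im\Om\vert_L=0$, the $m$-form $\Im\Om\vert_{L_\al}$ on $L$ is automatically exact, so $F:\al\mapsto\Im\Om\vert_{L_\al}$ maps a neighbourhood of $0$ in the closed $1$-forms into the exact $m$-forms, with $\cM_L$ locally identified with $F^{-1}(0)$. McLean's computation using the $\SU(m)$-structure gives $\d F\vert_0(\al)=\d(*_L\al)$ with $*_L$ the Hodge star of the induced metric on $L$; writing $\al=\al_{\rm harm}+\d f$ by Hodge decomposition one has $\d(*_L\al)=\d *_L\d f=\pm(\De f)\,\vol_L$, so $\d F\vert_0$ is onto the exact $m$-forms (as $\De$ surjects onto the functions of integral zero) with kernel the harmonic $1$-forms, of dimension $b^1(L)$. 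The implicit function theorem in H\"older (or Sobolev) completions then makes $F^{-1}(0)$ a smooth manifold of dimension $b^1(L)$, elliptic regularity shows its points are genuine smooth special Lagrangians, and no diffeomorphism quotient is needed since the graph parametrization is already a local slice.

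For part (b) the argument is parallel, using the $G_2$ linear algebra of $\vp$ in place of the $\SU(m)$-structure. Pointwise, $v\mapsto(\io_v\vp)\vert_C$ identifies the normal bundle $\nu_C$ with the bundle $\La^2_+\,T^*C$ of self-dual $2$-forms for the metric $g_\vp\vert_C$ and induced orientation (this is pure $G_2$ linear algebra, so needs no torsion-freeness), and a tubular neighbourhood identifies $4$-folds near $C$ with graphs of small self-dual $2$-forms $\be$. Since $\d\vp=0$, the $4$-fold $C_\be$ is coassociative iff $\vp\vert_{C_\be}=0$ (the characterization of Remark~\ref{ca2rem2}(d)); this $3$-form is closed, and exact because $C_\be$ is homotopic to $C$ with $\vp\vert_C=0$, so $F:\be\mapsto\vp\vert_{C_\be}$ maps into the exact $3$-forms on $C$ with $\cM_C$ locally $F^{-1}(0)$. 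By Cartan's formula and $\d\vp=0$, $\d F\vert_0(\be)=\d(\io_v\vp)\vert_C=\d\be$. The Hodge-theoretic point is that $\d\colon\Om^2_+(C)\ra\{\text{exact 3-forms}\}$ is onto with kernel $\cH^2_+(C)$, of dimension $b^2_+(C)$: the kernel consists of closed, hence harmonic, self-dual $2$-forms; and surjectivity reduces, via $*$ and the closed range of $\d^+\colon\Om^1\ra\Om^2_+$, to the fact that an exact anti-self-dual $2$-form $\d\al$ satisfies $\d^*\d\al=0$ and hence vanishes, so that $\ker\d^+$ is exactly the closed $1$-forms. The implicit function theorem plus elliptic regularity then give $\cM_C$ the structure of a smooth manifold of dimension $b^2_+(C)$; note the argument uses only $\d\vp=0$, not $\psi=*\vp$.

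The main obstacle is the surjectivity of $\d F\vert_0$, i.e.\ the vanishing of the obstruction space, and there are two points in it that need care. First, one must notice that $F$ takes values in \emph{exact} forms, so that the naive cokernel of $\d$ (resp.\ $\d *_L$) viewed as a map into all forms of that degree --- namely $\cH^3(C)$ (resp.\ $\cH^m(L)$) --- is not a genuine obstruction. Second, one must show the operator is onto even when the target is cut down to the exact forms; for (b) this is precisely the statement that a compact oriented Riemannian $4$-manifold admits no nonzero exact anti-self-dual $2$-form, and for (a) the analogous statement about $\d *_L$ on closed $1$-forms. The remaining analytic work --- choosing Banach completions on which $F$ is smooth and $\d F\vert_0$ is Fredholm, applying the implicit function theorem, and bootstrapping solutions to $C^\iy$ --- is routine.
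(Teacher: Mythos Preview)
The paper does not give its own proof of this theorem: it simply attributes the result to McLean \cite{McLe} and adds the one-line remark that the proof of part (b) requires $\d\vp=0$. Your proposal is a faithful reconstruction of McLean's original argument, and your closing observation that part (b) uses only $\d\vp=0$ (not $\psi=*\vp$) is exactly the point the paper singles out.
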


The proof of part (b) requires $\d\vp=0$.

\subsection{Associative 3-folds with boundary in coassociatives}
\label{ca27}

If $(X,\vp,\psi)$ is a TA-$G_2$-manifold and $C\subset X$ is coassociative, we can consider associative 3-folds $N\subset X$ with boundary $\pd N\subset C$. Note that associatives $N$ are defined using $\psi$, and coassociatives $C$ defined using $\vp$, but Definition \ref{ca2def2}(iii) ensures that $\pd N\subset C$ is a well behaved boundary condition for $N$. If $(X,\vp,\psi)$ is a compact TA-$G_2$-manifold, so that Definition \ref{ca2def2}(i$)'$ holds for some $K>0$, then as in \eq{ca2eq9} we have the topological volume bound
\begin{equation*}
\vol^{g_\psi}(N)\le K[\vp]\cdot[N].
\end{equation*}
where now we use relative (co)homology $[\vp]\in H^3_{\rm dR}(X,C;\R)$, $[N]
\in H_3(X,C;\Z)$.

Gayet and Witt \cite{GaWi} generalized Theorem \ref{ca2thm2} to the boundary case. The dimension of the moduli space is no longer automatically zero.

\begin{thm}[Gayet and Witt \cite{GaWi}, extended] Let\/ $(X,\vp,\psi)$ be a TA-$G_2$-manifold, and\/ $C\subset X$ a coassociative $4$-fold. Suppose\/ $N$ is a compact, immersed associative $3$-fold in $X$ with connected boundary $\pd N\subset C$ of genus $g$. Then the deformation theory of\/ $N$ for fixed\/ $(X,\vp,\psi),C$ is a nonlinear elliptic equation, of index\/ $d(N):=\int_{\pd N}c_1(\nu_{\pd N})+1-g,$ where $\nu_N$ is the normal bundle of\/ $\pd N$ in $C$ with its natural complex structure.
\label{ca2thm4}
\end{thm}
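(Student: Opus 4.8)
The plan is to set up the deformation problem for $N$ relative to fixed $(X,\vp,\psi)$ and $C$ as a boundary-value problem for a first-order elliptic operator, and then compute its index via the Atiyah--Patodi--Singer index theorem, reducing eventually to a Riemann--Roch computation on $\pd N$. First I would recall from Theorem \ref{ca2thm2} that for a compact immersed associative $i:N\ra X$ the normal bundle $\nu\ra N$ carries the twisted Dirac operator $\bD:\Ga^\iy(\nu)\ra\Ga^\iy(\nu)$ of \eq{ca2eq10}, which governs the deformations of $N$ as a closed associative. The new feature is the boundary: one shows that if $\pd N\subset C$ with $C$ coassociative, then along $\pd N$ the normal bundle $\nu_N|_{\pd N}$ contains the subbundle $\nu_{\pd N}$ (the normal bundle of $\pd N$ inside $C$) as a Lagrangian-type subbundle for the relevant symplectic/Hermitian structure, and the natural boundary condition for $\bD$ --- namely that a normal vector field $v$ along $\pd N$ preserves the condition $\pd N\subset C$, i.e.\ $v|_{\pd N}\in\Ga^\iy(\nu_{\pd N})$ --- is an elliptic (Lagrangian, or APS-type) boundary condition for $\bD$. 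The key geometric input here is Definition \ref{ca2def2}(iii), which guarantees that no tangent 3-plane of $N$ along $\pd N$ sits inside a $\vp$-coassociative 4-plane, so the boundary condition is transverse and the resulting operator $\bD$ with this boundary condition is Fredholm; this is essentially the content of Gayet--Witt \cite{GaWi}, and the extension to TA-$G_2$-manifolds needs only that the analytic setup never used $\psi=*\vp$.

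Next I would identify the complex structure on $\nu_{\pd N}$: since $C$ is coassociative, $T C|_{\pd N}$ splits as $T(\pd N)\oplus\nu_{\pd N}$, and coassociative geometry endows $\nu_{\pd N}$ with a natural complex line bundle structure (this is the same structure underlying McLean's description of coassociative deformations in Theorem \ref{ca2thm3}(b)), so $\nu_{\pd N}$ is a complex line bundle over the oriented surface $\pd N$ of genus $g$, with a well-defined degree $\int_{\pd N}c_1(\nu_{\pd N})$. Then the index of the boundary-value problem for $\bD$ is computed by the Atiyah--Patodi--Singer index theorem: the interior contribution is an integral of local curvature terms over $N$, and the boundary contribution is an $\eta$-invariant plus a correction term. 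The claim $d(N)=\int_{\pd N}c_1(\nu_{\pd N})+1-g$ is precisely what one gets after the APS integrand collapses --- much as in the dimensional-reduction picture of Remark \ref{ca2rem1}, where associatives with boundary in a coassociative correspond (in the $\R\times Y$ model) to holomorphic discs/curves with Lagrangian boundary, whose index is given by a Maslov-type/Riemann--Roch formula. Concretely, one expects the interior term to vanish (as it does for the closed index-$0$ result of Theorem \ref{ca2thm2}, where $\bD$ is self-adjoint, using Lemma \ref{ca2lem1} and $\d\psi=0$), leaving only the boundary contribution, which for a complex line bundle over a genus-$g$ surface is $\deg(\nu_{\pd N})+1-g$ by Riemann--Roch; matching the two sides gives the stated formula.

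The main obstacle I anticipate is the careful verification that the boundary condition ``$v|_{\pd N}\in\Ga^\iy(\nu_{\pd N})$'' is genuinely elliptic for the first-order operator $\bD$, i.e.\ that the pair (principal symbol of $\bD$, boundary symbol) satisfies the Lopatinski--Shapiro / APS condition, and the correct bookkeeping of which half of the restriction of $\nu|_{\pd N}$ the Lagrangian subbundle picks out --- getting the $\pm$ signs and the ``$+1$'' right depends on orienting $\pd N$ consistently with $N$ and on the precise way coassociative geometry splits $\nu|_{\pd N}=\nu_{\pd N}\oplus(\text{something})$ near the boundary. A secondary technical point is that \cite{GaWi} worked in the torsion-free case, so I would need to check that relaxing $\psi=*\vp$ to merely $\d\psi=0$ (plus the compatibility in Definition \ref{ca2def2}) does not disturb the Fredholm theory or the index computation; since the symbol of $\bD$ and the boundary operator depend only on $\psi$ (not on $\vp$) and on $C$, and since the index is a homotopy invariant, a deformation argument from $\psi$ to a nearby $*\vp'$ should suffice, so this part is expected to be routine rather than the crux.
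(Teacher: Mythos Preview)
The paper does not give a proof of this theorem: it is stated as a result of Gayet and Witt \cite{GaWi}, with the phrase ``extended'' indicating only that their theorem carries over to the TA-$G_2$ setting, and is immediately followed by a remark about derived orbifold structures on $\cM_N$. There is therefore no proof in the paper to compare your proposal against.

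That said, your outline is a plausible reconstruction of the Gayet--Witt argument: set up $\bD$ with the Lagrangian-type boundary condition coming from $\nu_{\pd N}\subset\nu|_{\pd N}$, verify ellipticity of the boundary condition, and compute the index. Your identification of the two delicate points --- the Lopatinski--Shapiro/ellipticity check for the boundary condition, and the sign/orientation bookkeeping in the Riemann--Roch reduction --- is accurate. Your remark that the extension from torsion-free $G_2$ to TA-$G_2$ is routine (since the symbol depends only on $\psi$, and the index is a deformation invariant) matches the spirit of how the paper treats such extensions elsewhere (cf.\ the discussion after Theorem~\ref{ca2thm2}). If you want to fill this in rigorously you should consult \cite{GaWi} directly, as the paper itself defers entirely to that reference.
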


Thus as in Conjecture \ref{ca2conj1} we expect the moduli space $\cM_N$ of deformations of\/ $N$ to be a derived orbifold as in \cite{Joyc22,Joyc23,Joyc24,Joyc25,Joyc26,Joyc27}, of virtual dimension~$d(N)$.

Given two nearby coassociatives $C_1,C_2$ in $(X,\vp,*\vp)$ with $C_1\cap C_2=\es$, Leung, Wang and Zhu \cite{LWZ1,LWZ2} prove results on associative 3-folds $N$ in $(X,\vp,*\vp)$ with boundary $\pd N\subset C_1\amalg C_2$ and $\vol(N)$ small. This is intended as a first step towards constructing some kind of Floer theory for coassociative 4-folds $C$ by counting associative 3-folds $N$ with boundary $\pd N\subset C$. We discuss this in~\S\ref{ca62}.

\section{How to orient moduli spaces of associatives}
\label{ca3}

The material of this section is new. Our aim is to construct orientations on the moduli spaces $\cM(\cN,\al,\psi)$ of associatives in $(X,\vp,\psi)$ in \S\ref{ca26}, considered as derived orbifolds in the case of Conjecture \ref{ca2conj1}, or as orbifolds in the case of Conjecture \ref{ca2conj2}. For unobstructed associatives, our construction is rigorous.

We will show that any compact associative 3-fold $N\subset X$  has a natural {\it flag\/} $f_N$, a partial framing of the normal bundle $\nu\ra N$, defined in a subtle way using the operator $\bD:\Ga^\iy(\nu)\ra\Ga^\iy(\nu)$ from Theorem \ref{ca2thm2}. The set $\Flag(N)$ of flags on $N$ is a $\Z$-torsor. Roughly speaking we have $\Flag(N)\cong\Z$, and when $N$ is unobstructed we define $N$ to be positively (negatively) oriented if $f_N\in\Flag(N)$ corresponds to an even (odd) number in~$\Z$.

In fact things are more complicated, as the isomorphism $\Flag(N)\cong\Z$ is not canonical. We will define a new algebro-topological structure on $X$ called a {\it flag structure\/} $\cF$. The set of flag structures is a torsor over $\Hom_{\rm Grp}\bigl(H_3(X;\Z),\{\pm 1\}\bigr)$. Given a flag structure on $X$, the isomorphism $\Flag(N)\cong\Z$ is canonical mod $2\Z$, which is enough to define orientations.

Orienting moduli spaces $\cM(\cN,\al,\psi)$ is important for our programme, since it is essential to count associative 3-folds with signs to have any chance of getting a deformation-invariant answer, as we explain in~\S\ref{ca7}.

For comparison, Donaldson and Kronheimer \cite[\S 5.4 \& \S 7.1.6]{DoKr} construct orientations on moduli spaces of instantons on a 4-manifold $M$, and Fukaya--Oh--Ohta--Ono \cite[\S 8]{FOOO} define orientations on moduli spaces of $J$-holomorphic discs in a symplectic manifold $S$ with boundary in a Lagrangian $L$. In both cases some extra algebraic topological data is needed, namely an orientation on $H^1(M;\R)\op H^2_+(M;\R)$ in \cite{DoKr}, and a relative spin structure for $(S,L)$ in~\cite{FOOO}.

\subsection{Flags and flag structures}
\label{ca31}

Though we explain the material of this section for 3-submanifolds $N$ in a 7-manifold $X$, in fact it works in exactly the same way for $(2k+1)$-dimensional submanifolds $N$ of a $(4k+3)$-manifold $X$ for $k=0,1,\ldots.$

\begin{dfn} Let $X$ be an oriented 7-manifold, and $i:N\ra X$ a compact, oriented, immersed 3-manifold in $X$. Write $\nu\ra N$ for the normal bundle of $N$ in $X$. Then the orientation on $X$ induces an orientation on the total space of $\nu$.

Consider nonvanishing sections $s\in\Ga^\iy(\nu)$, so that $s(x)\ne 0$ for all $x\in N$. Let $s,s'$ be nonvanishing sections. Write
$0:N\ra \nu$ for the zero section, and $\ga:[0,1]\t N\ra\nu$ for the map $\ga:(t,x)\mapsto (1-t)s(x)+ts'(x)$. Then $0(N)$ is a 3-cycle in the homology of $\nu$ over $\Z$, and $\ga([0,1]\t N)$ is a 4-chain in the homology of $\nu$, where $\pd[\ga([0,1]\t N)]$ is disjoint from $0(N)$, and $\nu$ is an oriented 7-manifold. Define $d(s,s')\in\Z$ to be the intersection number $0(N)\bu\ga([0,1]\t N)$.

We have $d(s',s)=-d(s,s')$ and $d(s,s'')=d(s,s')+d(s',s'')$ for all nonvanishing sections $s,s',s''\in\Ga^\iy(\nu)$. Define a {\it flag on\/} $N$ to be an equivalence class $[s]$ of nonvanishing $s\in\Ga^\iy(\nu)$, where $s,s'$ are equivalent if $d(s,s')=0$. We call $(N,[s])$ a {\it flagged submanifold}. Write $\Flag(N)$ for the set of all flags $[s]$ on~$N$.

For $[s],[s']\in\Flag(N)$ we define $d([s],[s'])=d(s,s')\in\Z$ for any representatives $s,s'$ for $[s],[s']$. It is not difficult to show that for any $[s]\in\Flag(N)$ and any $k\in\Z$, there is a unique $[s']\in\Flag(N)$ with $d([s],[s'])=k$. We write $[s']=[s]+k$. This gives a natural action of $\Z$ on $\Flag(N)$ by addition, which makes $\Flag(N)$ into a $\Z$-torsor (that is, the $\Z$-action is free and transitive).
\label{ca3def1}	
\end{dfn}

For the next parts we restrict to $(N,[s])$ with $N$ {\it finite-embedded}, as in Definition \ref{ca2def6}. We compare flags for homologous 3-submanifolds $N_1,N_2$.

\begin{dfn} Let $X$ be an oriented 7-manifold, and suppose $N_1,N_2$ are compact, oriented, finite-embedded 3-submanifolds in $X$ with $[N_1]=[N_2]$ in $H_3(X;\Z)$ and $N_1\cap N_2=\es$, and $[s_1],[s_2]$ are flags on $N_1,N_2$. Choose a 4-chain $C_{12}$ in the homology of $X$ over $\Z$ with $\pd C_{12}=N_2-N_1$. Let $s_1,s_2$ be representatives for $N_1,N_2$, and let $N_1',N_2'$ be small perturbations of $N_1,N_2$ in the normal directions $s_1,s_2$. Then $N_1'\cap N_1=N_2'\cap N_2=\es$ as $s_1,s_2$ are nonvanishing and $N_1,N_2$ are finite-embedded, and also $N_1'\cap N_2=N_2'\cap N_1=\es$ as $N_1,N_2$ are disjoint and $N_1',N_2'$ are close to~$N_1,N_2$.

Define $D((N_1,[s_1]),(N_2,[s_2]))$ to be the intersection number $(N_2'-N_1')\bu C_{12}$ in homology over $\Z$. This is well defined as $\pd C_{12}=N_2-N_1$, so the 3-cycles $N_2'-N_1'$ and $\pd C_{12}$ are disjoint. It is also independent of the choices of $C_{12}$ and $N_1',N_2'$. We can show that for $k_1,k_2\in\Z$ we have
\e
D((N_1,[s_1]+k_1),(N_2,[s_2]+k_2))=D((N_1,[s_1]),(N_2,[s_2]))-k_1+k_2.
\label{ca3eq1}
\e

\label{ca3def2}	
\end{dfn}

\begin{prop} Let\/ $X$ be an oriented\/ $7$-manifold, and\/ $(N_1,[s_1]),(N_2,[s_2]),\ab(N_3,[s_3])$ be disjoint finite-embedded flagged submanifolds in $X$. Then 
\e
\begin{split}
D((N_1,[s_1]),(N_3,[s_3]))&=D((N_1,[s_1]),(N_2,[s_2]))\\
&\quad +D((N_2,[s_2]),(N_3,[s_3]))\mod 2.
\end{split}
\label{ca3eq2}
\e

\label{ca3prop1}	
\end{prop}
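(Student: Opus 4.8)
The plan is to prove \eqref{ca3eq2} by the obvious additivity of bounding chains, reducing it to the vanishing modulo $2$ of two ``cross terms,'' recognising these cross terms as linking numbers of null-homologous $3$-cycles in the oriented $7$-manifold $X$, and then exploiting the $\bmod 2$ symmetry of the linking pairing together with the cocycle property of $d$ from Definition~\ref{ca3def1}.

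First I would choose, as in Definition~\ref{ca3def2}, $4$-chains $C_{12},C_{23}$ in $X$ over $\Z$ with $\pd C_{12}=N_2-N_1$ and $\pd C_{23}=N_3-N_2$, put into general position, together with representatives $s_i$ of the flags $[s_i]$ and small normal pushoffs $N_i'$ of $N_i$ along $s_i$; since $N_1,N_2,N_3$ are pairwise disjoint (and homologous, as is implicit in $D$ being defined) the $N_i'$ are also pairwise disjoint and disjoint from the other $N_j$. As $\pd(C_{12}+C_{23})=N_3-N_1$, the chain $C_{13}:=C_{12}+C_{23}$ is an admissible choice for computing $D((N_1,[s_1]),(N_3,[s_3]))$, which by Definition~\ref{ca3def2} is independent of the bounding chain. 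Expanding $(N_3'-N_1')\bu C_{13}$ bilinearly over $C_{13}=C_{12}+C_{23}$ and inserting $\pm(N_2'-N_2')$, the ``diagonal'' pieces $(N_2'-N_1')\bu C_{12}$ and $(N_3'-N_2')\bu C_{23}$ are, by the independence of $D$ from all choices, exactly $D((N_1,[s_1]),(N_2,[s_2]))$ and $D((N_2,[s_2]),(N_3,[s_3]))$, so that
\[ D((N_1,[s_1]),(N_3,[s_3]))-D((N_1,[s_1]),(N_2,[s_2]))-D((N_2,[s_2]),(N_3,[s_3]))=(N_3'-N_2')\bu C_{12}+(N_2'-N_1')\bu C_{23}. \]
Working mod~$2$ from here, all signs in intersection and linking pairings may be ignored.

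Next I would note that $N_3'-N_2'$ and $N_2'-N_1'$ are \emph{null-homologous} $3$-cycles, each disjoint from the boundary of the relevant chain, so the two terms on the right are well-defined $\Z/2$ linking numbers $\ell(N_3'-N_2',N_2-N_1)$ and $\ell(N_2'-N_1',N_3-N_2)$, independent of bounding chains, and that mod~$2$ the linking pairing of disjoint $3$-cycles in $X$ is symmetric and bilinear. Using symmetry on the second term, the two contributions become linking numbers that would cancel exactly were it not for the pushoffs replacing $N_i$ by $N_i'$; the replacement of $N_i$ by $N_i'$ inside an argument changes the $\Z/2$ linking number by the $\bmod 2$ reduction of the relevant $d(\cdot,\cdot)$ (a normal Euler number), and the only index~$i$ that can contribute is $i=2$, since $N_2$ is the \emph{only} one of the $N_i$ lying on the boundary of a chain ($C_{12}$) while also appearing in the cycle part of the other term (via $C_{23}$). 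So the whole right-hand side reduces mod~$2$ to a correction localised near $N_2$, coming once from $N_2'\bu C_{12}$ and once from $N_2'\bu C_{23}$, of the form $d(n_{12},s_2)+d(n_{23},s_2)$, where $n_{12},n_{23}$ are the directions in which $C_{12},C_{23}$ emanate from $N_2$. Because $\pd(C_{12}+C_{23})$ has no boundary along $N_2$, these emanating directions are opposite, so $n_{12}\simeq -n_{23}$ through nonvanishing normal sections (the antipodal map of the $S^3$-fibre has degree $+1$, hence is fibrewise isotopic to the identity via a path in $\SO(4)$); thus $d(n_{12},n_{23})=0$, and the cocycle identity $d(n_{12},s_2)=d(n_{12},n_{23})+d(n_{23},s_2)$ of Definition~\ref{ca3def1} gives $d(n_{12},s_2)=d(n_{23},s_2)$. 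Hence the correction is twice a single integer, so $\equiv 0\ \bmod 2$, and \eqref{ca3eq2} follows (with the consistent flag behaviour \eqref{ca3eq1} showing it is enough to treat one choice of $s_i$).

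The main obstacle is precisely this bookkeeping at $N_2$: it appears on the boundary of both $C_{12}$ and $C_{23}$ (with opposite signs), so one must verify that $N_2'$ is used in a single consistent way, that $N_2'\bu C_{12}$ and $N_2'\bu C_{23}$ remain well-defined, and that the self-linking correction from pushing $N_2$ off itself along $s_2$ is isolated and seen to enter with the same $\Z/2$-coefficient from each side. This is exactly where the argument produces an \emph{even}, rather than zero, discrepancy, and hence is the reason \eqref{ca3eq2} is asserted only modulo~$2$ (integrally one would get $D(1,3)-D(1,2)-D(2,3)=2\,d(n_{12},s_2)$, not $0$).
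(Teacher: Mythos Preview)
Your reduction to the two cross terms
\[
(N_3'-N_2')\bu C_{12}+(N_2'-N_1')\bu C_{23}
\]
is correct and is exactly how the paper begins. From there, however, the paper does something much simpler than your local ``emanating directions'' analysis. It introduces one auxiliary $4$-chain $C_{12}'$ with $\pd C_{12}'=N_2'-N_1'$, uses the linking-symmetry identity $(N_3'-N_2')\bu C_{12}=(N_3-N_2)\bu C_{12}'$ (both sides compute the same linking number of the pairs $(N_3-N_2,N_2-N_1)$ after a small isotopy of disjoint cycles), and then rewrites the sum as $\pd C_{23}\bu C_{12}'+\pd C_{12}'\bu C_{23}$. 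A single application of the Leibniz rule for the intersection product shows this equals $\pd(C_{23}\bu C_{12}')+2\,\pd C_{12}'\bu C_{23}=2(N_2'-N_1')\bu C_{23}$, which is manifestly even. No local analysis near $N_2$ is needed at all.

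Your route has a genuine gap. The ``directions $n_{12},n_{23}$ in which $C_{12},C_{23}$ emanate from $N_2$'' are not well-defined for arbitrary $4$-chains with boundary; a general chain need not be collar-like near its boundary, and even after putting it in general position there is no canonical nonvanishing normal section attached to it. More seriously, your conclusion that the integral discrepancy is $2\,d(n_{12},s_2)$ cannot be correct: this quantity depends on the local shape of $C_{12}$ near $N_2$, whereas $D(1,3)-D(1,2)-D(2,3)$ is independent of all chain choices by Definition~\ref{ca3def2}. The paper's answer $2(N_2'-N_1')\bu C_{23}$ is a global linking number, independent of the chains, and is not in general a ``self-linking at $N_2$'' quantity. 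So the localisation step---``the whole right-hand side reduces mod~$2$ to a correction localised near $N_2$''---is where the argument breaks down. The fix is exactly the auxiliary chain $C_{12}'$: it converts the symmetry you are reaching for into a one-line Leibniz identity.
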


\begin{proof} Let $s_1,s_2,s_3$ be representatives for $[s_1],[s_2],[s_3]$, and $N_1',N_2',N_3'$ be small perturbations of $N_1,N_2,N_3$ in directions $s_1,s_2,s_3$. Choose 4-chains $C_{12},C_{23}$ over $\Z$ in $X$ with $\pd C_{12}=N_2-N_1$ and $\pd C_{23}=N_3-N_2$. Then $C_{13}=C_{12}+C_{23}$ is a 4-chain with $\pd C_{13}=N_3-N_1$. Also choose a 4-chain $C_{12}'$ with $\pd C_{12}'=N_2'-N_1'$. Then we have
\begin{align*}
&D((N_1,[s_1]),(N_3,[s_3]))-D((N_1,[s_1]),(N_2,[s_2]))-
D((N_2,[s_2]),(N_3,[s_3]))	\\
&=(N'_3-N'_1)\bu(C_{12}+C_{23})-(N'_2-N'_1)\bu C_{12}-(N'_3-N'_2)\bu C_{23}\\
&=(N'_3-N'_2)\bu C_{12}+(N'_2-N'_1)\bu C_{23}=(N_3-N_2)\bu C'_{12}+(N'_2-N'_1)\bu C_{23}\\
&=\pd C_{23}\bu C'_{12}+\pd C_{12}'\bu C_{23}=\pd (C_{23}\bu C'_{12})+2\pd C_{12}'\bu C_{23}=0+2\pd C_{12}'\bu C_{23},
\end{align*}
using the definition of $D((N_i,[s_i]),(N_j,[s_j]))$ in the first step, the easy identity $(N'_3-N'_2)\bu C_{12}=(N_3-N_2)\bu C'_{12}$ in the third, and that a boundary is zero in homology in the sixth. Equation \eq{ca3eq2} follows.
\end{proof}

\begin{prop} Let\/ $X$ be an oriented\/ $7$-manifold, and\/ $(N,[s])$ be an immersed flagged submanifold in $X,$ and\/ $(N',[s']),(N'',[s''])$ be any two small perturbations of\/ $(N,[s])$ with $N',N''$ embedded in $X$. Then
\e
\begin{split}
D((N',[s']),(N'',[s'']))=0\mod 2.
\end{split}
\label{ca3eq3}
\e

\label{ca3prop2}	
\end{prop}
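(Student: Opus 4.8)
The plan is to reduce the claim to Proposition~\ref{ca3prop1}, together with a direct computation of $D$ for two perturbations of a \emph{single} flagged submanifold. First I would set up the notation: let $N',N''$ be the two embedded small perturbations of the immersed $(N,[s])$, with induced flags $[s'],[s'']$ coming from $[s]$. Since $N',N''$ are each $C^\infty$-close to $i(N)$ (as chains in $X$) and homologous to it, we have $[N']=[N'']$ in $H_3(X;\Z)$, and for small enough perturbations $N'\cap N''=\es$, so $D((N',[s']),(N'',[s'']))$ is defined. The strategy is to exhibit a \emph{third} embedded perturbation $(N''',[s'''])$ of $(N,[s])$ disjoint from both $N'$ and $N''$, and then apply \eqref{ca3eq2} to the triple $(N',N'',N''')$ in the form
\e
D((N',[s']),(N'',[s'']))=D((N',[s']),(N''',[s''']))-D((N'',[s'']),(N''',[s''']))\mod 2,
\label{ca3eqPlanA}
\e
so that it suffices to show each term on the right is even, and indeed that they are equal. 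By symmetry of the construction, both right-hand terms are of the same form: the difference $D$ between two perturbations of $(N,[s])$, one of which we are free to choose. So the whole problem collapses to: \emph{compute $D((N_a,[s_a]),(N_b,[s_b]))$ when $N_a,N_b$ are two disjoint embedded perturbations of the same immersed flagged $(N,[s])$, and show it is $0\bmod 2$ in at least the generic case, e.g.\ when $N_b$ is obtained from $N_a$ by pushing off in the flag direction.}

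For that computation I would take $N_b=N_a'$, i.e.\ let $N_b$ be the push-off of $N_a$ used in Definition~\ref{ca3def2} itself. Concretely: perturb $i:N\ra X$ to an embedding $j_a:N\ra X$ close to $i$; let $N_a=j_a(N)$; now $N_b$ should be a further push-off of $N_a$ along a section representing the flag. Because $N_a$ and $N_b$ are \emph{parallel copies} of the same embedded $N$, one can choose the 4-chain $C_{ab}$ with $\pd C_{ab}=N_b-N_a$ to be (a chain representing) the image of $[0,1]\times N$ under the homotopy between $j_a$ and its push-off — essentially a tubular "band" $B$ inside a tubular neighbourhood of $N_a$. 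Then $D((N_a,[s_a]),(N_b,[s_b]))$ is the intersection number of $(N_b'-N_a')$ with this band $B$. Pushing $N_a',N_b'$ off in directions given by the flag, and keeping everything inside the tubular neighbourhood $\nu$ where the computation is linear-algebraic, the intersection number becomes an honest computation in the total space of the oriented rank-$4$ bundle $\nu\ra N$: it is expressible via self-intersection / Euler-class data of $\nu$ and the mutual linking of the chosen sections. The key point I would exploit is that for perturbations of a common $N$ the relevant sections are \emph{homotopic through nonvanishing sections up to the flag ambiguity}, so the intersection numbers computed for $(N_a,N_b)$ and for $(N_a,N''')$ differ by exactly the discrepancy already accounted for in \eqref{ca3eq1} — and that discrepancy is an honest integer, but by \eqref{ca3eqPlanA} it enters twice with opposite signs, hence cancels mod $2$. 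Alternatively, and perhaps more cleanly, I would observe that $D((N',[s']),(N'',[s'']))$ depends only on the flags and chains, not on which underlying immersion we started from, so we may compute it by \emph{first} making a generic small perturbation to an embedding $\hat N$ with flag $[\hat s]$, whence $N',N''$ are both perturbations of the \emph{embedded} $(\hat N,[\hat s])$; then \eqref{ca3eq2} applied to $(N',N'',\hat N)$ gives $D((N',[s']),(N'',[s'']))=D((N',[s']),(\hat N,[\hat s]))-D((N'',[s'']),(\hat N,[\hat s]))\bmod 2$, and for an embedded $\hat N$ the difference of two close perturbations of a single flagged embedded submanifold is manifestly $0$ (choose $C$ to be the thin band between them and note the pushed-off cycles can be made disjoint from it inside the tubular neighbourhood).

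The main obstacle I anticipate is bookkeeping the \emph{orientations and signs} carefully enough that "$0\bmod 2$" genuinely comes out, rather than some nonzero even integer masquerading as the answer, or a sign error that produces $1\bmod 2$. In particular one must be careful that: (i) the flag $[s']$ on the perturbation $N'$ really is the one induced from $[s]$ on $N$ in a way compatible with the $\Z$-torsor structure, so that the transformation rule \eqref{ca3eq1} applies with the stated signs; (ii) the tubular "band" 4-chain $B$ used for $C_{ab}$ has its boundary orientation matching $N_b-N_a$; and (iii) when $i:N\ra X$ is an immersion with genuine self-intersections (not just a finite cover of an embedded submanifold), the local pictures at the double points of $i(N)$ contribute correctly — here one should check that the two sheets at a transverse double point, when perturbed to $N'$ and $N''$, contribute in canceling pairs, which is where the "$4k+3$"/"$2k+1$" dimension parity (ensuring the normal bundle has even rank $4$) does the real work. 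Everything else is routine intersection theory inside the total space of $\nu$, reducing to the Euler class of $\nu$ and elementary homotopy of nonvanishing sections; the substance is entirely in getting the parity and the reduction via \eqref{ca3eq2} right.
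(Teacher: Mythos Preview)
Your reduction via Proposition \ref{ca3prop1} is sound in form, but the argument is circular at the decisive step. After applying \eq{ca3eq2} to $(N',\hat N,N'')$ you need $D((N',[s']),(\hat N,[\hat s]))\equiv 0\bmod 2$, and you propose to obtain this by regarding $N'$ as a small perturbation of the \emph{embedded} $\hat N$ and computing inside a tubular neighbourhood. But $N'$ and $\hat N$ are each small perturbations of the \emph{immersed} $N$, and near a transverse double point of $i(N)$ they may resolve the crossing in different ways; when that happens $N'$ is not the graph of a small section of the normal bundle of $\hat N$, your tubular-neighbourhood calculation does not apply, and the assertion that for embedded $\hat N$ the $D$ of two close perturbations vanishes simply restates the proposition. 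The same circularity afflicts the first approach with $N'''$: choosing $N'''$ as a push-off of $N'$ makes $D((N',[s']),(N''',[s''']))$ directly computable, but $D((N'',[s'']),(N''',[s''']))$ remains a general instance of what you are trying to prove.

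The paper's proof confronts exactly this obstruction. It fixes a reference $(\check N,[\check s])$ with $[\check N]=[N]$ and $\check N$ disjoint from a neighbourhood of $i(N)$, then takes a \emph{generic path} $(\hat N_t,[\hat s_t])$ of perturbations from $(N',[s'])$ to $(N'',[s''])$ and tracks $t\mapsto D((\check N,[\check s]),(\hat N_t,[\hat s_t]))$. Generically $\hat N_t$ is embedded except at finitely many times $t_i$, where it acquires a single transverse self-intersection; the function is locally constant away from these, and at each $t_i$ it jumps by $\pm 2$, because the two sheets each contribute one crossing of $\hat N_t'$ through $\pd C_t$ with the \emph{same} sign (not a cancelling pair, contrary to your guess). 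Hence the total change is even, and Proposition \ref{ca3prop1} finishes. The self-intersection events along the path are precisely what must occur when $N'$ and $N''$ resolve a double point of $N$ differently, and cannot be sidestepped by any static choice of intermediary.
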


\begin{proof} Given $(N',[s']),(N'',[s''])$ as in the proposition, choose a generic smooth 1-parameter family $(\hat N_t:[\hat s_t])$ of small perturbations of $(N,[s])$ for $t\in[0,1]$ with $(\hat N_0,[\hat s_0])=(N',[s'])$ and $(\hat N_1,[\hat s_1])=(N'',[s''])$. Then by genericness we can suppose that there exist $0<t_1<t_2<\cdots<t_k<1$ such that $\hat N_t$ is embedded for $t\in[0,1]\sm\{t_1,\ldots,t_k\}$, and $\hat N_{t_i}$ is immersed with a single self-intersection point $x_i\in X$ for $i=1,\ldots,k$, such that the family $\hat N_t,$ $t\in[0,1]$ crosses itself transversely at $x_i$ as $t$ increases through~$t_i$.

Choose another compact embedded flagged submanifold $(\check N,[\check s])$ in $X$ with $[\check N]=[N]\in H_3(X;\Z)$ which is disjoint from $N$, and hence also disjoint from $N',N'',\hat N_t$ as these are small perturbations of $N$. Consider the function
\e
t\longmapsto D((\check N,[\check s]),(\hat N_t,[\hat s_t]))\quad\text{for $t\in [0,1]\sm\{t_1,\ldots,t_k\}$.}
\label{ca3eq4}
\e
Since $\check N$ is disjoint from $\hat N_t$, and $(\hat N_t,[\hat s_t])$ deforms continuously in $t$, this function is constant in each connected component of $[0,1]\sm\{t_1,\ldots,t_k\}$. Define $D((\check N,[\check s]),(\hat N_t,[\hat s_t]))$ using a 4-chain $C_t$ with $\pd C_t=\hat N_t-\check N$, where $C_t$ depends continuously on $t\in[0,1]$. For $t$ close to $t_i$, near $x_i$ in $X$ there are two sheets $\hat N_t^+,\hat N_t^-$ of $\hat N_t$, and hence two sheets $\pd C_t^+,\pd C_t^-$ of $\pd C_t$. As $t$ crosses $t_i$, we see that $\hat N_t^+$ crosses $\pd C_t^-$ transversely, and $\hat N_t^-$ crosses $\pd C_t^+$ transversely with the same orientation, so that $D((\check N,[\check s]),(\hat N_t,[\hat s_t]))$ changes by $\pm 2$. Therefore the total change in \eq{ca3eq4} between $t=0$ and $t=1$ is even, giving
\begin{equation*}
D((\check N,[\check s]),(N',[s']))=D((\check N,[\check s]),(N'',[s'']))\mod 2.
\end{equation*}
Equation \eq{ca3eq3} now follows from Proposition~\ref{ca3prop1}.
\end{proof}

Flag structures are the algebro-topological data we will need in \S\ref{ca32} to orient moduli spaces of associative 3-folds in~$(X,\vp,\psi)$.

\begin{dfn} Let $X$ be an oriented 7-manifold. A {\it flag structure\/} is a map
\e
F:\bigl\{\text{immersed flagged submanifolds $(N,[s])$ in $X$}\bigr\}\longra\{\pm 1\},
\label{ca3eq5}
\e
satisfying:
\begin{itemize}
\setlength{\itemsep}{0pt}
\setlength{\parsep}{0pt}
\item[(i)] If $(N,[s])$ is an immersed flagged submanifold and $(N',[s'])$ is any small perturbation of $(N,[s])$ then $F(N,[s])=F(N',[s'])$.
\item[(ii)] $F(N,[s]+k)=(-1)^k\cdot F(N,[s])$ for all $(N,[s])$ and~$k\in\Z$.
\item[(iii)] If $(N_1,[s_1]),(N_2,[s_2])$ are disjoint finite-embedded flagged submanifolds in $X$ with $[N_1]=[N_2]$ in $H_3(X;\Z)$ then
\e
F(N_2,[s_2])=F(N_1,[s_1])\cdot (-1)^{D((N_1,[s_1]),(N_2,[s_2]))}.
\label{ca3eq6}
\e
\item[(iv)] If $(N_1,[s_1]),(N_2,[s_2])$ are disjoint immersed flagged submanifolds then
\e
F(N_1\amalg N_2,[s_1\amalg s_2])=F(N_1,[s_1])\cdot F(N_2,[s_2]).
\label{ca3eq7}
\e
\end{itemize}
\label{ca3def3}	
\end{dfn}

\begin{prop} Let\/ $X$ be an oriented\/ $7$-manifold. Then:
\begin{itemize}
\setlength{\itemsep}{0pt}
\setlength{\parsep}{0pt}
\item[{\bf(a)}] There exists a flag structure $F$ on $X$.
\item[{\bf(b)}] If\/ $F,F'$ are flag structures on $X$ then there exists a unique group morphism $\ep:H_3(X;\Z)\ra\{\pm 1\}$ such that
\e
F'(N,[s])=F(N,[s])\cdot \ep([N]) \qquad\text{for all\/ $(N,[s])$.}
\label{ca3eq8}
\e
\item[{\bf(c)}] Let\/ $F$ be a flag structure on $X$ and\/ $\ep:H_3(X;\Z)\ra\{\pm 1\}$ a group morphism, and define $F'$ in \eq{ca3eq5} by \eq{ca3eq8}. Then $F'$ is a flag structure on~$X$.
\end{itemize}
Parts\/ {\bf(a)}--{\bf(c)} imply that the set\/ $\FlagSt(X)$ of flag structures on $X$ is a torsor over $\Hom_{\rm Grp}\bigl(H_3(X;\Z),\{\pm 1\}\bigr)$.
\label{ca3prop3}	
\end{prop}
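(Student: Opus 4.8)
The plan is to prove the three parts in the order {\bf(c)}, {\bf(b)}, {\bf(a)}: part {\bf(c)} is a direct verification, part {\bf(b)} reduces quickly to two standard topological facts, and part {\bf(a)} (existence) contains the only real difficulty. \emph{Part (c).} Given a flag structure $F$ and a homomorphism $\ep:H_3(X;\Z)\ra\{\pm1\}$, set $F'(N,[s]):=F(N,[s])\cdot\ep([N])$ and check Definition \ref{ca3def3}(i)--(iv) term by term. Since $[N]\in H_3(X;\Z)$ is unchanged under small perturbations and under shifting the flag, and is additive over disjoint unions, and since $\ep$ is a homomorphism, each of (i)--(iv) for $F'$ is immediate from the corresponding property of $F$. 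In particular this shows the action of $\Hom_{\rm Grp}(H_3(X;\Z),\{\pm1\})$ on the set of flag structures is well defined.

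\emph{Part (b).} Here I use: (1) every class $\al\in H_3(X;\Z)$ is represented by a compact embedded oriented $3$-submanifold (Thom's theorem, as $3\le 6$); and (2) the normal bundle $\nu\ra N$ of such a submanifold is a rank-$4$ bundle over a $3$-complex, so the obstruction to a nowhere-zero section lies in $H^4(N;\pi_2S^3)=0$, whence $N$ carries flags and $\Flag(N)$ is a $\Z$-torsor (matching $H^3(N;\pi_3S^3)\cong\Z$ for connected $N$). Choose a flagged representative $(N_\al,[s_\al])$ of each $\al$ and set $\ep(\al):=F'(N_\al,[s_\al])\cdot F(N_\al,[s_\al])\in\{\pm1\}$. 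This is independent of $[s_\al]$ by Definition \ref{ca3def3}(ii) (a shift multiplies both $F$ and $F'$ by $(-1)^k$), and independent of $N_\al$: given two representatives, a generic small perturbation makes them disjoint since $3+3<7$, and then Definition \ref{ca3def3}(iii) applied to both $F$ and $F'$ gives equal values, the factors $(-1)^{D(\cdots)}$ cancelling. Using disjoint representatives of $\al$ and $\be$, whose disjoint union represents $\al+\be$, and Definition \ref{ca3def3}(iv) shows $\ep$ is a homomorphism; it is the unique one satisfying \eqref{ca3eq8}, since that equation forces this formula on every class.

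\emph{Part (a).} Using (1),(2) above, choose a flagged embedded representative $(R_\al,[r_\al])$ of each $\al\in H_3(X;\Z)$. For a finite-embedded flagged $(N,[s])$ with $[N]=\al$, perturb $N$ to be disjoint from $R_\al$ and set $F(N,[s]):=(-1)^{D((R_\al,[r_\al]),(N,[s]))}$; this is independent of the perturbation because $D$, a homological intersection number, is invariant under small perturbations of either factor. Then $F$ extends to an arbitrary immersed flagged $(N,[s])$ by perturbing $N$ to an embedded $N'$ (a generic perturbation of a $3$-immersion into a $7$-manifold is self-transverse, hence an embedding) and setting $F(N,[s]):=F(N',[s'])$; this is well defined by Proposition \ref{ca3prop2} together with property \eqref{ca3eq6}. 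It then remains to verify Definition \ref{ca3def3}, first for finite-embedded and then for immersed flagged submanifolds: (i) is perturbation-invariance of $D$; (ii) is \eqref{ca3eq1}; and (iii) is the mod-$2$ additivity of $D$ from Proposition \ref{ca3prop1}.

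Property (iv) fails for an arbitrary choice of references, and repairing it is the crux. Its defect $\de(\al,\be):=F(N_\al\amalg N_\be)\cdot F(N_\al)\cdot F(N_\be)$, for disjoint flagged representatives of $\al,\be$, is well defined by (iii), symmetric, and --- using associativity of $\amalg$ --- a $2$-cocycle, so defines a class in $\operatorname{Ext}^1_\Z(H_3(X;\Z),\Z/2)$; replacing $F$ by $F\cdot(\mu\ci[\,\cdot\,])$ changes $\de$ by $\pd\mu$, so $F$ can be corrected to satisfy (iv) if and only if this class vanishes. Since $\operatorname{Ext}^1_\Z(-,\Z/2)$ is detected on $2$-torsion, it suffices to show that $F$ of $n$ disjoint parallel copies of a flagged embedded submanifold $L$ representing a torsion class of even order $n$ equals $+1$; equivalently $(L^{(n)})'\bu C\equiv 0\pmod 2$, where $\pd C=L^{(n)}$ and $(L^{(n)})'$ is the push-off of $L^{(n)}$ along its flag. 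One proves this by comparing the copies pairwise through the thin $4$-chains $A_{ij}'$ joining their push-offs: $(\mathrm{copy}_i)'\bu C-(\mathrm{copy}_j)'\bu C=\mp\sum_{k\ne i,j}A_{ij}'\bu\mathrm{copy}_k$, and summing over $i$, using that $n$ is even to discard the diagonal contribution $n\,(\mathrm{copy}_1)'\bu C$, reduces \eqref{ca3eq7} to a parity cancellation among the remaining terms. Finally {\bf(a)}+{\bf(b)}+{\bf(c)} give that $\FlagSt(X)$ is nonempty with a free (by the uniqueness in {\bf(b)}) and transitive (by the existence in {\bf(b)}) action of $\Hom_{\rm Grp}(H_3(X;\Z),\{\pm1\})$, i.e.\ a torsor. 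The genuine obstacle is exactly this last computation --- showing the obstruction class $[\de]$ vanishes, i.e.\ controlling the mod-$2$ intersection numbers attached to parallel copies of torsion $3$-submanifolds; the rest is bookkeeping with Propositions \ref{ca3prop1}--\ref{ca3prop2} and standard transversality.
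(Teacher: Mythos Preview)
Your treatment of {\bf(b)} and {\bf(c)} matches the paper's essentially line for line: for {\bf(b)} the paper also uses a third disjoint representative and axiom (iii) to show the ratio $F'(N,[s])F(N,[s])^{-1}$ depends only on $[N]$, and then (iv) to get the homomorphism property; {\bf(c)} is declared ``easy to check'' in the paper.

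For {\bf(a)} you take a genuinely different route. The paper does \emph{not} choose one reference per class in $H_3(X;\Z)$ and then battle the defect cocycle. Instead it passes immediately to $\Z_2$-coefficients: it picks a $\Z_2$-basis $\{e_i\}_{i\in I}$ for the image of $H_3(X;\Z)\to H_3(X;\Z_2)$, embedded disjoint flagged representatives $(N_i,[s_i])$, arbitrary signs $\de_i$, and for general $(N,[s])$ with $[N]=\sum_{j\in J}e_j$ in $H_3(X;\Z_2)$ sets
\[
F(N,[s])=(-1)^{D_{\Z_2}\bigl((N',[s']),(\coprod_{j\in J}N_j,\coprod_{j\in J}[s_j])\bigr)}\cdot\textstyle\prod_{j\in J}\de_j,
\]
where $D_{\Z_2}$ is the obvious $\Z_2$-version of $D$. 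The point of working mod~2 is that for disjoint $(N_1,[s_1])$, $(N_2,[s_2])$ the relevant index sets combine by \emph{symmetric difference}, so axiom (iv) drops out of Proposition~\ref{ca3prop1} with no obstruction to kill. This also sidesteps any finiteness assumption on $H_3(X;\Z)$ --- the paper explicitly allows $I$ countable when $X$ is noncompact --- whereas your ${\rm Ext}^1$ argument as stated implicitly uses a cyclic decomposition.

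Your approach is salvageable, but the final ``parity cancellation'' is not really an argument as written. In fact the step is simpler than you suggest: choosing the $n$ parallel copies at \emph{generic} points $p_1,\dots,p_n$ of a fibre $D^4$ of the tubular neighbourhood, the thin chain $A_{ij}'$ is $L$ times a short segment in $D^4$, and a $1$-dimensional segment generically misses every $0$-dimensional $p_k$. Hence \emph{all} terms $A_{ij}'\bu L_k$ vanish, $L_i'\bu C$ is independent of $i$, and $\sum_i L_i'\bu C=n\,(L_1'\bu C)\equiv 0\pmod 2$. So the obstruction dies for a cheap dimension reason, not a subtle cancellation. If you want to keep your route, say this explicitly; but the paper's $\Z_2$ trick is both shorter and more robust.
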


\begin{proof} For (a), let $V$ be the image of the projection $H_3(X;\Z)\ra H_3(X;\Z_2)$. It is a $\Z_2$-vector space, as $\Z_2$ is a field. Choose a basis $e_i:i\in I$ for $V$. The indexing set $I$ is countable, and finite if $X$ is compact. For each $i\in I$, choose an embedded flagged submanifold $(N_i,[s_i])$ in $X$ with $[N_i]=e_i$ in $H_3(X;\Z_2)$. As there are at most countably many $N_i$, we can choose them to be disjoint. For each $i\in I$, choose~$\de_i=\pm 1$.

We will construct a flag structure $F$ with $F(N_i,[s_i])=\de_i$. Let $(N,[s])$ be an immersed flagged submanifold in $X$. Then $[N]\in V\subseteq H_3(X;\Z_2),$ so as the $e_i$ are a basis for $V$ there is a unique finite subset $J\subseteq I$ with $[N]=\sum_{j\in J}e_j$ in $H_3(X;\Z_2)$. Choose a small perturbation $(N',[s'])$ of $(N,[s])$ such that $N'$ is embedded in $X$ and disjoint from $N_j$ for all $j\in J$. Observe that Definition \ref{ca3def2} and Propositions \ref{ca3prop1}--\ref{ca3prop2} make sense in homology over $\Z_2$ as well as over $\Z$, so we can define $D_{\Z_2}((N_1,[s_1]),(N_2,[s_2]))\in\Z_2$ if $(N_1,[s_1]),(N_2,[s_2])$ are embedded submanifolds with $[N_1]=[N_2]\in H_3(X;\Z_2)$. Thus we may set
\begin{equation*}
F(N,[s])=(-1)^{D_{\Z_2}((N',[s']),(\coprod_{j\in J}N_j,\coprod_{j\in J}[s_j]))}\cdot \ts\prod_{j\in J}\de_j,	
\end{equation*}
since $[N']=[N]=[\coprod_{j\in J}N_j]$ in $H_3(X;\Z_2)$. Propositions \ref{ca3prop1} and \ref{ca3prop2} imply that this is independent of the choice of perturbation $(N',[s'])$, so $F(N,[s])$ is well defined. From \eq{ca3eq1}--\eq{ca3eq3} and by construction it is not difficult to show that $F$ is a flag structure, proving~(a).

For (b), suppose $(N_1,[s_1]),(N_2,[s_2])$ are immersed flagged submanifolds with $[N_1]=[N_2]=\al\in H_3(X;\Z)$. Choose another immersed flagged submanifold $(N_3,[s_3])$ with $[N]=\al$ and $N$ disjoint from both $N_1,N_2$. Then
\begin{align*}
&F'(N_1,[s_1])F(N_1,[s_1])^{-1}\\
&\!=\!\bigl[F'(N_3,[s_3])\!\cdot \!(-1)^{D((N_3,[s_3]),(N_1,[s_1]))}\bigr]\!\cdot\!\bigl[F(N_3,[s_3])\!\cdot\! (-1)^{D((N_3,[s_3]),(N_1,[s_1]))}\bigr]{}^{-1}\\
&\!=\!F'(N_3,[s_3])F(N_3,[s_3])^{-1}\\
&\!=\!\bigl[F'(N_2,[s_2])\!\cdot \!(-1)^{D((N_2,[s_2]),(N_3,[s_3]))}\bigr]\!\cdot\!\bigl[F(N_2,[s_2])\!\cdot\! (-1)^{D((N_2,[s_2]),(N_3,[s_3]))}\bigr]{}^{-1}\\
&\!=\!F'(N_2,[s_2])F(N_2,[s_2])^{-1},
\end{align*}
by Definition \ref{ca3def3}(iii) for $F,F'$. Thus $F'(N,[s])F(N,[s])^{-1}$ depends only on the homology class $[N]\in H_3(X;\Z)$. Hence there exists a unique map $\ep:H_3(X;\Z)\ra\{\pm 1\}$ with $F'(N,[s])F(N,[s])^{-1}=\ep([N])$, so that \eq{ca3eq8} holds. 

Dividing \eq{ca3eq7} for $F'$ by \eq{ca3eq7} for $F$ yields $\ep([N_1\amalg N_2])=\ep([N_1])\cdot \ep([N_2])$, so $\ep(\al+\be)=\ep(\al)\ep(\be)$ for $\al,\be\in H_3(X;\Z)$, and $\ep:H_3(X;\Z)\ra\{\pm 1\}$ is a group morphism. This proves (b). Part (c) is easy to check from Definition~\ref{ca3def3}.
\end{proof}

\subsection{Canonical flags of associatives, and orientations}
\label{ca32}

Given any compact, immersed associative $i:N\ra X$ in a TA-$G_2$-manifold $(X,\vp,\psi)$, we will define a flag $[s]$ for $N$. To do this we will need the notion of {\it spectral flow\/} introduced by Atiyah, Patodi and Singer~\cite[\S 7]{APS}.

\begin{dfn} Let $N$ be a compact manifold, and suppose that for all $t\in[0,1]$ we are given a vector bundle $E_t\ra N$ and a linear first-order elliptic operator $A_t:\Ga^\iy(E_t)\ra\Ga^\iy(E_t)$, which is self-adjoint with respect to some metrics $g_t$ on $N$ and $h_t$ on the fibres of $E_t$, where $E_t,A_t,g_t,h_t$ depend continuously on $t\in[0,1]$. Then Atiyah et al.\ \cite[\S 7]{APS} define the {\it spectral flow\/} $\SF(A_t:t\in[0,1])\in\Z$. 

Heuristically, $\SF(A_t:t\in[0,1])\in\Z$ is the number of eigenvalues $\la$ of $A_t$ which cross from $\la\in(-\iy,0)$ to $\la\in[0,\iy)$ as we deform $t$ from 0 to 1, counted with signs. We need the $A_t$ to be self-adjoint so that their eigenvalues are real.
\label{ca3def4}
\end{dfn}

If $E_0=E_1$, $A_0=A_1$ then (for simplicity assuming $E_t,A_t$ are smooth in $t\in\cS^1=\R/\Z$) we may define a vector bundle $E\ra N\t\cS^1$ by $E\vert_{N\t\{t\}}=E_t$ and an elliptic operator $A:\Ga^\iy(E)\ra\Ga^\iy(E)$ by $A\vert_{N\t\{t\}}=A_t+\frac{\pd}{\pd t}$, and then \cite[Th.~7.4]{APS} shows that $\SF(A_t:t\in[0,1])=\ind(A)$, which may be computed using the Atiyah--Singer Index Theorem.

\begin{dfn} Let $(X,\vp,\psi)$ be a TA-$G_2$-manifold, and $i:N\ra X$ be a compact, immersed associative 3-fold in $X$. Write $g$ for the Riemannian metric on $X$ from the $G_2$-structure associated to $\psi$, and $\nu\ra N$ for the normal bundle of $N$ in $X$. Then Theorem \ref{ca2thm2} defines a first-order linear elliptic operator $\bD:\Ga^\iy(\nu)\ra\Ga^\iy(\nu)$, which by Lemma \ref{ca2lem1} is self-adjoint with respect to the metrics induced by $g$, as we assume $\d\psi=0$ for TA-$G_2$-manifolds~$(X,\vp,\psi)$.

Choose a flag $[s]$ for $N$, and choose a representative $s$ for $[s]$ which is of constant length 1 for the metric on $\nu$ induced by $g$. Now $\bD$ is a twisted Dirac operator on $N$. Another example of a twisted Dirac operator on $N$ is
\e
\d*\!+\!*\d=\begin{pmatrix} 0 & *\d \\ *\d & \d * \end{pmatrix}:\Ga^\iy(\La^0T^*N\!\op\!\La^2 T^*N)\ra \Ga^\iy(\La^0T^*N\!\op\!\La^2 T^*N).
\label{ca3eq9}
\e
It is easy to see that there is a unique isomorphism $\nu\cong \La^0T^*N\op\La^2 T^*N$ which identifies $s$ with $1\op 0$ in $\Ga^\iy(\La^0T^*N\op\La^2 T^*N)$, and identifies the symbols of $\bD$ and $\d*+*\d$. Under this identification, $\bD$ and $\d*+*\d$ differ by an operator of order zero, since their symbols (first-order parts) agree. Thus we have 
\e
\bD\cong \d*+*\d+B:\Ga^\iy(\La^0T^*N\op\La^2 T^*N)\longra \Ga^\iy(\La^0T^*N\op\La^2 T^*N),
\label{ca3eq10}
\e
for some unique vector bundle morphism
\e
B=\bigl(\begin{smallmatrix} B_{00} & B_{02} \\ B_{20} & B_{22} \end{smallmatrix}\bigr):\La^0T^*N\op\La^2 T^*N\longra \La^0T^*N\op\La^2 T^*N.
\label{ca3eq11}
\e

Define a family of self-adjoint first order linear elliptic operators
\e
A_t:\Ga^\iy(\La^0T^*N\op\La^2 T^*N)\longra \Ga^\iy(\La^0T^*N\op\La^2 T^*N)
\label{ca3eq12}
\e
for $t\in[0,1]$ by $A_t=\d*+*\d+tB$. Then $A_0=\d*+*\d$ in \eq{ca3eq9}, and $A_1\cong\bD$ under our isomorphism $\La^0T^*N\op\La^2 T^*N\cong\nu$. Thus as in Definition \ref{ca3def4} we have the spectral flow $\SF(A_t:t\in[0,1])\in\Z$.

Suppose $s,s'$ are non-vanishing sections of $\nu\ra N$ yielding flags $[s],[s']$, and $A_t:t\in[0,1]$, $A'_t:t\in[0,1]$ the corresponding families of elliptic operators. Definition \ref{ca3def1} defines $d(s,s')\in\Z$. By using \cite[Th.~7.4]{APS} and computing the index of a Dirac-type operator on $N\t\cS^1$ by the Atiyah--Singer Index Theorem, we can show that (up to the sign of $d(s,s')$)
\e
\SF(A'_t:t\in[0,1])=\SF(A_t:t\in[0,1])+d(s',s).
\label{ca3eq13}
\e

This implies that $\SF(A_t:t\in[0,1])$ depends only on the flag $[s]$, not on the representative $s$. Also, since $\Flag(N)$ is a $\Z$-torsor as in \S\ref{ca31}, there is a unique flag $f_N$ on $N$, called the {\it canonical flag\/} of $N$, such that $\SF(A_t:t\in[0,1])=0$ for $A_t:t\in[0,1]$ constructed using $s\in f_N$. It has the property that for any flag $[s]$ for $N$ and family $A_t:t\in[0,1]$ constructed from $s\in[s]$ as above, we have
\e
f_N=[s]+\SF(A_t:t\in[0,1]).
\label{ca3eq14}
\e

\label{ca3def5}	
\end{dfn}

\begin{rem} Suppose $(X,\vp,*\vp)$ is a torsion-free compact $G_2$-manifold, and $N\subset X$ is a compact, unobstructed associative 3-fold in $X$, and $(W,\Om)$ is an Asymptotically Cylindrical $\Spin(7)$-manifold (not necessarily torsion-free) with $\Spin(7)$ 4-form $\Om$, with one end asymptotic to $(X\t(0,\iy),\d t\w\vp+*\vp)$, and $M\subset W$ is a closed, Asymptotically Cylindrical Cayley 4-fold in $W$, with one end asymptotic to $N\t(0,\iy)$ in~$X\t(0,\iy)$.

Ohst \cite{Ohst} studies the deformation theory of $M$ in $X$. We can interpret \cite[Prop.~19]{Ohst} in our language as saying that the moduli space $\cM_M$ of Asymptotically Cylindrical Cayley deformations of $M$ in $(W,\Om)$ has virtual dimension
\begin{equation*}
\mathop{\rm vdim}\cM_M=\ha\bigl(\chi(M)+\si(M)-b^0(N)-b^1(N)\bigr)-e(\nu_M,f_N),
\end{equation*}
where $\chi(M),\si(M)$ are the Euler characteristic and signature of $M$ (the sign of $\si(M)$ depends on the model for $\Spin(7)$ 4-forms $\Om$, we follow \cite{Joyc3,Joyc20}), and $\nu_M$ is the normal bundle of $M$ in $W$, and $e(\nu_M,f_N)$ is the Euler class of $\nu_M$ relative to the canonical flag $f_N$ at infinity in $M$. That is, $e(\nu_M,f_N)$ is the number of zeroes, counted with signs, of a generic section $s$ of $\nu_M\ra M$ asymptotic to a nonvanishing section $s'$ of the normal bundle $\nu_N$ of $N$ in $X$ with~$[s']=f_N$.

\label{ca3rem1}	
\end{rem}

Suppose that for $u\in(-\ep,\ep)$ we are given a TA-$G_2$-manifold $(X,\vp_u,\psi_u)$ and compact immersed associative $N_u$ in $(X,\vp_u,\psi_u)$, both varying smoothly with $u$. Consider how the canonical flag $f_{N_u}$ of $N_u$ varies with $u\in(-\ep,\ep)$. Choose $s_u\in\Ga^\iy(\nu_u)$ depending smoothly on $u\in(-\ep,\ep)$ and of constant length 1 in the metric $g_u$ associated to $\psi_u$, and let $A_{t,u}:t\in[0,1]$ be the family of operators associated to $(X,\vp_u,\psi_u),N_u,s_u$ in Definition \ref{ca3def5}. Then by \eq{ca3eq14} we have 
\begin{equation*}
f_{N_u}=[s_u]+\SF(A_{t,u}:t\in[0,1]).
\end{equation*}

Here the flag $[s_u]$ varies smoothly with $u\in(-\ep,\ep)$, so $f_{N_u}$ varies smoothly with $u$ if and only if $\SF(A_{t,u}:t\in[0,1])$ is constant in $u$. Since $A_{t,u}$ depends smoothly on $t,u$ the only way $\SF(A_{t,u}:t\in[0,1])$ could fail to be constant in $u$ is if either
\begin{itemize}
\setlength{\itemsep}{0pt}
\setlength{\parsep}{0pt}
\item[(a)] an eigenvalue of $A_{0,u}=\d*_u+*_u\d$ crosses 0 as $u$ varies; or
\item[(b)] an eigenvalue of $A_{1,u}\cong\bD_u$ crosses 0 as $u$ varies.
\end{itemize}
Now by Hodge theory, $\Ker A_{0,u}\cong H^0(N;\R)\op H^2(N;\R)$, which is of constant dimension. Thus (a) is impossible. Hence $f_{N_u}$ must vary smoothly with $u$ unless $\Ker\bD_u=\cT_{N_u}$ jumps as $u$ varies. In particular, if $N_u$ is unobstructed for all $u\in(-\ep,\ep)$ then $\Ker\bD_u=0$, so (b) does not happen. This proves:

\begin{prop} Suppose that for\/ $u\in(-\ep,\ep)$ we are given a TA-$G_2$-manifold\/ $(X,\vp_u,\psi_u)$ and a compact, immersed, unobstructed associative $3$-fold\/ $N_u$ in $(X,\vp_u,\psi_u),$ both varying smoothly with\/ $u$. Then the canonical flag $f_{N_u}$ of\/ $N_u$ varies continuously with $u$ in~$(-\ep,\ep)$.
\label{ca3prop4}	
\end{prop}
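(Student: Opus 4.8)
The plan is to reduce the statement to the local constancy of a spectral flow, following the reasoning in the paragraph preceding the Proposition. First I would, after shrinking $\ep$ if necessary, choose a smooth family of nonvanishing sections $s_u\in\Ga^\iy(\nu_u)$ of constant length $1$ in the metric $g_u$ associated to $\psi_u$; this exists because the $\nu_u$ form a smooth family of rank-$4$ bundles over the smooth family $N_u$. The resulting flags $[s_u]\in\Flag(N_u)$ are locally constant under the canonical identifications $\Flag(N_u)\cong\Flag(N_{u'})$ coming from the smooth family. Running the construction of Definition \ref{ca3def5} with $s_u$ gives a family of self-adjoint elliptic operators $A_{t,u}$ on $\La^0T^*N\op\La^2T^*N$, $t\in[0,1]$, depending smoothly on $(t,u)$, with $A_{0,u}=\d*_u+*_u\d$ and $A_{1,u}\cong\bD_u$, and \eqref{ca3eq14} gives $f_{N_u}=[s_u]+\SF(A_{t,u}:t\in[0,1])$. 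So it is enough to show that $u\mapsto\SF(A_{t,u}:t\in[0,1])\in\Z$ is locally constant.

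For the key step I would fix $u_0$, take $u_1$ near $u_0$, and consider the rectangle $R=[0,1]_t\times[u_0,u_1]_u$. Its four boundary edges give paths of self-adjoint (hence Fredholm) elliptic operators: the bottom $b:t\mapsto A_{t,u_0}$, the top $\tau:t\mapsto A_{t,u_1}$, the left $\ell:u\mapsto A_{0,u}$, and the right $\rho:u\mapsto A_{1,u}$. Since $R$ is contractible, the concatenations $\ell*\tau$ and $b*\rho$ are paths from $A_{0,u_0}$ to $A_{1,u_1}$ which are homotopic rel endpoints, so homotopy invariance and additivity of spectral flow (as in \cite{APS}) give $\SF(\ell)+\SF(\tau)=\SF(b)+\SF(\rho)$, i.e.\ $\SF(\tau)-\SF(b)=\SF(\rho)-\SF(\ell)$. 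It therefore suffices to show $\SF(\ell)=\SF(\rho)=0$ for $u_1$ close enough to $u_0$.

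Both of these follow as indicated before the Proposition. For $\rho$: unobstructedness says $\Ker\bD_u=\cT_{N_u}=0$ for all $u$, so $\rho$ is a path of invertible operators and $\SF(\rho)=0$. For $\ell$: by Hodge theory $\Ker A_{0,u}\cong H^0(N;\R)\op H^2(N;\R)$, so $\dim\Ker A_{0,u}=b^0(N)+b^2(N)$ is independent of $u$; choosing $\de>0$ with $\pm\de\notin\mathrm{spec}(A_{0,u_0})$ and $(-\de,\de)\cap\mathrm{spec}(A_{0,u_0})=\{0\}$, for $u$ near $u_0$ we still have $\pm\de\notin\mathrm{spec}(A_{0,u})$, and the rank of the spectral projection of $A_{0,u}$ onto $[-\de,\de]$ is locally constant, hence equals $b^0(N)+b^2(N)=\dim\Ker A_{0,u}$, forcing $(-\de,\de)\cap\mathrm{spec}(A_{0,u})=\{0\}$; thus no eigenvalue crosses $0$ along $\ell$ and $\SF(\ell)=0$. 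This makes $\SF(A_{t,u}:t\in[0,1])$ locally constant, hence constant on the connected interval $(-\ep,\ep)$, so $f_{N_u}=[s_u]+\SF(A_{t,u}:t\in[0,1])$ varies continuously. The proof is otherwise routine; the only point needing care is the spectral-flow bookkeeping at the non-invertible endpoint $t=0$, dealt with as above via Hodge theory.
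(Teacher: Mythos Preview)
Your proof is correct and follows essentially the same approach as the paper's argument (which appears in the paragraph immediately preceding the Proposition): choose a smooth family $s_u$, reduce to constancy of $\SF(A_{t,u}:t\in[0,1])$, and rule out eigenvalue crossings at the two endpoints using Hodge theory at $t=0$ and unobstructedness at $t=1$. The only difference is that you have made the rectangle/homotopy-invariance step explicit, whereas the paper leaves it implicit in the statement that $\SF$ can only change when eigenvalues of $A_{0,u}$ or $A_{1,u}$ cross zero.
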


Now we explain how to orient moduli spaces of associatives.

\begin{dfn} Let $(X,\vp,\psi)$ be a TA-$G_2$-manifold. Choose a flag structure $F$ on $X$, which is possible by Proposition \ref{ca3prop3}(a). The orientations on moduli spaces we define will depend on this choice. Let $N$ be a compact, immersed, unobstructed associative 3-fold in $(X,\vp,\psi)$. Then Definition \ref{ca3def5} defines a canonical flag $f_N$ for $N$. Define $\Or(N)=F(N,f_N)$, so that $\Or(N)=\pm 1$. 

If we take $\psi$ to be generic, and assume Conjecture \ref{ca2conj2}, then all compact associatives are unobstructed, so this defines maps $\Or:\cM(\cN,\al,\psi)\ra\{\pm 1\}$ for all $\cN,\al$. We think of $\Or$ as an {\it orientation on the\/ $0$-manifold\/} $\cM(\cN,\al,\psi)$, since in dimension 0 an orientation is a choice of sign for each point. Note that $\Or(N)$ {\it is not an orientation on\/} $N$, which already has a natural orientation.
\label{ca3def6}	
\end{dfn}

Combining Proposition \ref{ca3prop4} and Definition \ref{ca3def3}(i) yields:

\begin{cor} Suppose that for\/ $u\in(-\ep,\ep)$ we are given a TA-$G_2$-manifold\/ $(X,\vp_u,\psi_u)$ and a compact, immersed, unobstructed associative $3$-fold\/ $N_u$ in $(X,\vp_u,\psi_u),$ both varying smoothly with\/ $u$. Fix a flag structure $F$ on $X$. Then the orientation $\Or(N_u)$ is constant in $u\in(-\ep,\ep)$.
\label{ca3cor1}	
\end{cor}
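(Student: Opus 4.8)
The plan is to obtain this immediately from Proposition~\ref{ca3prop4}, which does all the analytic work, combined with the stability axiom Definition~\ref{ca3def3}(i) for flag structures; the remaining content is a connectedness argument plus a check that ``the canonical flag varies continuously'' is exactly the input that Definition~\ref{ca3def3}(i) consumes. Since $\Or(N_u)=F(N_u,f_{N_u})$ by Definition~\ref{ca3def6} and $\{\pm1\}$ is discrete, it is enough to prove that $u\mapsto\Or(N_u)$ is locally constant on the connected interval~$(-\ep,\ep)$.

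First I would fix $u_0\in(-\ep,\ep)$ and localise near it. For $u$ close to $u_0$ the immersed associative $N_u$ is a small normal graph over $N_{u_0}$, so I can identify the normal bundles $\nu_u\ra N$ --- which are metric-independent, being $TX\vert_{N_u}/TN_u$ --- compatibly in $u$, and choose $s_u\in\Ga^\iy(\nu_u)$ of constant length $1$ in the metric $g_u$ of $\psi_u$, depending smoothly on $u$, as in the paragraph before Proposition~\ref{ca3prop4}. By \eq{ca3eq14} one has $f_{N_u}=[s_u]+\SF(A_{t,u}:t\in[0,1])$, and the argument preceding Proposition~\ref{ca3prop4} shows that the integer $\SF(A_{t,u}:t\in[0,1])$ can only jump where $\Ker\bD_u=\cT_{N_u}$ jumps; since each $N_u$ is unobstructed, $\cT_{N_u}=0$ for all $u$, so this integer is constant, say equal to $k$, on some subinterval $(u_0-\de,u_0+\de)$.

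Next, on $(u_0-\de,u_0+\de)$ we have $f_{N_u}=[s_u]+k$, which is represented by a nonvanishing section of $\nu_u$ varying smoothly with $u$; together with the fact that $N_u$ is a small normal deformation of $N_{u_0}$, this shows $(N_u,f_{N_u})$ is a small perturbation of $(N_{u_0},f_{N_{u_0}})$ in the sense used in Definitions~\ref{ca3def2}--\ref{ca3def3}. Applying Definition~\ref{ca3def3}(i) gives $F(N_u,f_{N_u})=F(N_{u_0},f_{N_{u_0}})$, i.e.\ $\Or(N_u)=\Or(N_{u_0})$ for all $u\in(u_0-\de,u_0+\de)$. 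Hence $\Or(N_u)$ is locally constant, and since $(-\ep,\ep)$ is connected it is constant, which is the assertion.

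The step I expect to need the most care is the passage from Proposition~\ref{ca3prop4}'s statement that $f_{N_u}$ ``varies continuously'' to the precise claim that $(N_u,f_{N_u})$ is a genuine small perturbation of $(N_{u_0},f_{N_{u_0}})$ of the kind to which Definition~\ref{ca3def3}(i) applies: this requires setting up the varying normal bundles $\nu_u$ and metrics $g_u$ carefully enough that a smooth family of unit representatives $s_u$ exists, and then tracking the canonical flag through the spectral-flow correction, using unobstructedness only to keep that correction locally constant. Once this bookkeeping is in place, the rest is purely formal.
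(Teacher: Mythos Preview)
Your proposal is correct and follows exactly the paper's approach: the paper simply states that the corollary follows by combining Proposition~\ref{ca3prop4} and Definition~\ref{ca3def3}(i). You have supplied the routine details (local constancy via the spectral-flow argument, then connectedness of $(-\ep,\ep)$) that the paper leaves implicit.
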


The next conjecture should be proved using similar methods to
Fukaya--Oh--Ohta--Ono's treatment \cite[\S 8]{FOOO} of orientations on Kuranishi space moduli spaces of $J$-holomorphic discs.

\begin{conj} Assume Conjecture\/ {\rm\ref{ca2conj1}}. Then for any TA-$G_2$-manifold $(X,\vp,\psi)$ we have Kuranishi spaces $\bcM(\cN,\al,\psi),$ the moduli spaces of associative $3$-folds in $(X,\vp,\psi),$ and for any smooth family of TA-$G_2$-manifolds $(X,\vp_t,\psi_t):t\in\cF,$ we have $1$-morphisms of Kuranishi spaces $\bs\pi:\bcM(\cN,\al,\psi_t:t\in\cF)\ra\cF,$ interpreted as families of moduli spaces $\bcM(\cN,\al,\psi_t)$ over the base $\cF$.

Choose a flag structure $F$ for $X$. Using the ideas on canonical flags above, we can construct orientations for the Kuranishi spaces $\bcM(\cN,\al,\psi)$ and coorientations for the $1$-morphisms $\bs\pi:\bcM(\cN,\al,\psi_t:t\in\cF)\ra\cF,$ for all\/ $\cN,\al$. These (co)orientations are compatible with pullbacks of families $(X,\vp_t,\psi_t):t\in\cF,$ and agree with those in Definition\/ {\rm\ref{ca3def6}} for unobstructed\/~$[N,i]\in\bcM(\cN,\al,\psi)$.

\label{ca3conj1}	
\end{conj}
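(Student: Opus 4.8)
The plan is to follow Fukaya--Oh--Ohta--Ono's construction \cite[\S 8]{FOOO} of orientations on Kuranishi space moduli spaces, with the determinant line of the twisted Dirac operator $\bD$ of Theorem \ref{ca2thm2} in the role of their determinant line of $\bar\partial$, and a flag structure $F$ on $X$ in the role of their relative spin structure. Granting Conjecture \ref{ca2conj1}, so that $\bcM(\cN,\al,\psi)$ and the $1$-morphisms $\bs\pi:\bcM(\cN,\al,\psi_t:t\in\cF)\ra\cF$ exist as (families of) Kuranishi spaces, an orientation of $\bcM(\cN,\al,\psi)$ is, on a Kuranishi neighbourhood $(V,\O_N,\Ga,\Th,\Psi)$ at $[N,i]$, an orientation of $\det\cT_N\ot(\det\O_N)^*=\det\bD$, equivariant, continuous, and glued under coordinate changes. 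Equivalently it is a trivialisation up to positive scalars, $\Diff_+(N)$-equivariant, of the real line bundle $\cL$ of determinant lines of the Fredholm operators obtained by linearising the associative equation over a Banach manifold $\cB$ of immersions $i:N\ra X$ with $i_*[N]=\al$, whose fibre over an associative immersion is $\det\bD$. For the coorientation of $\bs\pi$ we want an orientation of the analogous line bundle over $\cB\t\cF$, relative to $T\cF$.

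First I would fix a flag $[s]\in\Flag(N)$, transported over $\cB$, with a unit-length representative $s$, and run the construction of Definition \ref{ca3def5}: $s$ identifies $\nu\cong\La^0T^*N\op\La^2T^*N$ and gives the self-adjoint homotopy $A_t=\d*+*\d+tB$, $t\in[0,1]$, from $\d*+*\d$ to a zeroth-order perturbation of $\bD$, and this homotopy extends over $\cB$ (and over $\cF$). A path of Fredholm operators canonically trivialises the pullback of the determinant line bundle along it, so $A_t$ yields an isomorphism $\cL\cong\underline{\det(\d*+*\d)}$ of line bundles over $\cB$. Now $\d*+*\d$ is self-adjoint of index $0$ with $\Ker=\Coker\cong H^0(N;\R)\op H^2(N;\R)$ by Hodge theory, so $\det(\d*+*\d)=\det\bigl(H^0(N;\R)\op H^2(N;\R)\bigr)\ot\det\bigl(H^0(N;\R)\op H^2(N;\R)\bigr)^*$ carries a canonical orientation $o_0$; transporting $o_0$ back along the isomorphism gives an orientation $o_{[s]}$ of $\cL$.

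Next I would remove the dependence on $[s]$ using $F$. Changing $[s]$ to $[s]+k$ changes $A_t$ to a path with the same endpoints but spectral flow differing by $k$, by \eqref{ca3eq13}; since the monodromy of the determinant line bundle over self-adjoint Fredholm operators around a loop of spectral flow $m$ is $(-1)^m$, one obtains $o_{[s]+k}=(-1)^k o_{[s]}$. Hence, by Definition \ref{ca3def3}(ii), the orientation $F(N,[s])\cdot o_{[s]}$ of $\cL$ is independent of $[s]$; declare this to be the orientation of $\bcM(\cN,\al,\psi)$, and similarly for the relative orientation defining the coorientation of $\bs\pi$. One then checks: $\Diff_+(N)$-equivariance, so that it descends to the orbifold quotient; multiplicativity under disjoint unions, from Definition \ref{ca3def3}(iv) and the behaviour of determinant lines under direct sums; invariance under replacing $N$ by a homologous finite-embedded submanifold, from Definition \ref{ca3def3}(iii) and \eqref{ca3eq1}; compatibility with pullback along families $(X,\vp_t,\psi_t):t\in\cF$, whose infinitesimal form is exactly Proposition \ref{ca3prop4} and Corollary \ref{ca3cor1}; and agreement with $\Or(N)=F(N,f_N)$ of Definition \ref{ca3def6} at unobstructed $[N,i]$, by taking $[s]=f_N$, so the spectral flow vanishes and $o_{f_N}$ is the canonical orientation of $\det\bD=\R$.

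The main obstacle is twofold. First, the argument presupposes Conjecture \ref{ca2conj1}: one needs $\bcM(\cN,\al,\psi)$ and $\bs\pi$ with coherent coordinate changes before one can even ask for an orientation, and the substantive, genuinely FOOO-style part is to show that the local orientations $o_{[s]}$ glue to an orientation of the \emph{Kuranishi structure}, which requires controlling how the determinant lines and the homotopies $A_t$ transform under the coordinate changes between overlapping Kuranishi neighbourhoods. Second, the sign bookkeeping must be made precise: \eqref{ca3eq13} is stated only up to the sign of $d(s,s')$, and both $o_{[s]+k}=(-1)^k o_{[s]}$ and $o_{f_N}=+1$ rest on a careful normalisation of the spectral flow of $A_t$ against the monodromy of the determinant line bundle and the choice of $o_0$. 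Once these are in place, the remaining compatibilities reduce formally to the algebraic topology of \S\ref{ca31}.
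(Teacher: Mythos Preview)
The paper does not prove this statement: it is explicitly a \emph{conjecture}, and the only indication of method is the single sentence preceding it, that it ``should be proved using similar methods to Fukaya--Oh--Ohta--Ono's treatment \cite[\S 8]{FOOO} of orientations on Kuranishi space moduli spaces of $J$-holomorphic discs.'' There is thus no proof in the paper to compare your proposal against; your outline is a reasonable and detailed fleshing-out of exactly the approach the paper gestures at, with the flag structure $F$ playing the role of the relative spin structure and the homotopy $A_t=\d*+*\d+tB$ from Definition~\ref{ca3def5} trivialising the determinant line.

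Your self-identified obstacles are the right ones. One additional point worth flagging: you speak of ``transporting'' a chosen flag $[s]\in\Flag(N)$ over the Banach manifold $\cB$ of immersions, but $\Flag(N)$ is only a $\Z$-torsor, not a trivial bundle, and there is no reason a global continuous choice of flag exists over all of $\cB$ (or even over a connected component). The construction therefore has to be genuinely local-plus-gluing: on overlaps one compares two local flag choices differing by some $k\in\Z$, and the factor $(-1)^k$ is absorbed by the flag-structure axiom Definition~\ref{ca3def3}(ii). This is implicit in your ``independence of $[s]$'' step, but the argument as written reads as if a global $[s]$ is fixed first, which would beg the orientability question. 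Apart from this and the sign normalisations you already note, your plan matches the paper's intent.
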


The next example describes the typical way in which the author expects orientations of associatives to change discontinuously in a family.

\begin{ex} Let $(X,\vp_s,\psi_s)$ for $s\in(-\ep^2,\ep^2)$ be a smooth family of TA-$G_2$-manifolds, and $i_t:N\ra X$ for $t\in(-\ep,\ep)$ a family of compact, immersed 3-submanifolds, with $N_t:=i_t(N)$ associative in $(X,\vp_{t^2},\psi_{t^2})$ for $s=t^2$. Write $\bD_t$ for the operator $\bD$ in Theorem \ref{ca2thm2} for $N_t$. Suppose $N_t$ is unobstructed for $t\ne 0$, so that $\Ker\bD_t=0$ for~$t\ne 0$. 

As $t\mapsto s=t^2$ is stationary at $t=0$, we see that $\frac{\d}{\d t}i_t\vert_{t=0}$ is an infinitesimal deformation of $N_0$ as an associative 3-fold in $(X,\vp_0,\psi_0)$, and lies in $\Ker\bD_0$. We suppose that $\Ker\bD_0=\bigl\langle\frac{\d}{\d t}i_t\vert_{t=0}\bigr\rangle\cong\R$. Thus, $\Ker\bD_t$ is 0 for $t\ne 0$ and $\R$ for $t=0$. This happens because an eigenvalue $\la$ of $\bD_t$ crosses 0 as $t$ increases through zero, crossing either from $\la<0$ to $\la>0$, or from $\la>0$ to $\la<0$. 

Thus the canonical flag $f_{N_t}$ of $N_t$ changes discontinuously by $\pm 1$ as $t$ passes through zero. If we fix a flag structure $F$ on $X$, so that Definition \ref{ca3def6} defines orientations of compact, unobstructed associative 3-folds, then $\Or(N_t)$ changes sign as $t$ passes through zero. Thus we can suppose that
\begin{equation*}
\Or(N_t)=\begin{cases} -1, & t<0, \\ 1, & t>0. \end{cases}
\end{equation*}
This does not contradict Corollary \ref{ca3cor1}, as $N_0$ is obstructed.

When $s<0$ we have no associative 3-folds of interest in $(X,\vp_s,\psi_s)$, but when $s>0$ we have two compact, unobstructed associative 3-folds $N_t,N_{-t}$ for $t=\sqrt{s}$, with opposite orientations. Thus, if we count associative 3-folds $N$ weighted by orientations $\Or(N)$, the number will not change under this transition, making it plausible that we might get a deformation-invariant answer. Note that the use of spectral flow in defining orientations, so that $\Or(N_t)$ changes sign when eigenvalues of $\bD_t$ cross zero, is crucial here. If we counted associatives without orientations, the number would not be deformation-invariant. 
\label{ca3ex1}	
\end{ex}

\begin{rem} We have been discussing associative 3-folds $N$ in a TA-$G_2$-manifold $(X,\vp,\psi)$, which by definition has $\d\psi=0$. We now consider how the theory changes if we allow~$\d\psi\ne 0$.

In \S\ref{ca26}, the moduli spaces $\cM(\cN,\al,\psi)$, McLean's Theorem \ref{ca2thm2}, and Conjectures \ref{ca2conj1} and \ref{ca2conj2} remain unchanged when $\d\psi\ne 0$. However, as in Lemma \ref{ca2lem1} the twisted Dirac operator $\bD$ in Theorem \ref{ca2thm2} is no longer self-adjoint if $\d\psi\ne 0$, though it does have self-adjoint symbol. This affects the spectral flow term $\SF(A_t:t\in[0,1])$ in Definition~\ref{ca3def5}.

For non-self-adjoint operators $A_t$ of this type, eigenvalues $\la$ are either real, or occur in complex-conjugate pairs $\la,\bar\la$ in $\C\sm\R$. To define $\SF(A_t:t\in[0,1])$, we must count eigenvalues that cross the imaginary axis $i\R$ in $\C$ as $t$ increases from 0 to 1. So when $\d\psi\ne 0$ we have a new phenomenon, that a pair $\la,\bar\la$ in $\C\sm\R$ can cross $i\R$ at $t\in(0,1)$, changing $\SF(A_t:t\in[0,1])$ by $\pm 2$. For $\bD$ to have imaginary eigenvalues does not make $N$ unobstructed, and does not correspond to any qualitative change in the families of associative 3-folds in~$(X,\vp,\psi)$.

As a consequence, the analogue of Proposition \ref{ca3prop4} with $\d\psi_u\ne 0$ should be {\it false\/}: given families $(X,\vp_u,\psi_u)$ and compact, unobstructed associative $3$-folds $N_u$ in $(X,\vp_u,\psi_u)$ varying smoothly with $u\in(-\ep,\ep)$, but allowing $\d\psi_u\ne 0$, the canonical flag $f_{N_u}$ of $N_u$ need not vary continuously with $u$ in $(-\ep,\ep)$, but can jump by $\pm 2$ when conjugate pairs of eigenvalues of $\bD_u$ cross $i\R$. However, because these jumps in canonical flags are even, the analogue of Corollary \ref{ca3cor1} with $\d\psi_u\ne 0$, and also Conjecture \ref{ca3conj1}, should remain true. 

In conclusion: for associative 3-folds in $(X,\vp,\psi)$ with $\d\psi\ne 0$, the author expects the theory of orientations on moduli spaces $\cM(\cN,\al,\psi)$ outlined above to continue to work nicely. But the canonical flags $f_N$ lose the continuity property in Proposition \ref{ca3prop4}, which is important for our proposal in Conjecture~\ref{ca1conj1}.

An aside: when a pair of eigenvalues $\la,\bar\la$ cross $i\R$ for $N_u$, say at $\la=is$ for $s>0$, the author expects a new $\cS^1$ family of Cayley 4-folds $N_u\t\cS^1_s$ to appear in the $\Spin(7)$-manifold $X\t\cS^1_s$, where $\cS^1_s=\R/2\pi s\Z$. So one might be able to compensate for this phenomenon by counting Cayley 4-folds in~$X\t\cS^1_s$. 
\label{ca3rem2}	
\end{rem}

\section{An index 1 singularity of associative 3-folds}
\label{ca4}

We now describe the first of two kinds of singularity of associative 3-folds that will be crucial to our discussion.

\subsection{\texorpdfstring{A family of SL 3-folds in $\C^3$}{A family of SL 3-folds in ℂ³}}
\label{ca41}

We describe a family of explicit SL 3-folds $K_{\bs{\phi},s}$
in $\C^3$. This family was first found by Lawlor \cite{Lawl}, was made
more explicit by Harvey \cite[p.~139--140]{Harv}, and was discussed from
a different point of view by the author in \cite[\S 5.4(b)]{Joyc6}. Our
treatment is based on that of Harvey.

Let $a_1,a_2,a_3>0$, and define polynomials $p(x)$, $P(x)$ by
\begin{equation*}
p(x)=(1+a_1x^2)(1+a_2x^2)(1+a_3x^2)-1
\quad\text{and}\quad P(x)=\frac{p(x)}{x^2}.
\end{equation*}
Define real numbers $\phi_1,\phi_2,\phi_3$ and $s$ by
\begin{equation*}
\phi_k=a_k\int_{-\iy}^\iy\frac{\d x}{(1+a_kx^2)\sqrt{P(x)}}
\quad\text{and}\quad s=\frac{1}{3}(a_1a_2a_3)^{-1/2}.
\end{equation*}
Clearly $\phi_k>0$ and $s>0$. But writing $\phi_1+\phi_2+\phi_3$
as one integral and rearranging gives
\begin{equation*}
\phi_1+\phi_2+\phi_3=\int_0^\iy\frac{p'(x)\d x}{(p(x)+1)\sqrt{p(x)}}
=2\int_0^\iy\frac{\d w}{w^2+1}=\pi,
\end{equation*}
making the substitution $w=\sqrt{p(x)}$. So $\phi_k\in(0,\pi)$
and $\phi_1+\phi_2+\phi_3=\pi$. This yields a 1-1 correspondence between triples $(a_1,a_2,a_3)$ with $a_k>0$, and quadruples $(\phi_1,\phi_2,\phi_3,s)$ with $\phi_k\in(0,\pi)$,
$\phi_1+\phi_2+\phi_3=\pi$ and~$s>0$.

For $k=1,2,3$ and $y\in\R$, define $z_k(y)$ by $z_k(y)=
{\rm e}^{i\psi_k(y)}\sqrt{a_k^{-1}+y^2}$, where 
\begin{equation*}
\psi_k(y)=a_k\int_{-\iy}^y\frac{\d x}{(1+a_kx^2)\sqrt{P(x)}}\,.
\end{equation*}
Now write $\bs{\phi}=(\phi_1,\phi_2,\phi_3)$, and define 
a submanifold $K_{\bs{\phi},s}$ in $\C^3$ by
\e
K_{\bs{\phi},s}=\bigl\{(z_1(y)x_1,z_2(y)x_2,z_3(y)x_3):
y\in\R,\; x_k\in\R,\; x_1^2+x_2^2+x_3^2=1\bigr\}.
\label{ca4eq1}
\e
Our next result comes from Harvey~\cite[Th.~7.78]{Harv}.

\begin{prop} The set\/ $K_{\bs{\phi},s}$ defined in \eq{ca4eq1} is an
embedded SL\/ $3$-fold in $\C^3$ diffeomorphic to ${\mathcal S}^2\t\R$. It
is asymptotically conical at rate $O(r^{-2})$ to the union $\Pi_0\cup\Pi_{\bs\phi}$ of two special Lagrangian $3$-planes $\Pi_0,\Pi_{\bs\phi}$ given by
\begin{equation*}
\Pi_0=\bigl\{(x_1,x_2,x_3):x_j\in\R\bigr\},\;\>
\Pi_{\bs\phi}=\bigl\{({\rm e}^{i\phi_1}x_1,{\rm e}^{i\phi_2}x_2,
{\rm e}^{i\phi_3}x_3):x_j\in\R\bigr\}.
\end{equation*}

\label{ca4prop1}
\end{prop}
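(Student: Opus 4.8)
The plan is to work throughout with the parametrization $\Phi:\R\t\cS^2\ra\C^3$ given by $\Phi(y,x_1,x_2,x_3)=(z_1(y)x_1,z_2(y)x_2,z_3(y)x_3)$, whose image is $K_{\bs\phi,s}$, and to establish the four assertions — embedded submanifold, diffeomorphism type $\cS^2\t\R$, special Lagrangian with phase $1$, and asymptotically conical at rate $O(r^{-2})$ to $\Pi_0\cup\Pi_{\bs\phi}$ — in turn, following Harvey~\cite[Th.~7.78]{Harv}. First I would check that $\Phi$ is an embedding: since $z_k(y)={\rm e}^{i\psi_k(y)}\sqrt{a_k^{-1}+y^2}$ never vanishes and the $x_k$ are never all zero on $\cS^2$, computing $\d\Phi$ shows $\Phi$ is an immersion, and $|\Phi(y,x)|^2=y^2+\sum_k a_k^{-1}x_k^2\ra\iy$ as $y\ra\pm\iy$ shows it is proper. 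For injectivity I would use that each $\psi_k$ is strictly increasing, $\psi_k'(y)=a_k/((1+a_ky^2)\sqrt{P(y)})>0$, with total variation $\phi_k<\pi$ over $\R$: if $\Phi(y,x)=\Phi(y',x')$, choosing $k$ with $x_k\ne 0$ forces $z_k(y)/z_k(y')\in\R$, so $\psi_k(y)-\psi_k(y')\in\pi\Z$, hence $\psi_k(y)=\psi_k(y')$, hence $y=y'$, and then $x=x'$. So $K_{\bs\phi,s}=\Phi(\R\t\cS^2)$ is an embedded submanifold diffeomorphic to $\cS^2\t\R$.

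Next, the Lagrangian condition. Writing $w_k=z_k(y)x_k$ one computes $\tfrac{i}{2}\,\d w_k\w\d\bar w_k=-x_k\,\Im(z_k'(y)\bar z_k(y))\,\d y\w\d x_k$, and from $|z_k(y)|^2=a_k^{-1}+y^2$ and $z_k'/z_k=\tfrac{y}{a_k^{-1}+y^2}+i\psi_k'(y)$ one obtains $\Im(z_k'\bar z_k)=\psi_k'(y)(a_k^{-1}+y^2)=1/\sqrt{P(y)}$, \emph{independent of $k$}. Hence $\Phi^*\om_0=-\tfrac{1}{2\sqrt{P(y)}}\,\d y\w\d(x_1^2+x_2^2+x_3^2)=0$ on $\cS^2$, so $K_{\bs\phi,s}$ is Lagrangian.

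For the special Lagrangian property I would expand $\Phi^*\Om_0=\Phi^*(\d z_1\w\d z_2\w\d z_3)$: the $\d x_1\w\d x_2\w\d x_3$ term vanishes on $\cS^2$, and substituting $\tfrac{z_k'x_k}{z_k}=\tfrac{x_k}{a_k^{-1}+y^2}\bigl(y+\tfrac{i}{\sqrt{P(y)}}\bigr)$ gives
\[
\Phi^*\Om_0=z_1(y)z_2(y)z_3(y)\Bigl(y+\tfrac{i}{\sqrt{P(y)}}\Bigr)\,\d y\w\be_y,
\]
where $\be_y$ is a real $2$-form on $\cS^2$ that is a pointwise positive multiple of the round volume form. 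It therefore suffices to show $\arg\bigl(z_1(y)z_2(y)z_3(y)\bigr)+\arg\bigl(y+i/\sqrt{P(y)}\bigr)\equiv\pi\pmod{2\pi}$ for all $y$; granting this, $\Phi^*\Om_0$ is a negative real multiple of a fixed real $3$-form, so equals $\pm\vol_{K_{\bs\phi,s}}$, and fixing the orientation of $K_{\bs\phi,s}$ so this is $+\vol$, the Harvey--Lawson criterion \cite{HaLa} shows $K_{\bs\phi,s}$ is calibrated by $\Re\Om_0$, i.e.\ is SL with phase $1$. To prove the phase identity I would evaluate both arguments at $y=0$ (where the integrands are even, so $\psi_k(0)=\tfrac{1}{2}\phi_k$, and $\sum_k\phi_k=\pi$) and at $y=\pm\iy$, and then show the $y$-derivative vanishes: $\tfrac{\d}{\d y}\arg(z_1z_2z_3)=\sum_k\psi_k'(y)=\tfrac{1}{\sqrt{P(y)}}\sum_k\tfrac{a_k}{1+a_ky^2}$, while $\tfrac{\d}{\d y}\arg(y+i/\sqrt{P(y)})=-\tfrac{p'(y)}{2y\sqrt{P(y)}(p(y)+1)}$, and these cancel because $\sum_k\tfrac{a_k}{1+a_ky^2}=\tfrac{p'(y)}{2y(p(y)+1)}$ — obtained by differentiating $\prod_k(1+a_ky^2)=p(y)+1$ — together with $\tfrac{1}{2}yP'(y)+P(y)=\tfrac{p'(y)}{2y}$, which comes from $x^2P(x)=p(x)$.

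Finally, the asymptotically conical statement, which I expect to be the main obstacle. As $y\ra+\iy$ we have $\psi_k(y)\ra\phi_k$ and $\sqrt{a_k^{-1}+y^2}=y+O(y^{-1})$, so $\Phi(y,x)\ra({\rm e}^{i\phi_1}yx_1,{\rm e}^{i\phi_2}yx_2,{\rm e}^{i\phi_3}yx_3)\in\Pi_{\bs\phi}$, and similarly $\Phi(y,x)\ra(-yx_1,-yx_2,-yx_3)\in\Pi_0$ as $y\ra-\iy$; this produces the diffeomorphism from $(\Pi_0\cup\Pi_{\bs\phi})\sm B_R$ onto $K_{\bs\phi,s}$ outside a compact set, with $r=|\Phi(y,x)|\sim|y|$. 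The delicate point is the sharp decay rate: the \emph{radial} discrepancy $\sqrt{a_k^{-1}+y^2}-y=O(y^{-1})$ is tangent to the cone and must not be counted, so one has to compare $\Phi(y,x)$ with its \emph{nearest} point of $\Pi_0\cup\Pi_{\bs\phi}$, after which the distance is governed by the angular error $\phi_k-\psi_k(y)=a_k\int_y^\iy\tfrac{\d x}{(1+a_kx^2)\sqrt{P(x)}}=\tfrac{s}{y^3}+O(y^{-5})$ (the integrand being $\sim(a_1a_2a_3)^{-1/2}x^{-4}$ at infinity); this $O(y^{-3})$ angular error times the radius $r\sim|y|$ gives normal distance $O(r^{-2})$, and differentiating the integral formulas for $z_k$ and $\psi_k$ yields the matching $O(r^{-2-j})$ bounds on $j$th derivatives, establishing rate $O(r^{-2})$ in the $C^\iy$ sense. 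A naive radial comparison would only give rate $O(r^{-1})$, so this refinement — tracking the normal component correctly and controlling all derivatives of the integral expressions — is the crux.
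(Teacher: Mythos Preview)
The paper does not give its own proof of this proposition: it simply records the result as ``coming from Harvey~\cite[Th.~7.78]{Harv}'' and states it without argument. Your proposal, by contrast, is a full self-contained verification --- essentially a reconstruction of Harvey's computation --- and the steps you outline are correct: the Lagrangian check via $\Im(z_k'\bar z_k)=1/\sqrt{P(y)}$ being independent of $k$, the phase identity established by checking the value at $y=0$ and showing the $y$-derivative vanishes using $\sum_k a_k/(1+a_ky^2)=p'(y)/(2y(p(y)+1))$, and the asymptotic analysis distinguishing the tangential $O(y^{-1})$ radial discrepancy from the genuinely normal $O(y^{-3})$ angular error to obtain rate $O(r^{-2})$. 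So there is nothing to compare: you have supplied what the paper merely cites.
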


An easy calculation shows that near $\Pi_0$ for small $s>0$ we have
\e
\begin{split}
K_{\bs{\phi},s}\approx\bigl\{(1+isr^{-3})(x_1,x_2,x_3)+O(s^{5/3}r^{-4}):(x_1,x_2,x_3)\in\R^3&,\\ 
r=(x_1^2+x_2^2+x_3^2)^{1/2}\gg 0&\bigr\}.
\end{split}
\label{ca4eq2}
\e

The next proposition can be proved from Proposition \ref{ca4prop1} and Remark~\ref{ca2rem1}(ii).

\begin{prop} Suppose $V,V'$ are $3$-dimensional vector subspaces of\/ $\R^7$ which are associative, with\/ $V\cap V'=\{0\}$. Then there exists an isomorphism $\R^7\cong\R\t\C^3$ such that\/ \eq{ca2eq4} holds, which identifies $V\subset\R^7$ with\/ $\{0\}\t\Pi_0\subset\R\t\C^3$ and\/ $V'\subset\R^7$ with\/ $\{0\}\t\Pi_{\bs\phi}\subset\R\t\C^3,$ for some unique $\bs\phi=(\phi_1,\phi_2,\phi_3)$ in $(0,\pi)^3$ with\/~$\phi_1+\phi_2+\phi_3=\pi$. 

Hence there is a family of associative\/ $3$-folds $K_s^{V,V'}\subset\R^7$ for $s>0$ identified with $\{0\}\t K_{\bs{\phi},s}\subset\R\t\C^3,$ such that\/ $K_s^{V,V'}$ is diffeomorphic to $\cS^2\t\R,$ and is Asymptotically Conical, with cone $V\cup V'$. This family is independent of the choice of isomorphism\/~$\R^7\cong\R\t\C^3$.
\label{ca4prop2}	
\end{prop}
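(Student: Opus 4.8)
The plan is to reduce the pair $(V,V')$ to a standard pair inside $\R\t\C^3$, and then obtain $K^{V,V'}_s$ by transporting the family of Proposition \ref{ca4prop1} back to $\R^7$ via Remark \ref{ca2rem1}(ii). First I would produce a good splitting. Since $V\cap V'=\{0\}$, the sum $U:=V+V'$ is a $6$-dimensional subspace of $\R^7$, so $U^\perp$ is a line; pick a unit vector $e\in U^\perp$. As $G_2$ acts transitively on unit vectors in $\R^7$, composing the standard identification of \S\ref{ca22} (which satisfies \eq{ca2eq4} and sends $\pd_{x_1}$ to $(1,0)$) with a suitable $g\in G_2$ carrying $e$ to $\pd_{x_1}$ gives an isomorphism $\iota\colon\R^7\to\R\t\C^3$ satisfying \eq{ca2eq4} with $\iota(e)=(1,0)$. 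Then $\iota(V),\iota(V')\subset(1,0)^\perp=\{0\}\t\C^3$, and restricting \eq{ca2eq4} to a $3$-plane in $\{0\}\t\C^3$ kills the term $\d x_1\w\om_0$, so there $\vp_0$ restricts to $\Re\Om_0$. Hence $\iota(V)$ and $\iota(V')$ are calibrated by $\Re\Om_0$, i.e.\ are transverse special Lagrangian $3$-planes of phase $1$ in $\C^3$ --- this is the linear-algebra converse of Remark \ref{ca2rem1}(ii).

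Next I would act by further elements of $G_2$ preserving $\{0\}\t\C^3$ to reach standard form. Here one invokes the classical normal form for transverse special Lagrangian $3$-planes: $\SU(3)$ acts transitively on phase-$1$ special Lagrangian planes with stabilizer $\SO(3,\R)$ at $\Pi_0$, so an $\SU(3)$-transformation makes $\iota(V)=\{0\}\t\Pi_0$; writing $\iota(V')$ as a graph over $\Pi_0$ and diagonalizing the associated symmetric Lagrangian-angle operator by an element of $\SO(3,\R)$ yields $\iota(V')=\{0\}\t\Pi_{\bs\theta}$, and the special Lagrangian condition forces $\theta_1+\theta_2+\theta_3\in\pi\Z$. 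The involution $c\colon(x_1,z)\mapsto(-x_1,\bar z)$ lies in $G_2$, fixes $\{0\}\t\Pi_0$, and sends $\{0\}\t\Pi_{\bs\theta}$ to $\{0\}\t\Pi_{-\bs\theta}$; using it together with coordinate sign changes we may normalize to $\bs\phi\in(0,\pi)^3$ with $\phi_1+\phi_2+\phi_3=\pi$. Folding all of these $G_2$-elements into $\iota$ establishes the existence statement. Uniqueness of $\bs\phi$ for this $\iota$ is then immediate: the intersection of $\Pi_{\bs\phi}$ with the $j$-th coordinate complex line of $\C^3$ is $e^{i\phi_j}\R$, and $\phi_j\in(0,\pi)$ is determined by it.

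For the family, set $K^{V,V'}_s:=\iota^{-1}(\{0\}\t K_{\bs\phi,s})$ for $s>0$. Proposition \ref{ca4prop1} says $K_{\bs\phi,s}$ is an embedded phase-$1$ special Lagrangian $3$-fold in $\C^3$, diffeomorphic to $\cS^2\t\R$ and asymptotically conical at rate $O(r^{-2})$ with cone $\Pi_0\cup\Pi_{\bs\phi}$; by Remark \ref{ca2rem1}(ii), $\{0\}\t K_{\bs\phi,s}$ is associative in $\R\t\C^3$. As $\iota$ is a linear isomorphism intertwining the two $G_2$-structures, $K^{V,V'}_s$ is associative in $\R^7$, diffeomorphic to $\cS^2\t\R$, and asymptotically conical with cone $\iota^{-1}(\{0\}\t(\Pi_0\cup\Pi_{\bs\phi}))=V\cup V'$. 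For independence of the choice, suppose $\iota_1,\iota_2$ are two isomorphisms produced this way, with $\iota_a(V')=\{0\}\t\Pi_{\bs\phi^{(a)}}$ and $\phi^{(a)}_1+\phi^{(a)}_2+\phi^{(a)}_3=\pi$. Then $h:=\iota_2\ci\iota_1^{-1}\in G_2$ fixes $\{0\}\t\Pi_0$, preserves the span $\{0\}\t\C^3$, and carries $\{0\}\t\Pi_{\bs\phi^{(1)}}$ onto $\{0\}\t\Pi_{\bs\phi^{(2)}}$. Preserving the hyperplane $\{0\}\t\C^3$ forces $h(1,0)=\pm(1,0)$, and the sign $-1$ is excluded since then $c\ci h$ would be holomorphic but change the angle sum from $\pi$ to $2\pi$; so $h(1,0)=(1,0)$ and $h|_{\C^3}\in\SO(3,\R)$. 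A short computation then shows $h|_{\C^3}$ is a signed permutation matrix, with $\bs\phi^{(2)}$ the corresponding permutation of $\bs\phi^{(1)}$. Since $K_{\bs\phi,s}$ is invariant under simultaneous permutations and sign changes of the $\C^3$-coordinates, $h(\{0\}\t K_{\bs\phi^{(1)},s})=\{0\}\t K_{\bs\phi^{(2)},s}$, whence $\iota_1^{-1}(\{0\}\t K_{\bs\phi^{(1)},s})=\iota_2^{-1}(\{0\}\t K_{\bs\phi^{(2)},s})$.

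The hard part is the normal-form step: proving rigorously that every transverse pair of special Lagrangian (equivalently associative) $3$-planes is $G_2$-equivalent to some $(\Pi_0,\Pi_{\bs\phi})$ with $\bs\phi\in(0,\pi)^3$ and $\phi_1+\phi_2+\phi_3=\pi$, dealing also with the degenerate configurations where $\iota(V')$ fails to be a graph over $\Pi_0$ in one of the coordinate directions, and the bookkeeping with the involution $c$ and with coordinate permutations that this normalization requires. Everything else is a routine transport of structure along the linear isomorphism $\iota$.
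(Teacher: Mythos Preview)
Your proof is correct and is exactly the elaboration the paper intends: the paper's ``proof'' is the single sentence that the result follows from Proposition~\ref{ca4prop1} and Remark~\ref{ca2rem1}(ii), and your argument---pick $e\in(V+V')^\perp$, use $G_2$-transitivity on unit vectors to land in $\{0\}\t\C^3$, invoke the $\SU(3)/\SO(3)$ normal form for transverse special Lagrangian planes, then transport the Lawlor neck---is precisely how one unpacks that hint. Your independence argument via the characterization of coordinate complex lines (as those meeting both $\Pi_0$ and $\Pi_{\bs\phi}$) and the angle-sum obstruction to $h(1,0)=-(1,0)$ is the right way to handle the well-definedness, and correctly shows that $\bs\phi$ is determined up to permutation, which is the intended meaning of ``unique'' in the statement.
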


We could think of $V\cup V'$ as a singular associative 3-fold in $\R^7$ with a singularity at 0, and $K_s^{V,V'}$ for $s>0$ as a family of associative smoothings of $V\cup V'$. However, it is more helpful to regard $V\cup V'$ as a nonsingular, {\it immersed\/} associative 3-fold with a self-intersection point at~0.

Let us describe $K_s^{V,V'}$ near $V\sm\{0\}$ for small $s>0$. From \eq{ca4eq2} we see that we may choose Euclidean coordinates $(x_1,x_2,x_3)$ on $V$ and $(x_4,x_5,x_6,x_7)$ on the orthogonal complement $V^\perp$ in $\R^7$, which we identify with the normal bundle $\nu_V$ of $V$ in $\R^7$, such that
\e
K_s^{V,V'}\approx \Ga_{s\ze}+O(s^{5/3}r^{-4})\quad\text{near $V\sm\{0\}$ for small $s>0$,}
\label{ca4eq3}
\e
with $\Ga_{s\ze}$ the graph of $s\ze$ in $\nu$, where $\ze\in\Ga^\iy(\nu_V\vert_{V\sm\{0\}})$ is given by
\begin{equation*}
\ze(x_1,x_2,x_3)=(r^{-3}x_1,r^{-3}x_2,r^{-3}x_3,0),\quad r=(x_1^2+x_2^2+x_3^2)^{1/2}.
\end{equation*}
Let $\bD_V:\Ga^\iy(\nu_V)\ra\Ga^\iy(\nu_V)$ be the operator of Theorem \ref{ca2thm2} for the associative $V$ in $\R^7$. Then $\bD(\ze)=0$ on $V\sm\{0\}$, since $\ze$ is an associative deformation of $V$. In fact we can regard $\ze$ as a section of $\nu$ on $V$ in {\it currents\/} (a kind of generalized section). Then calculation shows that in currents we have
\begin{equation*}
\bD_V(\ze)=4\pi\,\de_0\cdot (0,0,0,1),
\end{equation*}
with $\de_0$ the delta function on $V$ at 0, in the sense of currents.

\subsection{Desingularizing immersed associative 3-folds}
\label{ca42}

The next definition sets up notation for a conjecture on an index one singularity of associative 3-folds.

\begin{dfn} Suppose that for $t\in(-\ep,\ep)$ we are given a TA-$G_2$-manifold $(X,\vp_t,\psi_t)$ and a compact, immersed, unobstructed associative 3-fold $i_t:N\ra X$ in $(X,\vp_t,\psi_t)$, both varying smoothly with $t$. We write $N_t=i_t(N)$. Suppose there are distinct points $x^\pm$ in $N$ with $i_0(x^+)=i_0(x^-)=x$ in $X$, and these are the only immersed points in~$i_0:N\ra X$.

We will be interested in two separate cases:
\begin{itemize}
\setlength{\itemsep}{0pt}
\setlength{\parsep}{0pt}
\item[(a)] $N$ is a disjoint union $N=N^+\amalg N^-$, where $N^\pm$ are connected with $x^+\in N^+$ and $x^-\in N^-$, and $i_t\vert_{N^\pm}$ are embeddings.
\item[(b)] $N$ is connected.
\end{itemize}

Write $\Pi^+=\d_{x^+}i_0(T_{x^+}N)$ and $\Pi^-=\d_{x^-}i_0(T_{x^-}N)$, as associative 3-planes in $T_xX$, and suppose $\Pi^+\cap\Pi^-=\{0\}$, so that we have a splitting
\e
T_xX=\Pi^+\op \Pi^-\op\langle v\rangle_\R,
\label{ca4eq4}
\e
where $v\in T_xX$ is chosen uniquely to be of unit length, orthogonal to $\Pi^+\op \Pi^-$, with \eq{ca4eq4} compatible with the orientations of $\Pi^+,\Pi^-,\langle v\rangle_\R\cong\R$ and~$T_xX$.

Proposition \ref{ca4prop2} gives a unique family of associative 3-folds $K_s$, $s>0$ in $T_xX$ asymptotic at rate $O(r^{-2})$ to $\Pi^+\cup\Pi^-$. Conjecture \ref{ca4conj1} explains when we expect there to exist a compact associative 3-fold $\ti N_t$ in $(X,\vp_t,\psi_t)$ which is close to $i_0(N)$ away from $x$ in $X$, and close to $K_s$ near $x$, identifying $X\cong T_xX$ near $x$. To state the conjecture we first need to define two real numbers~$\ga,\de$.

Now $\frac{\d}{\d t}i_t(x^+)\vert_{t=0}$ and $\frac{\d}{\d t}i_t(x^-)\vert_{t=0}$ lie in $T_xX$. Define $\ga\in\R$ by
\begin{equation*}
\ga=v\cdot \ts\bigl(\frac{\d}{\d t}i_t(x^+)\vert_{t=0}-\frac{\d}{\d t}i_t(x^-)\vert_{t=0}\bigr).
\end{equation*}
Then $\ga$ measures the speed at which the two sheets of $N_t$ near $x$ in $X$ cross each other as $t$ increases through 0 in~$(-\ep,\ep)$.

The discussion at the end of \S\ref{ca41} gives $O(r^{-2})$ sections $\ze^+$ of $\nu_{\Pi^+}\vert_{\Pi^+\sm\{0\}}$ and $\ze^-$ of $\nu_{\Pi^-}\vert_{\Pi^-\sm\{0\}}$ such that 
\begin{align*}
K_s&\approx \Ga_{s\ze^+}+O(s^{5/3}r^{-4})\quad\text{near $\Pi^+\sm\{0\}$ for small $s>0$,}\\
K_s&\approx \Ga_{s\ze^-}+O(s^{5/3}r^{-4})\quad\text{near $\Pi^-\sm\{0\}$ for small $s>0$.}\\
\end{align*}
These $\ze^\pm$ make sense as currents on all of $\Pi^\pm$, and satisfy
\e
\bD_{\Pi^+}(\ze^+)=4\pi\,\de_0\cdot v,\quad
\bD_{\Pi^-}(\ze^-)=-4\pi\,\de_0\cdot v,
\label{ca4eq5}
\e 
where $v$ in \eq{ca4eq4} is a normal vector to both $\Pi^+$ and $\Pi^-$.

Now let $\bD_{N_0}:\Ga^\iy(\nu_{N_0})\ra\Ga^\iy(\nu_{N_0})$ be the operator from Theorem \ref{ca2thm2} for $N_0$ in $(X,\vp_0,\psi_0)$. It is an isomorphism, as $N_0$ is unobstructed. So its extension to currents is also an isomorphism. Thus there exists a unique current section $\chi$ of $\nu_{N_0}$ such that
\begin{equation*}
\bD_{N_0}(\chi)=4\pi\,\de_{x^+}\cdot v-4\pi\,\de_{x^-}\cdot v.
\end{equation*}
Then $\chi$ is smooth on $N_0\sm\{x^+,x^-\}$, and from \eq{ca4eq5} we see that $\chi-\ze^+$ is smooth near $x^+$, and $\chi-\ze^-$ is smooth near $x^-$. 

Near $x^+$ in $N$, under the splitting \eq{ca4eq4}, the section $\chi\approx\ze^+$ of $\nu_{N_0}$ has a pole in the $\Pi^-$ factor in \eq{ca4eq4}, but remains continuous in the $\langle v\rangle$-factor, so that $\lim_{x\ra x^+}v\cdot\chi(x)$ exists in $\R$, and similarly $\lim_{x\ra x^-}v\cdot\chi(x)$ exists. Define $\de=\lim_{x\ra x^+}v\cdot\chi(x)-\lim_{x\ra x^-}v\cdot\chi(x)$ in~$\R$.

The point of this is if we try to define an associative 3-fold $\ti N_s$ in $(X,\vp_0,\psi_0)$ by gluing $K_s$ for small $s$ into $N_0$ at $x$, then $\ti N$ should look like the graph of $s\chi$ near $i_0(N)\sm\{x\}$ to leading order in $s$. But the two ends of this graph only fit together to leading order in $s$ if $\de=0$, so $\de$ is the first-order obstruction to deforming $N_0$ to an associative 3-fold $\ti N_s$ in the fixed TA-$G_2$-manifold $(X,\vp_0,\psi_0)$, rather than in $(X,\vp_t,\psi_t)$ for some $t$. 

To make Conjecture \ref{ca4conj1} simpler, we suppose $\ga\ne 0\ne\de$. This should hold if $(X,\vp_t,\psi_t):t\in(-\ep,\ep)$ is a generic 1-parameter family of TA-$G_2$-manifolds.  
\label{ca4def1}	
\end{dfn}

\begin{conj} Work in the situation of Definition\/ {\rm\ref{ca4def1}}. Then for all sufficiently small\/ $t\in(-\ep,\ep)$ with\/ $\ga\de^{-1} t<0$ there exists a unique compact, embedded, unobstructed associative $3$-fold\/ $\ti N_t$ in $(X,\vp_t,\psi_t),$ such that\/ $\ti N_t$ is close to $N_0$ away from $x$ in $X$ and\/ $\ti N_t$ is close to $K_s$ near $x$ in $X,$ identifying $X$ near $x$ with\/ $T_xX\cong\R^7$ near $0,$ where $0<s\approx -\ga\de^{-1} t$ to leading order in~$t$.

Topologically, $\ti N_t$ is the connected sum of\/ $N$ with itself at $x^+,x^-,$ so that\/ $\ti N_t\cong N^+\# N^-$ in case {\rm(a),} and\/ $\ti N_t\cong N\#(\cS^1\t\cS^2)$ in case {\rm(b)}.

No such associative $3$-fold\/ $\ti N_t$ exists in $(X,\vp_t,\psi_t)$ if\/~$\ga\de^{-1} t\ge 0$.

We may determine the canonical flag $f_{\ti N_t}$ of\/ $\ti N_t$ in {\rm\S\ref{ca32}} from that of\/ $N_t$ as follows. Let\/ $(N',[s'])$ be a flagged submanifold in $X$ with\/ $[N']=[N_0]$ in $H_3(X;\Z),$ such that $N'$ is disjoint from $N_0,$ and hence from $N_t$ and\/ $\ti N_t$ for small\/ $t$. Then in the notation of\/ {\rm\S\ref{ca31}} we have
\e
D((N,[s']),(\ti N_t,f_{\ti N_t}))=D((N,[s']),(N_t,f_{N_t}))+\begin{cases} 0, & \de<0, \\
1, & \de>0.	\end{cases}
\label{ca4eq6}
\e

If we fix a flag structure on $X,$ so that\/ {\rm\S\ref{ca32}} defines orientations $\Or(N)$ of compact, unobstructed associative $3$-folds $N,$ then \eq{ca4eq6} implies that
\begin{equation*}
\Or(\ti N_t)=\begin{cases} \Or(N_0), & \de<0, \\ -\Or(N_0), & \de>0. \end{cases}
\end{equation*}
\label{ca4conj1}	
\end{conj}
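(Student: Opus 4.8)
The plan is to prove Conjecture \ref{ca4conj1} by a singular perturbation (gluing) argument, the associative analogue --- via the correspondence of Remark \ref{ca2rem1}(ii) and \S\ref{ca41} --- of the author's desingularizations of special Lagrangian conifolds, and of neck-gluing constructions for calibrated submanifolds in general. The model neck is the dilation family $K_s=s^{1/3}\cdot K_1$, $s>0$, of Asymptotically Conical associatives in $T_xX\cong\R^7$ from Proposition \ref{ca4prop2}, with waist of size $\sim s^{1/3}$ and tails $K_s\approx\Ga_{s\ze^\pm}+O(s^{5/3}r^{-4})$ near $\Pi^\pm\sm\{0\}$. First I would construct, for small $t$ and a small gluing parameter $s>0$ to be fixed later, an approximate associative $\hat N_{t,s}$ by excising from $i_t(N)$ geodesic balls of radius $\rho$ (with $s^{1/3}\ll\rho\ll 1$) around $i_t(x^\pm)$, identifying a neighbourhood of $x$ in $X$ with a neighbourhood of $0$ in $T_xX\cong\R^7$, and inserting the portion of $K_s$ inside radius $\rho$, interpolating with cut-off functions in the annuli $s^{1/3}\ll r\ll\rho$ where both pieces are $C^1$-close to $\Pi^+\cup\Pi^-$. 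All estimates are carried out in weighted H\"older spaces on $\hat N_{t,s}$ with weight in a gap between the indicial roots of the cone $\Pi^+\sqcup\Pi^-$; since its link is $\cS^2\sqcup\cS^2$ and the linearized operator is a twisted Dirac operator, these roots are explicit shifts of the eigenvalues of the Hodge--Dirac operator on $\cS^2$, and the $O(r^{-2})$ decay of Proposition \ref{ca4prop1} lies in the relevant gap.

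Next I would estimate the ``associator'' error of $\hat N_{t,s}$, viewed as a section of its normal bundle. The contributions are the ambient curvature and torsion of $(X,\vp_t,\psi_t)$ in the transition annuli (of size $O(\rho^2)+O(t)$), the higher-order tails $O(s^{5/3})$ of $K_s$, and --- the essential one --- the mismatch of the two tails of the leading-order deformation $s\chi$ of $i_0(N)$. By Definition \ref{ca4def1} the current $\chi$ solving $\bD_{N_0}\chi=4\pi\,\de_{x^+}\cdot v-4\pi\,\de_{x^-}\cdot v$ has $v$-components at $x^\pm$ differing by $\de$, while turning $t$ on separates the two sheets of $N_t$ in the $v$-direction at relative speed $\ga$. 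Projecting the error onto the one-dimensional cokernel of the spliced linearization --- which precisely records the $v$-component jump between the two tails of $s\chi$ --- produces a single scalar obstruction equation whose leading-order part is a nonzero multiple of $s\,\de+\ga\,t$. This has a unique small solution $s>0$ exactly when $\ga\de^{-1}t<0$, with $s\approx-\ga\de^{-1}t$, and no solution when $\ga\de^{-1}t\ge 0$. Extracting these coefficients and signs from the current identities \eq{ca4eq5} and the definitions of $\ga,\de$ is the first genuinely delicate step.

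To complete existence and uniqueness I would build a right inverse for the linearization on $\hat N_{t,s}$, bounded uniformly in $(t,s)$ modulo that one-dimensional cokernel, by splicing the inverse of $\bD_{N_0}$ --- invertible since $N_0$ is unobstructed, hence so is its extension to currents and to the excised piece after small weighted corrections --- with a uniform weighted parametrix for $\bD$ on $K_s$ off its scaling mode, using the Fredholm theory above and the rigidity of $K_s$ in $\R^7$. A Banach contraction mapping on the nonlinear Dirac-type equation then yields, for each small $t$ with $\ga\de^{-1}t<0$ and the matched value $s\approx-\ga\de^{-1}t$, a unique compact embedded associative $\ti N_t$ within a small weighted ball of $\hat N_{t,s}$; an a priori estimate with elliptic regularity upgrades this to the uniqueness asserted in the conjecture, and once $s$ is pinned by $t$ the approximate kernel and cokernel pair off, so $\bD_{\ti N_t}$ is invertible and $\ti N_t$ unobstructed. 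Topologically $\ti N_t$ is $N$ with small balls about $x^\pm$ deleted and a tube $\cS^2\t[0,1]$ inserted, that is, the connected sum of $N$ with itself at $x^+,x^-$: this gives $\ti N_t\cong N^+\#N^-$ in case (a) and $\ti N_t\cong N\#(\cS^1\t\cS^2)$ in case (b).

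Finally I would establish the flag formula \eq{ca4eq6} and the orientation statement by tracking the canonical flags through the gluing. Fix a flagged submanifold $(N',[s'])$ disjoint from $N_0$, hence from $N_t$ and $\ti N_t$, with $[N']=[N_0]$ in $H_3(X;\Z)$ (note $\ti N_t$ is homologous to $N_t$, since the two differ only by a bounded $3$-cycle inside the gluing ball). Using the characterisation $f_N=[s]+\SF(A_\tau:\tau\in[0,1])$ of \eq{ca3eq14}, the difference $D((N',[s']),(N_t,f_{N_t}))-D((N',[s']),(\ti N_t,f_{\ti N_t}))$ can be computed by choosing a flag section on $\ti N_t$ agreeing with one for $N_t$ away from $x$ and equal to the natural normal section $v$ on the neck; an excision/gluing formula for spectral flow \cite{APS} together with the local intersection count in the gluing ball (which sees both the bounded $4$-chain correction there and the change of flag section) then shows this difference is $0$ if $\de<0$ and $1$ if $\de>0$, the sign being governed by whether the $v$-component of $\chi$ jumps with a definite sign --- equivalently, whether a nonvanishing flag section is carried with or against the neck. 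This is \eq{ca4eq6}. The orientation statement follows formally: by Corollary \ref{ca3cor1} we have $\Or(N_t)=\Or(N_0)$, and applying Definition \ref{ca3def3}(iii) for the chosen flag structure $F$ to the homologous pairs $(N',N_t)$ and $(N',\ti N_t)$ and dividing, together with \eq{ca4eq6}, gives $\Or(\ti N_t)=\Or(N_0)$ if $\de<0$ and $\Or(\ti N_t)=-\Or(N_0)$ if $\de>0$. The main obstacle is getting the two sign computations right --- the coefficient of $s\,\de+\ga\,t$ in the obstruction equation, and the $\de$-dependence of the spectral-flow change --- since both require careful bookkeeping of the current identities \eq{ca4eq5} against the definitions of $\ga$ and $\de$; the weighted analysis and fixed-point steps, though lengthy, follow well-established patterns.
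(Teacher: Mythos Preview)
The statement you are attempting to prove is labelled a \emph{conjecture} in the paper, and the paper does not supply a proof. What the paper offers instead is Remark~\ref{ca4rem1}, which gives heuristic justifications: part~(a) derives the matching equation $s\de+t\ga=0$ by the same leading-order mismatch argument you describe; part~(b) justifies \eq{ca4eq6} by a direct eigenvalue estimate; and part~(c) notes that Nordstr\"om has rigorously established the existence part (without the asymptotic $s\approx-\ga\de^{-1}t$). So there is no ``paper's proof'' to compare against, only a heuristic.

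That said, your outline for the existence, uniqueness, and non-existence parts is the natural gluing strategy and tracks the paper's heuristic in Remark~\ref{ca4rem1}(a) closely: splice $K_s$ into $N_t$, identify the one-dimensional obstruction as the $v$-component mismatch $s\de+t\ga$, and solve by contraction once $s$ is matched to $t$. This is essentially the approach Nordstr\"om carries out (partially), and is the analogue of the SL desingularizations the paper cites.

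For the flag formula \eq{ca4eq6} your route differs from the paper's heuristic. You propose to compute the change in $D((N',[s']),(\,\cdot\,,f_{\,\cdot\,}))$ via an excision/gluing formula for spectral flow together with a local intersection count in the gluing ball. The paper's Remark~\ref{ca4rem1}(b) instead argues that $\bD_{\ti N_t}$ has, beyond the eigenvalues inherited from $\bD_{N_0}$, exactly one additional small eigenvalue $\la_t$ coming from the gluing mode $\xi_t\approx\chi$ away from $x$ and $\xi_t\approx\frac{\d}{\d s}K_s$ on the neck, and estimates
\[
\la_t\approx 4\pi C^{-2}\md{\ga}^{1/3}\md{\de}^{-1/3}t^{1/3}\,\de,
\]
so that $\la_t$ has the sign of $\de$. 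Since the canonical flag is defined by spectral flow from $\d*+*\d$ to $\bD$, this single extra eigenvalue crossing (or not) accounts for the jump by $0$ or $1$ in \eq{ca4eq6}. Your excision approach and the paper's single-eigenvalue approach should ultimately agree, but the paper's version isolates the mechanism more transparently: the $\de$-dependence enters through $\langle\chi,\bD_{N_0}\chi\rangle_{L^2}=4\pi\de$, not through a separate intersection-number calculation. If you pursue a rigorous proof, the paper's heuristic suggests the cleaner route is to prove directly that the glued operator has one small eigenvalue with sign governed by $\de$, rather than assembling an excision formula.
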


\begin{rem}{\bf(a)} Here is why we require $s\approx -\ga\de^{-1} t$ in this conjecture. To define an associative 3-fold $\ti N_t$ in $(X,\vp_t,\psi_t)$ by gluing $K_s$ for small $s>0$ into $N_t$ for small $t$ near $x$, then $\ti N_t$ should look like the graph of $s\chi+t\frac{\d}{\d t}i_t\vert_{t=0}$ near $i_0(N)\sm\{x\}$ to leading order in $s,t$. The distance between the two ends of this graph in the $\R$-component in \eq{ca4eq4} is $s\de+t\ga$, by definition of $\ga,\de$ in Definition \ref{ca4def1}. As the two ends of the graph must match up, we require that $s\de+t\ga=0$, to leading order in $s,t$. Since $K_s$ is only defined if $s>0$, we expect that no such $\ti N_t$ exists if~$\ga\de^{-1} t\ge 0$.
\smallskip

\noindent{\bf(b)} Equation \eq{ca4eq6} is a guess, but here is some justification for it. The author expects that the eigenvalues (in any bounded region) and eigenvectors of $\bD_{\ti N_t}$ for small $t$ will be close to those of $\bD_{N_0}$, {\it except\/} that $\bD_{\ti N_t}$ should have one additional eigenvector $\xi_t$, with small eigenvalue $\la_t$, where we expect $\xi_t\cong \chi$ away from $x$, and $\xi_t\cong\frac{\d}{\d s}K_s$ near $K_s$, with $s\approx -\ga\de^{-1} t$.

We can estimate this eigenvalue $\la_t$ by
\begin{align*}
\la_t&=\nm{\xi_t}_{L^2}^{-2}\cdot\langle \xi_t,\bD_{\ti N_t}\xi_t\rangle_{L^2}\approx (C\md{\ga}^{-1/6}\md{\de}^{1/6}t^{-1/6})^{-2}\cdot\langle \chi,\bD_{N_0}\chi\rangle_{L^2}\\
&=C^{-2}\md{\ga}^{1/3}\md{\de}^{-1/3}t^{1/3}\cdot\langle \chi,4\pi\,\de_{x^+}\cdot v-4\pi\,\de_{x^-}\cdot v\rangle_{L^2}\\
&=4\pi C^{-2}\md{\ga}^{1/3}\md{\de}^{-1/3}t^{1/3}\cdot\bigl(\ts\lim_{x\ra x^+}v\cdot\chi(x)-\lim_{x\ra x^-}v\cdot\chi(x)\bigr)\\
&=4\pi C^{-2}\md{\ga}^{1/3}\md{\de}^{-1/3}t^{1/3}\de.
\end{align*}
Here in the first step we expect $\nm{\xi_t}_{L^2}$ to be dominated by $\nm{\frac{\d}{\d s}K_s}_{L^2}=Cs^{-1/6}$ for $C>0$ and $s\approx -\ga\de^{-1} t$, and $\langle \xi_t,\bD_{\ti N_t}\xi_t\rangle_{L^2}$ to be dominated by $\langle \chi,\bD_{N_0}\chi\rangle_{L^2}$. Hence we expect $\bD_{\ti N_t}$ to have one small eigenvalue $\la=O(t^{1/3})$, which is positive if $\de>0$ and negative if $\de<0$. So by properties of spectral flow, the canonical flag $f_{\ti N_t}$ of $\ti N_t$ should increase by 1 as $\de$ increases through 0, and this is the reason for the last term in \eq{ca4eq6}.
\smallskip

\noindent{\bf(c)} Motivated by a talk on earlier version of these conjectures given by the author in a conference in London in 2012, Nordstr\"om \cite{Nord} proved part of Conjecture \ref{ca4conj1}. He shows that for for small $s>0$ there exists a associative 3-fold $\ti N_t$ in $(X,\vp_t,\psi_t)$ by gluing $K_s$ into $N_0$ for some unique small $t\in(-\ep,\ep)$, but he does not prove that $s\approx -\ga\de^{-1} t$. A related conjecture for SL 3-folds was stated in \cite[\S 6]{Joyc4} and proved in \cite[\S 9]{Joyc18}, and also independently by Yng-Ing Lee \cite{LeeY}, and by Dan Lee~\cite{LeeD}. 
\label{ca4rem1}
\end{rem}

\section{Another index 1 associative singularity}
\label{ca5}

Next we describe a second kind of singularity of associative 3-folds.

\subsection{\texorpdfstring{Three families of SL 3-folds in $\C^3$}{Three families of SL 3-folds in ℂ³}}
\label{ca51}

Let $G$ be the group $\U(1)^2$, acting on $\C^3$ by
\e
({\rm e}^{i\th_1},{\rm e}^{i\th_2}):(z_1,z_2,z_3)\mapsto
({\rm e}^{i\th_1}z_1,{\rm e}^{i\th_2}z_2,{\rm e}^{-i\th_1-i\th_2}z_3)\quad\text{for $\th_1,\th_2\in\R$.}
\label{ca5eq1}
\e
All the $G$-invariant special Lagrangian 3-folds in $\C^3$ were
written down explicitly by Harvey and Lawson \cite[\S III.3.A]{HaLa}, and studied in more detail in \cite[Ex.~5.1]{Joyc5} and
\cite[\S 4]{Joyc8}. Here are some examples of $G$-invariant SL
3-folds which will be important in what follows.

\begin{dfn} Define a subset $L_0$ in $\C^3$ by
\e
\begin{split}
L_0=\bigl\{(z_1,z_2&,z_3)\in\C^3:\ms{z_1}=\ms{z_2}=\ms{z_3},\\
&\Im(z_1z_2z_3)=0,\quad \Re(z_1z_2z_3)\ge 0\bigr\}.
\end{split}
\label{ca5eq2}
\e
Then $L_0$ is a {\it special Lagrangian cone} on $T^2$, invariant
under the Lie subgroup $G$ of $\SU(3)$ given in \eq{ca5eq1}. Let
$s>0$, and define
\ea
\begin{split}
L^1_s=\bigl\{(z_1,z_2,z_3)\in\C^3:\,&
\ms{z_1}-s=\ms{z_2}=\ms{z_3},\\
&\Im(z_1z_2z_3)=0,\; \Re(z_1z_2z_3)\ge 0\bigr\},
\end{split}
\label{ca5eq3}\\
\begin{split}
L^2_s=\bigl\{(z_1,z_2,z_3)\in\C^3:\,&
\ms{z_1}=\ms{z_2}-s=\ms{z_3},\\
&\Im(z_1z_2z_3)=0,\; \Re(z_1z_2z_3)\ge 0\bigr\},
\end{split}
\label{ca5eq4}\\
\begin{split}
L^3_s=\bigl\{(z_1,z_2,z_3)\in\C^3:\,&
\ms{z_1}=\ms{z_2}=\ms{z_3}-s,\\
&\Im(z_1z_2z_3)=0,\; \Re(z_1z_2z_3)\ge 0\bigr\}.
\end{split}
\label{ca5eq5}
\ea
Then each $L^a_s$ is a $G$-invariant, nonsingular, embedded SL 3-fold in $\C^3$ diffeomorphic to ${\mathcal S}^1\t\R^2$, which is {\it Asymptotically Conical} ({\it AC\/}), with cone~$L_0$.
\label{ca5def1}
\end{dfn}

Thus the $L^a_s$ for $a=1,2,3$ are {\it three different\/} families
of AC SL 3-folds in $\C^3$ asymptotic to the same SL cone $L_0$,
each family depending on $s\in(0,\iy)$. Hence $\{0\}\t L^a_s$ is a nonsingular AC associative 3-fold in $\R^7=\R\t\C^3$ as in \S\ref{ca22}, diffeomorphic to ${\mathcal S}^1\t\R^2$ for $a=1,2,3$ and $s>0$, asymptotic to the singular associative $T^2$-cone $\{0\}\t L_0$. For brevity we write $L_0,L^a_s$ in place of~$\{0\}\t L_0,\{0\}\t L^a_s$.

Write $\nu_{L_0}$ for the normal bundle of $L_0$ in $\R^7$, and $\bD_{L_0}:\Ga^\iy(\nu_{L_0})\ra\Ga^\iy(\nu_{L_0})$ for the operator in Theorem \ref{ca2thm2}. Define sections $\ze_1,\ze_2$ of $\nu_{L_0}$ by
\e
\begin{split}
\ze_1&:(0,z_1,z_2,z_3)\longmapsto \ts(0,\frac{1}{3}\bar z^{-1}_1,-\frac{1}{6}\bar z^{-1}_2,-\frac{1}{6}\bar z^{-1}_3),\\	
\ze_2&:(0,z_1,z_2,z_3)\longmapsto \ts(0,-\frac{1}{6}\bar z^{-1}_1,\frac{1}{3}\bar z^{-1}_2,-\frac{1}{6}\bar z^{-1}_3).
\end{split}
\label{ca5eq6}
\e
Then $\ze_1,\ze_2$ are homogeneous $O(r^{-1})$ with $\bD_{L_0}(\ze_1)=\bD_{L_0}(\ze_2)=0$. A similar analysis to \eq{ca4eq3} shows that
\e
\begin{split}
L^1_s&\approx \Ga_{s\ze_1}+O(s^2r^{-2}),\qquad\quad L^2_s\approx \Ga_{s\ze_2}+O(s^2r^{-2}),\quad\text{and}\\
L^3_s&\approx \Ga_{-s\ze_1-s\ze_2}+O(s^2r^{-2})
\quad\text{near $L_0\sm\{0\}$ in $\R^7$ for small $s>0$.}
\end{split}
\label{ca5eq7}
\e

\subsection{\texorpdfstring{Associative 3-folds with singularities modelled on $L_0$}{Associative 3-folds with singularities modelled on L₀}}
\label{ca52}

The next definition sets up notation for our conjecture.

\begin{dfn} Let $(X,\vp_t,\psi_t)$ for $t\in(-\ep,\ep)$ be a smooth family of TA-$G_2$-manifolds, and that $N_0$ a compact associative 3-fold in $(X,\vp_0,\psi_0)$ with one singular point $x$, locally modelled on $L_0$ (or $\{0\}\t L_0$) in $\R^7=\R\t\C^3$, under an identification $T_xX\cong\R^7$. Write $\nu$ for the normal bundle of $N_0\sm\{x\}$ in $X$, and $\bD:\Ga^\iy(\nu)\ra\Ga^\iy(\nu)$ for the operator in Theorem~\ref{ca2thm2}.
  
The author \cite{Joyc14,Joyc15,Joyc16,Joyc17,Joyc18} studied SL $m$-folds with isolated conical singularities in (almost) Calabi--Yau $m$-folds, and very similar techniques should work to study singular associative 3-folds of this type. To do the analysis, we should work in {\it weighted Sobolev spaces\/} $L^2_{k,\la}(\nu)$ in the sense of Lockhart and McOwen \cite{Lock,LoMc}, where $\la\in\R$ is a growth rate, so that roughly $L^2_{k,\la}(\nu)$ contains sections of $\nu$ on $N_0\sm\{x\}$ which grow at rate $O(r^\la)$ near $x$ in $N_0$, where $r$ is the distance to $x$. 

Then $\bD$ extends to an operator on weighted Sobolev spaces
\e
\bD_{k,\la}:L^2_{k+1,\la}(\nu)\longra L^2_{k,\la-1}(\nu).
\label{ca5eq8}
\e
Write $\nu_{L_0}$ for the normal bundle of $L_0\sm\{0\}$ in $\R^7$, and $\bD_{L_0}:\Ga^\iy(\nu_{L_0})\ra\Ga^\iy(\nu_{L_0})$ for the corresponding twisted Dirac operator. For each $\la\in\R$, define
\begin{align*}
V_\la=\big\{s\in\Ga^\iy(\nu_{L_0}):\text{$\bD_{L_0}(s)=0$ and $s$ is homogeneous of order}&\\
\text{$O(r^\la)$ under dilations of $L_0$}&\bigr\}.
\end{align*}
Then $V_\la$ is finite-dimensional, isomorphic to the kernel of an elliptic operator on the link $T^2$ of $L_0$. Write $\cD_{L_0}\subset\R$ for the set of $\la$ with $V_\la\ne 0$. Then $\cD_{L_0}$ is discrete. The Lockhart--McOwen theory implies that $\bD_{k,\la}$ in \eq{ca5eq8} is Fredholm if and only if $\la\in\R\sm\cD_{L_0}$, where the index, kernel and cokernel of $\bD_{k,\la}$ are independent of $k\in\N,$ and if $\la_1,\la_2\in \R\sm\cD_{L_0}$ with $\la_1<\la_2$ then
\e
\ind(\bD_{k,\la_1})=\ind(\bD_{k,\la_2})+\sum_{\la\in \cD_{L_0}:\la_1<\mu<\la_2}\dim V_\mu.
\label{ca5eq9}
\e
 
Since $\bD$ is self-adjoint of order 1 and $\dim N_0=3$, it turns out that $\cD_{L_0}$ and the $V_\la$ are symmetric about $\la=-1$ in $\R$, with for all $k,l\in\N$ and $\la\in\R\sm\cD_{L_0}$
\e
\begin{gathered}
\Ker(\bD_{k,\la})\cong \Coker(\bD_{l,-2-\la}), \quad \Coker(\bD_{k,\la})\cong \Ker(\bD_{l,-2-\la}),\\ 
\ind(\bD_{k,\la})=-\ind(\bD_{l,-2-\la}).
\end{gathered}
\label{ca5eq10}
\e
Combining \eq{ca5eq9}--\eq{ca5eq10} yields for $\la\in\R\sm\cD_{L_0}$
\begin{equation*}
\ind(\bD_{k,\la})=\begin{cases} \phantom{-}\ha \dim V_{-1}+\sum_{\mu\in \cD_{L_0}:\la<\mu<-1}\dim V_\mu, & \la<-1, \\
-\ha \dim V_{-1}-\sum_{\mu\in\cD_{L_0}:-1<\mu<\la}\dim V_\mu, & \la>-1.	
\end{cases}
\end{equation*}

The cone $L_0$ was studied as a special Lagrangian cone in \cite[Ex.~3.5]{Joyc18}. A similar analysis should show that $V_{-1}=\langle\ze_1,\ze_2\rangle\cong\R^2$, for $\ze_1,\ze_2$ as in \eq{ca5eq6}, and that $V_\la=0$ for $\la\in(-1,0)$, and $V_0\cong\R^7$ is the normal projections of translation vector fields in $\R^7$. Therefore $\bD_{k,\la}$ is Fredholm with index $-1$ for $\la\in(-1,0)$, and Fredholm with index 1 for $\la\in(-2,-1)$.

Let us assume that $\Coker(\bD_{k,\la})=0$ for $\la\in(-2,-1)$, that is, $N_0$ is minimally obstructed. This should hold provided $(X,\vp_t,\psi_t)$, $t\in(-\ep,\ep)$ is a generic 1-parameter family of TA-$G_2$-manifolds. Then $\dim\Ker(\bD_{k,\la})=1$, so $\Ker(\bD_{k,\la})=\langle\chi\rangle$ for $\la\in(-2,-1)$, say. So \eq{ca5eq10} gives $\Coker(\bD_{k,\la})\cong\langle\chi\rangle$ for $\la\in(-1,0)$. From the Lockhart--McOwen theory with $V_{-1}=\langle\ze_1,\ze_2\rangle$ and $V_\la=0$ for $\la\in(-1,0)$ we can show that identifying $N_0,\nu$ near $x\in X$ with $L_0,\nu_{L_0}$ near $0\in\R^7$ we have
\begin{equation*}
\chi=\de_1\ze_1+\de_2\ze_2+O(1)
\end{equation*}
for some $\de_1,\de_2\in\R$, not both zero. 

To simplify Conjecture \ref{ca5conj1} we assume that $\de_2\ne 0$, $\de_1\ne 0$, and $\de_1\ne\de_2$. These are the respective first-order obstructions to gluing $L_s^1,L_s^2,L_s^3$ for small $s>0$ into $N_0$ to make an associative 3-fold in the fixed TA-$G_2$-manifold $(X,\vp_0,\psi_0)$, rather than in $(X,\vp_t,\psi_t)$ for some $t$. This should hold provided $(X,\vp_t,\psi_t)$, $t\in(-\ep,\ep)$ is a generic 1-parameter family of TA-$G_2$-manifolds.

Define a section $\xi\in\Ga^\iy(\nu)$ by, using the index notation for tensors,
\begin{equation*}
\xi^{a_4}=\ts g^{a_1b_1}g^{a_2b_2}g^{a_3b_3}g^{a_4b_4}(\vol_{N_0})_{a_1a_2a_3}\bigl(\frac{\d}{\d t}\psi_t\vert_{t=0}\bigr){}_{b_1b_2b_3b_4},
\end{equation*}
where $g$ is the Riemannian metric on $X$ associated to $\psi$, and $\vol_{N_0}$ the volume form on $N_0$ induced by $g$. Define $\ga\in\R$ by $\ga=\langle\chi,\xi\rangle_{L^2}$.

Now if $N_0$ extended to a smooth family $N_t$, $t\in(-\ep,\ep)$ of compact associative 3-folds in $(X,\vp_t,\psi_t)$ with conical singularities, then $\th=\frac{\d}{\d t}N_t\vert_{t=0}$ would be a section of $\nu$ with $\th=O(r^0)$ and $\bD(\th)=\xi$. As $\Coker(\bD_{k,\la})=\langle\chi\rangle$ for $\la\in(-1,0)$, this would give $\ga=\langle \chi,\xi\rangle_{L^2}=\langle\chi,\bD(\th)\rangle_{L^2}=0$. Thus $\ga$ is the first-order obstruction to extending $N_0$ to a family $N_t$, $t\in(-\ep,\ep)$ in $(X,\vp_t,\psi_t)$. 

To simplify Conjecture \ref{ca5conj1} we assume that $\ga\ne 0$. This should hold provided $(X,\vp_t,\psi_t)$, $t\in(-\ep,\ep)$ is a generic 1-parameter family of TA-$G_2$-manifolds.
\label{ca5def2}	
\end{dfn}

\begin{conj} Work in the situation of Definition\/ {\rm\ref{ca5def2}}. Then:
\begin{itemize}
\setlength{\itemsep}{0pt}
\setlength{\parsep}{0pt}
\item[{\bf(i)}] For all small\/ $t\in(-\ep,\ep)$ with\/ $\ga\de_2^{-1}t<0$ there exists a unique compact, embedded, unobstructed associative $3$-fold\/ $\ti N^1_t$ in $(X,\vp_t,\psi_t),$ such that\/ $\ti N^1_t$ is close to $N_0$ away from $x$ in $X$ and\/ $\ti N^1_t$ is close to $L^1_s$ near $x$ in $X,$ identifying $X$ near $x$ with\/ $T_xX\cong\R^7$ near $0,$ where $0<s\approx -\ga\de_2^{-1} t$ to leading order in $t$. No such\/ $\ti N^1_t$ exists in $(X,\vp_t,\psi_t)$ if\/~$\ga\de_2^{-1} t\ge 0$.

\item[{\bf(ii)}] For all small\/ $t\in(-\ep,\ep)$ with\/ $\ga\de_1^{-1} t>0$ there exists a unique compact, embedded, unobstructed associative $3$-fold\/ $\ti N^2_t$ in $(X,\vp_t,\psi_t),$ such that\/ $\ti N^2_t$ is close to $N_0$ away from $x$ in $X$ and\/ $\ti N^2_t$ is close to $L^2_s$ near $x$ in $X,$ identifying $X$ near $x$ with\/ $T_xX\cong\R^7$ near $0,$ where $0<s\approx \ga\de_1^{-1} t$ to leading order in $t$. No such\/ $\ti N^2_t$ exists in $(X,\vp_t,\psi_t)$ if\/~$\ga\de_1^{-1} t\le 0$.
\item[{\bf(iii)}] For all small\/ $t\in(-\ep,\ep)$ with\/ $\ga(\de_2-\de_1)^{-1}t>0$ there exists a unique compact, embedded, unobstructed associative $3$-fold\/ $\ti N^3_t$ in $(X,\vp_t,\psi_t),$ such that\/ $\ti N^3_t$ is close to $N_0$ away from $x$ in $X$ and\/ $\ti N^3_t$ is close to $L^2_s$ near $x$ in $X,$ identifying $X$ near $x$ with\/ $T_xX\cong\R^7$ near $0,$ where $0<s\approx \ga(\de_2-\de_1)^{-1}t$ to leading order in $t$. No such\/ $\ti N^3_t$ exists in $(X,\vp_t,\psi_t)$ if\/~$\ga(\de_2-\de_1)^{-1}t\le 0$.

\end{itemize}
\label{ca5conj1}	
\end{conj}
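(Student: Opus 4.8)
The plan is to prove Conjecture~\ref{ca5conj1} by a gluing argument in weighted Banach spaces, following closely the author's desingularization programme for special Lagrangian $m$-folds with isolated conical singularities \cite{Joyc14,Joyc15,Joyc16,Joyc17,Joyc18}, and in particular the matching-parameter analysis of \cite[\S 9]{Joyc18}, but with the exterior-derivative operators there replaced by the twisted Dirac operator $\bD$ on the normal bundle $\nu$ of Theorem~\ref{ca2thm2}. The three cases $a=1,2,3$ are treated in parallel; they differ only in the value of one numerical coefficient at the end, arising from the three leading profiles $\ze_1,\ze_2,-\ze_1-\ze_2$ in \eq{ca5eq7}. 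So fix $a$, and for small $s>0$ build a one-parameter family of \emph{approximate} compact associative $3$-folds $\hat N^a_s$ in $X$: remove a small ball $B_{r_1(s)}(x)$ from $N_0$, remove the far end $\{r>r_2(s)/s\}$ of $L^a_s\subset\R^7\cong T_xX$, and glue the two along the annular neck using cutoffs and the identification $X\cong T_xX$ near $x$, where $r_1(s)\to0$ slowly as $s\to0$ (say $r_1(s)\sim s^\ka$ for suitable $\ka\in(0,1)$). On the outer part, $\hat N^a_s$ is the graph over $N_0\sm\{x\}$ of a section which near $x$ agrees with the profile $s\ze^a$ of $L^a_s$ in \eq{ca5eq7} and on the rest of $N_0$ approximately solves the linearised associative equation $\bD_{N_0}w=t\xi$ in the fixed $(X,\vp_0,\psi_0)$ picture up to a residual in $\langle\chi\rangle$; on the inner part it is $L^a_s$ itself, which is genuinely associative in $T_xX\cong\R^7$, so the only error there comes from $\psi_t-\psi^{\mathrm{flat}}$, of size $O(s^\al)$ for some $\al>0$. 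A routine estimate in a weighted H\"older norm $C^{k,\mu}_\la$ on $\hat N^a_s$ with weight $\la\in(-1,0)$ then gives the associative defect map $\cF_{s,t}$ with $\|\cF_{s,t}(0)\|_{C^{k,\mu}_{\la-1}}\le C(s+|t|)^{1+\be}$ for some $\be>0$, uniformly in small $s,t$.

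Next I would study the linearisation $\d\cF_{s,t}(0)$, a small perturbation of the operator obtained by gluing $\bD_{N_0}$ on $N_0\sm\{x\}$ to $\bD_{L^a_s}$ on $L^a_s$. By Definition~\ref{ca5def2} and the Lockhart--McOwen theory, $\bD_{k,\la}$ on $N_0\sm\{x\}$ for $\la\in(-1,0)$ has kernel $0$ and cokernel $\langle\chi\rangle$, while on $L^a_s$ the corresponding operator is surjective with one-dimensional kernel spanned by the dilation field $\tfrac{\d}{\d s}L^a_s$. Hence on $\hat N^a_s$ the glued operator has approximate kernel and approximate cokernel of dimension $1$, and the analytic core of the argument is a \emph{uniform} estimate $\|v\|_{C^{k+1,\mu}_\la}\le Cs^{-\si}\|\d\cF_{s,t}(0)v\|_{C^{k,\mu}_{\la-1}}$ for $v$ in a fixed complement of the approximate kernel, with $\si\ge0$ controlled. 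This is proved by the standard Joyce-type contradiction-and-rescaling dichotomy: a failure would produce a nonzero element of $\Ker\bD_{N_0}$ at rate $\la$, or of $\Ker\bD_{L_0}$ at a rate in $(\la,0)$, both of which vanish by the minimal-obstruction hypothesis on $N_0$ and by $V_\mu=0$ for $\mu\in(-1,0)$. A contraction-mapping argument in the complement of the approximate kernel then yields, for each small $(s,t)$, a section $v=v(s,t)$ with $\cF_{s,t}(v(s,t))=\Phi_a(s,t)\,\chi_s$ lying in the one-dimensional obstruction space, where $\chi_s$ is a cut-off copy of $\chi$.

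It remains to solve the \emph{reduced equation} $\Phi_a(s,t)=0$. Pairing the defect with $\chi$ and using $\bD_{N_0}\chi=0$, Green's formula on $N_0\sm\{x\}$, the identity $\langle\chi,\xi\rangle_{L^2}=\ga$, and the expansion $\chi=\de_1\ze_1+\de_2\ze_2+O(1)$, the neck contribution is evaluated by the Lockhart--McOwen boundary pairing on $V_{-1}=\langle\ze_1,\ze_2\rangle$ (which underlies the self-adjoint duality \eq{ca5eq10}) applied to $\ze^a$ and the leading term of $\chi$; this produces the leading-order expansions
\begin{gather*}
\Phi_1(s,t)=c_1\bigl(\ga t+\de_2 s\bigr)+\cdots,\qquad \Phi_2(s,t)=c_2\bigl(\ga t-\de_1 s\bigr)+\cdots,\\
\Phi_3(s,t)=c_3\bigl(\ga t-(\de_2-\de_1)s\bigr)+\cdots,
\end{gather*}
with nonzero constants $c_1,c_2,c_3$ and error terms of higher order. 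Since $\ga\ne0$ and $\de_2,\de_1,\de_2-\de_1\ne0$ by the genericity hypotheses of Definition~\ref{ca5def2}, the implicit function theorem solves $\Phi_a(s,t)=0$ uniquely for $t$ as a smooth function of $s$ with, e.g., $s\approx-\ga\de_2^{-1}t$ when $a=1$, and correspondingly for $a=2,3$. Because the construction needs $s>0$, the associative $3$-fold $\ti N^a_t$ exists exactly when the stated sign condition holds ($\ga\de_2^{-1}t<0$ for $a=1$, $\ga\de_1^{-1}t>0$ for $a=2$, $\ga(\de_2-\de_1)^{-1}t>0$ for $a=3$) and not otherwise; it is embedded for small $s$ since both pieces are and they are joined along a thin neck; it is unobstructed because $v(s,t)$ was taken in the complement of the approximate kernel while the kernel direction has been consumed by the reduced equation, so $\Ker\bD_{\ti N^a_t}=0$; and the uniqueness statement follows from uniqueness in the contraction mapping together with uniqueness of the solution $t(s)$ of the reduced equation.

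The main obstacle I expect is twofold. First, one must transplant the full Lockhart--McOwen / conical-singularity analytic package of \cite{Joyc14,Joyc15,Joyc16,Joyc17,Joyc18} from the exterior-derivative setting to the twisted Dirac operator $\bD$ on $\nu$: Definition~\ref{ca5def2} already \emph{assumes} the key spectral data of $\bD_{L_0}$ ($V_{-1}=\langle\ze_1,\ze_2\rangle$, $V_\mu=0$ for $\mu\in(-1,0)$, $V_0\cong\R^7$), and proving these, together with the precise nonlinear mapping properties of the associative-defect operator on weighted spaces, requires a genuine computation on the link $T^2$ of $L_0$ --- and although the model pieces $L^a_s$ arise from SL $3$-folds in $\C^3$ via Remark~\ref{ca2rem1}(ii), the ambient manifold is a general TA-$G_2$-manifold, so one cannot simply invoke the SL results. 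Second, and more delicate, is pinning down the \emph{leading coefficient} in the reduced equation --- proving that $\Phi_a$ really behaves like $\ga t+\de_2 s$ plus genuinely higher-order terms, and not, say, a relation involving a fractional or logarithmic power of $s$ forced by the neck (a real danger since the rate $\la=-1$ carries the two-dimensional space $V_{-1}$). This matching computation is exactly the point at which Nordstr\"om's partial proof \cite{Nord} of the analogous Conjecture~\ref{ca4conj1} stopped short, and it is where a surprise, if there is one, is most likely to appear.
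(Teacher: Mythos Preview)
The statement you are addressing is explicitly a \emph{Conjecture} in the paper, and the paper does not provide a proof of it. What the paper offers instead is the heuristic justification in Remark~\ref{ca5rem1}: part~(a) derives the leading-order relation $t\ga=-\de_2 s+O(t^2)$ for $a=1$ (and analogously for $a=2,3$) by computing the boundary term $\langle\chi,\bD(\th)\rangle_{L^2}-\langle\bD(\chi),\th\rangle_{L^2}$ as an antisymmetric pairing $\w:V_{-1}\t V_{-1}\ra\R$ applied to the leading profiles $\de_1\ze_1+\de_2\ze_2$ of $\chi$ and $s\ze^a$ of $\th$; part~(b) asserts that a full proof ``should not be that difficult, by adapting known technology for special Lagrangians to the associative case'', citing \cite{Joyc14,Joyc15,Joyc16,Joyc17,Joyc18} and~\cite{Imag}.

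Your outline is precisely the programme the paper gestures at, and your reduced-equation computation agrees with the paper's heuristic: your $\Phi_1(s,t)=c_1(\ga t+\de_2 s)+\cdots$ is the paper's $t\ga=-\de_2 s+O(t^2)$ up to normalization, and the mechanism you invoke --- the Lockhart--McOwen boundary pairing on $V_{-1}$ --- is exactly the antisymmetric form $\w$ in \eq{ca5eq11}. So there is no ``paper's own proof'' against which to check your argument; your proposal is a plausible fleshing-out of what the paper leaves open, and the two obstacles you flag at the end (transplanting the weighted-analysis package from $\d,\d^*$ to the twisted Dirac operator $\bD$, and establishing that the reduced equation really is $\ga t+\de_2 s$ to leading order with no anomalous neck contribution) are genuine and are exactly the points the paper does not attempt to resolve. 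Note in particular that the paper itself admits to ``guessing (out of laziness)'' the normalization $\ze_1\w\ze_2=1$ of the boundary pairing; determining its actual sign and value is part of the work you would have to do, and an error there could flip the sign conventions in parts (i)--(iii).
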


We discuss canonical flags and orientations of the $\ti N^a_t$ in Conjecture~\ref{ca5conj2}.

\begin{rem}{\bf(a)} Here is why we expect $s\approx \ga\de_1^{-1} t>0$ in part (i). Suppose we have an associative 3-fold $\ti N^1_t$ in $(X,\vp_t,\psi_t)$ modelled on $N_0$ away from $x$ in $X$ and on $L^1_s$ near $x$ in $X$, for small $s>0$ and $t\in(-\ep,\ep)$. Then near $N_0$ we can write $\ti N^1_t\approx\Ga_\th$ for $\th\in\Ga^\iy(\nu)$. As $\ti N^1_t$ is associative we must have $\bD(\th)=t\xi+O(t^2)$. Since $\ti N^1_t$ approximates $L_s^1$ near $x$, from \eq{ca5eq7} we see that $\th\approx s\ze_1+O(1)$. We now show that
\e
\begin{split}
t\ga&=\langle\chi,t\xi\rangle_{L^2}-0=\langle\chi,\bD(\th)\rangle_{L^2}-\langle \bD(\chi),\th\rangle_{L^2}+O(t^2)\\
&=(\de_1\ze_1+\de_2\ze_2)\w(s\ze_1)+O(t^2)=-\de_2s+O(t^2).
\end{split}
\label{ca5eq11}
\e

Here one might expect that $\langle \chi,\bD(\th)\rangle_{L^2}=\langle \bD(\chi),\th\rangle_{L^2}$, as $\bD$ is self-adjoint. However, as $\chi,\th=O(r^{-1})$ and $\nabla\chi,\nabla\th=O(r^{-2})$, so that the $L^2$-inner products between $\chi,\th$ and $\nabla\chi,\nabla\th$ are not defined, it turns out that 
\begin{equation*}
\langle\chi,\bD(\th)\rangle_{L^2}-\langle \bD(\chi),\th\rangle_{L^2}=\text{boundary term,}
\end{equation*}
where the boundary term is obtained by completing $N_0\sm\{x\}$ to a compact manifold $\bar N_0$ with boundary $\pd\bar N_0=T^2$, and using Stokes' Theorem. 

The boundary term depends only on the leading terms $\chi=\de_1\ze_1+\de_2\ze_2+\cdots$, $\th=s\ze_1+\cdots$ in $V_{-1}$, and may be written in terms of an antisymmetric bilinear form $\w:V_{-1}\t V_{-1}\ra\R$, as in the third step of \eq{ca5eq11}. Guessing (out of laziness) that this is normalized with $\ze_1\w\ze_2=1$ gives the final step of \eq{ca5eq11}. Thus $t\ga=-\de_2s+O(t^2)$, giving $s\approx -\ga\de_2^{-1} t$, and showing that $\ti N_t^1$ in (i) exists only when $\ga\de_2^{-1}t<0$, as $s>0$. Parts (ii),(iii) are similar, using \eq{ca5eq7} for~$L_s^2,L_s^3$.

\smallskip

\noindent{\bf(b)} A related conjecture for SL 3-folds with singularities modelled on $L_0\subset\C^3$ was stated in \cite[\S 3.2]{Joyc4}, and now follows from work of the author \cite{Joyc14,Joyc15,Joyc16,Joyc17,Joyc18} and Imagi \cite{Imag}. Proving Conjecture \ref{ca5conj1} should not be that difficult, by adapting known technology for special Lagrangians to the associative case.
\label{ca5rem1}
\end{rem}

\subsection{\texorpdfstring{Algebraic topology of desingularizations using $L^a_s$}{Algebraic topology of desingularizations using Lªˢ}}
\label{ca53}

In \cite[\S 4]{Joyc4} the author discussed starting with a compact SL 3-fold $N_0$ with one singular point locally modelled on $L_0\subset\C^3$ in \eq{ca5eq2} in an (almost) Calabi--Yau 3-fold $(Y,J,h)$, and desingularizing $N_0$ by gluing in $L^a_s\subset\C^3$ for $a=1,2,3$ and small $s>0$ from \eq{ca5eq3}--\eq{ca5eq5} to get compact nonsingular SL 3-folds $\ti N^a_s$ in $Y$. In \cite[\S 4.3]{Joyc4} we computed the integral homology groups $H_1(\ti N^a_s;\Z)$ from $H_1(N_0;\Z)$. This is a purely topological calculation, and so applies just as well to smoothing associative 3-folds with singularities modelled on $L_0\subset\R^7$ by gluing in $L^a_s\subset\R^7$, as in \S\ref{ca52}. Thus, from \cite[\S 4.2]{Joyc4} we deduce:

\begin{prop} Work in the situation of Conjecture\/ {\rm\ref{ca5conj1}}. Write\/ $P=N_0\sm B_\ep(x),$ for $B_\ep(x)$ a ball of radius $\ep$ about $x$ in $X$ for $\ep>0$ small. Then $P$ is a compact, nonsingular\/ $3$-manifold with boundary, where $\pd P$ may be identified with\/ $G=T^2$ in {\rm\eq{ca5eq1},} since $\pd(L_0\sm B_\ep(x))$ is a free $G$-orbit. Define $\rho:\Z^2\ra H_1(P;\Z)$ to be the composition of natural morphisms
\begin{equation*}
\xymatrix@C=40pt{ \Z^2 \ar@{=}[r] & H_1(G;\Z) \ar[r]^\cong & H_1(\pd P;\Z) \ar[r]^{{\rm inc}_*} & H_1(P;\Z). }	
\end{equation*}
Then $\Ker(\rho)\cong\Z,$ so $\Ker\rho=\langle(b_1,b_2)\rangle_\Z$ for $(b_1,b_2)\in\Z^2\sm\{0\}$ unique up to sign. Also $H_1(N_0;\Z)$ and\/ $H_1(\ti N^a_t;\Z)$ are determined by the exact sequences
\begin{align*}
\xymatrix@!0@C=50pt{ *+[r]{\Z^2} \ar[rrr]^(0.4)\rho &&& *+[l]{H_1(P;\Z)} \ar[r] & H_1(N_0;\Z) \ar[r] & 0, }\\
\xymatrix@!0@C=50pt{ *+[r]{\Z} \ar[rrr]^(0.4){n\mapsto\rho(n,0)} &&& *+[l]{H_1(P;\Z)} \ar[r] & H_1(\ti N_t^1;\Z) \ar[r] & 0, }\\
\xymatrix@!0@C=50pt{ *+[r]{\Z} \ar[rrr]^(0.4){n\mapsto\rho(0,n)} &&& *+[l]{H_1(P;\Z)} \ar[r] & H_1(\ti N_t^2;\Z) \ar[r] & 0, }\\
\xymatrix@!0@C=50pt{ *+[r]{\Z} \ar[rrr]^(0.4){n\mapsto\rho(-n,-n)} &&& *+[l]{H_1(P;\Z)} \ar[r] & H_1(\ti N_t^3;\Z) \ar[r] & 0. }
\end{align*}
If\/ $H_1(N_0;\Z)$ is infinite then so are $H_1(\ti N^1_t;\Z)$ for $a=1,2,3$.

Suppose now that\/ $H_1(N_0;\Z)$ is finite. Then we have
\begin{align*}
\bmd{H_1(\ti N^1_t;\Z)}&=\begin{cases}\md{b_1}\cdot \bmd{H_1(N_0;\Z)}, & b_1\ne 0, \\ \iy, & b_1=0, \end{cases}\\
\bmd{H_1(\ti N^2_t;\Z)}&=\begin{cases}\md{b_2}\cdot \bmd{H_1(N_0;\Z)}, & b_2\ne 0, \\ \iy, & b_2=0, \end{cases}\\
\bmd{H_1(\ti N^3_t;\Z)}&=\begin{cases}\md{-b_1-b_2}\cdot \bmd{H_1(N_0;\Z)}, & -b_1-b_2\ne 0, \\ \iy, & -b_1-b_2=0. \end{cases}
\end{align*}
Hence if we define an invariant\/ $I$ of compact\/ $3$-manifolds $N$ by
\e
I(N)=\begin{cases} \bmd{H_1(N;\Z)}, & \text{$H_1(N;\Z)$ is finite,} \\
0, & \text{otherwise,} \end{cases}
\label{ca5eq12}
\e
then in all cases in Conjecture\/ {\rm\ref{ca5conj1}} we have
\e
\sign(b_1)\cdot I(\ti N^1_t)+\sign(b_2)\cdot I(\ti N^2_t)+\sign(-b_1-b_2)\cdot I(\ti N^3_t)=0.
\label{ca5eq13}
\e

Note too that for all compact\/ $3$-manifolds $N_1,N_2$ we have
\e
I(N_1\# N_2)=I(N_1)\cdot I(N_2).
\label{ca5eq14}
\e

\label{ca5prop1}	
\end{prop}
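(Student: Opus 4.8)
The assertion is purely topological: everything depends only on the local model $L_0$ for the singularity of $N_0$ at $x$, on the diffeomorphism types of the smoothings $L^a_s$, and on how $L^a_s$ is glued in, all of which coincide with the special Lagrangian situation of \cite[\S 4.2--4.3]{Joyc4}, so the plan is to transfer that argument. First set $P=N_0\sm B_\ep(x)$; since a neighbourhood of $x$ in $N_0$ is modelled on the cone $L_0$ over the free $G$-orbit $T^2\subset\C^3$, with $G=\U(1)^2$ acting as in \eq{ca5eq1}, $P$ is a compact oriented $3$-manifold (assume $N_0$ connected, else argue on the component of $x$) whose boundary is canonically identified up to the $G$-action with $G\cong T^2$, and $N_0=P\cup_{T^2}cT^2$ with $cT^2$ the contractible cone on $T^2$. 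By Conjecture \ref{ca5conj1} each $\ti N^a_t$ is a small normal graph over $P$ away from $B_\ep(x)$ and a small normal graph over $L^a_s$ near $x$, so $\ti N^a_t\cong P\cup_{T^2}V_a$, where $V_a$ is the compact piece of $L^a_s\cong\cS^1\t\R^2$ cut off by $\pd B_\ep(x)$ --- a solid torus --- glued to $P$ by the $G$-equivariant identification of the two boundary tori coming from their being close to the link of the common asymptotic cone $L_0$.

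Next I would run Mayer--Vietoris on these two decompositions. In each case all the pieces are connected, so the $H_0$-terms force exactness of $H_1(T^2;\Z)\ra H_1(P;\Z)\op H_1(cT^2;\Z)\ra H_1(N_0;\Z)\ra 0$ and of $H_1(T^2;\Z)\ra H_1(P;\Z)\op H_1(V_a;\Z)\ra H_1(\ti N^a_t;\Z)\ra 0$. As $H_1(cT^2;\Z)=0$ and the map $H_1(T^2;\Z)\ra H_1(P;\Z)$ is $\rho$ by definition, the first is the asserted sequence $\Z^2\ra H_1(P;\Z)\ra H_1(N_0;\Z)\ra 0$, i.e.\ $H_1(N_0;\Z)=\Coker\rho$. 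For $\ti N^a_t$, $H_1(V_a;\Z)\cong\Z$ is generated by the core circle of the solid torus and $H_1(T^2;\Z)\ra H_1(V_a;\Z)$ is the quotient by the meridian class $\mu_a\in H_1(T^2;\Z)\cong\Z^2$; eliminating the $H_1(V_a;\Z)$ summand identifies $H_1(\ti N^a_t;\Z)$ with $H_1(P;\Z)/\langle\rho(\mu_a)\rangle$. To pin down $\mu_a$ I would use that the core circle of $L^a_s$ is the locus where two of $z_1,z_2,z_3$ vanish, and that $\mu_a$ is the class in $H_1(G;\Z)$ of the $\U(1)$-subgroup of $G$ fixing it; reading this off from \eq{ca5eq1} gives three primitive classes satisfying a single linear relation, equal in suitable coordinates on $\Z^2$ to $(1,0),(0,1),(-1,-1)$, which turns the three sequences just described into the ones in the statement.

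That $\Ker\rho\cong\Z$ is the ``half lives, half dies'' consequence of Poincar\'e--Lefschetz duality for the compact oriented $3$-manifold-with-boundary $P$: $\Ker\bigl(H_1(\pd P;\Q)\ra H_1(P;\Q)\bigr)$ has $\Q$-dimension $\tfrac12\dim_\Q H_1(\pd P;\Q)=1$, so $\Ker\rho$ is a rank-$1$ subgroup of $\Z^2$, hence infinite cyclic; write $\Ker\rho=\langle(b_1,b_2)\rangle$. If $H_1(N_0;\Z)=\Coker\rho$ is infinite, the surjections $H_1(\ti N^a_t;\Z)=H_1(P;\Z)/\langle\rho(\mu_a)\rangle\twoheadrightarrow H_1(P;\Z)/\rho(\Z^2)=H_1(N_0;\Z)$ force all $H_1(\ti N^a_t;\Z)$ infinite. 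If $H_1(N_0;\Z)$ is finite then $H_1(P;\Z)$ has rank $1$, and the orders follow from a routine computation with the structure theorem for finitely generated abelian groups: the relation $\rho(b_1,b_2)=0$ makes the order of the image of $\rho$ of the complementary meridian generator in $H_1(P;\Z)/\langle\rho(\mu_a)\rangle$ equal to the absolute value of the relevant coordinate of $(b_1,b_2)$, giving $\bmd{H_1(\ti N^a_t;\Z)}$ as stated. Identity \eq{ca5eq13} is then immediate, since the $\sign$-weighted sum of the three $I(\ti N^a_t)$ equals $\bmd{H_1(N_0;\Z)}\cdot\bigl(b_1+b_2-(b_1+b_2)\bigr)=0$ in the finite case, and $0=0$ in the infinite case. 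Finally \eq{ca5eq14} follows from $H_1(N_1\#N_2;\Z)\cong H_1(N_1;\Z)\op H_1(N_2;\Z)$ (Mayer--Vietoris on $(N_1\sm B^3)\cup_{\cS^2}(N_2\sm B^3)$, using $H_1(\cS^2)=0$ and that removing a ball from a $3$-manifold does not change $H_1$): the orders multiply when both are finite, and $I$ vanishes once a summand has infinite $H_1$.

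The main obstacle is the identification in the second step of the three meridian classes $\mu_a$ together with their single linear relation --- it is exactly this relation that makes the signed sum in \eq{ca5eq13} cancel --- and, relatedly, the verification that ``$\ti N^a_t$ close to $N_0$ and to $L^a_s$'' genuinely produces the gluing $P\cup_{T^2}V_a$ with meridian $\mu_a$. Both points run exactly parallel to \cite[\S 4.2--4.3]{Joyc4}, where they are treated in detail, so I expect no essentially new difficulty beyond translating that work to the associative setting.
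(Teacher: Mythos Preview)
Your proposal is correct and matches the paper's approach: the paper does not give a self-contained proof but simply states that the computation is purely topological and identical to the special Lagrangian case treated in \cite[\S 4.2--4.3]{Joyc4}, from which the proposition is deduced. Your sketch (Mayer--Vietoris on $P\cup cT^2$ and $P\cup V_a$, ``half lives, half dies'' for $\Ker\rho$, identification of the meridians $\mu_a$ from the $G$-action) is exactly the argument carried out in that reference, and you correctly identify this and defer to it for the details.
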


\begin{conj} In the situation of Conjecture\/ {\rm\ref{ca5conj1},} there is some formula relating the canonical flags of\/ $\ti N^1_t,\ti N^2_t,\ti N^3_t,$ depending on $\ga,\de_1,\de_2,b_1,b_2$. If we choose a flag structure on $X$ then the corresponding orientations satisfy
\e
\sum_{\text{$a=1,2,3:$ $\ti N^a_t$ exists when $t<0$}\!\!\!\!\!\!\!\!\!\!\!\!\!\!\!\!\!\!\!\!\!\!\!\!\!\!\!\!\!\!\!\!\!\!\!\!\!\!\!\!\!\!\!\!\!} \Or(\ti N^a_t)\cdot I(\ti N^a_t)=\sum_{\text{$a=1,2,3:$ $\ti N^a_t$ exists when $t>0$}\!\!\!\!\!\!\!\!\!\!\!\!\!\!\!\!\!\!\!\!\!\!\!\!\!\!\!\!\!\!\!\!\!\!\!\!\!\!\!\!\!\!\!\!\!} \Or(\ti N^a_t)\cdot I(\ti N^a_t).
\label{ca5eq15}
\e

\label{ca5conj2}	
\end{conj}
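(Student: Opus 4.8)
The plan is to reduce \eq{ca5eq15} to the \emph{unconditional} topological identity \eq{ca5eq13} of Proposition \ref{ca5prop1}, by first proving the canonical-flag formula the conjecture asks for and then feeding it through the flag-structure axioms of Definition \ref{ca3def3}. As a preliminary: the surgeries producing $\ti N^1_t,\ti N^2_t,\ti N^3_t$ are all supported in a small ball $B_\ep(x)\subset X$, and the $L^a_s$ are asymptotically conical with the \emph{same} cone $L_0$, so the chains $\ti N^a_t-\ti N^b_t$ are null-homologous and $[\ti N^1_t]=[\ti N^2_t]=[\ti N^3_t]=:\al_0$ in $H_3(X;\Z)$. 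First I would fix a compact embedded flagged $3$-submanifold $(N',[s'])$ with $[N']=\al_0$, disjoint from $N_0$ and hence, for small $t$, from every $\ti N^a_t$; all canonical flags will be compared against this $(N',[s'])$.

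Next I would establish the flag formula, adapting the author's conical-singularity analysis for special Lagrangians \cite{Joyc14,Joyc15,Joyc16,Joyc17,Joyc18} to associatives, inside the Lockhart--McOwen weighted Sobolev framework set up in \S\ref{ca52}. In any bounded window the spectrum of $\bD_{\ti N^a_t}$ should be close to the ``obstruction spectrum'' of $N_0$ --- which, $N_0$ being minimally obstructed, contains no zero in the relevant weighted space and so contributes no zero-crossing --- \emph{plus} exactly one extra small eigenvalue $\la^a_t$ born in the neck carrying $L^a_s$. Estimating $\la^a_t$ by a Rayleigh quotient with the model eigenvector equal to $\chi$ on the bulk and to $\tfrac{\d}{\d s}L^a_s$ on the neck (with $s$ the multiple of $t$ prescribed in Conjecture \ref{ca5conj1}), and evaluating the numerator as a junction term given by the antisymmetric bilinear form $\w$ on $V_{-1}$ normalised by $\ze_1\w\ze_2=1$ exactly as in \eq{ca5eq11} and Remark \ref{ca5rem1}(a), one gets, up to a fixed positive factor and on the side of $t=0$ where $\ti N^a_t$ exists,
\begin{equation*}
\sign\la^1_t=\sign\bigl((\de_1\ze_1+\de_2\ze_2)\w\ze_1\bigr)=-\sign\de_2,\quad\sign\la^2_t=\sign\de_1,\quad\sign\la^3_t=\sign(\de_2-\de_1).
\end{equation*}
By the spectral-flow description of the canonical flag (Definition \ref{ca3def5}) this determines $f_{\ti N^a_t}$ up to the jump in the normal bundle across the surgery, which an Atiyah--Patodi--Singer-type index computation on the compactified neck identifies with the relative Euler class of Remark \ref{ca3rem1}, hence with the topology $b_1,b_2$ recording how $L^a_s$ caps off $L_0$. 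Patterned on \eq{ca4eq6}, the outcome should be a formula
\begin{equation*}
D\bigl((N',[s']),(\ti N^a_t,f_{\ti N^a_t})\bigr)=D_0+\mu_a\quad\text{in }\Z,
\end{equation*}
with $D_0$ depending only on $(X,N_0,x,N')$ (definable via the singular limit $t\to0$) and $\mu_a\in\Z$ depending only on $a$ and on $\ga,\de_1,\de_2,b_1,b_2$ --- which gives the first assertion of the conjecture.

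Then I would finish as follows. Fixing a flag structure $F$ on $X$, Definition \ref{ca3def6} together with Definition \ref{ca3def3}(iii) gives $\Or(\ti N^a_t)=F(N',[s'])\cdot(-1)^{D_0+\mu_a}=(-1)^{\mu_a}\Or_0$ for a sign $\Or_0$ independent of $a$ and of $\sign t$, so \eq{ca5eq15} becomes the assertion $\sum_{a:\,t<0}(-1)^{\mu_a}I(\ti N^a_t)=\sum_{a:\,t>0}(-1)^{\mu_a}I(\ti N^a_t)$. By Conjecture \ref{ca5conj1} the signs of $\ga,\de_1,\de_2$ decide which of the three $\ti N^a_t$ sits on the $t<0$ side and which on the $t>0$ side --- always a $1$-versus-$2$ split --- and the $S_3$ symmetry of $(\C^3,L_0,G)$, which permutes $\{L^1_s,L^2_s,L^3_s\}$, $\{\ze_1,\ze_2,-\ze_1-\ze_2\}$ and $\{b_1,b_2,-b_1-b_2\}$ compatibly (and where a vanishing $b_i$ kills the corresponding term by the convention \eq{ca5eq12}), reduces the remaining check to one representative configuration. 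In that configuration one verifies that the $(-1)^{\mu_a}$ delivered above equal $\pm\sign(b_1),\pm\sign(b_2),\pm\sign(-b_1-b_2)$ with the lone $t<0$ term carrying the opposite overall sign to the two $t>0$ terms, so that the displayed equation coincides with \eq{ca5eq13}, using \eq{ca5eq14} for the multiplicativity of $I$ under any connected sum absorbed into $N_0$. Since \eq{ca5eq13} holds unconditionally, \eq{ca5eq15} follows.

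The hard part is the flag formula of the second step, and within it the topological contribution to $\mu_a$: one must control how a spectral-flow invariant changes under gluing at a conical singularity whose link is $T^2$, i.e.\ carry out weighted-Sobolev APS index theory on a manifold with both conical and cylindrical ends in the style of \cite{Joyc18}, while pinning down every orientation convention and the normalisation of $\w$ on $V_{-1}$. A genuine conceptual wrinkle is that the three $\ti N^a_t$ lie on different sides of $t=0$ and so cannot be compared to one another by disjointness --- the comparison must be routed through the common singular limit $N_0$, which is why the formula has to be phrased relative to the external reference $(N',[s'])$. Finally, the sign bookkeeping in the last step, while ``routine'', is where the statement could break for a silly reason, so it must be done carefully, either case by case or via the symmetry reduction.
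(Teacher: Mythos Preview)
The statement you are trying to prove is stated in the paper as a \emph{conjecture}, not a theorem: the paper does not offer a proof. Immediately after Conjecture~\ref{ca5conj2} the author only remarks that \eq{ca5eq15} is ``plausible by \eq{ca5eq13}, as there are always at least two choices of signs $\Or(\ti N^1_t),\Or(\ti N^2_t),\Or(\ti N^3_t)$ for which \eq{ca5eq15} holds.'' That is the full extent of the paper's justification --- a consistency check, not an argument that the canonical flags actually produce those signs. So there is no ``paper's own proof'' to compare your proposal against.

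Your outline goes substantially further than the paper does: you sketch how one might actually pin down the canonical flags $f_{\ti N^a_t}$ by a gluing/spectral-flow analysis at the $T^2$-cone, and then reduce \eq{ca5eq15} to the unconditional identity \eq{ca5eq13}. The broad strategy is sensible and is in the spirit of the paper's treatment of the simpler singularity in \S\ref{ca4} (cf.\ Remark~\ref{ca4rem1}(b) and equation \eq{ca4eq6}). You also correctly identify the genuine difficulty: controlling the spectral-flow change under gluing at a conical singularity with link $T^2$, via weighted APS index theory, and tracking the topological contribution through $b_1,b_2$. The paper itself flags this kind of analysis as open (the introduction lists the conjectures of \S\ref{ca5} among those the author ``has no idea how to prove'' in full generality, and Remark~\ref{ca5rem1}(b) says proving even Conjecture~\ref{ca5conj1} ``should not be that difficult'' but leaves it undone).

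A few specific cautions about your sketch. First, your claim that the $\ti N^a_t$ always split $1$-versus-$2$ across $t=0$ is correct given the nondegeneracy assumptions $\de_1\ne 0$, $\de_2\ne 0$, $\de_1\ne\de_2$, $\ga\ne 0$ in Definition~\ref{ca5def2}, but you should say explicitly which sign conditions on $\ga,\de_1,\de_2$ give which split --- the paper's Conjecture~\ref{ca5conj1} tells you: $\ti N^1_t$ exists for $\ga\de_2^{-1}t<0$, $\ti N^2_t$ for $\ga\de_1^{-1}t>0$, $\ti N^3_t$ for $\ga(\de_2-\de_1)^{-1}t>0$. Second, your $S_3$-symmetry reduction is reasonable at the level of the local model, but the analytic data $\de_1,\de_2$ are tied to a chosen basis $\ze_1,\ze_2$ of $V_{-1}$ and do not transform symmetrically under arbitrary permutations of the three ends, so the ``reduction to one representative configuration'' needs more care than you indicate. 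Third, the invocation of \eq{ca5eq14} at the end is a non sequitur here: no connected sum is being taken in this transition, so that identity plays no role in \eq{ca5eq15}.

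In short: your proposal is a plausible research programme towards the conjecture, not a proof, and the paper does not claim otherwise.
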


Observe that Conjecture \ref{ca5conj2} is plausible by \eq{ca5eq13}, as there are always at least two choices of signs $\Or(\ti N^1_t),\Or(\ti N^2_t),\Or(\ti N^3_t)$ for which \eq{ca5eq15} holds. The point of \eq{ca5eq15} is that as we cross the `wall' $t=0$ in the family of TA-$G_2$-manifolds $(X,\vp_t,\psi_t)$, the signed weighted count of associative 3-folds does not change. In \cite{Joyc4} the author made a similar proposal to define invariants of (almost) Calabi--Yau 3-folds by counting SL 3-folds $N$ weighted by $I(N)$ in~\eq{ca5eq12}.

\begin{rem}{\bf(a)} Let $N$ be a compact oriented 3-manifold. If $b^1(N)=0$ then the moduli space $\cM_N^{\U(1)}$ of flat $\U(1)$-connections on $N$ is finite, and is $\md{H_1(N;\Z)}$ points. If $b^1(N)>0$ then $\cM_N^{\U(1)}$ is a finite number of copies of $T^{b^1(N)}$, so $\chi(\cM_N^{\U(1)})=0$. In both cases, $\chi(\cM_N^{\U(1)})=I(N)$ in~\eq{ca5eq12}.

In \S\ref{ca7} we propose counting associative 3-folds $N$ in $(X,\vp,\psi)$, with signs, weighted by $I(N)$. Thus, we can interpret this as counting associative 3-folds with flat $\U(1)$-connections. This may have an interpretation in String Theory or M-theory, as counting some kind of brane, such as D3-branes in Type IIB String Theory on the $G_2$-manifold, or M2-brane instantons in M-theory.
\smallskip

\noindent{\bf(b)} The programme of \S\ref{ca7} would work using any invariant $I$ of compact oriented 3-manifolds satisfying \eq{ca5eq13}--\eq{ca5eq14}, and such that $I(N)=0$ if $b^1(N)>0$. The author expects that $I$ in \eq{ca5eq12} is the unique such invariant.  
\label{ca5rem2}	
\end{rem}

\section{\texorpdfstring{$\U(1)$-invariant associative 3-folds in $\R^7$}{U(1)-invariant associative 3-folds in ℝ⁷}}
\label{ca6}

Next we discuss a class of $\U(1)$-invariant associative 3-folds in $\R^7$ which should be amenable to study using analytic techniques, and will provide a large class of examples of singularities of associative 3-folds. Understanding the behaviour of these singularities may help guide any programme for defining invariants by counting associative 3-folds. This class is closely related to the author's papers \cite{Joyc10,Joyc11,Joyc12,Joyc13} on $\U(1)$-invariant SL 3-folds in~$\C^3$.

\subsection{\texorpdfstring{Associative 3-folds and $J$-holomorphic curves}{Associative 3-folds and J-holomorphic curves}}
\label{ca61}

We will study associative 3-folds $N$ in $\R^7$ invariant under the $\U(1)$-action
\e
\begin{split}
e^{i\th}:(x_1,\ldots,x_7)\longmapsto(x_1,x_2,x_3,\cos\th\, x_4-\sin\th\, x_5,\sin\th\, x_4+\cos\th\, x_5&,\\
\cos\th\, x_6+\sin\th\, x_7,-\sin\th\, x_6+\cos\th\, x_7&).
\end{split}
\label{ca6eq1}
\e
This preserves $g_0,\vp_0,*\vp_0$ on $\R^7$ from \S\ref{ca21}. The $\U(1)$-action fixes the associative 3-plane $\R^3=\bigl\{(x_1,x_2,x_3,0,0,0,0):x_j\in\R\bigr\}$ in $\R^7$. 

Define $\U(1)$-invariant quadratic polynomials $y_1,y_2,y_3$ on $\R^7$ by
\begin{align*}
y_1(x_1,\ldots,x_7)&=x_4^2+x_5^2-x_6^2-x_7^2,\\
y_2(x_1,\ldots,x_7)&=2(x_4x_7+x_5x_6),\\
y_3(x_1,\ldots,x_7)&=2(x_4x_6-x_5x_7).
\end{align*}
Then $y_1^2+y_2^2+y_3^2=(x_4^2+x_5^2+x_6^2+x_7^2)^2$. Consider the map
\begin{equation*}
\Pi=(x_1,x_2,x_3,y_1,y_2,y_3):\R^7\longra\R^6.
\end{equation*}
This is $\U(1)$-invariant, and its fibres are exactly the $\U(1)$-orbits in $\R^7$. Hence it descends to a homeomorphism $\Pi:\R^7/\U(1)\ra\R^6$. The $\U(1)$-fixed locus $\R^3\subset\R^7$ maps to the 3-plane $L=\R^3=\bigl\{(x_1,x_2,x_3,0,0,0):x_j\in\R\bigr\}$ in~$\R^6$.

Note that we should {\it not\/} think of $\R^7/\U(1)$ as a smooth manifold near the fixed locus $\R^3\subset\R^7$. The identification $\R^7/\U(1)\cong\R^6$ is only topological, not smooth, near $\R^3$, and we should expect singular behaviour near $\R^3\subset\R^6$.

The next proposition relates $\U(1)$-invariant associative 3-folds $N$ in $\R^7\sm\R^3$ to $J$-holomorphic curves $\Si$ in $\R^6\sm\R^3$, for a certain almost complex structure $J$ on $\R^6\sm\R^3$. It is similar to~\cite[Prop.~4.1]{Joyc10}.

\begin{prop} Let\/ $\R^6$ have coordinates $(x_1,x_2,x_3,y_1,y_2,y_3),$ and write $L=\R^3=\bigl\{(x_1,x_2,x_3,0,0,0):x_j\in\R\bigr\}\subset\R^6$. Define $u:\R^6\ra[0,\iy)$ by $u(x_1,x_2,x_3,y_1,y_2,y_3)=(y_1^2+y_2^2+y_3^2)^{1/2}$. Define an almost complex structure $J$ on $\R^6\sm\R^3$ to have matrix
\e
J=\begin{pmatrix} 0 & 0 & 0 & -\ha u^{-1/2} & 0 & 0 \\
0 & 0 & 0 & 0 & -\ha u^{-1/2} & 0 \\
0 & 0 & 0 & 0 & 0 & -\ha u^{-1/2} \\
2u^{1/2} & 0 & 0 & 0 & 0 & 0 \\
0 & 2u^{1/2} & 0 & 0 & 0 & 0 \\
0 & 0 & 2u^{1/2} & 0 & 0 & 0 
\end{pmatrix}
\label{ca6eq2}
\e
with respect to the basis of sections $\frac{\pd}{\pd x_1},\frac{\pd}{\pd x_2},\frac{\pd}{\pd x_3},\frac{\pd}{\pd y_1},\frac{\pd}{\pd y_2},\frac{\pd}{\pd y_3}$ of\/ $T(\R^6\sm\R^3)$.

Suppose $N$ is a $\U(1)$-invariant\/ $3$-submanifold in $\R^7\sm\R^3,$ so that\/ $\Si=N/\U(1)$ is a $2$-submanifold in $\R^6\sm\R^3\cong (\R^7\sm\R^3)/\U(1)$. Then $N$ is an associative $3$-fold in $\R^7\sm\R^3$ if and only if\/ $\Si$ is a $J$-holomorphic curve in $\R^6\sm\R^3$.
\label{ca6prop1}	
\end{prop}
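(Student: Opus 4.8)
The plan is to reduce the associativity condition for the $\U(1)$-invariant $3$-fold $N$ to a first-order condition on the quotient surface $\Si$, and then match that condition against the equation $\bar\partial_J$-holomorphicity, i.e.\ $J(T_p\Si)=T_p\Si$ for all $p\in\Si$. Recall that a $3$-plane $V\subset\R^7$ is associative iff the $\R^7$-valued $3$-form $\chi_0=*(\vp_0\wedge\,\cdot\,)$ (the associator, or equivalently Harvey--Lawson's cross product $\times$) vanishes on $V$; equivalently $V$ is associative iff for any orthonormal basis $e_1,e_2,e_3$ of $V$ we have $e_1\times e_2 = e_3$. So I would first set up the cross product on $\R^7$ compatible with $\vp_0$ in \eqref{ca2eq1}, and note that it is $\U(1)$-equivariant for the action \eqref{ca6eq1} since that action preserves $\vp_0$ and $g_0$.

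First I would parametrize: near a point of $N$ away from $\R^3$, the $\U(1)$-orbit direction is a distinguished tangent vector $\xi$ to $N$ (the generator of \eqref{ca6eq1}), which is orthogonal to the fixed locus directions and has length $|\xi| = u(x_4,\dots,x_7)$-ish; more precisely $|\xi|^2 = x_4^2+x_5^2+x_6^2+x_7^2 = \sqrt{y_1^2+y_2^2+y_3^2} = u^{1/2}\circ\Pi$. The tangent space $T_xN$ then splits as $\langle\xi\rangle \oplus H_x$, where $H_x$ is a $2$-plane mapping isomorphically under $d\Pi$ to $T_{\Pi(x)}\Si$. The associativity condition $e_1\times e_2 = e_3$ applied with $e_3 = \xi/|\xi|$ becomes: for an orthonormal basis $f_1,f_2$ of $H_x$, $f_1\times f_2 = \xi/|\xi|$, which (since $\times$ is alternating and the normalization only contributes scalars) is equivalent to $f_1\times f_2$ being a positive multiple of $\xi$, i.e.\ $\langle f_1\times f_2, \xi\rangle > 0$ and $f_1\times f_2 \perp \Pi$-fibre complement appropriately. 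The content is that $w\mapsto |\xi|^{-1}\,\xi\times w$ defines a complex structure $j$ on $H_x$ (using the $G_2$ identity $u\times(u\times w) = -|u|^2 w + \langle u,w\rangle u$, so on $\xi^\perp$ the map $w\mapsto |\xi|^{-1}\xi\times w$ squares to $-\mathrm{id}$), and $H_x$ is part of an associative $3$-plane containing $\xi$ iff $H_x$ is $j$-invariant.

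The next step is the explicit computation: push $j$ forward under $d\Pi$ and check it equals the matrix \eqref{ca6eq2}. This is the routine-but-essential linear-algebra core. I would compute $d\Pi$ on the horizontal space: the $x_1,x_2,x_3$ components are preserved, while $d\Pi$ sends the span of $\partial_{x_4},\dots,\partial_{x_7}$ (modulo $\xi$) to the span of $\partial_{y_1},\partial_{y_2},\partial_{y_3}$, scaled by factors involving $u$. Using $y_1^2+y_2^2+y_3^2 = (x_4^2+\cdots+x_7^2)^2$, one sees $dy_i$ has size $\sim |\xi|^2 = u^{1/2}$ relative to $dx_j$ for $j\le 3$, which is exactly where the $2u^{1/2}$ and $-\tfrac12 u^{-1/2}$ entries in \eqref{ca6eq2} come from (their product is $-1$, confirming $J^2 = -\mathrm{id}$ on the relevant block; the off-diagonal block structure reflects that $\partial_{x_i}\leftrightarrow\partial_{y_i}$ pair up under $j$). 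I would verify the pairing by evaluating $\vp_0$-cross-products of representative horizontal vectors at a convenient point (e.g.\ $(0,0,0,x_4,0,0,0)$ by $\U(1)$-equivariance, then a general orbit), projecting via $d\Pi$, and reading off the matrix. Finally, since $\Si$ is $J$-holomorphic iff $T\Si$ is $J$-invariant everywhere, and we have shown $H_x$ is $j$-invariant iff $N$ is associative along the orbit through $x$, and $d\Pi$ intertwines $j$ with $J$, the equivalence follows for all orbits, i.e.\ on all of $\R^7\sm\R^3$.

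The main obstacle I expect is the bookkeeping in the middle step — correctly tracking the metric normalizations and the non-isometric nature of $\Pi$, so that the scaling factors $2u^{1/2}$, $\tfrac12 u^{-1/2}$ come out exactly rather than up to an undetermined positive function. The $G_2$ cross-product identities ($\langle u\times v, w\rangle = \vp_0(u,v,w)$ and the double-cross-product formula) make the "$j$ is a complex structure on $\xi^\perp$" claim clean, and $\U(1)$-equivariance reduces the verification to a single orbit, but one still has to be careful that the almost complex structure $j$ on $H_x$ is the one induced by the $G_2$ structure and not its conjugate — i.e.\ an orientation/sign check, which is exactly what distinguishes $J$-holomorphic from anti-holomorphic curves, and hence associative from the other coassociative-adjacent configurations. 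This is presumably handled, as in \cite[Prop.~4.1]{Joyc10}, by fixing orientations consistently at the outset (the orientation on $N$ from $\vp_0|_N = \vol$, the orientation on $\Si$ compatible with $J$). I would lean on that reference for the sign conventions.
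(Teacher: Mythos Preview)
The paper does not actually prove this proposition; it is stated without proof, with only the remark that it is ``similar to \cite[Prop.~4.1]{Joyc10}''. Your approach---take the $G_2$ cross product with the normalized orbit vector $\xi/|\xi|$ to get an almost complex structure on $\xi^\perp$, then push it forward under $d\Pi$ and verify it matches \eqref{ca6eq2}---is correct and is exactly the method behind the cited result for $\U(1)$-invariant SL $3$-folds.

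One small slip worth fixing, since it bears directly on the normalization issue you flag as the main obstacle: you write $|\xi|^2 = u^{1/2}\circ\Pi$, but in fact
\[
|\xi|^2 = x_4^2+x_5^2+x_6^2+x_7^2 = (y_1^2+y_2^2+y_3^2)^{1/2} = u\circ\Pi,
\]
so $|\xi| = u^{1/2}$ (pulled back along $\Pi$). This is precisely the scaling that, combined with the factor of $2$ from differentiating the quadratic expressions $y_i$, yields the entries $2u^{1/2}$ and $\tfrac12 u^{-1/2}$ in \eqref{ca6eq2}. With that correction your outline goes through; the reduction by $\U(1)$-equivariance to a single convenient point (e.g.\ $x_5=x_6=x_7=0$, $x_4>0$) makes the verification of the matrix entries a short computation.
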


Note that $J$ in \eq{ca6eq2} becomes singular when $u=0$, that is, on~$L=\R^3\subset\R^6$.

\begin{ex} Let $N$ be the associative 3-plane $\bigl\{(x_1,0,0,x_4,x_5,0,0):x_j\in\R\bigr\}$ in $\R^7$. Then $N$ is $\U(1)$-invariant, and $\Si=N/\U(1)$ is the half-plane
\begin{equation*}
\Si=\bigl\{(x_1,0,0,y_1,0,0):x_1\in\R,\;\> y_1\in[0,\iy)\bigr\}\cong \R\t[0,\iy),
\end{equation*}
which has boundary $\pd\Si\subset L\subset\R^6$. 
\label{ca6ex1}	
\end{ex}

This example illustrates the general principle that $J$-{\it holomorphic curves $\Si$ in $\R^6$ with boundary $\pd\Si$ in $L\subset\R^6$ lift to associative $3$-folds $N=\Pi^{-1}(\Si)$ without boundary in\/} $\R^7$. Note that $J$ is singular along $L$. One moral is that we should expect any theory `counting' associative 3-folds $N$ in a TA-$G_2$-manifold $(X,\vp,\psi)$ to look more like Lagrangian Floer cohomology \cite{Fuka2,FOOO} (built on counting $J$-holomorphic curves $\Si$ with boundary in $L$) than like Gromov--Witten theory \cite{FuOn,HWZ,McSa} (built on counting $J$-holomorphic curves $\Si$ without boundary).

Identify $\R^6$ with $\C^3$ with complex coordinates $(x_1+iy_1,x_2+iy_2,x_3+iy_3)$. This corresponds to the complex structure $J_0$, with matrix
\begin{equation*}
J_0=\begin{pmatrix} 0 & 0 & 0 & -1 & 0 & 0 \\
0 & 0 & 0 & 0 & -1 & 0 \\
0 & 0 & 0 & 0 & 0 & -1 \\
1 & 0 & 0 & 0 & 0 & 0 \\
0 & 1 & 0 & 0 & 0 & 0 \\
0 & 0 & 1 & 0 & 0 & 0 
\end{pmatrix}
\end{equation*}
with respect to the basis $\frac{\pd}{\pd x_1},\frac{\pd}{\pd x_2},\frac{\pd}{\pd x_3},\frac{\pd}{\pd y_1},\frac{\pd}{\pd y_2},\frac{\pd}{\pd y_3}$, so that $J$ in \eq{ca6eq2} becomes $J_0$ if we replace $2u^{1/2}$ by 1. This $J_0$ is compatible with the standard symplectic structure $\om_0=\d x_1\w\d y_1+\d x_2\w\d y_2+\d x_3\w\d y_3$ on $\R^6$, for which $L$ is a Lagrangian submanifold. The next conjecture is not very precise:

\begin{conj} $J$-holomorphic curves in $\R^6$ (with boundary in $L$) have essentially the same qualitative behaviour as ordinary $J_0$-holomorphic curves in $\R^6=\C^3$ (with boundary in $L$), which are already very well understood.
\label{ca6conj1}	
\end{conj}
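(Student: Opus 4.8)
\medskip
\noindent\textbf{Proof proposal.}
Conjecture \ref{ca6conj1} is deliberately imprecise, so the first step is to decide which ``qualitative behaviours'' we want to match. A reasonable list is: local existence and uniqueness of $J$-holomorphic curves with prescribed boundary on $L$ or prescribed asymptotics; interior and boundary elliptic regularity, together with a priori $C^1$ and higher estimates; Gromov-type compactness for sequences of bounded area, with energy quantization and a description of the resulting bubble trees; the virtual dimension of the relevant moduli spaces via an index computation; and removal of point singularities, together with the local structure of the intersection $\Si\cap L$. Away from $L$ all of this is routine: the matrix in \eq{ca6eq2} shows $J$ is block diagonal in the pairs $\bigl(\pd/\pd x_i,\pd/\pd y_i\bigr)$, and one checks that $\om_0=\sum_i\d x_i\w\d y_i$ tames $J$ on $\R^6\sm L$, so the standard theory of McDuff--Salamon \cite{McSa} and, for boundaries on the Lagrangian $L$, Fukaya--Oh--Ohta--Ono \cite{FOOO} applies verbatim there. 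The entire content of the conjecture is thus concentrated at $L=\{u=0\}$, where $J$ genuinely degenerates: any $J$-compatible metric $g$ must satisfy $g(\pd/\pd x_i,\pd/\pd x_i)=4u\,g(\pd/\pd y_i,\pd/\pd y_i)$, so no such $g$ extends non-degenerately over $L$.

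The first route I would take is to \emph{lift to $\R^7$} via Proposition \ref{ca6prop1}. There $\U(1)$-invariant associative $3$-folds are honest submanifolds calibrated by $\vp_0$, hence smooth minimal submanifolds; interior regularity, monotonicity and local area bounds, tangent-cone analysis, and compactness of area-bounded families all follow from Harvey--Lawson \cite{HaLa} and geometric measure theory, and being $\U(1)$-invariant is a closed condition preserved in all limits. Pushing these statements down through $\Pi$ yields the corresponding facts for $\Si\subset\R^6$, with the key dictionary entry ``$\Si$ meets $L$'' $\leftrightarrow$ ``$N$ meets the $\U(1)$-fixed locus $\R^3$''. At a generic point $p\in\R^3\cap N$ one can take $N$ smooth, and a short representation-theoretic check shows that a $\U(1)$-invariant $3$-plane in $T_p\R^7$ is either $\R^3$ itself or $\ell\oplus W$ with $\ell\subset\R^3$ a line and $W$ a $\U(1)$-invariant real $2$-plane in the normal directions; in the second case the quotient is a half-plane with boundary on $L$, exactly Example \ref{ca6ex1}. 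So near $L$ the generic $J$-curve is a curve with boundary on $L$, which is precisely how the $J_0$-picture behaves: $L=\R^3\subset\C^3$ is the fixed locus of complex conjugation, and by the Schwarz reflection principle a $J_0$-holomorphic curve with boundary on $L$ doubles to a conjugation-invariant closed $J_0$-curve, whose behaviour near the real locus is classical. Matching these two local models, uniformly in families, is the conjecture.

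The second, complementary route is to transplant directly the degenerate-PDE analysis of the author's $\U(1)$-invariant special Lagrangian papers \cite{Joyc10,Joyc11,Joyc12,Joyc13}. Writing $\Si$ locally over a domain in $\R^2$ turns the $J$-holomorphic equation defined by \eq{ca6eq2} into a singular perturbation of the Cauchy--Riemann equations whose linearisation is a degenerate elliptic operator of the type $\pd_x^2+2u\,\pd_y^2$ treated there; the a priori gradient estimates, Harnack inequality, and regularity across $\{u=0\}$ proved in those papers, and the existence and uniqueness theorems for the associated boundary-value problems, should carry over with only cosmetic changes. A variant worth trying is to replace $2u^{1/2}$ in \eq{ca6eq2} by an interpolation between $1$ and $2u^{1/2}$, obtaining a path of (increasingly singular) almost complex structures from $J_0$ to $J$, and to show the relevant moduli spaces are unchanged along it --- but that requires uniform compactness and transversality control as the degeneration at $L$ is switched on, which is itself the crux.

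The main obstacle is this interface behaviour at $L$. Specifically: (i) a sharp removal-of-singularities theorem at points where $\Si$ meets $L$, together with a classification of the possible local models --- including the non-generic ones, where $N$ may be singular or tangent to $\R^3$ with higher multiplicity, so that GMT subtleties about the singular set of a minimising current intervene; (ii) showing that for generic data $\Si\cap L$ is a properly embedded $1$-manifold behaving like a boundary, which is exactly the analytic heart of \cite{Joyc10,Joyc11,Joyc12,Joyc13} and is delicate precisely because degenerate elliptic theory lacks the full strength of the uniformly elliptic case; and (iii) upgrading ``essentially the same qualitative behaviour'' into statements precise enough to compare moduli dimensions, compactifications, and wall-crossing with the $J_0$ model --- which, as the excerpt notes, first requires deciding whether the right analogue is Gromov--Witten-type (no boundary) or Lagrangian-Floer-type (boundary on $L$). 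Everything away from $L$, by contrast, should reduce to standard pseudoholomorphic curve theory.
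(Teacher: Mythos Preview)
The paper does not prove this statement: it is explicitly a conjecture, introduced with the caveat ``The next conjecture is not very precise'', and followed only by supporting remarks pointing to the author's earlier work \cite{Joyc10,Joyc11,Joyc12,Joyc13} on the $\U(1)$-invariant special Lagrangian case as heuristic evidence. There is therefore no proof in the paper to compare against.

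Your proposal is not a proof either, and you are honest about this --- it is a research outline. As such it is sensible: you correctly identify that away from $L$ everything is standard, that the entire content is at the degeneration $u=0$, and that the two natural attacks are (a) lift to $\R^7$ and use calibrated-submanifold/GMT compactness and regularity for $\U(1)$-invariant associatives, then push down, and (b) adapt the degenerate-elliptic analysis of \cite{Joyc10,Joyc11,Joyc12,Joyc13} directly. Both routes are exactly what the paper's surrounding discussion hints at. Your list of obstacles (removal of singularities at $L$, the structure of $\Si\cap L$, and making ``qualitative behaviour'' precise enough to compare moduli) is the right one, and the paper does not claim to have overcome any of them. So there is no gap to identify in your argument relative to the paper's --- the paper simply has no argument, and your outline is a reasonable sketch of what a proof would require.
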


In \cite{Joyc10,Joyc11,Joyc12,Joyc13} the author studied $\U(1)$-invariant SL 3-folds in $\C^3$, in terms of solutions of a singular nonlinear Cauchy--Riemann equation. These correspond to studying $J$-holomorphic curves in the $\R^6$ above lying in the $\R^4\subset\R^6$ defined by $x_1=0$, $y_1=a$. One moral of \cite{Joyc10,Joyc11,Joyc12,Joyc13} is that the singular nonlinear Cauchy--Riemann equation behaves exactly like the usual Cauchy--Riemann equation, for questions such as existence and uniqueness of solutions with prescribed boundary data. The author expects a similar picture for this more general class.

If we accept Conjecture \ref{ca6conj1} then we can give heuristic descriptions of a large class of singularities of associative 3-folds: every kind of singularity of $J_0$-holomorphic curves in $\C^3$, possibly with boundary in a Lagrangian $L$, should correspond to a kind of singularity of associative 3-folds. 

Both the associative singularities in \S\ref{ca4}--\S\ref{ca5} can be made invariant under \eq{ca5eq1}, and so interpreted in this framework, as the next two examples show.

\begin{ex} Consider the $J_0$-holomorphic curves $\Si^+_t,\Si_t^-$ and $\ti\Si_s$ in $\R^6$ with boundary in $L$, for $s\ge 0$ and $t\in\R$:
\begin{align*}
\Si^+_t&=\bigl\{(x_1,0,t,y_1,0,0):x_1\in\R,\; y_1\in[0,\iy)\bigr\},\\
\Si^-_t&=\bigl\{(0,x_2,-t,0,y_2,0):x_2\in\R,\; y_2\in[0,\iy)\bigr\},\\
\ti\Si_s&=\bigl\{(x_1,x_2,0,y_1,y_2,0):(x_1+iy_1)(x_2+iy_2)=-s,\;\> y_1,y_2\ge 0\bigr\}.
\end{align*}
Here $\Si^+_t,\Si^-_t$ do not intersect for $t\ne 0$, and when $t=0$ they intersect in one point $(0,\ldots,0)$ in their common boundary. Also $\ti\Si_0=\Si^+_0\cup\Si^-_0$, but $\ti\Si_s$ for $s>0$ is diffeomorphic to $[0,1]\t\R$. Write $N_t^+,N_t^-,\ti N_s$ for the preimages of $\Si^+_t,\Si^-_t,\ti\Si_s$ under $\Pi:\R^7\ra\R^6$. Then $N_t^+,N_t^-$ are affine associative 3-planes $\R^3\subset\R^7$, and $\ti N_s$ for $s>0$ is diffeomorphic to $\cS^2\t\R\cong\R^3\#\R^3$, and is a distorted version of the associative 3-fold $K_s^{V,V'}$ in~\S\ref{ca41}.

This is an approximate local model for the index one singularity of associative 3-folds described in \S\ref{ca4}: we have associative 3-folds $N_t^+,N_t^-$ in $(X,\vp_t,\psi_t)$, which are disjoint for $t\ne 0$, and intersect in one point $\{x\}$ when $t=0$. As $t$ passes through 0 we create a new associative 3-fold $\ti N_s$ diffeomorphic to~$N_t^+\# N_t^-$.
\label{ca6ex2}	
\end{ex}

\begin{ex} Consider the $J_0$-holomorphic curves $\Si_t$ and $\ti\Si_s$ in $\R^6$, where $\ti\Si_s$ has boundary in $L$, for $s\ge 0$ and $t\in\R$:
\begin{align*}
\Si_t&=\bigl\{(x_1,x_2,0,x_2,-x_1,t):x_1\in\R,\; y_1\in[0,\iy)\bigr\},\\
\ti\Si_s&=\bigl\{(x_1,x_2,0,y_1,y_2,0):(x_1+iy_1)^2+(x_2+iy_2)^2=s,\;\> x_2y_1-x_1y_2\ge 0\bigr\}.
\end{align*}
Then $\Si_t\cong\R^2$, which does not intersect $L$ when $t\ne 0$, and intersects $L$ in one point $(0,\ldots,0)$ when $t=0$. Also $\ti\Si_0=\Si_0$, and $\ti\Si_s$ for $s>0$ is diffeomorphic to $\cS^1\t[0,\iy)$, with boundary the circle $\bigl\{(x_1,x_2,0,0,0,0):x_1^2+x_2^2=s\bigr\}$ in~$L$.

Write $N_t,\ti N_s$ for the preimages of $\Si_t,\ti\Si_s$ under $\Pi:\R^7\ra\R^6$. Then $N_0=\ti N_0$ is a $T^2$-cone in $\R^7$, and $N_t,\ti N_s$ for $s,t\ne 0$ are diffeomorphic to $\cS^1\t\R^2$. In fact $N_t$ for $t<0$ and $N_t$ for $t>0$ differ by a Dehn twist around $\cS^1\subset N_t$. So we should regard $N_t$, $t<0$ and $N_t$, $t>0$ and $\ti N_s$, $s>0$ as three different families of 3-manifolds $\cS^1\t\R^2$ desingularizing the $T^2$-cone $N_0=\ti N_0$. These are distorted versions of the associative $T^2$-cone $L_0$ and $\cS^1\t\R^2$'s $L^1_s,L^2_s,L^3_s$ in~\S\ref{ca51}.
\label{ca6ex3}	
\end{ex}

\subsection{Associative 3-folds with boundary in coassociatives}
\label{ca62}

Next we use the ideas of \S\ref{ca61} to discuss associative 3-folds with boundary in a coassociative 4-fold, as in \S\ref{ca27}. Let $C$ be the coassociative 4-plane
\begin{equation*}
C=\bigl\{(0,x_2,x_3,x_4,x_5,0,0):x_j\in\R\bigr\}\subset\R^7,	
\end{equation*}
which is invariant under the $\U(1)$-action \eq{ca6eq1}. Then
\begin{equation*}
M=C/\U(1)=\bigl\{(0,x_2,x_3,y_1,0,0):x_2,x_3\in\R,\;\> y_1\in[0,\iy)\bigr\}\cong [0,\iy)\t\R^2.	
\end{equation*}
We think of $M$ as a Lagrangian half-plane in $\R^6\cong\C^3$ with boundary in $L=\R^3\subset\C^3$. In the language of \S\ref{ca23}, $L$ is special Lagrangian with phase 1, and $M$ is special Lagrangian with phase~$i$.

Suppose now that $N$ is a $\U(1)$-invariant associative 3-fold in $\R^7$ with $\pd N\subset C$. Then $\Si=N/\U(1)$ is a (possibly singular) $J$-holomorphic curve in $\R^6$, which can have boundary $\pd\Si$ of two kinds. As in Example \ref{ca6ex1}, the fixed locus of $\U(1)$ in $N$ (which may lie in the interior $N^\ci$) gives a boundary component $\pd_L\Si$ of $\Si$ in $L$. And $\pd N/\U(1)$ gives a boundary component $\pd_M\Si$ of $\Si$ in $M$. Thus we expect that $\pd\Si=\pd_L\Si\cup\pd_M\Si\subset L\cup M$, where $\Si$ may have codimension 2 corners $\pd_L\Si\cap\pd_M\Si$ mapping to $L\cap M$. Thus we conclude:
\begin{quotation}
\noindent {\it Counting associative $3$-folds $N$ with boundary $\pd N\subset C$ in a coassociative $4$-fold $C$ in a TA-$G_2$-manifold\/ $(X,\vp,\psi),$ is analogous to counting $J$-holomorphic curves $\Si$ in a symplectic manifold\/ $(Y,\om)$ with boundary $\pd\Si\subset L\cup M,$ where $L$ is a Lagrangian in $Y,$ and\/ $M$ is another Lagrangian in $Y$ with boundary $\pd M\subset L$.}
\end{quotation}

The author does not know of any symplectic theory involving counting $J$-holomorphic curves with boundary in $L\cup M$ in this way. If we assume Conjecture \ref{ca6conj1} we can give heuristic models for singularities of $\U(1)$-invariant associative 3-folds $N$ with boundary in $C$. Here is one with index one: 

\begin{ex} Let $s\ge 0$, and consider the $J_0$-holomorphic map
\begin{align*}
f_s&:\Si=\bigl\{a+ib\in\C:a,b\ge 0\bigr\}\longra\R^6=\C^3,\\
f_s&:a+ib\mapsto (x_1+iy_1,x_2+iy_2,x_3+iy_3)=(s(a+ib)-(a+ib)^3,(a+ib)^2,0).
\end{align*}
Then $f_s$ maps the boundary component $\bigl\{(a,0):a\in[0,\iy)\bigr\}$ of $\Si$ to $L\subset\R^6$, and the boundary component $\bigl\{(0,b):b\in[0,\iy)\bigr\}$ of $\Si$ to $M\subset\R^6$, so $f_s(\Si)$ is a $J_0$-holomorphic curve in $\R^6$ with boundary in $L\cup M$. If $s<0$ then $f_s$ does not map $(0,b)$ to $M$ for small $b>0$, which is why we restrict to~$s\ge 0$.

Let $N_s$ be the preimage of $f_s(\Si)$ under $\Pi:\R^7\ra\R^6$. Then $N_s$ for $s>0$ is a nonsingular 3-submanifold of $\R^7$ diffeomorphic to $[0,\iy)\t\R^2$, with boundary $\pd N_s\subset C$. One interior point of $N_s$, from $a+ib=\sqrt{s}$, maps to $C$.  Also $N_0$ is homeomorphic to $[0,\iy)\t\R^2$, but is not smooth at $(0,\ldots,0)$. These $N_s$ are not associative, since $f_s$ is holomorphic with respect to $J_0$ rather than $J$. But as in Conjecture \ref{ca6conj1}, we expect there to exist $J$-holomorphic maps $\ti f_s$ with essentially the same behaviour as $f_s$, yielding associative 3-folds $\ti N_s$ very like the $N_s$. 

Such $\ti N_s$, $s\ge 0$ should provide an example of an {\it index one singularity\/} of associative 3-folds $N$ with boundary in coassociative 4-folds $C$. That is, singularities of this type occur in codimension one in generic families of TA-$G_2$-manifolds, and so could cause numbers of associatives $N$ with $\pd N\subset C$ to change under deformation.
\label{ca6ex4}	
\end{ex}

Because of all this, the author expects that it is not possible to define an interesting Floer-type theory for coassociative 4-folds $C$ in $(X,\vp,\psi)$, suitably deformation-invariant in $\vp,\psi$, involving counting associatives $N$ with $\pd N\subset C$, following the analogy of Lagrangian Floer cohomology or Fukaya categories in symplectic geometry, say. But the author is not completely certain.

\section{A superpotential counting associative 3-folds}
\label{ca7}

\subsection{Set up of situation and notation}
\label{ca71}

In \S\ref{ca7} we will consider the following situation, and use the following notation. Let $X$ be a compact, oriented 7-manifold, and $\ga\in H^3_{\rm dR}(X;\R)$. Write $\cF_\ga$ for the set of closed 4-forms $\psi$ on $X$ such that there exists a closed 3-form $\vp$ on $X$ with $[\vp]=\ga\in H^3_{\rm dR}(X;\R)$, for which $(X,\vp,\psi)$ is a TA-$G_2$-manifold, with the given orientation on $X$. Suppose $\cF_\ga\ne\es$. Then $\cF_\ga$ is open in the vector space of closed 4-forms on $X$, and so is infinite-dimensional.

We will be discussing moduli spaces $\cM(\cN,\al,\psi)$ of compact associative 3-folds $N$ in such TA-$G_2$-manifolds $(X,\vp,\psi)$, but note as in \S\ref{ca25} that $\cM(\cN,\al,\psi)$ depends only on $\psi$ and the orientation on $X$, not on the choice of $\vp$. Given any $\psi$ or $\psi_t,$ $t\in[0,1]$ in $\cF_\ga$, we generally implicitly suppose we have chosen $\vp$ or $\vp_t,$ $t\in[0,1]$ to make TA-$G_2$-manifolds $(X,\vp,\psi)$ or $(X,\vp_t,\psi_t)$, but this is just for notational convenience, the choices of $\vp,\vp_t$ do not affect anything.

We often restrict to $\psi$ which is {\it generic in\/} $\cF_\ga$, as we expect this will simplify the singular behaviour of associatives considerably, as in Conjecture~\ref{ca2conj2}. 

Given generic $\psi_0,\psi_1$ in the same connected component of $\cF_\ga$, we can choose a smooth 1-parameter family $\psi_t,$ $t\in[0,1]$ in $\cF_\ga$ connecting $\psi_0,\psi_1$. We often restrict to a {\it generic\/ $1$-parameter family\/} $\psi_t,$ $t\in[0,1]$, that is, to a family which is generic amongst all smooth 1-parameter families with fixed end-points $\psi_0,\psi_1$. We expect that this will simplify the singular behaviour of associative 3-folds in $(X,\vp_t,\psi_t)$ for $t\in(0,1)$ considerably.

Fix a flag structure $F$ on $X$, as in \S\ref{ca31}. Then as in \S\ref{ca32} we have orientations $\Or(N)=\pm 1$ on $\cM(\cN,\al,\psi)$ at $[N]$ for all compact, unobstructed associative 3-folds $N$ in $(X,\vp,\psi)$.

Let $\F$ be the field $\Q,\R$ or $\C$. As in \S\ref{ca1}, write $\La$ for the Novikov ring over $\F$:
\e
\La=\bigl\{\ts\sum_{i=1}^\iy c_iq^{\al_i}: \text{$c_i\in\F,$ $\al_i\in\R,$ $\al_i\ra \iy$ as $i\ra\iy$}\bigr\}, 
\label{ca7eq1}
\e
with $q$ a formal variable. Then $\La$ is a commutative $\F$-algebra. Define $v:\La\ra\R\amalg\{\iy\}$ by $v(\la)$ is the least $\al\in\R$ with the coefficient of $q^\al$ in $\la$ nonzero for $\la\in\La\sm\{0\}$, and $v(0)=\iy$. Write $\La_{\ge 0}\subset\La$ for the subalgebra of $\la\in\La$ with $v(\la)\ge 0$, and $\La_{>0}\subset\La_{\ge 0}$ for the ideal of $\la\in\La$ with $v(\la)>0$. 

Then $\La$ is a {\it complete non-Archimedean field\/} in the sense of Bosch, G\"untzer and Remmert \cite[\S A]{BGR}, with valuation $\nm{\la}=2^{-v(\la)}$, so we can consider {\it rigid analytic spaces\/} over $\La$ as in \cite[\S C]{BGR}. These are like schemes over $\La$, except that polynomial functions on schemes are replaced by convergent power series.

Consider $1+\La_{>0}\subset\La$ as a group under multiplication in $\La$. Write
\begin{equation*}
\cU=\Hom\bigl(H_3(X;\Z),1+\La_{>0}\bigr)
\end{equation*}
for the set of group morphisms $\th:H_3(X;\Z)\ra 1+\La_{>0}$. By choosing a basis $e_1,\ldots,e_n$ for $H_3(X;\Z)/$torsion, where $n=b_3(X)$, we can identify $\cU\cong\La_{>0}^n$ by $\th\cong (\la_1,\cdots,\la_n)$ if $\th(e_i)=1+\la_i$ for $i=1,\ldots,n,$ where $\La_{>0}$ is the open unit ball in $\La$ in the norm $\nm{\,.\,}$. We regard $\cU$ as a {\it smooth rigid analytic space\/} over~$\La$.

A map $\Up:\cU\ra\cU$ will be called a {\it quasi-identity morphism\/} if:
\begin{itemize}
\setlength{\itemsep}{0pt}
\setlength{\parsep}{0pt}
\item[(i)] Writing $\Up(\th)=(\Up_1(\la_1,\ldots,\la_n),\ldots,\Up_n(\la_1,\ldots,\la_n))$ under $\cU\cong\La_{>0}^n$, each $\Up_i$ is given by a power series in $\la_1,\ldots,\la_n$ convergent in $\La_{>0}$.  
\item[(ii)] There exists $\ep>0$ such that if $(\la_1,\ldots,\la_n),(\la'_1,\ldots,\la'_n)\in\La_{>0}^n$ and $\de>0$ with $\la_i'-\la_i\in q^\de\cdot\La_{\ge 0}$ for $i=1,\ldots,n$ then
\begin{equation*}
\Up_j(\la'_1,\ldots,\la'_n)-\la'_j-\Up_j(\la_1,\ldots,\la_n)+\la_j\in 
q^{\de+\ep}\cdot\La_{\ge 0}\;\>\text{for $j=1,\ldots,n$.}
\end{equation*}
\end{itemize}
Here (i) implies that $\Up$ is a morphism of rigid analytic varieties. Using (ii) we can show that $\Up:\cU\ra\cU$ is a bijection, and $\Up^{-1}$ is also a quasi-identity morphism, so that $\Up$ is an isomorphism of rigid analytic varieties. Quasi-identity morphisms are closed under composition, and form a group.

\subsection{Six kinds of wall-crossing behaviour}
\label{ca72}

Suppose now that $\psi_0,\psi_1\in\cF_\ga$ are generic, and $\psi_t,$ $t\in[0,1]$ is a generic 1-parameter family joining $\psi_0,\psi_1$. As in \S\ref{ca26}, fix $\cN\in\cD$ and $\al\in H_3(N;\Z)$. We want to know how the moduli spaces $\cM(\cN,\al,\psi_t)$ can change over $t\in[0,1]$. We briefly sketch six conjectural ways in which this can happen, labelled (A)--(F), where (A) comes from \S\ref{ca3}, (B)--(D) from \S\ref{ca4}, and (E) from \S\ref{ca5}. All of (A)--(F) can also happen in reverse, that is, we can replace $\psi_t$ by $\psi_{1-t}$. 

When we say `associative 3-folds of interest', we just mean the family of associative 3-folds in $(X,\vp_t,\psi_t)$ whose behaviour we are describing. There may of course be many other associative 3-folds in $(X,\vp_t,\psi_t)$ as well.

\subsubsection*{\ref{ca72}(A) Cancelling non-singular associatives with opposite signs}

As explained in Example \ref{ca3ex1}, we expect the following can happen in generic families $\psi_t,$ $t\in[0,1]$, for some $t_0\in(0,1)$:
\begin{itemize}
\setlength{\itemsep}{0pt}
\setlength{\parsep}{0pt}
\item For $t\in[0,t_0)$ there are no associative 3-folds of interest in $(X,\vp_t,\psi_t)$.
\item There is a single compact, nonsingular associative 3-fold $N_{t_0}$ of interest in $(X,\vp_{t_0},\psi_{t_0})$. It is obstructed, with $\O_{N_{t_0}}\cong\R$. 
\item For $t\in(t_0,1]$ there are two compact, nonsingular, unobstructed associative 3-folds $N^+_t,N^-_t$ of interest in $(X,\vp_t,\psi_t)$, with $\lim_{t\ra t_0-}N^+_t=\lim_{t\ra t_0-}N^+_t=N_{t_0}$. They are diffeomorphic to $N_{t_0}$ and in the same homology class in $\al\in H_3(N;\Z)$, and have $\Or(N^+_t)=1$ and $\Or(N^-_t)=-1$. The canonical flags of $N^+_t,N^-_t$ differ by 1, in a suitable sense.
\end{itemize}

Provided we count unobstructed associatives $[N]\in\cM(\cN,\al,\psi)$ weighted by $\Or(N)$ (possibly multiplied by some 3-manifold invariant $I(N)$), the count does not change over $t\in[0,1]$ under this transition.

\subsubsection*{\ref{ca72}(B) Intersecting associatives $N_{t_0}^\pm$ give a connect sum $N_{t_0}^+\# N_{t_0}^-$}

As explained in Definition \ref{ca4def1}(a) and Conjecture \ref{ca4conj1}, we expect the following can happen in generic families $\psi_t,$ $t\in[0,1]$, for some $t_0\in(0,1)$:
\begin{itemize}
\setlength{\itemsep}{0pt}
\setlength{\parsep}{0pt}
\item For all $t\in[0,1]$ there are compact, connected, unobstructed associatives $N_t^+,N_t^-$ in $(X,\vp_t,\psi_t)$, depending smoothly on $t$. For $t\ne t_0$ we have $N_t^+\cap N_t^-=\es$, but $N_{t_0}^+\cap N_{t_0}^-=\{x\}$, and $N_t^+,N_t^-$ cross transversely at $x$ with nonzero speed as $t$ increases through $t_0$.
\item For $t\in(t_0,1]$ there is a compact, unobstructed associative 3-fold $\ti N_t$ in $(X,\vp_t,\psi_t)$, depending smoothly on $t$. It is diffeomorphic to $N_t^+\# N_t^-$, with $[\ti N_t]=[N_t^+]+[N_t^-]$ in $H_3(X;\Z)$, with $\lim_{t\ra t_0-}\ti N_t=N_{t_0}^+\cup N_{t_0}^-$. No such associative of interest exists for $t\in[0,t_0]$. We have $\Or(\ti N_t)=\Or(N_t^+)\cdot\Or(N_t^-)\cdot\ep$, where $\ep=\pm 1$ according to whether $N_t^+$ crosses $N_t^-$ with positive or negative intersection number in $X$.
\end{itemize}

\subsubsection*{\ref{ca72}(C) Self-intersecting $N_{t_0}$ gives a connect sum $N_{t_0}\#(\cS^1\t\cS^2)$}

As explained in Definition \ref{ca4def1}(b) and Conjecture \ref{ca4conj1}, we expect the following can happen in generic families $\psi_t,$ $t\in[0,1]$, for some $t_0\in(0,1)$:
\begin{itemize}
\setlength{\itemsep}{0pt}
\setlength{\parsep}{0pt}
\item For all $t\in[0,1]$ there is a compact, connected, unobstructed associative $N_t$ in $(X,\vp_t,\psi_t)$, depending smoothly on $t$. Here $N_{t_0}$ is immersed, with a self-intersection point $x\in X$, the image of distinct points $x^+,x^-$ in $N_{t_0}$. The two sheets of $N_t$ near $x^+,x^-$ cross transversely at $x$ with nonzero speed as $t$ increases through $t_0$.
\item For $t\in(t_0,1]$ there is a compact, unobstructed associative 3-fold $\ti N_t$ in $(X,\vp_t,\psi_t)$, depending smoothly on $t$. It is the self-connect-sum of $N_{t_0}$ at $x^+,x^-$, diffeomorphic to $N_t\#(\cS^1\t\cS^2)$. It has $[\ti N_t]=[N_t]$ in $H_3(X;\Z)$, and $\lim_{t\ra t_0-}\ti N_t=N_{t_0}$. No such associative of interest exists for~$t\in[0,t_0]$.
\end{itemize}

Note that $\ti N_t\cong N_t\#(\cS^1\t\cS^2)$ has $b^1(\ti N_t)\ge 1$, so $\ti N_t$ is not a $\Q$-homology 3-sphere. Thus, if we count only associative $\Q$-homology 3-spheres, the count does not change over $t\in[0,1]$ under this transition. 

\subsubsection*{\ref{ca72}(D) Self-intersecting $N_{t_0}$ gives a connect sum $N_{t_0}\# N_{t_0}$}

Here is a combination of (B),(C) above:
\begin{itemize}
\setlength{\itemsep}{0pt}
\setlength{\parsep}{0pt}
\item For all $t\in[0,1]$ there is a compact, connected, unobstructed associative $N_t$ in $(X,\vp_t,\psi_t)$, depending smoothly on $t$. Here $N_{t_0}$ is immersed, with a self-intersection point $x\in X$, the image of distinct points $x^+,x^-$ in $N_{t_0}$. The two sheets of $N_t$ near $x^+,x^-$ cross transversely at $x$ with nonzero speed as $t$ increases through $t_0$.
\item For $t\in(t_0,1]$ there is a compact, unobstructed associative 3-fold $\ti N_t$ in $(X,\vp_t,\psi_t)$, depending smoothly on $t$. It is the connect sum of two copies of $N_{t_0}$ at $x^+,x^-$, diffeomorphic to $N_t\# N_t$. It has $[\ti N_t]=2[N_t]$ in $H_3(X;\Z)$, and $\lim_{t\ra t_0-}\ti N_t=2N_{t_0}$. No such associative of interest exists for $t\in[0,t_0]$. We have $\Or(\ti N_t)=\ep$, where $\ep=\pm 1$ according to whether $N_t$ near $x^+$ crosses $N_t$ near $x^-$ with positive or negative intersection number in~$X$.
\end{itemize}

\subsubsection*{\ref{ca72}(E) Three families $N_t^1,N_t^2,N_t^3$ from $N_{t_0}$ with $T^2$-cone singularity}

As explained in Definition \ref{ca5def2} and Conjectures \ref{ca5conj1} and \ref{ca5conj2}, we expect the following can happen in generic families $\psi_t,$ $t\in[0,1]$, for some $t_0\in(0,1)$:
\begin{itemize}
\setlength{\itemsep}{0pt}
\setlength{\parsep}{0pt}
\item For all $t\in[0,t_0)$ there is a compact, unobstructed associative $N_t^1$ in $(X,\vp_t,\psi_t)$, depending smoothly on $t$.
\item For all $t\in(t_0,1]$ there are compact, unobstructed associatives $N_t^2,N_t^3$ in $(X,\vp_t,\psi_t)$, depending smoothly on $t$.
\item There is a compact associative $N_{t_0}$ in $(X,\vp_{t_0},\psi_{t_0})$ with one singular point at $x\in X$ locally modelled on the associative $T^2$-cone $L_0\subset\R^7$ from \S\ref{ca51}. We have $\lim_{t\ra t_0-}N_t^1=\lim_{t\ra t_0+}N_t^2=\lim_{t\ra t_0+}N_t^3=N_{t_0}$, where $N_t^a$ is locally modelled near $x$ on $L_s^a\subset\R^7$ in \S\ref{ca51}, for $\md{t-t_0}$ and $s>0$ small.
\item Writing $I$ for the 3-manifold invariant in \eq{ca5eq12}, from \eq{ca5eq15} we have
\e
\Or(N^1_t)\cdot I(N^1_t)=\Or(N^2_t)\cdot I(N^2_t)+\Or(N^3_t)\cdot I(N^3_t).
\label{ca7eq2}
\e

\end{itemize}

If we count unobstructed associatives $[N]\in\cM(\cN,\al,\psi)$ weighted by $\Or(N)\cdot I(N)$, equation \eq{ca7eq2} implies that the count does not change over $t\in[0,1]$ under this transition. Note that $I(N)=0$ unless $N$ is a $\Q$-homology sphere, so this is consistent with counting only associative $\Q$-homology 3-spheres, as in~(C).

\subsubsection*{\ref{ca72}(F) Multiple cover phenomena}

This is one of the less satisfactory parts of this paper. 

The author expects that in generic families $\psi_t,$ $t\in[0,1]$, for some $t_0\in(0,1)$, it can happen that a family of associative 3-folds $\hat N_t$ in $(X,\vp_t,\psi_t)$ for $t\in(t_0,1]$ can converge as $t\ra t_0$ to a {\it branched multiple cover\/} of some associative $N_{t_0}$ in $(X,\vp_{t_0},\psi_{t_0})$, where $N_{t_0}$ may be obstructed, or immersed, or singular. There may be several ways in which this can happen. 

We illustrate this using (B) above. We expect the following can happen in generic families $\psi_t,$ $t\in[0,1]$, for some $t_0\in(0,1)$:
\begin{itemize}
\setlength{\itemsep}{0pt}
\setlength{\parsep}{0pt}
\item Let $N_t^\pm,t_0,x,\ti N_t$ be as in (B). Then for $t\in(t_0,1]$ there is a compact, unobstructed associative 3-fold $\hat N_t$ in $(X,\vp_t,\psi_t)$, depending smoothly on $t$. Topologically, we have $\hat N_t\cong k^+\,N_t^+\# k^-\,N_t^-\# l\,(\cS^1\t\cS^2)$, where $k^\pm\ge 1$ with $(k^+,k^-)\ne(1,1)$ and $l\ge 0$. That is, $\hat N_t$ is the connect sum of $k^+$ copies of $N_t^+$ and $k^-$ copies of $N_t^-$ at $k^++k^-+l-1$ pairs of points.
\item As $t\ra t_0$, $\hat N_t$ converges to a branched multiple cover of $N_{t_0}^+\cup N^-_{t_0}$, with multiplicity $k^+$ over $N_{t_0}^+$ and multiplicity $k^-$ over $N_{t_0}^+$. There is a 1-dimensional singular set $S\subset N_{t_0}^+\cup N^-_{t_0}$ with $x\in S$, probably a union of points $x'$ and curves $\ga$ with end-points. Over $N_{t_0}^+\sm S$ (or $N_{t_0}^-\sm S$), $k^+$ sheets (or $k^-$ sheets) of $\hat N_t$ converge smoothly to $N_{t_0}^\pm\sm S$. On the interiors $\ga^\ci$ of curves $\ga$ in $S$, $\hat N_t$ should look like a double cover of $N_{t_0}^\pm$ branched along $\ga^\ci$, as for branched covers of Riemann surfaces but one dimension higher. At points $x'$ or end-points of curves $\ga$ in $S$, the local models for how $\hat N_t$ converges to $N_{t_0}^+\cup N^-_{t_0}$ are more complicated. 
\end{itemize}

Using the ideas of \S\ref{ca6} we can write down heuristic $\U(1)$-invariant {\it local\/} models for how $\hat N_t$ can converge to $N_{t_0}^+\cup N^-_{t_0}$, based on branched-cover behaviour for families of $J_0$-holomorphic curves in $\C^3$ with boundary in~$L\subset\C^3$.

However, the author does not have a conjectural {\it global\/} description of how such multiple cover transitions happen, that is detailed enough to predict how many associatives $\hat N_t$ of each type $(\cN,\al)$ are created or destroyed in each such transition. Such a global description would necessarily be complicated. 

In the example above, suppose we have families $\hat N_t^1$ of type $(k_1^+,k_1^-,l_1)$ and $\hat N_t^2$ of type $(k_2^+,k_2^-,l_2)$ for $t\in(t_0,1]$. If we deform the geometry so that $\hat N_t^1$ crosses $\hat N_t^2$, then as in (B) above we create a new associative $\hat N_t^1\#\hat N_t^2$, which is another $\hat N_t$ of type $(k^+,k^-,l)=(k_1^++k_2^+,k_1^-+k_2^-,l_1+l_2)$. Because of this, the number of $\hat N_t$'s of type $(k^+,k^-,l)$ that appear or disappear as $t$ crosses $t_0$ will depend on all the other $\hat N'_t$ of type $(k^{\prime +},k^{\prime -},l')$ for $(k^{\prime +},k^{\prime -},l')<(k^+,k^-,l)$, and the canonical flags of these $\hat N'_t$, and their pairwise `linking numbers'.

We can see (D) as the simplest example of such a multiple cover transition.

Similar (but simpler) multiple cover phenomena occur for $J$-holomorphic curves in symplectic geometry, and do not spoil the deformation-invariance.

\subsection{Definition of the superpotential}
\label{ca73}

Work in the situation of \S\ref{ca71}, and assume Conjecture \ref{ca2conj2}. 

Let $\psi\in\cF_\ga$ be generic. We will define a superpotential $\Phi_\psi:\cU\ra \La_{>0}$, which is a generating function for Gromov--Witten type invariants $GW_{\psi,\al}$ counting associative $\Q$-homology spheres $N$ in $(X,\vp,\psi)$ with $[N]=\al\in H_3(X;\Z)$, depending on some arbitrary choices. 
 
\begin{dfn} For $i=0,\ldots,7$, choose elements $e^i_1,\ldots,e_{b_i(X)}^i$ in $H_i(X;\Z)$ such that $e^i_1,\ldots,e_{b_i(X)}^i$ is a basis for $H_i(X;\Q)$, with $e^7_1=[X]$. Choose compact, embedded, oriented, generic $i$-dimensional submanifolds $C^i_1,\ldots,C_{b_i(X)}^i$ in $X$ with $[C^i_j]=e^i_j$ in $H_i(X;\Z)$ for $j=1,\ldots,b_i(X)$, with~$C^7_1=X$.

By the K\"unneth theorem, $e^i_j\boxtimes e^{7-i}_k$ for $j=1,\ldots,b_i(X)$, $k=1,\ldots,b_{7-i}(X)$ is a basis for the homology group $H_7(X\t X;\Q)$, where $e^i_j\boxtimes e^{7-i}_k$ is represented by the compact, oriented submanifold $C^i_j\t C^{7-i}_k$ in $X\t X$. The diagonal map $\De_X:X\ra X\t X$, $\De_X:x\mapsto (x,x)$, gives a homology class $[\De_X(X)]$ in $H_7(X\t X;\Q)$. Hence for some coefficients $A^i_{jk}\in\Q$ we have
\begin{equation*}
[\De_X(X)]=\sum_{i=0}^7\sum_{j=1}^{b_i(X)}\sum_{k=1}^{b_{7-i}(X)} A^i_{jk}e^i_j\boxtimes e^{7-i}_k \quad\text{in $H_7(X\t X;\Q)$,}
\end{equation*}
with $(A^i_{jk})_{j,k=1}^{b_i(X)}$ the matrix of the intersection form $H_i(X;\Q)\t H_{7-i}(X;\Q)\ra\Q$.

Therefore we can choose an 8-chain $D$ in homology of $X\t X$ over $\Q$ with
\e
\pd D=\De_X(X)-\ts\sum_{i=0}^7\sum_{j=1}^{b_i(X)}\sum_{k=1}^{b_{7-i}(X)} A^i_{jk}\cdot C^i_j\t C^{7-i}_k.
\label{ca7eq3}
\e

As $\psi\in\cF_\ga$ is generic and we assume Conjecture \ref{ca2conj2}, for each $\al\in H_3(X;\Z)$ and $\cN\in\cD,$ the moduli space $\cM(\cN,\al,\psi)$ is finite and $N$ is finite-embedded and unobstructed for each $[N,i]\in\cM(\cN,\al,\psi)$. By genericness of $C^i_j$ we can suppose that for all such $N$ we have $N\cap C^i_j=\es$ for all $i=0,1,2,3$ and $j=1,\ldots,b_i(X)$.

Recall that a {\it tree\/} is a finite, undirected graph $\Ga$ which is connected and simply-connected. A tree $\Ga$ has a set $V$ of {\it vertices\/} $v$, and a set $E$ of {\it edges\/} $e$ joining two vertices $v,w$. In the next equation, a {\it labelled tree\/} $(\Ga,[N_v,i_v]_{v\in V})$ is a tree $\Ga$ together with an isomorphism class $[N_v,i_v]$ of compact, immersed associative $\Q$-homology spheres $i_v:N_v\ra X$ in $(X,\vp,\psi)$ for all $v\in V$, so that $[N_v,i_v]\in\cM(\cN,\al,\psi)$ for some $\cN\in\cDHS$ and $\al\in H_3(X;\Z)$.

Define a superpotential $\Phi_\psi:\cU\ra \La_{>0}$ by
\ea
\Phi_\psi(\th)&=\sum_{\begin{subarray}{l}\text{labelled trees}\\ (\Ga,[N_v,i_v]_{v\in V})\end{subarray}}\frac{1}{\md{\Aut(\Ga,[N_v,i_v]_{v\in V})}}\prod_{v\in V}
\frac{\Or(N_v)I(N_v)}{\md{\Iso([N_v,i_v])}}\cdot q^{\ga\cdot[N_v]}\th([N_v])
\nonumber\\
&\cdot\!\!\prod_{\begin{subarray}{l}\text{edges $\mathop{\bu}\limits^{\sst v}-\mathop{\bu}\limits^{\sst w}$ in $\Ga$: $N'_v,N'_w$ are small} \\ 
\text{perturbations of $N_v,N_w$ in directions $f_{N_v},f_{N_w}$}
\end{subarray}\!\!\!\!\!\!\!\!\!\!\!\!\!\!\!\!\!\!\!\!\!\!\!\!\!\!\!\!\!\!\!\!\!\!\!\!\!\!\!\!}\ha(N'_v\t N_w+N'_w\t N_v)\bu D
\label{ca7eq4}\\
&\text{+\,similar, but unknown, contributions from multiple covers.}
\nonumber
\ea
Here in the first line, $\Aut(\Ga,[N_v,i_v]_{v\in V})$ is the finite group of automorphisms of $\Ga$ preserving the assignment $v\mapsto[N_v,i_v]$. For each $v\in V$, $\Iso([N_v,i_v])$ is as in Definition \ref{ca2def5}, and $\Or(N_v)$ as in \S\ref{ca32}, and $I(N_v)$ as in~\eq{ca5eq12}.

In the second line, the associatives $N_v,N_w$ have canonical flags $f_{N_v},f_{N_w}$, as in \S\ref{ca32}. We choose representatives $s_{N_v}\in\Ga^\iy(\nu_{N_v})$, $s_{N_w}\in\Ga^\iy(\nu_{N_w})$ for $f_{N_v},f_{N_w}$, and take $N'_v,N'_w$ to be small perturbations of $N_v,N_w$ in normal directions $s_{N_v},s_{N_w}$.
Then $(N'_v\t N_w+N'_w\t N_v)\bu D$ in \eq{ca7eq4} is the intersection number in homology over $\Q$ of the 6-cycle $N'_v\t N_w+N'_w\t N_v$ and the 8-chain $D$. This is well defined provided $N'_v\t N_w+N'_w\t N_v$ does not intersect $\pd D$, which is given in \eq{ca7eq3}. As above $N_v,N_w$ do not intersect $C^i_j$ for $i=0,1,2,3$, so $N'_v,N'_w$ also do not intersect $C^i_j$ as they are close to $N_v,N_w$. Hence $N'_v\t N_w+N'_w\t N_v$ does not intersect $\sum_{i,j,k}A^i_{jk}\cdot C^i_j\t C^{7-i}_k$ in~\eq{ca7eq3}. 

To see that $N'_v\t N_w+N'_w\t N_v$ does not intersect $\De_X(X)$, as $\psi$ is generic we may divide into cases (i) $N_v\cap N_w=\es$, and (ii) $N_v$ and $N_w$ are finite covers of the same embedded $N\subset X$. In case (i) $N_v'\cap N_w=\es=N_w'\cap N_v$ as $N_v',N_w'$ are close to $N_v,N_w$. In case (ii) $N_v'\cap N_w=\es=N_w'\cap N_v$ since $N_v,N_w$ have the same image $N\subset X$. So in both cases $(N'_v\t N_w+N'_w\t N_v)\cap\De_X(X)=\es$, and $(N'_v\t N_w+N'_w\t N_v)\bu D$ is well defined.

Each edge $\mathop{\bu}\limits^{\sst v}-\mathop{\bu}\limits^{\sst w}$ in $\Ga$ appears only once in the product in \eq{ca7eq4}, that is, we do not distinguish $\mathop{\bu}\limits^{\sst v}-\mathop{\bu}\limits^{\sst w}$ and $\mathop{\bu}\limits^{\sst w}-\mathop{\bu}\limits^{\sst v}$. This makes sense as $(N'_v\t N_w+N'_w\t N_v)\bu D$ is symmetric in $v,w$. The sum \eq{ca7eq4} is generally infinite, but by the last part of Conjecture \ref{ca2conj2} there are only finitely many labelled trees $(\Ga,[N_v,i_v]_{v\in V})$ with $\sum_{v\in V}\ga\cdot[N_v]\le A$ for any $A>0$, so the term $\prod_{v\in V}q^{\ga\cdot[N_v]}$ in \eq{ca7eq4} implies that the sum converges in $\La_{>0}$, and thus the first two lines of \eq{ca7eq4} are well defined.

For the third line of \eq{ca7eq4}, the idea is to include correction terms which will ensure deformation-invariance of $\Phi_\psi$ under the multiple cover phenomena discussed in \S\ref{ca72}(F). As the author does not have a good conjectural description of these phenomena, we cannot yet write down the correction terms explicitly. We will mostly ignore this issue, and just hope things work out nicely.

We can also write \eq{ca7eq4} as 
\e
\Phi_\psi(\th)=\sum_{\al\in H_3(X;\Z):\ga\cdot\al>0}GW_{\psi,\al}\, q^{\ga\cdot\al}\,\th(\al), 
\label{ca7eq5}
\e
where $GW_{\psi,\al}\in\Q$ is defined by taking $GW_{\psi,\al}\, q^{\ga\cdot\al}\,\th(\al)$ to be the sum of all terms in \eq{ca7eq4} from $(\Ga,[N_v,i_v]_{v\in V})$ with $\sum_{v\in V}[N_v]=\al$ in $H_3(X;\Z)$. Then $GW_{\psi,\al}$ is a Gromov--Witten type invariant counting associative $\Q$-homology spheres in class $\al$ in $(X,\vp,\psi)$. Note however that the $GW_{\psi,\al}$ are {\it not independent of the choices of\/} $C^i_j,D$, and are {\it not invariant under deformations of\/ $\psi$ in\/} $\cF_\ga$. So they are not enumerative invariants in the usual sense.
\label{ca7def1}	
\end{dfn}

\begin{rem} We can interpret \eq{ca7eq4} as the sum of a `main term' $\Phi_\psi^{\rm main}(\th)$ coming from trees $\Ga$ with one vertex and no edges, and a series of increasingly complex `correction terms' coming from trees $\Ga$ with $n\ge 2$ vertices and $n-1$ edges, as $n\ra\iy$. The `main term' may be rewritten as
\begin{equation*}
\Phi_\psi^{\rm main}(\th)=\sum_{\begin{subarray}{l}\al\in H_3(X;\Z):\\ \quad \ga\cdot\al>0\end{subarray}}\sum_{\cN\in\cDHS}\sum_{[N,i]\in\cM(\cN,\al,\psi)}\frac{\Or(N)I(N)}{\md{\Iso([N,i])}}\cdot q^{\ga\cdot\al}\th(\al).
\end{equation*}
This is a straightforward weighted count of associative $\Q$-homology 3-spheres. Now $\Phi_\psi^{\rm main}(\th)$ is not deformation-invariant, because of the wall-crossing behaviour in \S\ref{ca72}(B),(D). The `correction terms' are designed to remedy this.
\label{ca7rem1}	
\end{rem}

\subsection{\texorpdfstring{How $\Phi_\psi$ depends on choices, and on $\psi$}{How Φᵩ depends on choices, and on ψ}}
\label{ca74}

We now consider how $\Phi_\psi$ in \S\ref{ca73} depends on the arbitrary choices $C^i_j,D$ in its definition, and how it varies under smooth deformations of $\psi$ in $\cF_\ga$. 

The next ``theorem'' depends on the conjectures in \S\ref{ca2}--\S\ref{ca5}, and we only sketch the proof. The hypotheses are rather limited and artificial. As in \S\ref{ca72}, we do not have a detailed conjecture for how multiple cover phenomena in \S\ref{ca72}(F) behave. So we exclude them, by just assuming that only wall-crossings of type \S\ref{ca72}(A)--(E) occur. However, the author actually expects that some \S\ref{ca72}(F) phenomena will occur simultaneously with \S\ref{ca72}(A)--(E), and \S\ref{ca72}(F) is needed to cancel interaction terms in \eq{ca7eq4} between pairs of associatives in \S\ref{ca72}(A)--(E). Part (a)(iii) ensures, just by assumption, that these interaction terms are zero. 

\begin{thm}{\bf(a)} Let\/ $\psi_0,\psi_1\in\cF_\ga$ be generic, and\/ $\psi_t,$ $t\in[0,1]$ be a generic smooth\/ $1$-parameter family in $\cF_\ga$ connecting $\psi_0,\psi_1$. Suppose that:
\begin{itemize}
\setlength{\itemsep}{0pt}
\setlength{\parsep}{0pt}
\item[{\bf(i)}] The only changes to moduli spaces $\cM(\cN,\al,\psi_t)$ as\/ $t$ increases through\/ $[0,1]$ are those of type {\rm\S\ref{ca72}(A)--(E)} (and \begin{bfseries}not\end{bfseries} those of type~{\rm\S\ref{ca72}(F)}).
\item[{\bf(ii)}] For any $A>0,$ only finitely many changes happen over $t\in[0,1]$ to all\/ $\cM(\cN,\al,\psi_t)$ with\/ $\ga\cdot\al\le A$. 
\item[{\bf(iii)}] If\/ $N^1_t,N^2_t$ are two distinct associatives in $(X,\vp_t,\psi_t)$ considered in one of\/ {\rm\S\ref{ca72}(A)--(E),} that do not both exist for all\/ $t\in[0,1]$ (this excludes $N^1_t,N^2_t$ being $N_t^+,N_t^-$ in {\rm\S\ref{ca72}(B)}), and\/ $N^{\prime 1}_t,N^{\prime 2}_t$ are small perturbations of\/ $N^1_t,N^2_t$ in directions $f_{N_t^1},f_{N_t^2},$ then $(N^{\prime 1}_t\t N^2_t+N^{\prime 2}_t\t N^1_t)\bu D=0$.
\item[{\bf(iv)}] $C^i_j,D$ in Definition\/ {\rm\ref{ca7def1}} are independent of\/ $t,$ and\/ $C^i_j\cap i_t(N_t)=\es$ for all\/ $i=0,\ldots,3,$ $j=1,\ldots,b_i(X),$ $t\in[0,1]$ and\/ $[N_t,i_t]\in\cM(\cN,\al,\psi_t)$. 
\end{itemize}
Define $\Phi_{\psi_t}$ as in {\rm\eq{ca7eq4},} but taking the `unknown multiple cover contributions' in the third line to be zero. Then~$\Phi_{\psi_1}=\Phi_{\psi_0}$.
\smallskip

\noindent{\bf(b)} Generalize {\bf(a)} by dropping {\bf(iv)}. Then there is a quasi-identity morphism $\Up:\cU\ra\cU$ in the sense of\/ {\rm\S\ref{ca71}} with\/ $\Phi_{\psi_1}=\Phi_{\psi_0}\ci\Up$.
\smallskip

\noindent{\bf(c)} Suppose $\ti\Phi_\psi$ is defined in \eq{ca7eq4} using alternative choices $\ti C^i_j,\ti D$ for $C^i_j,D$ in Definition\/ {\rm\ref{ca7def1}}. Then $\ti\Phi_\psi=\Phi_\psi\ci\Up$ for some quasi-identity morphism $\Up:\cU\ra\cU$.

\label{ca7thm1}	
\end{thm}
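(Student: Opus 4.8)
The plan is to prove parts (a)--(c) together, treating (a) as the main case and deriving (b), (c) from it by letting the auxiliary data $(C^i_j,D)$ move. One works $q$-adically throughout: for each energy cutoff $A>0$ the part of $\Phi_\psi$ coming from labelled trees with total energy $\sum_{v\in V}\ga\cdot[N_v]\le A$ is a finite sum, only finitely many wall-crossings affect it along $\psi_t$ by Conjecture \ref{ca2conj2} and hypothesis (ii), and $\La_{>0}$ is complete in $\nm{\,\cdot\,}$, so it suffices to prove everything modulo $q^A\La_{\ge0}$ for each $A$. Part (a) then splits into: (1) $\Phi_{\psi_t}$ is locally constant in $t$ off the finitely many wall-crossing times; (2) $\Phi_{\psi_t}$ does not jump at a wall of any type §\ref{ca72}(A)--(E). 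For (1): between walls the finitely many relevant associatives $N$ with $\ga\cdot[N]\le A$ are compact, immersed, unobstructed, and vary smoothly in $t$ by Theorem \ref{ca2thm2}; their classes $[N]$, the orders $\md{\Iso([N,i])}$, and the invariants $I(N)$ are constant, so the vertex weights and monomials $q^{\ga\cdot[N]}\th([N])$ are constant; the signs $\Or(N)$ are constant by Corollary \ref{ca3cor1}, which uses $\d\psi_t=0$ and Proposition \ref{ca3prop4}; and the edge weights $\ha(N'_v\t N_w+N'_w\t N_v)\bu D$ are rational intersection numbers of continuously varying $6$-cycles against the fixed $8$-chain $D$, hence locally constant as long as those cycles avoid $\pd D$ --- which, for a generic family and with (iv) in force, fails only at the transverse crossings of $\De_X(X)$ that are exactly the walls of type §\ref{ca72}(B),(D).

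The heart of the proof is (2). At walls of type §\ref{ca72}(A), (C), (E) the associatives created at $t_0$ come in families whose weighted counts $\Or(N)I(N)$, with equal $q$- and $\th$-factors, sum to zero: $+1-1$ in (A); $I=0$ in (C) since $\ti N_t\cong N_t\#(\cS^1\t\cS^2)$ has $b^1\ge1$; and \eq{ca7eq2} in (E). The task is to propagate this vertex-local cancellation through the whole labelled-tree sum, and here hypothesis (iii) is essential: it forces every edge weight between two wall-crossing associatives to vanish, so no tree joins two associatives born at the same event, and the tree sum factorizes as (a sum over the rest of the tree) times (a sum over the colliding vertex that cancels term by term) --- the $\Aut$- and $\Iso$-weights in \eq{ca7eq4} being precisely those of the exponential formula. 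At walls of type §\ref{ca72}(B), (D) the mechanism is an edge-contraction compensation: a new embedded vertex $\ti N_t$ with $[\ti N_t]=[N^+_t]+[N^-_t]$ (resp.\ $2[N_t]$), $I(\ti N_t)=I(N^+_t)I(N^-_t)$ by \eq{ca5eq14}, and $\Or(\ti N_t)=\ep\,\Or(N^+_t)\Or(N^-_t)$ appears for $t>t_0$ exactly as the edge weight between the colliding associatives jumps by $\ep$ as their product cycle crosses $\De_X(X)$; contracting the colliding edge of a labelled tree to a single $\ti N_t$ vertex --- and using Conjecture \ref{ca4conj1} to see that near $t_0$ an edge weight from $\ti N_t$ equals the sum of the two corresponding sub-edge weights --- gives a bijection identifying the two generating functions, with (iii) again killing leftover interaction terms. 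Running (1) and this wall analysis over all $A$ gives $\Phi_{\psi_1}=\Phi_{\psi_0}$, proving (a).

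For (c): replacing $(C^i_j,D)$ by generic homologous $(\ti C^i_j,\ti D)$ leaves the vertex weights alone and shifts each edge weight by $\de_e=\ha(N'_v\t N_w+N'_w\t N_v)\bu(D-\ti D)$, a rational number depending only on the classes and canonical flags of $N_v,N_w$ (well-defined because $\pd(D-\ti D)$ equals, up to boundaries, $\sum A^i_{jk}(\ti C^i_j\t\ti C^{7-i}_k-C^i_j\t C^{7-i}_k)$, disjoint from the relevant product cycles by genericity, and independent of the flag representatives by Propositions \ref{ca3prop1}--\ref{ca3prop2}). Expanding $\prod_e(w_e+\de_e)$ in \eq{ca7eq4} and absorbing each $\de_e$-insertion --- together with the subtree hanging off that edge --- into a correction of the $\th([N_v])$ factor, one reads off a reparametrization $\Up:\cU\ra\cU$ with $\ti\Phi_\psi=\Phi_\psi\ci\Up$; each correction carries a factor $q^{\ga\cdot[N_w]}$ with $\ga\cdot[N_w]>0$ (by the volume bound \eq{ca2eq9}), so $\Up$ differs from $\id$ by terms of $q$-order at least $\ep:=\inf\{\ga\cdot\al:\al\in H_3(X;\Z)$ represented by an associative$\}>0$, which is exactly the quasi-identity property of §\ref{ca71}. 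For (b), subdivide $[0,1]$ into finitely many subintervals on each of which (iv) can be arranged for some choice of $(C^i_j,D)$ (possible by genericity, since only finitely many associatives of energy $\le A$ occur over a compact subinterval), apply (a) on each subinterval and (c) at the junctions to change $(C^i_j,D)$, and compose the resulting quasi-identity morphisms --- which form a group --- to obtain a single $\Up$ with $\Phi_{\psi_1}=\Phi_{\psi_0}\ci\Up$.

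The hard part will be step (2) in its full generality: making rigorous the exponential-formula reorganization of the infinite labelled-tree sum so that the vertex-local cancellations of types (A), (C), (E) and the edge-contraction compensations of types (B), (D) propagate correctly, including the bookkeeping of the $\Aut$/$\Iso$ symmetry factors and the precise sign and magnitude ($\pm\ep$) of the jump in $\ha(N'_v\t N_w+N'_w\t N_v)\bu D$ at a transverse crossing of $\De_X(X)$, as well as pinning down the canonical flag $f_{\ti N_t}$ in the (B), (D) cases well enough to make the edge-weight additivity exact. (The conjectural status of §\ref{ca72}(A)--(E) itself, and the exclusion of §\ref{ca72}(F), are granted by the hypotheses, as the statement and its preamble make explicit, and Theorem \ref{ca2thm2}, Corollary \ref{ca3cor1}, Conjectures \ref{ca2conj2}, \ref{ca4conj1}, \ref{ca5conj1}--\ref{ca5conj2} are all invoked as given.)
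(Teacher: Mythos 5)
Your part (a) follows essentially the same route as the paper's sketch: $q$-adic truncation using (ii), local constancy of vertex and edge weights between walls (via Corollary \ref{ca3cor1} and the fixed $D$), vertex-local cancellation at walls of type \S\ref{ca72}(A),(C),(E) with hypothesis (iii) killing interaction terms, and the edge-contraction mechanism at walls of type (B),(D), where the jump of $\ha(N'_v\t N_w+N'_w\t N_v)\bu D$ at a transverse crossing of $\De_X(X)$ is compensated by the new vertex $\ti N_t$ using $\Or(\ti N_t)=\ep\,\Or(N^+_t)\Or(N^-_t)$, \eq{ca5eq14}, and the $\Aut/\Iso$ factors (your explicit remark that edge weights from $\ti N_t$ must equal the sums of the corresponding sub-edge weights is a point the paper's contraction map uses implicitly). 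Where you genuinely diverge is the order of (b) and (c): the paper proves (b) directly, analysing crossings of the moving associatives with the fixed $C^3_j$, showing each crossing shifts the relevant edge weights by $[N_w]\bu\de$ (linear in the class of the \emph{other} endpoint, the crossing associative being pinned), and absorbing these per-crossing shifts into quasi-identity morphisms modelled on \eq{ca7eq7}; it then deduces (c) from (b) by interpolating $\hat C^i_j(t),\hat D(t)$ and a bordism argument. You instead prove (c) first by a one-shot comparison of $D$ and $\ti D$, and recover (b) from (a)+(c) by subdividing $[0,1]$ and switching auxiliary data at junctions.

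Two concrete weak points in that reversal. First, your justification that (iv) ``can be arranged by genericity'' on each subinterval is wrong as stated: since $3+4=7$, a generic fixed $C^3_j$ meets the $4$-dimensional set swept out by a family $N_t$ in isolated, transverse, unremovable points, so crossings cannot be avoided by genericity; the fix is to subdivide at crossing-free times and alternate between (at least) two choices of $(C^i_j,D)$ whose finite crossing-time sets are disjoint, doing this for each $q$-adic truncation and checking compatibility as $A\ra\iy$ --- this is where the content of the paper's (b) re-enters, and it should be said. Second, your direct (c) hinges on ``reading off'' a quasi-identity morphism that absorbs edge-weight shifts $\de_e$ depending bilinearly on \emph{both} endpoints (together with the attached subtrees); this is the hardest combinatorial step, and it is strictly harder than the paper's elementary move, where the shift is linear in one endpoint and the absorption is exhibited (to leading order) by \eq{ca7eq7}, with the full rooted-tree formula left as an expectation. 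Since you make (c) the logical foundation for (b), the unproved resummation now carries the whole weight of (b) and (c), whereas the paper localizes it in a sequence of per-crossing moves; at minimum you should either prove the bilinear absorption or refactor your (c) as a composition of single-crossing comparisons as the paper does.
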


\begin{proof}[Sketch proof] For (a), $\Phi_{\psi_t}$ is defined for generic $t\in[0,1]$. We claim that $\Phi_{\psi_t}$ is constant in $t$, so that $\Phi_{\psi_0}=\Phi_{\psi_1}$. For $A>0$, consider the projection $\Phi_{\psi_t}+q^A\La_{>0}$ of $\Phi_{\psi_t}$ to $\La_{>0}/q^A\La_{>0}$. Part (ii) implies that $\Phi_{\psi_t}+q^A\La_{>0}$ undergoes at most finitely many changes in $t\in[0,1]$, each from a single transition in \S\ref{ca72}(A)--(E). We will show that $\Phi_{\psi_t}+q^A\La_{>0}$ is actually unchanged by each such transition. For (A),(C),(E) this follows from the discussion in \S\ref{ca72}, as \eq{ca7eq4} counts associative $\Q$-homology 3-spheres $N$ weighted by $\Or(N)I(N)$, together with part (iii), which ensures that interactions in \eq{ca7eq4} between pairs of associatives in (A),(C),(E) are all zero.

Let $N_t^\pm$ for $t\in[0,1]$, $x\in X$, $t_0\in(0,1)$, $\ti N_t$ for $t\in(t_0,1]$, and $\ep=\pm 1$ be as in \S\ref{ca72}(B). Then the sum \eq{ca7eq4} changes as $t$ crosses $t_0$ in two ways:
\begin{itemize}
\setlength{\itemsep}{0pt}
\setlength{\parsep}{0pt}
\item[$(\dag)$] When $t>t_0$ we can have terms in \eq{ca7eq4} from $(\Ga,[N_v,i_v]_{v\in V})$ with $N_v=\ti N_t$ for some $v\in V$. This does not happen for $t<t_0$.
\item[$(\ddag)$] Consider terms in \eq{ca7eq4} from $(\hat\Ga,[\hat N_v,\hat\imath_v]_{v\in\hat V})$ in which $\hat\Ga$ contains an edge $\mathop{\bu}\limits^{\sst v}-\mathop{\bu}\limits^{\sst w}$ with $\hat N_v=N_t^+$ and $\hat N_w=N_t^-$. Then the second line of \eq{ca7eq4} includes a factor $\ha(N^{+\prime}_t\t N_t^-+N^{-\prime}_t\t N^+_t)\bu D$. This factor (which (iii) does not require to be zero) changes by the addition of $-\ep$ as $t$ increases through $t_0$, because of extra intersection points of $N^{+\prime}_t\t N_t^-$ and $N^{-\prime}_t\t N^+_t$ with $D$ near $(x,x)$ in~$X\t X$.
\end{itemize} 

There is a map from trees $(\hat\Ga,[\hat N_v,\hat\imath_v]_{v\in\hat V})$ in $(\ddag)$ to trees $(\Ga,[N_v,i_v]_{v\in V})$ in $(\dag)$, in which we contract edges $\mathop{\bu}\limits^{\sst v}-\mathop{\bu}\limits^{\sst w}$ in $\hat\Ga$ with $\hat N_v=N_t^+$ and $\hat N_w=N_t^-$ to a vertex $v'$ in $\Ga$ with $N_{v'}=\ti N_t$. Under this map, the changes to \eq{ca7eq4} cancel, because we have $\Or(\ti N_t)=\Or(N_t^+)\cdot\Or(N_t^-)\cdot\ep$ from \S\ref{ca72}(B), and $I(\ti N_t)=I(N_t^+)I(N_t^-)$ by \eq{ca5eq14} as $\ti N_t\cong N_t^+\# N_t^-$. Thus $\Phi_{\psi_t}+q^A\La_{>0}$ is unchanged under transitions of type~(B).

Now let $N_t$ for $t\in[0,1]$, $x\in X$, $t_0\in(0,1)$, $\ti N_t$ for $t\in(t_0,1]$, and $\ep=\pm 1$ be as in \S\ref{ca72}(D). Then the sum \eq{ca7eq4} changes as $t$ crosses $t_0$ in two ways:
\begin{itemize}
\setlength{\itemsep}{0pt}
\setlength{\parsep}{0pt}
\item[$(\dag)'$] When $t>t_0$ we can have terms in \eq{ca7eq4} from $(\Ga,[N_v,i_v]_{v\in V})$ with $N_v=\ti N_t$ for some $v\in V$. This does not happen for $t<t_0$.
\item[$(\ddag)'$] Consider terms in \eq{ca7eq4} from $(\hat\Ga,[\hat N_v,\hat\imath_v]_{v\in\hat V})$ in which $\hat\Ga$ contains an edge $\mathop{\bu}\limits^{\sst v}-\mathop{\bu}\limits^{\sst w}$ with $\hat N_v=\hat N_w=N_t$. Then the second line of \eq{ca7eq4} includes a factor $(N'_t\t N_t)\bu D$. This factor (which (iii) does not require to be zero) changes by the addition of $-2\ep$ as $t$ increases through $t_0$, because of two extra intersection points of $N'_t,N_t$ with $D$ near $(x,x)$ in~$X\t X$.
\end{itemize} 

Again, there is a map from $(\hat\Ga,[\hat N_v,\hat\imath_v]_{v\in\hat V})$ in $(\ddag)'$ to $(\Ga,[N_v,i_v]_{v\in V})$ in $(\dag)'$, in which we contract edges $\mathop{\bu}\limits^{\sst v}-\mathop{\bu}\limits^{\sst w}$ in $\hat\Ga$ with $\hat N_v=\hat N_w=N_t$ to a vertex $v'$ in $\Ga$ with $N_{v'}=\ti N_t$. Under this map, the changes to \eq{ca7eq4} cancel, because we have $\Or(\ti N_t)=\ep=\Or(N_t)^2\ep$ from \S\ref{ca72}(D), and $I(\ti N_t)=I(N_t)^2$ as~$\ti N_t\cong N_t\# N_t$. 

The factor 2 in $-2\ep$ in $(\ddag)'$ is dealt with by the comparison between factors $1/\md{\Aut(\Ga,[N_v,i_v]_{v\in V})}$ and $1/\md{\Aut(\hat\Ga,[\hat N_v,\hat\imath_v]_{\smash{v\in\hat V}})}$ in \eq{ca7eq4}. For example, in the simplest case in which $\Ga=\bu$ and $\hat\Ga=\mathop{\bu}\limits^{\sst v}-\mathop{\bu}\limits^{\sst w}$ we have $1/\md{\Aut(\Ga,[N_v,i_v]_{v\in V})}=1$ and $1/\md{\Aut(\hat\Ga,[\hat N_v,\hat\imath_v]_{v\in\hat V})}=\ha$, where the $\ha$ cancels the 2 in $-2\ep$. Thus $\Phi_{\psi_t}+q^A\La_{>0}$ is unchanged under transitions of type (D). Hence $\Phi_{\psi_t}+q^A\La_{>0}$ is independent of $t$ for all $A>0$, so $\Phi_{\psi_1}=\Phi_{\psi_0}$, proving~(a).

For (b), the difference with (a) is that as (iv) does not hold, we now must allow associatives $N_t$ in $(X,\vp_t,\psi_t)$ with $C^i_j\cap N_t\ne\es$ for some $i=0,1,2,3$ and $j$. In fact, as $C^i_j$ is generic and there are only countably many smooth families of 3-folds $N_t$, $t\in[0,1]$ in $X$, it is automatic that $C^i_j\cap N_t=\es$ for $i=0,1,2$ for dimensional reasons, so we need only consider $i=3$, and then the only possibility is that $C^3_j\cap N_{t_0}=\{x\}$ for some $t_0\in(0,1)$, where $N_t$ crosses $C^3_j$ transversely as $t$ increases through~$t_0$.

First we consider the effect of just one such transition. So suppose that we have just one family $[\ti N_t,\ti\imath_t]\in\cM(\ti\cN,\ti\al,\psi_t)$ depending smoothly on $t\in[0,1]$, with $C^3_{\ti\jmath}\cap\ti N_{t_0}=\{x\}$ for $t_0\in(0,1)$, and $C^3_{\ti\jmath}\cap\ti N_t=\es$ for $t\ne t_0$, and $\ti N_t$ crosses $C^3_{\ti\jmath}$ transversely as $t$ increases through $t_0$ with intersection number $\ep=\pm 1$, and that $C^i_j\cap i_t(N_t)=\es$ for all $i=0,\ldots,3,$ $j=1,\ldots,b_i(X),$ $t\in[0,1]$ and $[N_t,i_t]\in\cM(\cN,\al,\psi_t)$ unless $i=3$, $j=\ti\jmath$, $t=t_0$ and $[N_t,i_t]=[\ti N_{t_0},\ti\imath_{t_0}]$. 

Define $\de=\ep\cdot\sum_{k=1}^{b_4(X)}A^i_{\ti\jmath k}e^4_k$ in $H_4(X;\Q)$. Then the effect of this change on \eq{ca7eq4} is that for each labelled tree $(\Ga,[N_v,i_v]_{v\in V})$ including an edge $\mathop{\bu}\limits^{\sst v}-\mathop{\bu}\limits^{\sst w}$, then:
\begin{itemize}
\setlength{\itemsep}{0pt}
\setlength{\parsep}{0pt}
\item[$(*)$] $\ha(N'_v\t N_w+N'_w\t N_v)\bu D$ in \eq{ca7eq4} increases by $[N_w]\bu\de$ as $t$ increases through $t_0$ if $[N_v,i_v]=[\ti N_t,\ti\imath_t]$ and $[N_w,i_w]\ne[\ti N_t,\ti\imath_t]$.
\item[$(**)$] $\ha(N'_v\t N_w+N'_w\t N_v)\bu D$ in \eq{ca7eq4} increases by $2[\ti N_t]\bu\de$ as $t$ increases through $t_0$ if $[N_v,i_v]=[N_w,i_w]=[\ti N_t,\ti\imath_t]$.
\end{itemize}
Here $\bu:H_3(X;\Q)\t H_4(X;\Q)\ra\Q$ is the intersection form. The reason for $(*)$ is that as $\ti N_t$ crosses $C^3_{\ti\jmath}$ in $X$ with intersection number $\ep$, $\ti N_t\t N_w$ (and also $\ti N'_t\t N_w$) crosses $C^3_{\ti\jmath}\t C^4_k$ in $X\t X$ with intersection number $\ep\cdot[N_w]\bu e^4_k$. Thus by \eq{ca7eq3}, the change in $(\ti N'_t\t N_w)\bu D$ as $t$ increases through $t_0$ is
\begin{equation*}
\ep\cdot\ts\sum_{k=1}^{b_4(X)}A^i_{\ti\jmath k}[N_w]\bu e^4_k=[N_w]\bu\de.
\end{equation*}
The change in $(N_w'\t\ti N_t)\bu D$ is the same. For $(**)$ we use a similar argument.

From $(*)$ and $(**)$ above we can show that
\ea
\Phi_{\psi_1}(\th)&=\sum_{\begin{subarray}{l}\text{labelled trees}\\ (\Ga,[N_v,i_v]_{v\in V})\\ \text{for $(X,\vp_0,\psi_0)$}\end{subarray}}\,\,\sum_{\begin{subarray}{l}\text{$S$ set of directed edges $\mathop{\bu}\limits^{\sst v}\ra\mathop{\bu}\limits^{\sst w}$}\\
\text{in $\Ga$ with $[N_v,i_v]=[\ti N_0,\ti\imath_0]$}\end{subarray}}\frac{1}{\md{\Aut(\Ga,[N_v,i_v]_{v\in V})}}
\nonumber\\
&\cdot \biggl(\prod_{v\in V}
\frac{\Or(N_v)I(N_v)}{\md{\Iso([N_v,i_v])}}\cdot q^{\ga\cdot[N_v]}\th([N_v])\biggr)\cdot\!\!\prod_{\text{edges $\mathop{\bu}\limits^{\sst v}\ra\mathop{\bu}\limits^{\sst w}$ in $S$}\!\!\!\!\!\!\!\!\!\!\!\!\!\!\!\!\!\!\!\!\!\!\!\!\!\!\!}[N_w]\bu\de
\nonumber\\
&\cdot\!\!\prod_{\begin{subarray}{l}\text{edges $\mathop{\bu}\limits^{\sst v}-\mathop{\bu}\limits^{\sst w}$ in $\Ga$ but not in $S$: $N'_v,N'_w$ are small} \\ 
\text{perturbations of $N_v,N_w$ in directions $f_{N_v},f_{N_w}$}
\end{subarray}\!\!\!\!\!\!\!\!\!\!\!\!\!\!\!\!\!\!\!\!\!\!\!\!\!\!\!\!\!\!\!\!\!\!\!\!\!\!\!\!\!\!\!\!\!\!\!\!\!\!\!\!\!\!\!\!\!\!\!\!\!\!\!}\ha(N'_v\t N_w+N'_w\t N_v)\bu D.
\label{ca7eq6}
\ea
Here the labelled trees $(\Ga,[N_v,i_v]_{v\in V})$ are as in \eq{ca7eq4} for $(X,\vp_0,\psi_0)$. On the first line we choose a subset $S$ of edges $\mathop{\bu}\limits^{\sst v}-\mathop{\bu}\limits^{\sst w}$ in $\Ga$, to each of which we assign a direction, written $\mathop{\bu}\limits^{\sst v}\ra\mathop{\bu}\limits^{\sst w}$, where we must have $[N_v,i_v]=[\ti N_0,\ti\imath_0]$. For any fixed $(\Ga,[N_v,i_v]_{v\in V})$, taking the sum in \eq{ca7eq6} over all $S$ is equivalent to replacing the factor $\ha(N'_v\t N_w+N'_w\t N_v)\bu D$ in \eq{ca7eq4} by $\ha(N'_v\t N_w+N'_w\t N_v)\bu D+[N_w]\bu\de$ for each edge $(*)$ (when the direction $\mathop{\bu}\limits^{\sst v}\ra\mathop{\bu}\limits^{\sst w}$ is fixed uniquely), and by $\ha(N'_v\t N_w+N'_w\t N_v)\bu D+2[\ti N_t]\bu\de$ for each edge $(**)$ (when both directions $\mathop{\bu}\limits^{\sst v}\ra\mathop{\bu}\limits^{\sst w}$ and $\mathop{\bu}\limits^{\sst w}\ra\mathop{\bu}\limits^{\sst v}$ are permitted), as we want.

We will not construct a quasi-identity map $\Up:\cU\ra\cU$ with $\Phi_{\psi_1}=\Phi_{\psi_0}\ci\Up$, but we will give a first approximation. Define $\Up_0:\cU\ra\cU$ by
\e
\Up_0(\th):\al\longmapsto \th(\al)\cdot\exp\biggl[\frac{\Or(\ti N_0)I(\ti N_0)}{\md{\Iso([\ti N_0,\ti\imath_0])}}\cdot q^{\ga\cdot[\ti N_0]}\th([\ti N_0])\cdot \al\bu\de\biggr].
\label{ca7eq7}
\e
This is a quasi-identity map. Substitute \eq{ca7eq7} into \eq{ca7eq4} for $\psi_0$ to give an expression for $\Phi_{\psi_0}\ci\Up_0$. Then each term $\th([N_w])$ in \eq{ca7eq4} is replaced by
\begin{equation*}
\Up_0(\th)([N_w])=\th([N_w])\cdot\sum_{k=0}^\iy\frac{1}{k!}\biggl[\frac{\Or(\ti N_0)I(\ti N_0)}{\md{\Iso([\ti N_0,\ti\imath_0])}}\cdot q^{\ga\cdot[\ti N_0]}\th([\ti N_0])\cdot [N_w]\bu\de\biggr]^k.
\end{equation*}
Rewrite this expression as a sum over graphs by adding $k$ new vertices $v_1,\ldots,v_k$ with $N_{v_i}=\ti N_0$ and edges $\mathop{\bu}\limits^{\sst v_i}-\mathop{\bu}\limits^{\sst w}$ to $\Ga$ in \eq{ca7eq4}. Then compare the result to \eq{ca7eq6}, where the new edges $\mathop{\bu}\limits^{\sst v_i}-\mathop{\bu}\limits^{\sst w}$ with $N_{v_i}=\ti N_0$ become the directed edges $\mathop{\bu}\limits^{\sst v_i}\ra\mathop{\bu}\limits^{\sst w}$ in $S$. What we find is that $\Phi_{\psi_0}\ci\Up_0$ agrees with the sum of all terms in \eq{ca7eq6} such that for each edge $\mathop{\bu}\limits^{\sst v}\ra\mathop{\bu}\limits^{\sst w}$ in $S$, there are no other edges $\mathop{\bu}\limits^{\sst u}-\mathop{\bu}\limits^{\sst v}$ in $\Ga$. So $\Phi_{\psi_0}\ci\Up_0$ is a kind of leading-order approximation to~$\Phi_{\psi_1}$. 

The author expects that there is a formula for $\Up:\cU\ra\cU$ which generalizes \eq{ca7eq7}, and yields $\Phi_{\psi_1}=\Phi_{\psi_0}\ci\Up$ by comparison with \eq{ca7eq6}. This formula should look like \eq{ca7eq7} with $[\cdots]$ replaced by a graph sum similar to \eq{ca7eq6}, but over labelled {\it rooted\/} trees $(\Ga,[N_v,i_v]_{v\in V}),r$ with a distinguished `root vertex' $r\in V$ with $N_r=\ti N_0$, and including some combinatorial coefficients $C(\Ga,r,S)\in\Q$. Equation \eq{ca7eq7} gives the term when $\Ga=\mathop{\bu}\limits^{\sst r}$ has one vertex $r$ and no edges.

The case in which finitely many $N_t$ cross finitely many $C^3_j$ follows by composing the corresponding morphisms $\Up$ for each transition in order. Then we prove the general case by reducing the target $\cU$ modulo $q^A$ for $A>0$, so that only finitely many transitions are relevant for any fixed $A$, and letting $A\ra\iy$, as in part (a). This concludes our sketch proof of~(b).

For (c), let $\psi\in\cF_\ga$ be generic, and let $C^i_j,D$ and $\ti C^i_j,\ti D$ be alternative choices in Definition \ref{ca7def1}, yielding superpotentials $\Phi_\psi$ and $\ti\Phi_\psi$. First suppose that there are smooth, generic families $\hat C^i_j(t)$, $\hat D(t)$ for $t\in[0,1]$ with $\hat C^i_j(0)=C^i_j$, $\hat D(0)=D$, $\hat C^i_j(1)=\ti C^i_j$, $\hat D(1)=\ti D$. In \eq{ca7eq4} replace $D$ by $\hat D(t)$, and consider how the sum changes as $t$ increases through $[0,1]$. By a similar argument to (b),  this happens only when $N_v$ or $N_w$ intersect $\hat C^3_j(t_0)$ for some $j$ and~$t_0\in(0,1)$.

Now fixing the associative $N_v$ and deforming $C^3_j$ over $t\in[0,1]$ so that $N_v$ and $C^3_j$ intersect at $t=t_0$, is basically the same as fixing $C^3_j$ and deforming the associative $N_v$ over $t\in[0,1]$ so that $N_v$ and $C^3_j$ intersect at $t=t_0$, which is what we did in (b), and it has the same effect on the sum  \eq{ca7eq4}. Hence by (b), we see that $\ti\Phi_\psi=\Phi_\psi\ci\Up$ for some $\Up:\cU\ra\cU$ as in (b) in this case.

By a slightly more general argument, we can change the $C^i_j$ not by smooth deformation $C^i_j(t),$ $t\in[0,1]$ but by smooth bordism in $X$, which allows us to link any two choices $C^i_j,\ti C^i_j$, and we can also allow any choices of~$D,\ti D$.
\end{proof}

\subsection{Our main conjecture}
\label{ca75}

The next conjecture is the one of the main points of this paper.

\begin{conj} Let\/ $X$ be a compact, oriented\/ $7$-manifold, and\/ $\ga\!\in\! H^3_{\rm dR}(X;\R)$. Write $\cF_\ga$ for the set of closed\/ $4$-forms $\psi$ on $X$ such that there exists a closed\/ $3$-form $\vp$ on $X$ with\/ $[\vp]=\ga$ in $H^3_{\rm dR}(X;\R),$ for which $(X,\vp,\psi)$ is a TA-$G_2$-manifold, with the given orientation on $X$. 

Assuming Conjecture {\rm\ref{ca2conj2},} and making some arbitrary choices, and supposing we can find a good definition for the `unknown multiple cover contributions' in \eq{ca7eq4} to compensate for the singular behaviour in {\rm\S\ref{ca72}(F),} Definition\/ {\rm\ref{ca7def1}} gives a superpotential\/ $\Phi_\psi:\cU\ra \La_{>0}$ for each generic $\psi\in\cF_\ga,$ where $\cU=\Hom(H_3(X;\Z),1+\La_{>0}),$ as a smooth rigid analytic space over $\La$.

We conjecture that if different arbitrary choices yield\/ $\ti\Phi_\psi:\cU\ra \La_{>0}$ then $\ti\Phi_\psi=\Phi_\psi\ci\Up$ for $\Up:\cU\ra\cU$ a quasi-identity morphism, as in {\rm\S\ref{ca71}}.

We also conjecture that if\/ $\psi_0,\psi_1$ are generic elements in the same connected component of\/ $\cF_\ga,$ then $\Phi_{\psi_1}=\Phi_{\psi_0}\ci\Up$ for $\Up:\cU\ra\cU$ a quasi-identity morphism.

\label{ca7conj1}	
\end{conj}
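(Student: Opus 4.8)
The plan is to prove Conjecture \ref{ca7conj1} by assembling the three pieces already prepared in \S\ref{ca73}--\S\ref{ca74}: the wall-crossing analysis of \S\ref{ca72}(A)--(F), the sketch of Theorem \ref{ca7thm1}, and the group structure on quasi-identity morphisms from \S\ref{ca71}. Recall that quasi-identity morphisms form a group under composition, and that Theorem \ref{ca7thm1}(c) already gives $\ti\Phi_\psi=\Phi_\psi\ci\Up$ under change of the auxiliary data $C^i_j,D$. So the new content is the deformation-invariance statement $\Phi_{\psi_1}=\Phi_{\psi_0}\ci\Up$ for $\psi_0,\psi_1$ in the same connected component of $\cF_\ga$, \emph{without} the artificial hypotheses (i)--(iv) of Theorem \ref{ca7thm1}(a), in particular allowing the multiple-cover behaviour of \S\ref{ca72}(F).

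First I would reduce to the case of a single generic $1$-parameter family $\psi_t$, $t\in[0,1]$, joining $\psi_0$ to $\psi_1$ inside $\cF_\ga$, which exists since they lie in the same connected component and $\cF_\ga$ is open in the space of closed $4$-forms. Then, exactly as in the sketch proof of Theorem \ref{ca7thm1}(a), I would work modulo $q^A\La_{>0}$ for fixed $A>0$: by Conjecture \ref{ca2conj2} and genericness of the family, only finitely many wall-crossings of types \S\ref{ca72}(A)--(F) affect $\cM(\cN,\al,\psi_t)$ with $\ga\cdot\al\le A$, and these occur at isolated parameter values $0<t_1<\cdots<t_k<1$. On each open subinterval $\Phi_{\psi_t}+q^A\La_{>0}$ is constant, so it suffices to control the jump across each $t_i$. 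For a jump of type (A), (C), or (E), the discussion in \S\ref{ca72} (together with $\Or,I$ and \eq{ca7eq2}) shows $\Phi_{\psi_t}+q^A\La_{>0}$ is unchanged, so the wall contributes the identity morphism; for a jump of type (B) or (D), the argument of Theorem \ref{ca7thm1}(a) shows the creation of $\ti N_t$ cancels against the change in the edge-factor $\ha(N'_v\t N_w+N'_w\t N_v)\bu D$, again contributing the identity. The genuinely new ingredient is that, once (iv) is dropped, an associative $N_t$ may also cross one of the cycles $C^3_j$; this is handled by the analysis in the proof of Theorem \ref{ca7thm1}(b), which produces a quasi-identity morphism $\Up_i$ (a generalization of \eq{ca7eq7}, a rooted-tree graph sum) for each such crossing. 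Composing the $\Up_i$ over all walls $t_1,\ldots,t_k$ (in order, using that quasi-identity morphisms are closed under composition) yields a quasi-identity $\Up^{(A)}$ with $\Phi_{\psi_1}\equiv\Phi_{\psi_0}\ci\Up^{(A)}$ modulo $q^A$. Finally I would let $A\ra\iy$: the $\Up^{(A)}$ are compatible under reduction mod $q^{A'}$ for $A'<A$ (since the walls relevant at level $A'$ are a subset of those at level $A$), so they assemble to a single quasi-identity $\Up:\cU\ra\cU$ with $\Phi_{\psi_1}=\Phi_{\psi_0}\ci\Up$, as desired. The first two sentences of the conjecture (independence of arbitrary choices up to quasi-identity) then follow from Theorem \ref{ca7thm1}(c), possibly after composing with a further quasi-identity.

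The main obstacle, as the paper itself flags, is the type \S\ref{ca72}(F) multiple-cover behaviour: the proof above \emph{presupposes} that the ``unknown multiple cover contributions'' in the third line of \eq{ca7eq4} have been defined so as to make $\Phi_{\psi_t}+q^A\La_{>0}$ continuous across such walls, but no such definition is available, and the global structure of these transitions (how many $\hat N_t$ of each combinatorial type $(k^+,k^-,l)$ appear, with which canonical flags and pairwise linking numbers) is not understood even conjecturally. A secondary, more technical difficulty is showing that the hypothesized closed-form $\Up$ generalizing \eq{ca7eq7}---a sum over labelled rooted trees with combinatorial coefficients $C(\Ga,r,S)\in\Q$---actually satisfies $\Phi_{\psi_1}=\Phi_{\psi_0}\ci\Up$ exactly, not merely to leading order; this is a combinatorial identity on tree sums of the kind that appears in the $A_\infty$/homotopy-transfer formalism underlying Fukaya--Oh--Ohta--Ono \cite{FOOO}, and I expect it can be proved by induction on the number of vertices together with a careful bookkeeping of how contracting an edge $\mathop{\bu}\limits^{\sst v}\ra\mathop{\bu}\limits^{\sst w}$ in $S$ interacts with the $\Aut$-factors, but writing it out in full is the bulk of the real work. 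Everything else---the reduction to a generic path, the finiteness of walls mod $q^A$, the wall-by-wall cancellation for types (A)--(E), the limit $A\ra\iy$, and the group-theoretic assembly of the $\Up_i$---is routine given the conjectures in \S\ref{ca2}--\S\ref{ca5} and the machinery of \S\ref{ca71}.
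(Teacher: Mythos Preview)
The statement is a \emph{conjecture}, and the paper does not prove it. Immediately after stating it, the paper says only: ``Some support for this is provided by Theorem~\ref{ca7thm1}, and its sketch proof.'' There is no further argument.

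Your proposal is essentially the same line of reasoning the paper uses to \emph{motivate} the conjecture: reduce to a generic $1$-parameter family, work modulo $q^A$, handle wall-crossings of types (A)--(E) via the sketch proof of Theorem~\ref{ca7thm1}(a), absorb crossings of $C^3_j$ into quasi-identity morphisms via Theorem~\ref{ca7thm1}(b), and invoke Theorem~\ref{ca7thm1}(c) for choice-independence. You also correctly identify the two places where this falls short of a proof --- the undefined multiple-cover contributions from \S\ref{ca72}(F), and the missing closed-form rooted-tree formula for $\Up$ generalizing \eq{ca7eq7} --- and these are precisely the gaps the paper itself flags (the author writes that he ``does not have a good conjectural description'' of the \S\ref{ca72}(F) phenomena, and in the sketch proof of Theorem~\ref{ca7thm1}(b) says only that he ``expects that there is a formula for $\Up$'' but does not construct it). So your proposal is an accurate summary of the heuristic evidence, with the honest caveats in the right places; it is not a proof, but neither is anything in the paper.
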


Some support for this is provided by Theorem \ref{ca7thm1}, and its sketch proof. 

Conjecture \ref{ca7conj1} implies that any information we can extract from the superpotential $\Phi_\psi$, which is unchanged under reparametrizations $\Phi_\psi\mapsto\Phi_\psi\ci\Up$ for quasi-identity morphisms $\Up:\cU\ra\cU$, is unchanged under deformations of $\psi$ in $\cF_\ga$. As a shorthand we say that such information {\it depends only on $\Phi_\psi$ modulo quasi-identity morphisms}. Here are some examples:
\begin{itemize}
\setlength{\itemsep}{0pt}
\setlength{\parsep}{0pt}
\item[(i)] For $GW_{\psi,\al}$ as in \eq{ca7eq5}, let $A>0$ be least such that $GW_{\psi,\al}\ne 0$ for some $\al\in H_3(X;\Z)$ with $\ga\cdot\al=A$, or $A=\iy$ if $GW_{\psi,\al}=0$ for all $\al$. Then $A$ depends only on $\Phi_\psi$ modulo quasi-identity morphisms. Also, the values of $GW_{\psi,\al}$ for any $\al\in H_3(X;\Z)$ with $\ga\cdot\al=A$ depend only on $\Phi_\psi$ modulo quasi-identity morphisms.

Roughly, this says that {\it the numbers of associative $\Q$-homology spheres with least area $A$ in $X$ are deformation-invariant}. There could exist associatives with area less than $A$, but their signed weighted count is zero.
\item[(ii)] Whether or not $\Phi_\psi$ has a critical point in $\cU$ depends only on $\Phi_\psi$ modulo quasi-identity morphisms. Also, the set of critical points $\Crit(\Phi_\psi)$, as a set up to bijection rather than as a subset of $\cU$, depends only on $\Phi_\psi$ modulo quasi-identity morphisms, since if $\Up:\cU\ra\cU$ is a quasi-identity morphism then $\Up\vert_{\Crit(\Phi_\psi\ci\Up)}$ is a bijection $\Crit(\Phi_\psi\ci\Up)\ra\Crit(\Phi_\psi)$.
\end{itemize}
We develop (ii) further in our discussion of $G_2$ {\it quantum cohomology\/} in~\S\ref{ca76}.

For a TA-$G_2$-manifold $(X,\vp,\psi)$, the moduli spaces $\cM(\cN,\al,\psi)$ depend only on the 4-form $\psi$, and the superpotential $\Phi_\psi$ depends only on $\psi$ and the cohomology class $[\vp]=\ga$ of $\psi$ in $H^3_{\rm dR}(X;\R)$. 

Conjecture \ref{ca7conj1} allows us to switch the focus back to the 3-form $\vp$. By Proposition \ref{ca2prop2}(b), the set of $\psi$ compatible with a fixed good 3-form $\vp$ is a convex cone, and so is connected. Therefore by Conjecture \ref{ca7conj1}, $\Phi_\psi$ modulo quasi-identity morphisms depends only on $(X,\vp)$, and in fact only on $\vp$ up to deformations in a fixed cohomology class $\ga\in H^3_{\rm dR}(X;\R)$. As in Remark \ref{ca2rem3}, we think of $(X,\vp)$ as the analogue of a symplectic manifold $(Y,\om)$, and $\psi$ as the analogue of an almost complex structure $J$ on $Y$ compatible with $\om$. So $\Phi_\psi$ modulo quasi-identity morphisms is the analogue of a symplectic invariant.

\subsection{\texorpdfstring{$G_2$ quantum cohomology}{G₂ quantum cohomology}}
\label{ca76}

This section is motivated by some areas of Symplectic Geometry: quantum cohomology, as in McDuff and Salamon \cite{McSa}, Lagrangian Floer cohomology, as in Fukaya, Oh, Ohta and Ono \cite{Fuka2,FOOO}, and work of Fukaya \cite{Fuka1} on counting $J$-holomorphic discs with boundary in Lagrangians in a Calabi--Yau 3-fold.

The {\it quantum cohomology\/} $QH^*(Y;\La)$ of a compact symplectic manifold $(Y,\om)$ is isomorphic to the ordinary cohomology $H^*(Y;\La)$ over a Novikov ring $\La$, but it has a deformed cup product $*$ depending on the genus zero three-point Gromov--Witten invariants $GW_\al(\be_1,\be_2,\be_3)$ of $(Y,\om)$. 

If $L$ is a compact, oriented, relatively spin Lagrangian in $(Y,\om)$, there is a notion of {\it bounding cochain\/} $b$ for $L$ \cite{Fuka2,FOOO}, which is an object in the homological algebra of $L$ satisfying an equation involving counts of $J$-holomorphic discs in $Y$ with boundary in $L$. If a bounding cochain $b$ exists, we say $L$ has {\it unobstructed Lagrangian Floer cohomology}. We can form the Lagrangian Floer cohomology ring $HF^*((L,b),(L,b))$, which is a deformed version of $H^*(L;\La)$. In contrast to quantum cohomology, we need not have~$HF^*((L,b),(L,b))\cong H^*(L;\La)$.

When $(Y,\om)$ is a symplectic Calabi--Yau 3-fold and $L\subset Y$ is a graded Lagrangian, and $J$ a generic almost complex structure on $Y$ compatible with $\om$, we can reinterpret and extend work of Fukaya \cite{Fuka1} as follows, though Fukaya does not write things in this form. One should define a superpotential
\begin{equation*}
\Phi_J:\cU=\Hom\bigl(H_1(L;\Z),1+\La_{>0}\bigr)\longra\La_{>0}	
\end{equation*}
which counts $J$-holomorphic discs in $(Y,\om)$ with boundary in $L$. This $\Phi_J$ depends on some choices, and has some wall-crossing behaviour under deformation of $J$, as for $\Phi_\psi$ in \S\ref{ca73}--\S\ref{ca75}. Critical points of $\Phi_J$ correspond exactly to (equivalence classes of) bounding cochains $b$ for $L$.

As in \S\ref{ca61}, there is a strong analogy between counting $J$-holomorphic curves $\Si$ in a symplectic Calabi--Yau 3-fold $(Y,\om)$ with boundary $\pd\Si$ in a graded Lagrangian $L$, and counting associative 3-folds $N$ without boundary in a TA-$G_2$-manifold $(X,\vp,\psi)$. Following this analogy, we might hope that critical points $\th$ of $\Phi_\psi$ should be `bounding cochains' needed to define some kind of `$G_2$ quantum cohomology' $QH^*_\th(X;\La)$ deforming $H^*(X;\La)$, analogous to~$HF^*((L,b),(L,b))$.

\begin{dfn} Work in the situation of \S\ref{ca71}--\S\ref{ca73}, with $\psi\in\cF_\ga$ generic. Use the formula \eq{ca7eq5} for the superpotential $\Phi_\psi$. We call $(X,\vp,\psi)$ {\it obstructed\/} if $\Phi_\psi$ has no critical points in $\cU$, and {\it unobstructed\/} otherwise. 

Suppose $(X,\vp,\psi)$ is unobstructed, and choose a critical point $\th$ of $\Phi_\psi$. Define a $\La_{\ge 0}$-linear map $\d:H^3(X;\La_{\ge 0})\ra H^4(X;\La_{\ge 0})$ by
\e
\d(\be)=\sum_{\al\in H_3(X;\Z):\ga\cdot\al>0}GW_{\psi,\al}\, q^{\ga\cdot\al}\,\th(\al)\cdot\be(\al)\cdot \Pd(\al).
\label{ca7eq8}
\e
Here $\be(\al)$ comes from the pairing $H^3(X;\La_{\ge 0})\t H_3(X;\Z)\ra\La_{\ge 0}$ and $\Pd(\al)$ from the Poincar\'e duality isomorphism $\Pd:H_3(X;\Z)\ra H^4(X;\Z)$, and the sum in \eq{ca7eq8} converges in the topology on $H^4(X;\La_{\ge 0})$ induced by that on $\La_{\ge 0}$.

We can interpret $\d$ as contraction with the Hessian $\Hess_\th(\Phi_\psi)$ of $\Phi_\psi$ at $\th$.

Now define the $G_2$-{\it quantum cohomology groups\/} $QH^k_\th(X;\La_{\ge 0})$ for $k\ge 0$ by
\begin{equation*}
QH^k_\th(X;\La_{\ge 0})=\begin{cases} H^k(X;\La_{\ge 0}), & k\ne 3,4,\\
\Ker\bigl[\d:H^3(X;\La_{\ge 0})\ra H^4(X;\La_{\ge 0})\bigr], & k=3, \\
\Coker\bigl[\d:H^3(X;\La_{\ge 0})\ra H^4(X;\La_{\ge 0})\bigr], & k=4. \end{cases}
\end{equation*}
Define a product $*:QH^k_\th(X;\La_{\ge 0})\t QH^l_\th(X;\La_{\ge 0})\ra QH^{k+l}_\th(X;\La_{\ge 0})$, written $\de*\ep\in QH^{k+l}_\th(X;\La_{\ge 0})$ for $\de\in QH^k_\th(X;\La_{\ge 0})$ and $\ep\in QH^l_\th(X;\La_{\ge 0})$, by:
\begin{itemize}
\setlength{\itemsep}{0pt}
\setlength{\parsep}{0pt}
\item[(i)] If $(k,l)$ are one of
\begin{align*}
&(0,0),(0,1),(0,2),(0,5),(0,6),(0,7),(1,0),(1,1),(1,5),(1,6),(2,0),\\
&(2,3),(2,5),(3,2),(3,3),(5,0),(5,1),(5,2),(6,0),(6,1),(7,0),	
\end{align*}
then $\de*\ep=\de\cup\ep$, as in these cases either $QH^*_\th(X;\La_{\ge 0})=H^*(X;\La_{\ge 0})$ in degrees $k,l,k+l$, or $QH^3_\th(X;\La_{\ge 0})\subseteq H^3(X;\La_{\ge 0})$ for $k=3$ or $l=3$.
\item[(ii)] If $(k,l)=(0,3)$ then $\de*\ep=\de\cup\ep$, where $\ep\in\Ker\d\subseteq H^3(X;\La_{\ge 0})$ implies that $\de\cup\ep\in\Ker\d$. Similarly for~$(k,l)=(3,0)$.
\item[(iii)] If $(k,l)=(0,4)$ then $\de*(\ep+\Im\d)=(\de\cup\ep)+\Im\d$, where $\ep\in H^4(X;\La_{\ge 0})$. Similarly for~$(k,l)=(4,0)$.
\item[(iv)] If $(k,l)=(1,2)$ then $\de*\ep=\de\cup\ep$. To show this is well defined we must prove that $\de\cup\ep\in\Ker\d\subseteq H^3(X;\La_{\ge 0})$ for all $\de\in H^1(X;\La_{\ge 0})$ and $\ep\in H^2(X;\La_{\ge 0})$. Now if $i:N\ra X$ is an immersed associative $\Q$-homology sphere with $[N]=\al\in H_3(X;\Z)$ then $(\de\cup\ep)\cdot\al=(i^*(\de)\cup i^*(\ep))\cdot[N]=0$, since $H^1(N;\Q)=H^2(N;\Q)=0$ as $N$ is a $\Q$-homology 3-sphere, and $i^*(\de)\in H^1(N;\Q)$, $i^*(\ep)\in H^2(N;\Q)$. Since $GW_{\psi,\al}$ counts associative $\Q$-homology 3-spheres in class $\al$, we have $(\de\cup\ep)\cdot\al=0$ if $GW_{\psi,\al}\ne 0$. Hence from \eq{ca7eq8} we see that $\de\cup\ep\in\Ker\d$. Similarly for~$(k,l)=(2,1)$.
\item[(v)] If $(k,l)$ is (1,3), (2,2) or (3,1) then $\de*\ep=\de\cup\ep+\Im\d$.
\item[(vi)] If $(k,l)=(1,4)$ or (2,4) then $\de*(\ep+\Im\d)=\de\cup\ep$. To show this is well-defined we must show that if $\ep+\Im\d=\ep'+\Im\d$ then $\de\cup\ep=\de\cup\ep'$. As $\ep'=\ep+\d\ze$ for $\ze\in H^3(X;\La_{\ge 0})$, it is enough to show that $\de\cup\d\ze=0$. From \eq{ca7eq8}, $\d\ze$ is a linear combination of classes $\Pd(\al)$ for $\al\in H_3(X;\Z)$ with $GW_{\psi,\al}\ne 0$. As in (iv), we have $\de\cup\Pd(\al)=0$ if $\de\in H^1(X;\La_{\ge 0})$ or $\de\in H^2(X;\La_{\ge 0})$, since $\al$ is represented by a $\Q$-homology 3-sphere, so $\de\cup\d\ze=0$. Similarly for $(k,l)=(4,1)$ or (4,2).
\item[(vii)] If $(k,l)=(3,4)$ then $\de*(\ep+\Im\d)=\de\cup\ep$ for $\de\in\Ker\d\subseteq H^3(X;\La_{\ge 0})$ and $\ep\in H^4(X;\La_{\ge 0})$. As in (vi), to show this is well-defined we must show that $\de\cup\d\ze=0$ for $\ze\in H^3(X;\La_{\ge 0})$. But from \eq{ca7eq8} we can prove that $\eta\cup\d\ze=\ze\cup\d\eta$ for any $\eta,\ze\in H^3(X;\La_{\ge 0})$, because $\Hess_\th(\Phi_\psi)$ is a symmetric form. Thus $\de\cup\d\ze=0$ as $\d\de=0$. Similarly for~$(k,l)=(4,3)$.
\item[(viii)] If $k+l>7$ then $\de*\ep=0$ automatically.
\end{itemize}
Since $\cup$ is associative and supercommutative, we see that $*$ is too.
\label{ca7def2}	
\end{dfn}

If we assume Conjecture \ref{ca7conj1}, then $G_2$ quantum cohomology $QH^*_\th(X;\La_{\ge 0})$ will be unchanged under deformations of $\psi$, in the same sense in which Lagrangian Floer cohomology $HF^*((L,b),(L,b))$ is independent of $J$. If $\psi_0,\psi_1$ are generic in the same connected component of $\cF_\ga$, Conjecture \ref{ca7conj1} gives $\Up:\cU\ra\cU$ with $\Phi_{\psi_1}=\Phi_{\psi_0}\ci\Up$. Then $\Up$ maps critical points $\th_1$ of $\Phi_{\psi_1}$ bijectively to critical points $\th_0$ of $\Phi_{\psi_1}$, and using the derivative $\d_{\th_1}\Up$ of $\Up$ at $\th_1$ we can define a $\La_{\ge 0}$-algebra isomorphism~$QH^*_{\th_1}(X;\La_{\ge 0})\ra QH^*_{\th_0}(X;\La_{\ge 0})$.

There should also be a way to define an $A_\iy$-algebra whose cohomology is $QH^*_\th(X;\La_{\ge 0})$, deforming the cochain cdga for $H^*(X;\La_{\ge 0})$, using similar ideas to Fukaya et al.\ \cite{Fuka2,FOOO}. In this definition we should use the fact that we count only associative $\Q$-homology 3-spheres $N\subset X$ in the following way. Consider the 6-cycle in $N\t N\t N$
\begin{equation*}
C=\bigl\{(x,x',x'):x,x'\in N\bigr\}\!+\!\bigl\{(x',x,x'):x,x'\in N\bigr\}\!+\!\bigl\{(x',x',x):x,x'\in N\bigr\}.
\end{equation*}
Since $N$ is a $\Q$-homology 3-sphere we have $[C]=0$ in $H_6(N\t N\t N;\Q)$, so there is a 7-cycle $D$ on $N\t N\t N$ with $\pd D=C$. The cochain-level version of multiplication $*$ should involve choosing such a 7-cycle $D$ for each associative $\Q$-homology sphere $N$ in the count.

The author does not know whether this $G_2$ quantum cohomology is actually interesting. It seems likely to play some r\^ole in M-theory, at least.

\subsection{Generalizations}
\label{ca77}

Here are some ways in which the picture of \S\ref{ca71}--\S\ref{ca76} can be extended.
\smallskip

\noindent{\bf Including a C-field.} Take the field $\F$ used to define $\La$ in \S\ref{ca71} to be $\F=\C$. Choose $C\in H^3(X;\R)/2\pi H^3(X;\Z)$. Then we can generalize the formulae \eq{ca7eq4}--\eq{ca7eq5} defining $\Phi_\psi$ by replacing $q^{\ga\cdot[N_v]}$ by $q^{\ga\cdot[N_v]}e^{i C\cdot [N_v]}$, so that \eq{ca7eq5} becomes
\begin{equation*}
\Phi_\psi(\th)=\ts\sum_{\al\in H_3(X;\Z):\ga\cdot\al>0}GW_{\psi,\al}\, q^{\ga\cdot\al}\,e^{iC\cdot\al}\,\th(\al).
\end{equation*}
Here as $C\in H^3(X;\R)/2\pi H^3(X;\Z)$ and $\al\in H_3(X;\Z)$, the product $C\cdot\al$ lies in $\R/2\pi\Z$, so that $e^{iC\cdot\al}$ is well defined. `C-fields' $C$ of this kind are natural in the M-theory of $G_2$-manifolds, and have the effect of complexifying the moduli space of $G_2$-manifolds, with $[\vp]+iC$ in the complex manifold~$H^3(X;\C/2\pi i\Z)$. 

\smallskip

\noindent{\bf Varying the cohomology class $[\vp]$.} So far we have worked with TA-$G_2$-manifolds $(X,\vp,\psi)$ for which the $[\vp]=\ga\in H^3_{\rm dR}(X;\R)$ is fixed. Here is a way to allow $[\vp]$ to vary. Let us regard the 4-form $\psi$ as fixed. Then Proposition \ref{ca2prop2}(a) gives an open convex cone $\cK_{X,\psi}$ in $H^3_{\rm dR}(X;\R)$, of cohomology classes $[\vp]$ of 3-forms $\vp$ such that $(X,\vp,\psi)$ is a TA-$G_2$-manifold. 

We can then then extend $\Phi_\psi$ in \eq{ca7eq7} to a map
\begin{equation*}
\hat\Phi_\psi:\cK_{X,\psi}\t\cU\longra\La_{>0},
\end{equation*}
which maps $(\ga,\th)$ in $\cK_{X,\psi}\t\cU$ to $\Phi_\psi$ in \eq{ca7eq7} computed using $[\vp]=\ga$. Over $\F=\R$, we can regard $\cK_{X,\psi}\t\cU$ as a rigid analytic space; it may be possible to glue the charts $\cK_{X,\psi_0}\t\cU$, $\cK_{X,\psi_1}\t\cU$ over $\cK_{X,\psi_0}\cap\cK_{X,\psi_1}$ for different $\psi_0,\psi_1$, using the morphisms $\Psi:\cU\ra\cU$ in Conjecture \ref{ca7conj1}, to get a $\cU$-bundle over a larger open subset of $H^3(X;\R)$, upon which a superpotential $\hat\Phi$ is defined.
\smallskip

\noindent{\bf Noncompact $G_2$-manifolds.} We can consider TA-$G_2$-manifolds $(X,\vp,\psi)$ with $X$ noncompact, if we have some control on the noncompact ends of $X$ -- some kind of convexity at infinity -- which prevents associative 3-folds from escaping to infinity in $X$, and so changing the numbers of associatives.
\smallskip

\noindent{\bf Counting associatives $N$ with $b^1(N)>0$.} It is tempting to try and modify \eq{ca7eq4} to count `higher genus' associative 3-folds $N$ with $g=b^1(N)>0$. The author does not know a way to do this in general, which is invariant under transitions of type \S\ref{ca72}(C). One possibility might be to try and count associatives $i:N\ra X$ where $N$ is not a $\Q$-homology 3-sphere, but $i_*:H_2(N;\Q)\ra H_2(X;\Q)$ is injective, as such $N$ are not affected by transitions~\S\ref{ca72}(C).

\section{\texorpdfstring{Remarks on counting $G_2$-instantons}{Remarks on counting G₂-instantons}}
\label{ca8}

We discussed $G_2$-instantons on TA-$G_2$-manifolds $(X,\vp,\psi)$ in \S\ref{ca24}--\S\ref{ca25} above. Donaldson and Segal \cite[\S 6.2]{DoSe} proposed a conjectural programme to define invariants counting $G_2$-instantons, which would hopefully be unchanged under deformations of $(\vp,\psi)$, and would be analogues of Donaldson--Thomas invariants of Calabi--Yau 3-folds \cite{Joyc22,JoSo}. The programme is currently under investigation by Menet, Nordstr\"om, S\'a Earp, Walpuski, and others~\cite{MNS,SaEa,SEWa,Walp1,Walp2,Walp3,Walp4}. 

As in \cite[\S 6]{DoSe}, to complete the Donaldson--Segal programme and define invariants of $(X,\vp,\psi)$ unchanged under deformations of $\psi$ will require the inclusion of `compensation terms' counting solutions of some equation on associative 3-folds $N$ in $X$, to compensate for bubbling of $G_2$-instantons on associative 3-folds. So counting $G_2$-instantons, and counting associative 3-folds, are intimately linked. 

We now discuss several aspects of this programme, drawing on the ideas of \S\ref{ca3}--\S\ref{ca7}. Section \ref{ca82} makes a proposal for how to define canonical orientations for $G_2$-instanton moduli spaces, based on the ideas in \S\ref{ca3} on orienting associative moduli spaces. Section \ref{ca84} gives two `thought-experiments' describing ways in which Donaldson--Segal's proposed invariants could change under deformations of $(\vp,\psi)$. Finally, \S\ref{ca85} suggests a way (not yet complete) to modify the Donaldson--Segal programme to (hopefully) fix these problems.

\subsection{The Donaldson--Segal programme}
\label{ca81}

Suppose $X$ is a compact 7-manifold, and $(\vp,\psi)$ a generic TA-$G_2$-structure on $X$. Let $G$ be a compact Lie group, and $\pi:P\ra X$ a principal $G$-bundle. Consider the moduli space $\cM(P,\psi)$ of $G_2$-instantons on $X$, as in \S\ref{ca24}--\S\ref{ca25}.

By analogy with Donaldson invariants of oriented 4-manifolds $M$ \cite{DoKr}, which count moduli spaces of instantons on $M$, and with Donaldson--Thomas invariants of Calabi--Yau 3-folds $Y$ \cite{Joyc22,JoSo}, which can be heuristically understood as counting Hermitian--Yang--Mills connections on $Y$, Donaldson and Segal \cite[\S 6]{DoSe} want to define invariants of $(X,\vp,\psi)$ by counting moduli spaces~$\cM(P,\psi)$.

Donaldson and Segal expect \cite[\S 4.1]{DoSe} that when $\psi$ is generic $\cM(P,\psi)$ will be a compact 0-manifold, that is, a finite set, and one can define an orientation on the moduli space $\Or:\cM(P,\psi)\ra\{\pm 1\}$ (compare \S\ref{ca3}), though they do not give details. Then a first approximation to the invariants they want is
\e
DS_0(P,\psi)=\ts\sum_{[A]\in \cM(P,\psi)}\Or([A])\in\Z.	
\label{ca8eq1}
\e

They explain \cite[\S 6.1]{DoSe} that $DS_0(P,\psi)$ should in general not be unchanged under deformations of $\psi$, as there are index one singularities of $G_2$-instantons which can change the moduli spaces $\cM(P,\psi)$. They expect that the typical way moduli spaces can change under deformations is as follows:

\begin{ex} Let $(\vp_t,\psi_t)$, $t\in[0,1]$ be a generic 1-parameter of TA-$G_2$-structures on $X$. Suppose that for some $t_0\in(0,1)$ there exists a connection $A_t$ on $P$ for $t\in[0,t_0)$ which is an unobstructed $G_2$-instanton on $(X,\vp_t,\psi_t)$, and depends smoothly on $t$. As $t\ra t_0$, the $G_2$-instanton $A_t$ approaches a singular limit, in which the curvature $F_{A_t}$ of $A_t$ concentrates around a compact associative 3-fold $N_{t_0}$ in~$(X,\vp_{t_0},\psi_{t_0})$. 

This singularity should be `removable'. That is, there is another principal $G$-bundle $P'\ra X$ with a $G_2$-instanton connection $A_{t_0}'$ on $(X,\vp_{t_0},\psi_{t_0})$, such that there is an isomorphism of principal $G$-bundles $P\vert_{X\sm N}\cong P'\vert_{X\sm N}$ on $X\sm N$, and up to gauge transformations, $A_t\vert_{X\sm N}$ converges to $A_{t_0}'\vert_{X\sm N}$ as $t\ra t_0$ on any compact subset of $X\sm N$. As $t$ converges to $t_0$, the connection $A_t$ near $N$ should resemble a family of instantons with group $G$ and charge $c_2=k$ on the $\R^4$ normal spaces $\nu_x$ to $N$ in $X$ at $x\in N$, concentrated near 0 in $\nu_x$. When $G=\SU(2)$, the second Chern classes $c_2(P),c_2(P')$ are related by
\begin{equation*}
c_2(P)=c_2(P')+k\cdot \Pd([N])\in H^4(X;\Z).
\end{equation*}

Now the moduli spaces of instantons on $\R^4$ are well understood, and can be described by the ADHM construction. Donaldson and Segal \cite[\S 6.1]{DoSe} define a bundle $\ul{M\!}\,\ra N$ whose fibre at $x\in N$ is the moduli space $\cM^G(\nu_x,k)$ of instantons on $\nu_x$ with group $G$ and charge $k$, with framing at infinity in $\nu_x$ depending on $P'\vert_N$. Using results of Haydys, they define an equation on smooth sections $\ul s:N\ra\ul{M\!}\,$ which they call the {\it Fueter equation}, which depends on $A'\vert_N$, and explain that the local model near $N$ for $A_t$ as $t\ra t_0$ should be written in terms of a solution $\ul s$ of the Fueter equation.  

They conjecture that given a $G_2$-instanton $(P',A')$ on $(X,\vp_{t_0},\psi_{t_0})$, a compact associative $N$ in $(X,\vp_{t_0},\psi_{t_0})$, and a solution $\ul s:N\ra\ul{M\!}\,$ of the Fueter equation constructed from $(P',A')\vert_N$ for charge $k$, it should be possible to find a smooth 1-parameter family of TA-$G_2$-manifolds $(X,\vp_t,\psi_t)$, $t\in[0,1]$ including $(X,\vp_{t_0},\psi_{t_0})$, and a smooth family of $G_2$-instantons $(P,A_t)$ on $(X,\vp_t,\psi_t)$ for $t\in[0,t_0)$, which bubble on $N$ as $t\ra t_0$ to recover $(P',A'),\ul s$ as above. This conjecture has now been proved by Walpuski~\cite{Walp2}.
\label{ca8ex1}
\end{ex}

When $G=\SU(2)$ and $k=1$, Donaldson and Segal \cite[\S 6.1]{DoSe} describe the bundle $\ul{M\!}\,\ra N$ and the Fueter equation for sections $\ul s:N\ra\ul{M\!}\,$ more explicitly:

\begin{ex} Continue in Example \ref{ca8ex1}, but fix $G=\SU(2)$ and the charge $k$ of instantons bubbling at $N$ as $t\ra t_0$ to be $k=1$. Also suppose that the associative 3-fold $N$ in $(X,\vp_{t_0},\psi_{t_0})$ is unobstructed, in the sense of~\S\ref{ca26}.

The moduli space of instantons on $\R^4$ with group $\SU(2)$ and charge 1 is $\cM^{\SU(2)}(\R^4,k)\cong [(\R^4\sm\{0\})/\{\pm 1\}]\t\R^4$. The corresponding bundle $\ul{M\!}\,\ra N$ is
\begin{equation*}
\ul{M\!}\,\cong [(\bS_{P'}\sm\{0\})/\{\pm 1\}]\t_N\nu.
\end{equation*}
Here we choose some spin structure $\si$ on $N$ and write $\bS\ra N$ for the spin bundle over $N$ associated to $\si$, which has fibre $\H\cong\R^4$. Then $\bS_{P'}=(\bS\t_N {P'\vert_N})/\SU(2)$ is the spin bundle on $N$ twisted by $P'\vert_N$, and $\bS_{P'}\sm\{0\}$ is the complement of the zero section in $\bS_{P'}$, so that $\bS_{P'}$, $\bS_{P'}\sm\{0\}$ and $(\bS_{P'}\sm\{0\})/\{\pm 1\}$ are bundles over $N$ with fibres $\R^4,\R^4\sm\{0\}$ and~$(\R^4\sm\{0\})/\{\pm 1\}$. 

Dividing by $\{\pm 1\}$ means that $(\bS_{P'}\sm\{0\})/\{\pm 1\}$ is independent of the choice of spin structure $\si$ on $N$. However, any section of $(\bS_{P'}\sm\{0\})/\{\pm 1\}$ lifts to a section of $\bS_{P'}\sm\{0\}$ for $\bS_{P'}$ defined using a unique spin structure $\si$. Thus, sections $\ul s:N\ra\ul{M\!}\,$ correspond to triples $(\si,\{\pm \ul s_1\},\ul s_2)$ of a spin structure $\si$ on $N$, a nonvanishing section $\ul s_1$ of the twisted spin bundle $\bS_{P'}\ra N$ defined using $\si$ and $P'\vert_N$, and a section $\ul s_2$ of~$\nu\ra N$.

The Fueter equation on $\ul s$ is then equivalent to $\bD_{P',A'}\ul s_1=0$, $\bD\ul s_2=0$, where $\bD_{P',A'}:\Ga^\iy(\bS_{P'})\ra\Ga^\iy(\bS_{P'})$ is the twisted Dirac operator for $(P'\vert_N,A'\vert_N)$, and $\bD:\Ga^\iy(\nu)\ra\Ga^\iy(\nu)$ is as in Theorem \ref{ca2thm2}. But by assumption $N$ is unobstructed, so $\Ker\bD=0$, and $\ul s_2=0$. Therefore, the conclusion is that solutions $\ul s$ of the Fueter equation correspond to pairs $(\si,\ul s_1)$, where $\si$ is a spin structure on $N$, and $\ul s_1$ is a non-vanishing solution of the twisted Dirac equation $\bD_{P',A'}\ul s_1=0$ for the $\SU(2)$-connection $(P'\vert_N,A'\vert_N)$ on $N$ with spin structure $\si$, where $\ul s_1$ only matters up to sign~$\pm\ul s_1$.
\label{ca8ex2}	
\end{ex}

Donaldson and Segal's proposal \cite[\S 6.2]{DoSe} is to try to modify \eq{ca8eq1} to define invariants, for TA-$G_2$-manifolds $(X,\vp,\psi)$ with $\psi$ generic:
\e
DS(P,\psi)=\sum_{[A]\in \cM(P,\psi)\!\!\!\!\!\!\!\!\!\!\!\!\!\!\!\!\!}\Or([A])+\sum_{\begin{subarray}{l} \text{$(P',A'),N,k$: $(P',A')$ $G_2$-instanton on $(X,\vp,\psi)$}\\
\text{with group $G$, up to gauge equivalence,} \\ \text{$N\ne\es$ compact associative in $(X,\vp,\psi)$,}\\ \text{$k\ge 1$, $P=P'+$charge $k$ modification along $N$} 
\end{subarray}\!\!\!\!\!\!\!\!\!\!\!\!\!\!\!\!\!\!\!\!\!\!\!\!\!\!\!\!\!\!\!\!\!\!\!\!\!\!\!\!\!\!\!\!\!\!\!\!\!\!\!\!\!\!\!\!\!\!\!\!\!\!\!\!\!\!\!\!\!\!\!\!}w\bigl((P',A'),N,k\bigr).
\label{ca8eq2}
\e
Here $w((P',A'),N,k)$ is some `compensation term' which they do not define, but the crucial point is that it must be chosen so that $DS(P,\psi)$ is unchanged under deformations of $(X,\vp,\psi)$ in 1-parameter families $(X,\vp_t,\psi_t)$, $t\in[0,1]$. So in Example \ref{ca8ex1}, the first term of \eq{ca8eq2} changes by $\pm 1$ as $t$ crosses $t_0$ and $[A_t]$ disappears from $\cM(P,\psi)$, and we expect $w((P',A'),N,k)$ for $(P,A'),N,k$ as in Example \ref{ca8ex1} to change by $\mp 1$ as $t$ crosses $t_0$ to compensate.

When $G=\SU(2)$ and $k=1$ Donaldson and Segal \cite[\S 6.2]{DoSe} suggest taking $w((P',A'),N,1)=\pm\ha$, where the sign is defined by using spectral flow as in \S\ref{ca32}. This is explained by Walpuski \cite[\S 6.2]{Walp3}. Haydys and Walpuski \cite[\S 1]{HaWa} give a different proposal for $w((P',A'),N,1)$, which we discuss in~\S\ref{ca85}.

\subsection{\texorpdfstring{Canonical orientations for moduli of $G_2$-instantons}{Canonical orientations for moduli of G₂-instantons}}
\label{ca82}

As in \S\ref{ca81}, there are close connections between moduli spaces of $G_2$-instantons and of associative 3-folds in $(X,\vp,\psi)$. So our method in \S\ref{ca32} for defining canonical orientations on associative moduli spaces $\cM(\cN,\al,\psi)$ in $(X,\vp,\psi)$, having chosen a flag structure $F$ on $X$, might have an analogue for defining canonical orientations on $G_2$-instanton moduli spaces.

\begin{conj} Let\/ $(X,\vp,\psi)$ be a compact TA-$G_2$-manifold and\/ $\pi:P\ra X$ a principal\/ $\SU(2)$-bundle, and write $\cM(P,\psi)$ for the moduli space of irreducible\/ $G_2$-instanton connections $A$ on $(X,\vp,\psi)$ up to gauge equivalence. We expect\/ $\cM(P,\psi)$ to be a smooth\/ $0$-manifold if\/ $\psi$ is generic, and an m-Kuranishi space of virtual dimension $0$ in general, as for Conjectures\/ {\rm\ref{ca2conj1}} and\/~{\rm\ref{ca2conj2}}.

Choose a flag structure $F$ for $X,$ as in {\rm\S\ref{ca31}}. Then there should be a way to define canonical orientations for the moduli spaces $\cM(P,\psi),$ as manifolds or m-Kuranishi spaces, which are well behaved under deformations of\/~$(X,\vp,\psi)$. 

If\/ $F,F'$ are flag structures on $X$ then Proposition\/ {\rm\ref{ca3prop3}(b)} gives a morphism $\ep:H_3(X;\Z)\ra\{\pm 1\}$ satisfying \eq{ca3eq8}. Let $\ep':H^4(X;\Z)\ra\{\pm 1\}$ correspond to $\ep$ under the Poincar\'e duality isomorphism $H_3(X;\Z)\cong H^4(X;\Z)$. Then the orientations on $\cM(P,\psi)$ coming from $F$ and\/ $F'$ differ by a factor\/~$\ep'\ci c_2(P)$.

\label{ca8conj1}	
\end{conj}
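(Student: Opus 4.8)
The plan is to mimic, on the gauge-theory side, the construction of \S\ref{ca32}: a $G_2$-instanton $(P,A)$ should acquire a \emph{canonical orientation} by comparing the determinant line of its deformation complex \eq{ca2eq5} with a reference operator via spectral flow, where the reference data near an associative $3$-fold $N$ is precisely the twisted Dirac operator whose kernel enters the Fueter equation of \S\ref{ca81}. Concretely, I would first recall (Donaldson--Thomas \cite{DoSe}, and the excision/gluing formalism for $\SU(2)$-bundles) that the family of determinant lines $\det D_A$ over the space of connections $\mathcal{A}_P/\mathcal{G}_P$ is orientable, and that \emph{choosing an orientation} is equivalent to choosing a trivialization of the $\mathbb{Z}_2$-torsor of orientations; the content of the conjecture is to produce such a trivialization from a flag structure $F$, and to compute how it changes when $F$ changes. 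The key observation is that the difference between two $\SU(2)$-bundles $P,P'$ with $c_2(P)=c_2(P')+k\,\mathrm{Pd}([N])$ is localized along $N$, and the corresponding change in orientation is governed by the spectral flow of the twisted Dirac operator $\bD_{P',A'}$ on $N$ appearing in Example \ref{ca8ex2} --- exactly the operator for which \S\ref{ca3} introduced flag-dependent signs.

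The steps I would carry out, in order: (1) Fix a ``base'' $\SU(2)$-bundle $P_0$ (e.g.\ the trivial bundle) and a reference $G_2$-instanton or merely a reference connection $A_0$, and orient $\cM(P_0,\psi)$ arbitrarily; all other orientations will be defined relative to this, and the arbitrariness will be absorbed at the end. (2) For a general $(P,A)$, write $c_2(P)-c_2(P_0)\in H^4(X;\Z)\cong H_3(X;\Z)$ as a $\Z$-combination $\sum_i n_i[N_i]$ of classes of embedded oriented $3$-submanifolds $N_i$ with chosen flags $[s_i]$; by an excision argument the orientation-torsor of $\det D_P$ relative to $\det D_{P_0}$ decomposes as a product of local contributions, one for each $(N_i,[s_i])$ with multiplicity $n_i$. (3) Identify each local contribution with a sign built from $F(N_i,[s_i])$ together with the spectral flow of the family $\bD_{P',A'}$ interpolating charge $k{-}1$ to charge $k$ along $N_i$ --- this is where Definition \ref{ca3def5} and equation \eq{ca3eq13} are imported essentially verbatim, using that for $\SU(2)$ the instanton moduli space on $\R^4$ is $[(\R^4\sm 0)/\{\pm1\}]\times\R^4$ so that the ``extra'' determinant line is exactly $\det\bD_{P',A'}\otimes\det\bD_N$, with $\det\bD_N$ trivialized by the canonical flag $f_{N_i}$. (4) Check independence of all choices: independence of the representing submanifolds and flags uses Propositions \ref{ca3prop1}--\ref{ca3prop3} (working mod $2$, i.e.\ in the $\{\pm1\}$-torsor) just as in the proof of Proposition \ref{ca3prop3}(a); independence of the path from $P_0$ to $P$ uses that the total is a cocycle. (5) Verify the behaviour under deformations $(X,\vp_t,\psi_t)$: the orientation is locally constant because the only jumps in the relevant spectral flows are even, by exactly the argument of Proposition \ref{ca3prop4} and Remark \ref{ca3rem2} (conjugate eigenvalue pairs crossing). (6) Finally, compute the dependence on the flag structure: changing $F$ to $F'=F\cdot\ep([N])$ (Proposition \ref{ca3prop3}(b)) multiplies each local contribution for $N_i$ by $\ep([N_i])$, hence multiplies the total orientation of $\cM(P,\psi)$ by $\prod_i \ep([N_i])^{n_i}=\ep\bigl(\sum_i n_i[N_i]\bigr)=\ep\bigl(\mathrm{Pd}(c_2(P)-c_2(P_0))\bigr)$; absorbing the constant $\ep(\mathrm{Pd}(c_2(P_0)))$ into the base choice gives the stated factor $\ep'\circ c_2(P)$ with $\ep'=\ep\circ\mathrm{Pd}^{-1}:H^4(X;\Z)\to\{\pm1\}$.

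The main obstacle, by some margin, is step (3): making rigorous the claim that the orientation-torsor of the ADHM/excision gluing for $\SU(2)$-instantons on $\R^4$ matches up with the flag-theoretic sign and spectral flow of \S\ref{ca3}. This requires (a) a clean excision theorem for determinant lines of the $G_2$-instanton operator that localizes the comparison to a tubular neighbourhood of $N$, (b) identifying the ``linearized gluing'' operator there with $\bD_{P',A'}\oplus\bD_N$ up to an operator of order zero and a homotopy, in the spirit of \eq{ca3eq9}--\eq{ca3eq12}, and (c) controlling the $\{\pm1\}$ ambiguity coming from the $/\{\pm1\}$ in the $k=1$ instanton moduli space and from the framing at infinity. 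Steps (1), (2), (4), (6) are then essentially formal repackagings of \S\ref{ca3}, and step (5) is the same even-jump argument; it is plausible that once the analytic input of \cite{Walp2, Walp3, HaWa} on bubbling along associatives is in hand, (3) can be pushed through, but a fully careful treatment would need care with the higher-charge cases $k\ge 2$, where $\cM^{\SU(2)}(\R^4,k)$ is no longer a simple quotient and the ``local contribution'' must be shown to be a well-defined element of a $\{\pm1\}$-torsor depending only on $([N_i],[s_i])$ and $k$.
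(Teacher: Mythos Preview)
First, note that the statement is a \emph{conjecture}: the paper does not prove it, but immediately after stating it gives a sketch of how a proof is expected to go. So the comparison is between two outlines, not two proofs.

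Your outline and the paper's are close in spirit but organised differently. The paper follows the Donaldson--Kronheimer template for 4-manifold instanton orientations: (a) work on the full infinite-dimensional space $\cB$ of all connections on $P$ modulo gauge, not just the moduli space; (b) show $\cB$ is orientable (this step is already done by Walpuski for $\SU(r)$); (c) fix the orientation by evaluating it at a \emph{single} connection $A$ on $P$ that is trivial away from one compact oriented $3$-submanifold $N$ with $[N]=\Pd^{-1}(c_2(P))$, and near $N$ approximates a family of small charge-$1$ instantons on the normal $\R^4$-fibres. The sign assigned to this $A$ is $(-1)^{\SF(L_t:t\in[0,1])}F(N,f)$, where $F$ is the flag structure, $f$ is a flag on $N$, and the spectral flow runs from a flag-dependent reference operator to the linearised $G_2$-instanton operator at $A$. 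Your approach is instead \emph{relative}: fix a base bundle $P_0$, write $c_2(P)-c_2(P_0)$ as a $\Z$-combination of classes $[N_i]$, and build the orientation by excision along the $N_i$.

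Both routes hinge on the same analytic crux, which you correctly isolate as step~(3): identifying the local contribution to the determinant line from gluing a charge-$1$ instanton along $N$ with the flag-and-spectral-flow sign of \S\ref{ca3}. The paper's route is slightly cleaner in one respect: by working absolutely with a single $N$ per bundle, it avoids your step~(1)'s ``orient $\cM(P_0,\psi)$ arbitrarily'', which as written leaves a global $\{\pm 1\}$ ambiguity unresolved (you need to say why the trivial bundle's determinant line is \emph{canonically} trivial, not arbitrarily so, for the orientation to deserve the name ``canonical''). Your route, on the other hand, makes the dependence on $F$ more transparent---your step~(6) computation of the factor $\ep'\ci c_2(P)$ is exactly right and more explicit than anything the paper writes---and the excision framework would likely be more robust for the higher-charge cases $k\ge 2$ that you flag at the end.
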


Here is how the author expects a proof of Conjecture \ref{ca8conj1} to go. We follow the method of Donaldson and Kronheimer \cite[\S 5.4 \& \S 7.1.6]{DoKr} for constructing orientations on moduli spaces $\cM(P,g)$ of instanton connections on a principal $\SU(2)$-bundle $P\ra M$ over a compact, oriented, generic Riemannian 4-manifold $(M,g)$. There are three main steps in their method:
\begin{itemize}
\setlength{\itemsep}{0pt}
\setlength{\parsep}{0pt}
\item[(a)] They define the orientation as a structure on the infinite-dimensional family $\cB$ of all connections $A$ on $P$, modulo gauge, not just on the finite-dimensional submanifold $\cM(P,g)\subset\cB$. Here $\cB$ is connected, and can be described using homotopy theory.
\item[(b)] In \cite[\S 5.4]{DoKr}, by considering loops $\cS^1$ in $\cB$, they show that $\cB$ is orientable. There are then two possible orientations on $\cB$, as $\cB$ is connected.
\item[(c)] In \cite[\S 7.1.6]{DoKr}, when $c_2(P)=k\ge 0$ in $H^4(M;\Z)\cong\Z$, they fix the orientation on $\cB$ by defining it near a connection $A$ on $P$ which is trivial away from $p_1,\ldots,p_k$ in $M$, and which near each $p_i$ approximates a standard $\SU(2)$-instanton on $\R^4$ with $c_2=1$, with curvature concentrated near~0. 
\end{itemize}

Orientations for moduli spaces $\cM(P,\psi)$ of $G_2$-instantons on $(X,\vp,\psi)$ are discussed by Donaldson and Segal \cite[\S 4.1]{DoSe}, and in more detail by Walpuski \cite[\S 6.1]{Walp3}. Walpuski does the analogues of (a),(b) above, where for (b) he shows \cite[Prop.~6.3]{Walp3} that $\cB$ is orientable for moduli spaces of $G_2$-instantons with gauge group $\SU(r)$ for $r\ge 2$. But he does not carry out step (c), instead choosing one of the two orientations on $\cB$ arbitrarily.

We propose that our ideas using flag structures may be used to complete step (c). The idea would be that given a principal $\SU(2)$-bundle $P\ra X$ with $c_2(P)=\be\in H^4(X;\Z)$, we would let $\al\in H_3(X;\Z)$ correspond to $\be$ under Poincar\'e duality, and choose a compact, oriented, embedded 3-submanifold $N$ in $X$ with $[N]=\al\in H_3(X;\Z)$. Here $N$ is not required to be associative. Then we should consider a connection $A$ on $P$ which is trivial away from $N$, and near $N$ approximates a family of small standard $\SU(2)$-instantons with $c_2=1$ on the $\R^4$ fibres of the normal bundle $\nu\ra N$, as in \cite[\S 6.1]{DoSe} for $N$ associative.

The orientation for $\cB$ should then be determined by giving $A$ the orientation $(-1)^{\SF(L_t:t\in[0,1])}F(N,f)$, where $F$ is the flag structure on $X$, and $\SF(L_t:t\in[0,1])$ is the spectral flow between an elliptic operator $L_0$ which depends on a choice of flag $f$ for $N$ at $t=0$, and the linearization $L_1$ of the $G_2$-instanton equation at $A$ at $t=1$, where we suppose $L_1$ is an isomorphism.

\subsection{\texorpdfstring{$P'$-flags, and canonical $P'$-flags}{P'-flags, and canonical P'-flag structures}}
\label{ca83}

\begin{dfn} Let $(X,\vp,\psi)$ be a compact TA-$G_2$-manifold, and $(P',A')$ a $G_2$-instanton on $X$ with structure group $\SU(2)$, and $N$ a compact, oriented 3-dimensional submanifold in $X$ (usually associative), and $\si$ a spin structure on $N$. Then as in \S\ref{ca8ex2} we define the twisted spin bundle $\bS_{P'}\ra N$ and the twisted Dirac operator $\bD_{P',A'}:\Ga^\iy(\bS_{P'})\ra\Ga^\iy(\bS_{P'})$ using $\si$ and $(P'\vert_N,A'\vert_N)$.

We now repeat parts of \S\ref{ca31}--\S\ref{ca32} with $\bS_{P'}\ra N$ in place of $\nu\ra N$. As in Definition \ref{ca3def1}, let $s,s'\in\Ga^\iy(\bS_{P'})$ be nonvanishing sections. Write
$0:N\ra \bS_{P'}$ for the zero section, and $\ga:[0,1]\t N\ra\in\Ga^\iy(\bS_{P'})$ for the map $\ga:(t,x)\mapsto (1-t)s(x)+ts'(x)$. Define $d(s,s')=0(N)\bu\ga([0,1]\t N)\in\Z$ .

Define a {\it $P'$-flag on\/} $N$ to be an equivalence class $[s]$ of nonvanishing $s\in\Ga^\iy(\bS_{P'})$, where $s,s'$ are equivalent if $d(s,s')=0$. Write $\Flag_{P'}(N)$ for the set of all $P'$-flags $[s]$ on $N$. For $[s],[s']\in\Flag_{P'}(N)$ we define $d([s],[s'])=d(s,s')\in\Z$ for any representatives $s,s'$ for $[s],[s']$. For any $[s]\in\Flag_{P'}(N)$ and any $k\in\Z$, there is a unique $[s']\in\Flag_{P'}(N)$ with $d([s],[s'])=k$, and we write $[s']=[s]+k$. This gives a natural action of $\Z$ on $\Flag_{P'}(N)$, making $\Flag_{P'}(N)$ into a $\Z$-torsor.

Following Definition \ref{ca3def5}, let $[s]$ be a $P'$-flag, and choose a representative $s$ of unit length. There is then a unique isomorphism $\bS_{P'}\cong \La^0T^*N\op\La^2 T^*N$ which identifies $s$ with $1\op 0$ in $\Ga^\iy(\La^0T^*N\op\La^2 T^*N)$, and identifies the symbols of $\bD_{P',A'}$ and $\d*+*\d$. Thus as in \eq{ca3eq10} we have $\bD_{P',A'}\cong \d*+*\d+B$, for $B$ of degree 0 as in \eq{ca3eq11}. Define a family of first order operators $A_t$, $t\in[0,1]$ as in \eq{ca3eq12} by $A_t=\d*+*\d+tB$. Then $A_0=\d*+*\d$ in \eq{ca3eq9}, and $A_1\cong\bD_{P',A'}$ under our isomorphism $\La^0T^*N\op\La^2 T^*N\cong\bS_{P'}$. Thus as in Definition \ref{ca3def4} we have the spectral flow~$\SF(A_t:t\in[0,1])\in\Z$.

As in Definition \ref{ca3def5}, there is a unique $P'$-flag $f^{P'}_N$ or $f^{P',A'}_N$ on $N$, called the {\it canonical\/ $P'$-flag\/} of $N$, such that $\SF(A_t:t\in[0,1])=0$ for $A_t:t\in[0,1]$ constructed using $s\in f_N^{P'}$. It has the property that for any $P'$-flag $[s]$ for $N$ and family $A_t:t\in[0,1]$ constructed from $s\in[s]$ as above, we have
\begin{equation*}
f_N^{P'}=[s]+\SF(A_t:t\in[0,1]).
\end{equation*}

\label{ca8def1}	
\end{dfn}

Canonical $P'$-flags $f_N^{P'}$ are related to the problem of defining the weight function $w((P',A'),N,k)$ in \eq{ca8eq2} when $G=\SU(2)$ and $k=1$, so that we can use Example \ref{ca8ex2}. Suppose we are given a generic 1-parameter family of TA-$G_2$-manifolds $(X,\vp_t,\psi_t)$, $t\in[0,1]$, and corresponding 1-parameter families $(P',A'_t)$, $t\in[0,1]$ of unobstructed $G_2$-instantons in $(X,\vp_t,\psi_t)$, and $N_t$, $t\in[0,1]$ of unobstructed associative 3-folds in $(X,\vp_t,\psi_t)$. Then we have a 1-parameter family of twisted Dirac operators $\bD_{P',A'_t}$ for $t\in[0,1]$ on~$N_t$.

According to the Donaldson--Segal--Walpuski picture, for generic $t\in[0,1]$ we have $\Ker\bD_{P',A'_t}=0$, but for isolated $t_0\in[0,1]$ we may have $\Ker\bD_{P',A'_{t_0}}\ne 0$, and then we create or destroy a new $G_2$-instanton $(P,A_t)$ as $t$ increases through $t_0$ in $[0,1]$, as in Examples \ref{ca8ex1} and \ref{ca8ex2}. This happens when an eigenvalue of $\bD_{P',A'_t}$ passes through 0 at $t=t_0$, so that $\SF(A_t:t\in[0,1])$ jumps by 1, and so the canonical flag $f_{N_t}^{P'}$ jumps by 1 as $t$ passes through~$t_0$.

Thus the canonical flag $f_N^{P'}$ has the property we want of $w((P',A'),N,1)$: under deformations of $(X,\vp_t,\psi_t)$, $f_N^{P'}$ changes by addition of $k\in\Z$ exactly when $w((P',A'),N,1)$ should change by addition of $k\in\Z$. Unfortunately, $f_N^{P'}$ is not a number, as $w((P',A'),N,1)$ should be, but a geometric structure on~$N$.

\subsection{\texorpdfstring{Problems with counting $G_2$-instantons}{Problems with counting G₂-instantons}}
\label{ca84}

Based on the ideas and results of Donaldson--Segal and Walpuski described in \S\ref{ca81}, and the material on $P'$-flags in \S\ref{ca83}, the author expects that the following is a possible (or at least plausible) behaviour for moduli spaces of $G_2$-instantons and associative 3-folds under smooth deformations of TA-$G_2$-manifolds:

\begin{ex} Suppose we are given a smooth family of compact TA-$G_2$-manifolds $(X,\vp_t,\psi_t)$, $t\in[0,1]$, supporting $G_2$-instantons and associative 3-folds as follows:
\begin{itemize}
\setlength{\itemsep}{0pt}
\setlength{\parsep}{0pt}
\item[(a)] There is an unobstructed $G_2$-instanton $(P',A'_t)$ on $(X,\vp_t,\psi_t)$ with structure group $\SU(2)$ for $t\in[0,1]$, depending smoothly on $t$.
\item[(b)] For $t\in[0,\frac{1}{3})$, $t\in(\frac{2}{3},1]$ there are no associatives of interest in~$(X,\vp_t,\psi_t)$.
\item[(c)] For $t\in(\frac{1}{3},\frac{2}{3})$ there are two associatives $N_t^+,N_t^-$ in $(X,\vp_t,\psi_t)$, depending smoothly on $t$. They are unobstructed, in the same homology class, with orientations $\Or(N_t^+)=1$, $\Or(N_t^-)=-1$.
\item[(d)] There are associatives $N_{1/3}$ in $(X,\vp_{1/3},\psi_{1/3})$ and $N_{2/3}$ in $(X,\vp_{2/3},\psi_{2/3})$. They are obstructed, with $\O_{N_{1/3}}\cong\R\cong \O_{N_{2/3}}$. We have $N_t^\pm\ra N_{1/3}$ as $t\ra \frac{1}{3}_+$ and $N_t^\pm\ra N_{2/3}$ as $t\ra \frac{2}{3}_-$, as in~\S\ref{ca72}(A).
\item[(e)] All of $N_t^\pm,N_{1/3},N_{2/3}$ are diffeomorphic to a fixed compact, oriented 3-manifold $N$, such as $N=\cS^3$. For simplicity we suppose $H_1(N;\Z_2)=0$, so that $N$ has a unique spin structure.
\end{itemize}

Let us now ask: how many $G_2$-instantons $(P,A_t)$ with structure group $\SU(2)$ are created or destroyed by bubbling a 1-instanton along $N_t^\pm$ from $(P',A'_t)$, as $t$ increases over $[0,1]$, as described in Examples \ref{ca8ex1} and \ref{ca8ex2}?

Consider the oriented 4-manifold $M\cong N\t\cS^1$ (or a twisted product) made of the disjoint union of $N_t^\pm$, $t\in(\frac{1}{3},\frac{2}{3})$ and $N_{1/3},N_{2/3}$ glued together in the obvious way, with its natural map $M\ra X$ from the inclusions $N_t^\pm,N_{1/3},N_{2/3}\ra X$. On $M$ we have a rank 4 oriented vector bundle $E\ra M$ restricting to the twisted spin bundles $\bS_{P'}$ on each slice $N_t^\pm,N_{1/3},N_{2/3}$, where $\bS_{P'}$ is unique as the spin structures on $N_t^\pm,N_{1/3},N_{2/3}\cong N$ are unique. The number of zeroes of a generic section of $E\ra M$, counted with signs, is~$k:=\int_Mc_2(P')$. 

Suppose no $G_2$-instantons $(P,A_t)$ are created or destroyed over $t\in[0,1]$. Then the canonical $P'$-flags $f^{P'}_{N_t^\pm},f^{P'}_{N_{1/3}},f^{P'}_{N_{2/3}}$ do not jump, and vary continuously. Therefore we can choose nonvanishing sections $s_t^\pm,s_{1/3},s_{2/3}$ of $\bS_{P'}$ on $N_t^\pm,N_{1/3},N_{2/3}$ representing $f^{P'}_{N_t^\pm},f^{P'}_{N_{1/3}},f^{P'}_{N_{2/3}}$ and varying continuously with $t$, and these $s_t^\pm,s_{1/3},s_{2/3}$ make up a continuous, nonvanishing section of $E\ra M$, so that $k=0$. In general, $k$ counts the jumps of $f^{P'}_{N_t^\pm}$ as $t$ increases over $[0,1]$, so we create or destroy $k$ new $G_2$-instantons $(P,A_t)$ as $t$ increases from 0 to~1.

We expect that we can have $k\ne 0$ in $\Z$ in examples. Thus, we can have:
\begin{itemize}
\setlength{\itemsep}{0pt}
\setlength{\parsep}{0pt}
\item[(i)] In $(X,\vp_0,\psi_0)$ one $G_2$-instanton $(P',A'_0)$ and no $G_2$-instantons on $P$, where $P\ra X$ is the principal $\SU(2)$-bundle obtained from $P'$ by gluing in a 1-instanton along $N^+_t$, and there are no associative 3-folds of interest.
\item[(ii)] In $(X,\vp_1,\psi_1)$ one $G_2$-instanton $(P',A'_1)$, and $k\ne 0$  $G_2$-instantons on $P$ counted with signs, and no associative 3-folds of interest.
\end{itemize}
Hence, in \eq{ca8eq2} we have $DS(P,\psi_0)=0$ and $DS(P,\psi_1)=k\ne 0$, so $DS(P,\psi)$ is not deformation-invariant.
\label{ca8ex3}	
\end{ex}

If Example \ref{ca8ex3} is true to mathematical reality, it demonstrates a problem with the Donaldson--Segal proposal \cite[\S 6.2]{DoSe} for defining invariants $DS(P,\psi)$ in \eq{ca8eq2}. Note that the actual choice of `compensation terms' $w((P',A'),N,k)$ is {\it irrelevant}, since in our example there are no associatives in $(X,\vp_0,\psi_0)$ or in $(X,\vp_1,\psi_1)$, so the second sum in \eq{ca8eq2} is automatically zero. However, we can trace the failure to difficulties in defining $w((P',A'),N,1)$ compensating for $\SU(2)$-instantons with charge 1 bubbling along $N$ in the way Donaldson and Segal want. We discuss a possible solution to this problem in~\S\ref{ca85}.

Here is another thought-experiment similar to Example~\ref{ca8ex3}:

\begin{ex} Suppose we are given a smooth family of compact TA-$G_2$-manifolds $(X,\vp_t,\psi_t)$, $t\in[0,1]$, and a principal $\SU(2)$-bundle $P'\ra X$ with $c_2(P')\ne 0$ in $H^4(X;\Q)$, supporting $G_2$-instantons and associatives as follows:
\begin{itemize}
\setlength{\itemsep}{0pt}
\setlength{\parsep}{0pt}
\item[(a)] There is an unobstructed associative 3-fold $N_t$ in $(X,\vp_t,\psi_t)$ for $t\in[0,1]$, depending smoothly on $t$. For simplicity we suppose $N_t$ is connected with $H_1(N_t;\Z_2)=0$, say $N_t\cong\cS^3$, so that $N_t$ has a unique spin structure.
\item[(b)] For $t\in[0,\frac{1}{3})$, $t\in(\frac{2}{3},1]$ there are no $G_2$-instantons on $P'$ over $(X,\vp_t,\psi_t)$. 
\item[(c)] For $t\in(\frac{1}{3},\frac{2}{3})$ there are two gauge equivalence classes $[A^{\prime +}_t],[A^{\prime -}_t]$ of $G_2$-instantons on $P'$ over $(X,\vp_t,\psi_t)$, depending smoothly on $t$. They are unobstructed, with orientations $\Or([A^{\prime +}_t])=1$ and $\Or([A^{\prime -}_t])=-1$.
\item[(d)] There are gauge equivalence classes $[A'_{1/3}]$ and $[A'_{2/3}]$ of $G_2$-instantons on $P'$ over $(X,\vp_{1/3},\psi_{1/3})$ and $(X,\vp_{2/3},\psi_{2/3})$, respectively. They are both obstructed, with obstruction space $\R$. We have $[A^{\prime \pm}_t]\ra [A'_{1/3}]$ as $t\ra\frac{1}{3}_+$ and $[A^{\prime \pm}_t]\ra [A'_{2/3}]$ as $t\ra \frac{2}{3}_-$.
\end{itemize}

Consider the problem of lifting the gauge equivalence classes $[A^{\prime +}_t],\ab[A^{\prime -}_t],\ab[A'_{1/3}],\ab[A'_{2/3}]$ to connections $A^{\prime +}_t,A^{\prime -}_t,A'_{1/3},A'_{2/3}$ on $P$ depending continuously on $t$. As we are dealing with a loop of connections, there may be monodromy. That is, we can choose $A^{\prime +}_t,A^{\prime -}_t,A'_{1/3},A'_{2/3}$ such that $A^{\prime \pm}_t$ depend smoothly on $t\in(\frac{1}{3},\frac{2}{3})$, and $A^{\prime \pm}_t\ra A'_{1/3}$ as $t\ra\frac{1}{3}_+$, and $A^{\prime +}_t\ra A'_{2/3}$ as $t\ra \frac{2}{3}_-$. But we cannot also ensure that $A^{\prime -}_t\ra A'_{2/3}$ as $t\ra \frac{2}{3}_-$. Instead, we can only suppose that $A^{\prime -}_t\ra \ga\cdot A'_{2/3}$ for some smooth gauge transformation $\ga:X\ra\SU(2)$, which may induce a nontrivial map $\ga_*:H_3(X;\Z)\ra H_3(\SU(2);\Z)\cong\Z$. Write $(\ga\vert_N)_*:\Z\cong H_3(N;\Z)\ra H_3(\SU(2);\Z)\cong\Z$ as multiplication by $k\in\Z$. We expect that we can have $k\ne 0$ in $\Z$ in examples.

Let $P\ra X$ be the principal $\SU(2)$-bundle obtained from $P'$ by gluing in a family of instantons of charge 1 along $N_t$. The author expects that by a similar calculation to that in Example \ref{ca8ex3} one can show that $k$ $G_2$-instantons $(P,A_t)$ are created or destroyed by bubbling a 1-instanton along $N_t$ from $(P',A^{\prime\pm}_t)$, as $t$ increases over $[0,1]$, counted with signs. Thus, we can have:
\begin{itemize}
\setlength{\itemsep}{0pt}
\setlength{\parsep}{0pt}
\item[(i)] In $(X,\vp_0,\psi_0)$ there is one associative $N_0$, and no $G_2$-instantons of interest.
\item[(ii)] In $(X,\vp_1,\psi_1)$ there is one associative $N_1$, and $k\ne 0$ $G_2$-instantons on $P$, counted with signs, and no other $G_2$-instantons of interest.
\end{itemize}
In \eq{ca8eq2} we have $DS(P,\psi_0)=0$ and $DS(P,\psi_1)=k\ne 0$, so $DS(P,\psi)$ is not deformation-invariant. There are no contributions to $DS(P,\psi_0),DS(P,\psi_1)$ from $N_0,N_1$, as there are no $G_2$-instantons on $P'$ over $(X,\vp_0,\psi_0)$ or~$(X,\vp_1,\psi_1)$.

\label{ca8ex4}	
\end{ex}

Again, if Example \ref{ca8ex4} is true to mathematical reality, it demonstrates a problem with the Donaldson--Segal proposal \cite[\S 6.2]{DoSe}, which we discuss in~\S\ref{ca85}.

\subsection{A suggestion for how to modify Donaldson--Segal}
\label{ca85}

Examples \ref{ca8ex3} and \ref{ca8ex4} indicate that Donaldson and Segal's proposed invariants $DS(P,\psi)$ in \eq{ca8eq2} will not be deformation-invariant. However, all may not be lost. We now outline a way to modify the Donaldson--Segal programme to hopefully fix these problems. We summarize our main points as (i),(ii), \ldots. This is not a complete proposal, just the beginnings of a possible answer.

While counting $G_2$-instantons and counting associative 3-folds are linked, counting associative 3-folds is the more primitive problem, as one can count associatives on their own, but to count $G_2$-instantons with any hope of deformation-invariance, one must count associative 3-folds too. So we should really start with the problem of counting associative 3-folds. The author expects that it should only be possible to count $G_2$-instantons on $(X,\vp,\psi)$ if counting associative 3-folds on $(X,\vp,\psi)$ is well-behaved, by which we mean:
\begin{itemize}
\setlength{\itemsep}{0pt}
\setlength{\parsep}{0pt}
\item[(i)] The Donaldson--Segal programme for counting $G_2$-instantons on a TA-$G_2$-manifold $(X,\vp,\psi)$, giving an answer independent of deformations of $\psi$, should only work if $(X,\vp,\psi)$ is {\it unobstructed\/} in the sense of Definition~\ref{ca7def2}.
\end{itemize}

In Example \ref{ca8ex4}, $(X,\vp_t,\psi_t)$ is obstructed by the associative 3-fold $N_t$. The author expects that the change in invariants $DS(P,\psi_t)$ in Example \ref{ca8ex4} under deformations of $\vp_t,\psi_t$ is typical for deformations of obstructed TA-$G_2$-manifolds $(X,\vp,\psi)$. The author knows of no way to add compensation terms to restore deformation-invariance in the obstructed case.
\begin{itemize}
\setlength{\itemsep}{0pt}
\setlength{\parsep}{0pt}
\item[(ii)] If $(X,\vp,\psi)$ is unobstructed then $\Phi_\psi:\cU\ra\La_{>0}$ in \S\ref{ca73} has at least one critical point $\th\in\cU$, but this critical point may not be unique. To get deformation-invariant information from counting $G_2$-instantons on $(X,\vp,\psi)$, we should first make a {\it choice of critical point\/} $\th$ of $\Phi_\psi$, and whatever invariants $\,\,\widehat{\!\!DS\!}\,(\psi,\th)$ we define {\it should depend on this choice of\/}~$\th$.
\item[(iii)] Suppose we are given a smooth 1-parameter family $(X,\vp_t,\psi_t)$, $t\in[0,1]$ of TA-$G_2$-manifolds with $[\vp_t]$ constant in $H^3(X;\R)$. Then as in Conjecture \ref{ca7conj1}, there should exist a natural quasi-identity morphism $\Up:\cU\ra\cU$ with $\Phi_{\psi_1}=\Phi_{\psi_0}\ci\Up$. We think of $\Up$ as a kind of `parallel translation' of associative 3-fold counting data along the family $(X,\vp_t,\psi_t)$, $t\in[0,1]$.

Now $\Up$ gives a bijection $\Crit(\Phi_{\psi_1})\ra\Crit(\Phi_{\psi_0})$. The correct meaning of {\it deformation-invariance\/} for the Donaldson--Segal style invariants $\,\,\widehat{\!\!DS\!}\,(\psi,\th)$ in (ii) should be that $\,\,\widehat{\!\!DS\!}\,(\psi_0,\th_0)=\,\,\widehat{\!\!DS\!}\,(\psi_1,\th_1)$ whenever $\th_0\in\Crit(\Phi_{\psi_0})$ and $\th_1\in\Crit(\Phi_{\psi_1})$ with~$\Up(\th_1)=\th_0$.
\item[(iv)] If we follow (ii)--(iii), we generally cannot make invariants $DS(P,\psi)$ for each principal $\SU(2)$-bundle $P\ra X$, as in \eq{ca8eq2} (though see Remark \ref{ca8rem1} below). Instead, we should aim to make one invariant $\,\,\widehat{\!\!DS\!}\,(\psi,\th)$ in $\La_{>0}$, as a formal power series similar to \eq{ca7eq4}, roughly of the form
\e
\,\,\widehat{\!\!DS\!}\,(\psi,\th)=\sum_{\text{$P\ra X$ principal $\SU(2)$-bundle}\!\!\!\!\!\!\!\!\!\!\!\!\!\!\!\!\!\!\!\!\!\!\!\!\!\!\!\!\!\!\!\!\!\!\!\!\!\!\!\!\!\!\!\!\!\!\!\!\!\!\!\!\!\!\!\!\!\!\!\!\!}
DS(P,\psi)\, q^{-4\pi^2\int_X[\vp]\cup c_2(P)} +\text{correction terms.}
\label{ca8eq3}
\e

\end{itemize}

In (i)--(iv) the author is motivated by an analogy with the Lagrangian Floer theory of Fukaya, Oh, Ohta and Ono \cite{Fuka2,FOOO}. Here for a Lagrangian $L$ in a symplectic manifold $(S,\om)$, one needs to choose a `bounding cochain' $\th$ for $L$ in homological algebra over a Novikov ring $\La_{>0}$. Such $\th$ need not exist or be unique, and we call $L$ `unobstructed' if $\th$ exists. When $(S,\om)$ is a symplectic Calabi--Yau 3-fold, $\th$ corresponds to the critical point of a superpotential $\Phi_J:\cU\ra\La_{>0}$. There is a notion of `parallel translation' of bounding cochains $\th$ along smooth families $L_t$, $t\in[0,1]$ of Hamiltonian isotopic Lagrangians.

We can now explain how to deal with Example \ref{ca8ex3} in our modified proposal. In Example \ref{ca8ex3}, at least when $t\in[0,\frac{1}{3})$ and $t\in(\frac{2}{3},1]$, there are no associative 3-folds in $(X,\vp_t,\psi_t)$, so $\Phi_{\psi_t}\equiv 0$ and $\Crit(\Phi_{\psi_t})=\cU$, and the extra data $\th_t$ required in (ii)--(iii) is an arbitrary element of $\cU$. We could take $\th_0$ to be the constant function $1:H_3(X;\Z)\ra 1+\La_{>0}$, but there are many other choices.

In Example \ref{ca8ex3} there are no associatives at $t=0$ and at $t=1$, so you might think that nothing changes between $t=0$ and $t=1$ from the point of view of counting associatives. However, the map $\Up:\cU\ra\cU$ from `parallel translation' along $(X,\vp_t,\psi_t)$, $t\in[0,1]$ will in general not be the identity, but will depend on the (co)homology classes $[\vp_t]\in H^3(X;\R)$, $[N_t]\in H_3(X;\Z)$ and $[M]\in H_4(X;\Z)$, as $\Up_0$ in \eq{ca7eq7} depends on $\ga$, $[\ti N_0]$ and $\de$. So if $\th_0=1$, we may not have $\th_1=1$. The difference in $G_2$-instanton counting between $t=0$ and $t=1$ should be compensated for by the difference between $\th_0$ and~$\th_1$.

Our proposal for counting associative 3-folds in \S\ref{ca7} involves counting only associative $\Q$-homology spheres. However, in the Donaldson--Segal picture, $G_2$-instantons $(P,A)$ might bubble on any compact associative 3-fold $N$, not just $\Q$-homology 3-spheres, and in \eq{ca8eq2} we must allow $N$ to be a general associative 3-fold. So we should explain how to bridge the gap between associative $\Q$-homology 3-spheres, and general associative 3-folds. 

Haydys and Walpuski \cite[\S 1]{HaWa} briefly outline a method for defining the `compensation terms' $w((P',A'),N,1)$ required by Donaldson and Segal, as in \S\ref{ca81}. They fix a line bundle $L\ra N$, and a spin structure on $N$ with spin bundle $\bS$, and consider moduli spaces  $\cM_{(P,A'),N}$ of solutions $(B,\Psi)$ of the Seiberg--Witten type equations $\bD_{B\ot A'}\Psi=0$, $F_B=\mu(\Psi)$ on $N$, where $B$ is a $\U(1)$-connection on $L$ with curvature $F_B$, and $\Psi:\ad(P)\vert_N\ra\bS\ot L$ is a vector bundle morphism, and $\bD_{B\ot A'}$ is a twisted Dirac operator, and $\mu$ is a natural quadratic bundle map. Then $\cM_{(P,A'),N}$ has virtual dimension 0, and they wish to define $w((P',A'),N,1)$ to be the virtual count~$[\cM_{(P,A'),N}]_{\rm virt}\in\Z$.

We need to understand how $w((P',A'),N,1)=[\cM_{(P,A'),N}]_{\rm virt}$ can change under deformations of $(X,\vp_t,\psi_t)$, as a result of noncompactness or singularities in the moduli spaces $\cM_{(P,A'),N}$. There are two ways in which this can happen:
\begin{itemize}
\setlength{\itemsep}{0pt}
\setlength{\parsep}{0pt}
\item[(A)] There may be a family of solutions $(B_t,\Psi_t)$ with $\nm{\Psi_t}_{L^2}\ra\iy$ as $t\ra t_0$. The main result of \cite{HaWa} is that a rescaled limit of the $(B_t,\Psi_t)$ converges to a solution of the Fueter equation which controls bubbling of $G_2$-instantons along $N$, as in Examples \ref{ca8ex1}--\ref{ca8ex2}. Thus, Haydys and Walpuski hope that changes in $w((P',A'),N,1)$ resulting from such limits will exactly cancel changes in $G_2$-instanton counting, so that \eq{ca8eq2} is unchanged.
\item[(B)] There may be a family of solutions $(B_t,\Psi_t)$ with $\nm{\Psi_t}_{L^2}\ra 0$ as $t\ra t_0$. While this does not cause noncompactness in $\cM_{(P,A'),N}$, there is a problem in defining the virtual count $[\cM_{(P,A'),N}]_{\rm virt}$ near solutions $(B,\Psi)$ with $\Psi=0$, as $(B,0)$ has stabilizer group $\U(1)$, so $[\cM_{(P,A'),N}]_{\rm virt}$ may change.

When $\Psi=0$ the equation $F_B=\mu(\Psi)$ becomes $F_B=0$, so $(L,B)$ is a flat $\U(1)$-line bundle on $N$. It turns out that $[\cM_{(P,A'),N}]_{\rm virt}$ only changes under such transitions if the moduli space of such $(L,B)$ has dimension 0, that is, if $b^1(N)=0$, so that $N$ is a $\Q$-homology 3-sphere.
\end{itemize}

Our conclusion is that the Haydys--Walpuski proposal for $w((P',A'),N,1)$ in \eq{ca8eq2} has problems for associative 3-folds $N$ which are $\Q$-{\it homology $3$-spheres}, and these problems also involve {\it flat $\U(1)$-line bundles on\/} $N$. Observe that this looks very similar to the programme of \S\ref{ca7}, which involves counting associative $\Q$-homology 3-spheres $N$ with weight $I(N)=\md{H_1(N;\Z)}$ in \eq{ca5eq12}, which is the number of flat $\U(1)$-line bundles on~$N$.

The author's rough idea is to add some extra terms to \eq{ca8eq2}, involving the choice of critical point $\th$ of $\Phi_\psi$ in (ii) above, whose changes under deformations would cancel the changes of type (B) to the Haydys--Walpuski terms, making the sum deformation-invariant. The author does not yet know how to do this.
\begin{itemize}
\setlength{\itemsep}{0pt}
\setlength{\parsep}{0pt}
\item[(v)] The invariant $\widehat{\!\!DS\!}\,(\psi,\th)$ envisaged in (iv) should roughly be the sum of products of three kinds of terms: (a) terms counting $G_2$-instantons, as for $\sum_{[A]\in \cM(P,\psi)}\Or([A])$ in \eq{ca8eq1}; (b) Haydys--Walpuski style compensation terms \cite{HaWa}; and (c) terms involving the chosen critical point $\th$ of $\Phi_\psi$.
\end{itemize}
This concludes our outline of modifications to the Donaldson--Segal programme.

\begin{rem}{\bf(a)} From \S\ref{ca71} we have been working in the ideal $\La_{>0}$ in the Novikov ring $\La$ in \eq{ca7eq1}. So for instance, setting $\th=1$ in \eq{ca7eq5} gives
\e
\Phi_\psi(1)=\sum_{\al\in H_3(X;\Z):[\vp]\cdot\al>0}GW_{\psi,\al}\, q^{[\vp]\cdot\al}\qquad\text{in $\La_{>0}$.}
\label{ca8eq4}
\e
The coefficient of $q^a$ in $\Phi_\psi(1)$ is $\sum_{\al:[\vp]\cdot\al=a}GW_{\psi,\al}$, that is, it `counts' associative 3-folds $N$ in $(X,\vp,\psi)$ with area $a$. Similarly, the coefficient of $q^a$ in $\,\,\widehat{\!\!DS\!}\,(\psi,\th)$ in \eq{ca8eq3} morally `counts' $G_2$-instantons $(P,A)$ with energy~$-4\pi^2\int_X[\vp]\cup c_2(P)=a$. 

The effect of working in $\La_{>0}$ like this is that we only get one counting invariant for each area or energy $a>0$, so homology classes $\al$ with the same area, or principal bundles $P$ with the same energy, get lumped together.
\smallskip

\noindent{\bf(b)} If $[\vp]$ is generic in $H^3(X;\R)$ then $[\vp]\cdot: H_3(X;\Z)/\text{torsion}\ra\R$ is injective, so invariants in $\La_{>0}$ of the form \eq{ca8eq4} give an invariant for each class $\al$ in $H_3(X;\Z)/\text{torsion}$ or $c_2(P)$ in $H^4(X;\Z)/\text{torsion}$, which is not far from the system of invariants hoped for in the Donaldson--Segal proposal in~\S\ref{ca81}.

However, there is a catch. If $[\vp]$ is generic, and the superpotential $\Phi_\psi$ in \eq{ca74}--\eq{ca7eq5} is not identically zero, then one can show that $\d\Phi_\psi(\th)\ne 0$ for all $\th\in\cU$, as the term in $\d\Phi_\psi(\th)$ from $\al\in H_3(X;\Z)$ with $GW_{\psi,\al}\ne 0$ and $[\vp]\cdot\al$ least dominates all others. So $\Phi_\psi$ has no critical points, and $(X,\vp,\psi)$ is obstructed.

If $\Phi_\psi\not\equiv 0$ then $\Phi_\psi$ can only have critical points if there exist one or more pairs $\al_1,\al_2$ in $H_3(X;\Z)$ with $GW_{\psi,\al_1}\ne 0$, $GW_{\psi,\al_2}\ne 0$, $\al_1\ne\al_2$ and $[\vp]\cdot\al_1=[\vp]\cdot\al_2$, so that the obstructions from $\al_1,\al_2$ cancel out. Then $\al_1-\al_2$ lies in the kernel of $[\vp]\cdot: H_3(X;\Z)/\text{torsion}\ra\R$, and principal $\SU(2)$-bundles $P,P'\ra X$ such that $c_2(P)-c_2(P')$ lies in the subspace of $H^4(X;\Q)$ spanned by $\mathop{\rm Pd}(\al_1-\al_2)$ for all such pairs $\al_1,\al_2$ contribute to the same $G_2$-instanton counting invariant.

\smallskip

\noindent{\bf(c)} As in {\bf(b)}, for a TA-$G_2$-manifold $(X,\vp,\psi)$ we have a dichotomy: either
\begin{itemize}
\setlength{\itemsep}{0pt}
\setlength{\parsep}{0pt}
\item[(i)] $\Phi_\psi\equiv 0$. Then all associative 3-fold counting invariants are trivial. We can take $[\vp]$ generic in $H^3(X;\R)$, and hope to define $G_2$-instanton counting invariants $DS(\al,\psi,\th)\in\F$ for all $\al\in H^4(X;\Z)/\text{torsion}$, depending on a choice of $\th\in\cU$.
\item[(ii)] $\Phi_\psi\not\equiv 0$. Then we must choose a critical point $\th$ of $\Phi_\psi$, which can only exist if $[\vp]$ lies in some proper vector subspace $V$ of $H^3(X;\R)$, and hope to define Donaldson--Segal style counting invariants $DS(\al,\psi,\th)\in\F$ parametrized by $\al$ in $H^4(X;\Z)/W$ for $W=\Ker([\vp]\cup-)\subseteq H^4(X;\Z)$ with~$\mathop{\rm rank} W>0$.
\end{itemize}

Here is an interesting special case of (i). Take $X=Y\t\cS^1$ for $Y$ a Calabi--Yau 3-fold, and initially take $(\vp,\psi)$ to be an $\cS^1$-invariant TA-$G_2$-structure on $X$, e.g.\ the torsion-free $G_2$-structure induced by a Calabi--Yau structure on~$Y$. 

Let $N$ be a compact associative 3-fold in $(X,\vp,\psi)$. If $N$ is $\cS^1$-invariant then $N\cong\cS^1\t\Si$ for some surface $\Si\subset Y$, so $N$ is not a $\Q$-homology 3-sphere, and it contributes zero to the superpotential $\Phi_\psi$ in \S\ref{ca73}. If $N$ is not $\cS^1$-invariant then it lies in an $\cS^1$-family $\bigl\{e^{i\th}\cdot N:e^{i\th}\in\U(1)\bigr\}$ of associative 3-folds in $X$, and this family also contributes zero to $\Phi_\psi$, as $\chi(\cS^1)=0$. Thus $\Phi_\psi\equiv 0$, as in~(i).

By $\cS^1$-localization we expect that counting $G_2$-instantons on $(X,\vp,\psi)$ gives the same answer as counting $\cS^1$-invariant $G_2$-instantons on $(X,\vp,\psi)$, which is equivalent to counting solutions of a gauge theoretic equation on $Y$, essentially the `Donaldson--Thomas instantons' considered by Tanaka \cite{Tana}. The invariants may be an analytic version of some form of the algebro-geometric Donaldson--Thomas invariants of $Y$, as in Thomas \cite{Thom} and Joyce and Song~\cite{Joyc21,JoSo}.
\label{ca8rem1}
\end{rem}

\medskip

\noindent{\small\sc The Mathematical Institute, Radcliffe
Observatory Quarter, Woodstock Road, Oxford, OX2 6GG, U.K.

\noindent E-mail: {\tt joyce@maths.ox.ac.uk.}}

\end{document}